\theoremstyle{plain}
\newtheorem{thm}{Theorem}[section]
\newtheorem{lem}[thm]{Lemma}
\newtheorem{prop}[thm]{Proposition}
\newtheorem{cor}[thm]{Corollary}
\theoremstyle{definition}
\newtheorem{rmk}[thm]{Remark}
\newtheorem{defn}[thm]{Definition}
\newtheorem{ex}[thm]{Example}
\newtheorem{conj}[thm]{Conjecture}
\newtheorem{construction}[thm]{Construction}
\numberwithin{equation}{section}
\newcommand{\overbar}[1]{\mkern 1.5mu\overline{\mkern-1.5mu#1\mkern-1.5mu}\mkern 1.5mu}
\newcommand{\ob}{\overbar}
\newcommand{\mr}{\mathrm}
\newcommand{\tif}{\ \text{if}\ }
\newcommand{\tand}{\ \text{and}\ }
\newcommand{\totherwise}{\ \text{otherwise}\ }
\newcommand{\Hom}{\operatorname{Hom}}
\newcommand{\End}{\operatorname{End}}
\newcommand{\Lie}{\operatorname{Lie}}
\newcommand{\Spec}{\operatorname{Spec}}
\newcommand{\Spf}{\operatorname{Spf}}
\newcommand{\tensor}{\otimes}
\newcommand{\os}{\overset}
\newcommand{\iso}{\cong}
\newcommand{\mbC}{\mathbb{C}}
\newcommand{\mbD}{\mathbb{D}}
\newcommand{\mbF}{\mathbb{F}}
\newcommand{\mbL}{\mathbb{L}}
\newcommand{\mbQ}{\mathbb{Q}}
\newcommand{\mbX}{\mathbb{X}}
\newcommand{\mbY}{\mathbb{Y}}
\newcommand{\mbZ}{\mathbb{Z}}
\newcommand{\mcA}{\mathcal{A}}
\newcommand{\mcC}{\mathcal{C}}
\newcommand{\mcD}{\mathcal{D}}
\newcommand{\mcF}{\mathcal{F}}
\newcommand{\mcL}{\mathcal{L}}
\newcommand{\mcN}{\mathcal{N}}
\newcommand{\mcO}{\mathcal{O}}
\newcommand{\mcP}{\mathcal{P}}
\newcommand{\mcW}{\mathcal{W}}
\newcommand{\mcX}{\mathcal{X}}
\newcommand{\mcY}{\mathcal{Y}}
\newcommand{\mcZ}{\mathcal{Z}}
\newcommand{\mfn}{\mathfrak{n}}
\newcommand{\mfs}{\mathfrak{s}}
\newcommand{\mfu}{\mathfrak{u}}
\begin{document}

\title{Relative unitary RZ-spaces and the Arithmetic Fundamental Lemma}
\author{Andreas Mihatsch}
\date{July 22, 2019}
\maketitle
\tableofcontents
\newpage

\section{Introduction}
In \cite{Zhang}, Wei Zhang introduces his so-called Arithmetic Fundamental Lemma conjecture (AFL).
This is a conjectural identity between certain derivatives of orbital integrals on $p$-adic\footnote{Throughout this work, we assume $p\neq 2$.} symmetric spaces and certain intersection products in unitary Rapoport-Zink spaces (RZ-spaces).
The AFL is proven in the case of dimension $n \leq 3$, see \cite{Zhang}.
In the subsequent work \cite{RTZ}, Rapoport, Terstiege and Zhang verify the AFL for arbitrary $n$ and so-called minuscule group elements $g$.
Their proof was later simplified by Li and Zhu \cite{LZ1, LZ2} and He, Li and Zhu \cite{HLZ}. 

In the present paper, we verify more cases of the AFL for arbitrary $n$ but under restrictive conditions on $g$. These computations rely on a certain recursion formalism which involves comparison isomorphisms between different RZ-spaces. More precisely, we will compare two PEL moduli problems, one for $p$-divisible groups and one for strict formal $\mcO$-modules. This comparison relies on the theory of display as developed by Zink \cite{Zink}, Lau \cite{Lauequiv} and Ahsendorf \cite{Ahs, ACZ}.

There is some resemblance of our comparison isomorphism with the one from Rapoport and Zink in the Drinfeld case, see \cite{RZDri}.
However, our moduli problems involve a polarization which adds an additional twist. The reason is that a polarization of a strict formal $\mcO$-module is not the same as a polarization of the underlying $p$-divisible group. We treat this problem in the appendix.

Let us briefly mention the following papers around the AFL. First, the AFL is related to an arithmetic Gan-Gross-Prasad conjecture which can be seen as a higher-dimensional generalization of the Gross-Zagier formula, see \cite{GGP}. We refer to the survey \cite{ZhangICM} and also to \cite{RSZdiag, Zhang} for these global aspects.
Second, the AFL from \cite{Zhang} is formulated for an unramified quadratic extension. See \cite{RSZ} and \cite{RSZatc} for variants in the ramified situation.

\subsection*{Part I: Relative unitary RZ-spaces}

We now describe our main results on unitary RZ-spaces. Let $E/E_0$ be an unramified quadratic extension of $p$-adic local fields with rings of integers $\mcO_{E_0}\subset \mcO_E$ and Galois conjugation $σ$. We denote by $\breve E$ the completion of a maximal unramified extension of $E$ with ring of integers $\mcO_{\breve E}$ and residue field $\mbF$.

\begin{defn}
Let $S$ be a scheme over $\Spf \mcO_{\breve E}$.\footnote{That is, an $\mcO_{\breve E}$-scheme such that $p$ is locally nilpotent in $\mcO_S$.} A \emph{hermitian $\mcO_E$-module} over $S$ is a triple $(X,ι,λ)$ where $X/S$ is a supersingular strict $\mcO_{E_0}$-module, $ι:\mcO_E→\End(X)$ an action and $λ:X\os{\sim}{→}X^\vee$ a compatible principal polarization, see Definition \ref{defHermmodule}.\\ 
The hermitian $\mcO_E$-module $(X,ι,λ)$ is \emph{of signature $(r,s)$} if, for all $a\in \mcO_E$,
$$\mr{charpol}(ι(a)\mid \Lie(X))(T) = (T-a)^r(T-σ(a))^s \in \mcO_S[T].$$
\end{defn}
It follows from Dieudonné theory that, up to quasi-isogeny, there is a unique hermitian $\mcO_E$-module $(\mbX_{E_0,(r,s)},ι_{\mbX},λ_{\mbX})$ of signature $(r,s)$ over $\mbF$.
\begin{defn}
For an $\mcO_{\breve E}$-scheme $S$, we denote by $\ob{S} := S\tensor_{\mcO_{\breve E}} \mbF$ its special fiber. Let $\mcN_{E_0,(r,s)}$ be the following set-valued functor on the category of schemes over $\Spf \mcO_{\breve E}$. To any $S$, it associates the set of isomorphism classes of quadruples $(X,ι,λ,ρ)$, where $(X,ι,λ)$ is a hermitian $\mcO_E$-module of signature $(r,s)$ and where
$$ρ:X\times_S\ob{S} → \mbX_{E_0,(r,s)}\times_{\Spec\mbF} \ob{S}$$
is an $E$-linear quasi-isogeny such that $ρ^*λ_{\mbX} = λ$.
\end{defn}

\begin{prop}[{\cite[Theorem 2.16]{RZ}}, Proposition \ref{propSmoothness}]\label{propIntro}
The functor $\mcN_{E_0,(r,s)}$ is representable by a formal scheme which is locally formally of finite type and formally smooth of dimension $rs$ over $\Spf \mcO_{\breve E}$.
\end{prop}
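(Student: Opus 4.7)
The plan is to follow the standard Rapoport-Zink representability strategy (in the spirit of Vollaard-Wedhorn \cite{VW}), adapted to strict formal $\mcO_{E_0}$-modules, and to obtain formal smoothness together with the dimension count by a Grothendieck-Messing deformation analysis that exploits both the polarization and the unramified $\mcO_E$-action.

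For representability, I would first consider the auxiliary functor parametrizing deformations by quasi-isogenies of the pair $(\mbX_{E_0,(r,s)}, \iota)$, i.e.\ of the strict $\mcO_{E_0}$-module together with its $\mcO_E$-action but without the polarization. Representability of this auxiliary functor by a formal scheme locally formally of finite type over $\Spf \mcO_{\breve E}$ is the analogue for strict formal $\mcO_{E_0}$-modules of the Rapoport-Zink theorem, and follows from the display- and window-theoretic framework of Zink, Lau and Ahsendorf-Cheng-Zink cited in the introduction. The conditions that $\lambda$ be a Rosati-compatible principal polarization of $X$ in the sense of Definition \ref{defRelHerm} and that $\rho^*\lambda = \lambda$ then cut out a closed formal subscheme, yielding representability of $\mcN_{E_0,(r,s)}$.

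For formal smoothness and the dimension, I use Grothendieck-Messing theory on the relative Dieudonné crystal $\mcM$ of the universal object. Since $\mcO_E/\mcO_{E_0}$ is unramified, the $\mcO_{\breve E}$-algebra $\mcO_E \tensor_{\mcO_{E_0}} \mcO_{\breve E}$ splits into two copies of $\mcO_{\breve E}$, indexed by the two $\mcO_{E_0}$-embeddings $E \hrighta \breve E$. Consequently $\mcM$ decomposes as $\mcM_0 \oplus \mcM_1$, and the Hodge filtration $\Fil^1 \mcM$ and the Lie algebra $\Lie(X) = (\mcM/\Fil^1)_0 \oplus (\mcM/\Fil^1)_1$ decompose accordingly; the signature condition says that the two summands of $\Lie(X)$ are locally free of ranks $r$ and $s$. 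The Rosati-compatible polarization induces an $\mcO_E$-antilinear perfect pairing on $\mcM$ that swaps the two eigencomponents, giving a perfect pairing $\mcM_0 \times \mcM_1 \to \mcO_S$ under which $\Fil^1 \mcM_0$ and $\Fil^1 \mcM_1$ are each other's annihilators. Lifting the full structure across a PD-thickening $S_0 \hrighta S$ therefore reduces to lifting the single locally direct rank-$s$ summand $\Fil^1 \mcM_0 \subset \mcM_0$, i.e.\ to a Grassmannian lifting problem. The lifts form a torsor under $\Hom(\Fil^1 \mcM_0, \Lie(X)_0)$, which has relative rank $rs$; formal smoothness of relative dimension $rs$ over $\Spf \mcO_{\breve E}$ follows.

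The main obstacle I anticipate is not in the Grassmannian-level computation but in the foundations: one must verify that the display framework for strict formal $\mcO_{E_0}$-modules genuinely supplies Rapoport-Zink-type representability for the unpolarized problem in this generality, and that the notion of polarization from Definition \ref{defRelHerm} — which, as the introduction stresses, differs from a polarization of the underlying $p$-divisible group — fits cleanly enough into Grothendieck-Messing that the deformation analysis above is legitimate. This is precisely the package of results that the appendix is advertised to provide, and the proof of the proposition would invoke it as a black box.
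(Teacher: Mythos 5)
Your proposal follows essentially the same route as the paper: representability is obtained from the Rapoport--Zink representability theorem by imposing the extra structure (action, strictness, polarization) as closed conditions, and formal smoothness of dimension $rs$ comes from Grothendieck--Messing theory, using the splitting of the crystal into the two unramified eigencomponents and the polarization-induced duality between them to reduce the lifting problem to a single rank-$s$ direct summand, exactly as in the paper's Proposition \ref{propSmoothness} and its relative variant. The foundational point you flag about polarizations of strict $\mcO_{E_0}$-modules is indeed exactly what the paper's appendix supplies, so invoking it as a black box is consistent with the paper's own treatment.
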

For arithmetic intersection theory and for the AFL, the case $(r,s)=(1,n-1)$ is of particular interest. The formal scheme $\mcN_{\mbQ_p,(1,n-1)}$ has been studied in detail by Vollaard-Wedhorn in \cite{VW}. Cho \cite{Cho} extended their results to the general case.
We remark that, if $E_0 = \mbQ_p$, then the moduli problem is of PEL-type in the sense of Rapoport and Zink, see \cite[Section 3.17]{RZ}. By contrast, if $E_0\neq \mbQ_p$, then this moduli problem is not covered by their book. This is due to the polarization $λ$, which is a polarization as a strict $\mcO_{E_0}$-module, see Definition \ref{defPol}. We call $\mcN_{E_0,(r,s)}$ a \emph{relative RZ-space} since the underlying moduli problem is formulated in strict $\mcO_{E_0}$-modules as opposed to $p$-divisible groups.

The last observation motivates our main result on unitary RZ-spaces, which we now state in a rather informal way. See Theorem \ref{mainThm1} for the precise statement.
\begin{thm}\label{thmMain1}
There exists an RZ-space $\mcN_{E_0/\mbQ_p,(r,s)}$ of PEL-type in the sense of \cite{RZ} together with an isomorphism
$$\mcN_{E_0/\mbQ_p,(r,s)} \iso \mcN_{E_0,(r,s)}$$
which is equivariant with respect to the unitary groups acting on both sides.
\end{thm}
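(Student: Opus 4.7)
The plan is to build $\mcN_{E_0/\Qp,(r,s)}$ as a PEL Rapoport--Zink space over $\Zp$ in the sense of \cite{RZ} and to match it with $\mcN_{E_0,(r,s)}$ by combining the theory of displays for the underlying $p$-divisible groups with the polarization comparison established in the appendix.

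First I would assemble the PEL datum: take $B = E$ with its non-trivial involution over $\Qp$, the rational Dieudonné module of $\mbX_{E_0,(r,s)}$ as the hermitian $E$-space $V$, and a self-dual $\mcO_E$-lattice giving the polarization. For the signature I impose two conditions simultaneously on the Lie algebra of a test point $X^\flat$: that $\mcO_{E_0}$ acts through the structural morphism $\mcO_{E_0}\righta \mcO_S$ (the Eisenstein/Kottwitz-type condition that singles out strict $\mcO_{E_0}$-modules), and that over this $\mcO_E\tensor_{\mcO_{E_0}}\mcO_S$-structure the pair $(r,s)$ gives the ranks at the two $\mcO_{E_0}$-algebra embeddings $\mcO_E \hrighta \mcO_{\breve E}$. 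Representability of the resulting functor $\mcN_{E_0/\Qp,(r,s)}$ is then part of the general theory of \cite{RZ}.

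For the forward map $\mcN_{E_0,(r,s)} \righta \mcN_{E_0/\Qp,(r,s)}$, given $(X,ι,λ,ρ)$ I forget that $X$ is a strict $\mcO_{E_0}$-module and keep only the underlying $p$-divisible group $X^\flat$ with its $\mcO_E$-action. The strict $\mcO_{E_0}$-structure of signature $(r,s)$ translates verbatim into the signature condition imposed above. The delicate point is the polarization: the $\mcO_{E_0}$-linear dual $X^\vee$ used in $\mcN_{E_0,(r,s)}$ and the classical Cartier dual $X^{\flat,\vee}$ disagree, the two being related by a twist involving $\mfd_{E_0/\Qp}^{-1}$. Invoking the comparison worked out in the appendix, $λ$ produces a principal polarization $λ^\flat$ of $X^\flat$ that is compatible with $\iota$ in the Rosati sense, while the framing $ρ$ transfers unchanged since it is already $E$-linear. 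For the inverse map I invoke the equivalence of Ahsendorf \cite{ACZ}, building on \cite{Zink, Lauequiv}: strict formal $\mcO_{E_0}$-modules over $S$ are equivalent to formal $p$-divisible groups over $S$ with an $\mcO_{E_0}$-action whose induced action on the Lie algebra agrees with the structural morphism. Any point of $\mcN_{E_0/\Qp,(r,s)}$ satisfies this condition by construction, so $X^\flat$ promotes canonically to a strict formal $\mcO_{E_0}$-module $X$ to which the $\mcO_E$-action descends; inverting the appendix correspondence recovers $λ$. Mutual inverseness follows from the functoriality of both the display equivalence and the polarization comparison. Since the group $\mr{U}(\mbX_{E_0,(r,s)},λ)(\Qp)$ acts on both sides by post-composition with the framing, and all constructions above are functorial in the framing object, equivariance is automatic.

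The step I expect to require the most care is the polarization comparison: relative duality of strict $\mcO_{E_0}$-modules and Cartier duality of $p$-divisible groups genuinely differ, and the bookkeeping needed to show that $λ^\flat$ is principal, Rosati-compatible with the full $\mcO_E$-action, and natural in the framing is exactly the content of the appendix. Once this black box is granted, the remainder of the argument is the routine application of Ahsendorf's equivalence together with a translation of signature conditions.
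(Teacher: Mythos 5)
There is a genuine gap, and it sits exactly at the point you flag as "delicate." Your forward map is "forget the strict $\mcO_{E_0}$-structure and keep the underlying $p$-divisible group $X^\flat$." But a point $(X,ι,λ)$ of $\mcN_{E_0,(r,s)}$ is a supersingular strict $\mcO_{E_0}$-module of relative height $2n$ and relative dimension $n$; its underlying $p$-divisible group has height $2nd$ but dimension only $n$, so for $d>1$ it is not supersingular, and since a principal polarization of a $p$-divisible group forces $\dim X^\flat=\dim X^{\flat,\vee}=nd$, the underlying group admits \emph{no} principal polarization at all. The appendix does not repair this: its content is that a polarization of a strict $\mcO_{E_0}$-module is an isomorphism onto the \emph{relative} dual, which is a different $p$-divisible group from the Cartier dual, and there is no "twist by $\mfd_{E_0/\mbQ_p}^{-1}$" that turns the one into a principal polarization of $X^\flat$ itself. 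Relatedly, your proposed Kottwitz condition (that $\mcO_{E_0}$ acts on $\Lie$ through $\mcO_S$, with ranks $(r,s)$ at the two $\mcO_{E_0}$-embeddings of $\mcO_E$) forces $\mr{rk}\,\Lie = n$, which is incompatible with the principal polarization in the PEL datum whenever $d>1$; the moduli problem you set up is empty. The condition the paper actually imposes (Definition \ref{defSignature}, equivalently (i')) distributes ranks $0$, $r$, $ne$, $ne-r$ over the $2f$ embeddings of $E^u$, and is genuinely different from the strictness/Eisenstein condition.

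The inverse direction has the same confusion: the equivalence you attribute to Ahsendorf ("strict formal $\mcO_{E_0}$-modules are $p$-divisible groups with $\mcO_{E_0}$-action acting strictly on $\Lie$") is just the definition, i.e.\ \eqref{eqEquiv2}; Ahsendorf's theorem \eqref{eqEquiv} identifies strict formal $\mcO_{E_0}$-modules with $\mcO_{E_0}$-displays over the ramified Witt vectors, and exploiting it here requires building a comparison functor $\mcC$ that \emph{changes the underlying group}: one passes from the $\mbZ_p$-display of a point of $\mcN_{E_0/\mbQ_p,(r,s)}$ through $f$-displays and Lubin--Tate frames, rescales $\dot F$ and the pairing by the element $θ$ of Lemma \ref{lemMagic} and a unit $ε$ with $σ(ε)ε^{-1}=κ/u$ (Lemma \ref{lemDual}), and proves the result is an equivalence by a separate argument (equivalence on reduced $\mbF$-schemes plus identification of deformation theories via contracted crystals). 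Finally, the framing does not "transfer unchanged": one must fix a quasi-isogeny $α:\mcC(\mbX_{E_0/\mbQ_p,(r,s)})\iso\mbX_{E_0,(r,s)}$ as in \eqref{eqFram}, and equivariance is with respect to conjugation by $α$. In short, the theorem cannot be reduced to forgetting structure plus a duality bookkeeping; the content is the construction of $\mcC$ itself.
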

In particular, the RZ-space $\mcN_{E_0/\mbQ_p,(r,s)}$ is smooth over $\Spf \mcO_{\breve E}$. This is remarkable since we do not impose any conditions on the ramification behavior of $E_0/\mbQ_p$. Instead, we impose a very specific Kottwitz condition for the moduli problem $\mcN_{E_0/\mbQ_p,(r,s)}$. Namely, the Kottwitz condition has to be induced from the maximal unramified intermediate field $\mbQ_p\subset E_0^u \subset E_0$ at all but possibly one place $ψ_0:E_0^u\hookrightarrow \breve E$, see Definition \ref{defSignature}. Our definition bears some similarity with the situation in \cite[Equation (2.1)]{RZDri}. But note that the unramified intermediate field does not play a role in loc.\! cit. Instead, the authors impose the \emph{Eisenstein condition} to get a regular moduli problem. A similar definition is made in \cite{KRZ}.

Finally, let us mention the following application of Theorem \ref{thmMain1}. The formal scheme $\mcN_{\mbQ_p, (1,n-1)}$ is well-known to uniformize the supersingular locus in certain Shimura varieties for unitary groups over $\mbQ$, see \cite[Section 5]{VW}. Essentially, this follows directly from the moduli description of the Shimura variety in terms of abelian varieties. For unitary groups over general totally real fields, it is the moduli description of $\mcN_{E_0/\mbQ_p,(1,n-1)}$ that naturally occurs in the uniformization. It follows from the Theorem that $\mcN_{E_0,(1,n-1)}$ can be used equivalently which gives the link between the AFL conjecture for $\mcN_{E_0,(1,n-1)}$ and the GGP conjecture for general totally real fields.


\subsection*{Part II: Application to the Arithmetic Fundamental Lemma}
We now describe the application of Theorem \ref{thmMain1} to the AFL. For this, we briefly recall the AFL conjecture in the \emph{inhomogeneous group} formulation from \cite{Zhang}. In the main text, we will also consider the AFL in the \emph{Lie algebra} formulation. We refer the reader to \cite{RSZ} for the \emph{homogeneous group} formulation.

Let us fix an integer $n\geq 2$ and let $W_0$ be an $(n-1)$-dimensional $E_0$-vector space. Set $W := E\tensor_{E_0} W_0$ and $V := W\oplus Eu$. We embed $GL(W)$ into $GL(V)$ as $h\mapsto \mathrm{diag}(h,1)$. In this way, $GL(W)$ acts by conjugation on $\End(V)$. An element $γ\in \End(V)$ is said to be \emph{regular semi-simple}, if its stabilizer for this action is trivial and if its orbit is Zariski-closed.

Let $S(E_0)$ denote the symmetric space
$$S(E_0):=\{γ\in \End(V)\mid γ\overbar{γ}=1\}.$$
It is stable under the action of $GL(W_0)$. We denote its regular semi-simple elements by $S(E_0)_{\rm{rs}}$ and form the set-theoretic quotient $[S(E_0)_{\mathrm{rs}}]:=GL(W_0)\backslash S(E_0)_{\mathrm{rs}}.$

For a regular semi-simple element $γ\in S(E_0)_{\mathrm{rs}}$, for a test function $f\in C^\infty_c(S(E_0))$ and for a complex parameter $s\in \mbC$, we define the \emph{orbital integral}
$$O_γ(f,s):=\int_{GL(W_0)} f(h^{-1}γh)η(\det h)|\det h|^s dh,$$
where $η:E_0^\times\rightarrow \{\pm 1\}$ is the quadratic character associated to $E/E_0$ by local class field theory and where $|\cdot | := q_{E_0}^{-v(\cdot)}$ is the normalized absolute value of $E_0$. We consider the special value $O_γ(f):= O_γ(f,0)$ and the \emph{derived orbital integral}
$$\partial O_γ(f):=\left.\frac{d}{ds}\right|_{s=0} O_γ(f,s).$$

Note that $O_γ(f)$ transforms with $η \circ \det$ under the action of $GL(W_0)$ on $γ$. The so-called \emph{transfer factor} $Ω(γ)\in \{\pm 1\}$, see Definition \ref{deftransfer}, is $η$-invariant as well, making the product $Ω(γ)O_γ(f)$ descend to the quotient $[S(E_0)_{\mathrm{rs}}]$.

Now let $J^\flat_0$ (resp.\! $J^\flat_1$) be a hermitian form with discriminant of even (resp.\! odd) valuation on $W$. For $i = 0,1$, we extend $J_i^\flat$ to a form $J_i$ on $V$ by defining $J_i(u,u) = 1$ and $u\perp W$. The unitary group $U(J^\flat_i)$ acts by conjugation on the regular semi-simple elements $U(J_i)_{\mathrm{rs}}$ and we define the quotient
$$[U(J_i)_{\mathrm{rs}}] := U(J^\flat_i)\backslash U(J_i)_{\mr{rs}}.$$

\begin{defn}
Two elements $δ\in U(J_i)_{\mathrm{rs}}$ and $γ\in S(E_0)_{\mathrm{rs}}$ are said to \emph{match}, if they are conjugate under $GL(W)$ within $\End(V)$.
\end{defn}

\begin{lem}[\protect{\cite[Lemma 2.3]{Zhang}}]\label{orbitcomparison}
The matching relation induces a bijection
$$α:[S(E_0)_{\mathrm{rs}}]\iso [U(J_0)_{\mathrm{rs}}]\sqcup [U(J_1)_{\mathrm{rs}}].$$
\end{lem}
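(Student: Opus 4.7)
The plan is to follow the invariant-theoretic approach of Jacquet--Rallis, realising both sides of the matching as subsets of the quotient $\End(V)_{\mathrm{rs}}/GL(W)$. The basic observation is that regular semisimplicity (trivial stabiliser, closed orbit) forces $u$ to be a cyclic vector for $X\in \End(V)$, i.e., $(u, Xu, \ldots, X^{n-1}u)$ is an $E$-basis of $V$: if this span were strictly smaller, one could lift an automorphism of a quotient to a non-trivial element of the $GL(W)$-centraliser, contradicting triviality of the stabiliser. With cyclicity in hand, the $GL(W)$-orbit of $X$ is determined by the characteristic polynomial of $X$ together with $n-1$ scalars encoding the expansion of the fixed decomposition $V = W\oplus Eu$ in the cyclic basis, giving an injection $\End(V)_{\mathrm{rs}}/GL(W)\hookrightarrow \mathcal{I}$ into an affine space $\mathcal{I}$ of $2n-1$ invariants.

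Next I would verify that both maps $[S(E_0)_{\mathrm{rs}}]\to \mathcal{I}$ and $[U(J_i)_{\mathrm{rs}}]\to \mathcal{I}$ are injective and land in the same subset of $\mathcal{I}$. For $\gamma \in S(E_0)$, the identity $\gamma\bar\gamma = 1$ imposes $\sigma$-palindromic-type relations on the characteristic polynomial of $\gamma$ together with analogous Galois compatibilities on the remaining invariants; for $\delta \in U(J_i)$, unitarity $\delta^{\ast}\delta = 1$ with respect to $J_i$ produces exactly the same relations. Injectivity on each side follows from the regular-semisimplicity hypothesis: the $GL(W)$-element effecting a conjugation between two regular semisimple elements is uniquely determined, and must therefore preserve the additional structure (the $E_0$-form on the $S$-side, the hermitian form on the $U$-side), placing it in $GL(W_0)$, respectively $U(J_i^\flat)$.

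Finally, for surjectivity and the disjoint decomposition, I would attach to each $\gamma \in S(E_0)_{\mathrm{rs}}$ a hermitian form $J_\gamma$ on $V$ stabilised by $\gamma$ and normalised so that $J_\gamma(u,u) = 1$. After replacing $\gamma$ by an appropriate $GL(W)$-conjugate one may assume $u$ is $J_\gamma$-orthogonal to $W$; then $J_\gamma|_W$ is a non-degenerate hermitian form on $W$, classified up to isometry by its discriminant in $E_0^\times/N_{E/E_0}(E^\times)\cong \{\pm 1\}$. This sign determines whether $J_\gamma|_W$ is isometric to $J_0^\flat$ or to $J_1^\flat$, and transporting $\gamma$ along the corresponding isometry of $W$ produces the desired matching $\delta\in U(J_i)_{\mathrm{rs}}$. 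The main obstacle, and the technical heart of the argument, will be the explicit construction of $J_\gamma$ together with the computation of its discriminant in terms of $\gamma$: this is what converts the symmetric-space identity $\gamma\bar\gamma=1$ into a polarisation condition and is what produces the dichotomy $J_0^\flat$ versus $J_1^\flat$.
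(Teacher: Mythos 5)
The paper does not prove this lemma --- it is quoted verbatim from \cite[Lemma 2.3]{Zhang} --- so your proposal can only be measured against the standard Jacquet--Rallis/Zhang argument, which is indeed the strategy you outline: classify $GL(W)$-orbits of regular semisimple elements by invariants (characteristic polynomial together with the $u^\vee\gamma^k u$), prove injectivity of orbit $\mapsto$ invariants on each side using triviality of stabilizers (e.g.\ if $h^{-1}\gamma h = \gamma'$ with $\gamma,\gamma'\in S(E_0)_{\mathrm{rs}}$, then $\bar h h^{-1}$ centralizes $\gamma$, hence $h\in GL(W_0)$), and produce the dichotomy via the discriminant of a $\gamma$-invariant hermitian form. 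Your first two paragraphs are sound, modulo the small point that regular semisimplicity is equivalent to \emph{both} spanning conditions of Lemma \ref{lemrs} ($\{\gamma^iu\}$ spans $V$ \emph{and} $\{u^\vee\gamma^i\}$ spans $V^\vee$); you only derive the first, though for $\gamma\in S(E_0)$ or $U(J_i)$ the second follows from the first by applying the relevant involution.

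The gap is in the surjectivity step. Having built $J_\gamma$ (necessarily $J_\gamma(\gamma^i u,\gamma^j u)=u^\vee(\gamma^{i-j}u)$, which is hermitian because $\overline{u^\vee\gamma^k u}=u^\vee\bar\gamma^k u=u^\vee\gamma^{-k}u$), you claim one may replace $\gamma$ by a $GL(W)$-conjugate so that $u\perp_{J_\gamma}W$. This fails: elements of $GL(W)=\{\mathrm{diag}(h,1)\}$ preserve $W$ and fix $u$, and conjugating $\gamma$ by $h$ replaces $u^{\perp_{J_\gamma}}$ by $h^{-1}\bigl(u^{\perp_{J_\gamma}}\bigr)$, which can equal $W$ only if it already was $W$. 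The correct route --- available inside your own framework --- is to forget $W$ at this stage: by Witt's theorem choose an isometry $\phi:(V,J_\gamma)\to(V,J_i)$ with $\phi(u)=u$ (possible since $J_\gamma(u,u)=1=J_i(u,u)$ and the discriminant singles out exactly one $i$), set $g:=\phi\gamma\phi^{-1}\in U(J_i)_{\mathrm{rs}}$, and then check that $g$ and $\gamma$ have the \emph{same invariants}: the characteristic polynomials agree, and $u^\vee(g^k u)=J_i(g^ku,u)=J_\gamma(\gamma^ku,u)=u^\vee(\gamma^ku)$, using that $u\perp_{J_i}W$ on the unitary side. By the invariant-theoretic classification from your first paragraph, $g$ and $\gamma$ are then $GL(W)$-conjugate, i.e.\ they match. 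With this repair the plan goes through; the remaining work is exactly what you flag, namely the non-degeneracy of $J_\gamma$ (which follows from the second spanning condition) and the computation of its discriminant $\det\bigl(u^\vee\gamma^{i-j}u\bigr)_{i,j}$ modulo norms, which also yields the disjointness of the two images.
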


Let $\mbY_{E_0}$ (resp.\! $\mbX_{E_0,(1,n-2)}$) be a hermitian $\mcO_E$-module over $\mbF$ of signature $(0,1)$ (resp.\! of signature $(1,n-2)$). Define $\mbX_{E_0,(1,n-1)} := \mbX_{E_0,(1,n-2)}\times \mbY_{E_0}$, which has signature $(1,n-1)$, and consider the associated RZ-spaces $\mcN_{E_0,(1,n-2)}$ and $\mcN_{E_0,(1,n-1)}$. Note that there is a unique deformation $\mcY_{E_0}$ of $\mbY_{E_0}$ to $\Spf \mcO_{\breve E}$ by Proposition \ref{propIntro} which defines a closed immersion
$$\begin{aligned}
δ:\mcN_{E_0,(1,n-2)}& → \mcN_{E_0,(1,n-1)}\\
X&\longmapsto X\times \mcY_{E_0}.\end{aligned}$$
Its image can be identified with the Kudla-Rapoport divisor $\mcZ(u)$ associated to the homomorphism $u := (0, \mr{id}):\mbY_{E_0}→\mbX_{E_0,(1,n-2)}\times \mbY_{E_0}$, see \cite{KRlocal}.

We can identify the group $\mr{Aut}(\mbX_{E_0,(1,n-2)})$ with $U(J^\flat_1)$. Then $U(J^\flat_1)$ acts on $\mcN_{E_0,(1,n-2)}$ by composition in the framing, $g.(X,λ,ι,ρ) = (X,λ,ι,gρ)$. Similarly, we may identify $\mr{Aut}(\mbX_{E_0,(1,n-1)})$ with $U(J_1)$ and we can even choose the identification in such a way that $δ$ becomes equivariant with respect to the embedding $U(J^\flat_1)\subset U(J_1)$.

\begin{defn}\label{defcycleintro}
(1) For an element $g\in U(J_1)$, we denote by $\mcZ(g)\subset \mcN_{E_0,(1,n-1)}$ the closed formal subscheme of $(X,ρ)$ with $ρ^{-1}gρ\in \End(X)$, see \cite[Proposition 2.9]{RZ}. It only depends on the $\mcO_E$-algebra spanned by $g$, $\mcO_E[g]\subset \End^0(\mbX_{E_0,(1,n-1)})$.\\
(2) An element $g\in U(J_1)$ is called \emph{artinian} if the intersection $\mr{Im}(δ)\cap \mcZ(g)$ is an artinian scheme.\\
(3) For artinian $g$, we define the \emph{intersection number}
$$\mr{Int}(g) := \mr{len}_{\mcO_{\breve E}} \mcO_{\mr{Im}(δ)\cap \mcZ(g)}.$$
\end{defn}

Zhang defines an intersection product more generally for all regular semi-simple elements $g\in U(J_1)_{\mr{rs}}$. Then the schematic intersection $\mr{Im}(δ)\cap \mcZ(g)$ may be higher-dimensional and higher Tor-terms appear, see Definition \ref{defHigherTor}. But note that the results of this paper only apply to the artinian case.

We are now ready to state the AFL conjecture for artinian elements. Let $Λ_0\subset W_0$ be some lattice and set $Λ := (Λ_0\tensor_{\mcO_{E_0}} \mcO_E)\oplus \mcO_Eu$. Define $S(\mcO_{E_0}):=S(E_0)\cap \End(Λ)$ and denote its characteristic function by $1_{S(\mcO_{E_0})}$.

\begin{conj}[AFL, \protect{\cite[Conjecture 2.9]{Zhang}}]
For every element $γ\in S(E_0)_{\mr{rs}}$ that matches an artinian element $g\in U(J_1)_{\mr{rs}}$, there is an equality
\begin{equation}\label{eqAFLIntro}
Ω(γ)\partial O_γ(1_{S(\mcO_{E_0})}) = -\mr{Int}(g)\log(q). \tag{$\mr{AFL}_{E/E_0,(V,J_1),u,g}$}
\end{equation}
\end{conj}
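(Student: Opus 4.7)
The plan is to use Theorem \ref{thmMain1} as a comparison device that reduces instances of the AFL over $E_0$ to instances over $\mbQ_p$, where prior results (notably those of Rapoport--Terstiege--Zhang) can be invoked. With $d := [E_0 : \mbQ_p]$, the chain of maps
$$\mcN_{E_0,(1,n-1)} \iso \mcN_{E_0/\mbQ_p,(1,n-1)} \hookrightarrow \mcN_{\mbQ_p,(1,nd-1)}$$
realizes the $E_0$-unitary RZ-space as a smooth closed formal subscheme of the $\mbQ_p$-unitary RZ-space, and, as promised in Theorem \ref{thmMain2}, it embeds the Kudla--Rapoport divisor $\mr{Im}(\delta)$ into an analogous divisor on the $\mbQ_p$-side. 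The strategy is then to match both sides of (\ref{eqAFLIntro}) with the corresponding quantities in the $\mbQ_p$-AFL.

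Geometrically, I would first observe that an artinian element $g \in U(J_1)$, by forgetting the $\mcO_{E_0}$-action, defines a corresponding element of the larger $\mbQ_p$-unitary group acting on the underlying $p$-divisible group. This produces a Kudla--Rapoport cycle on the $\mbQ_p$-side whose intersection with the analogue of $\mr{Im}(\delta)$ admits a decomposition indexed by the various $\mcO_{E_0}$-structures compatible with $g$ (equivalently, by the embeddings $E_0^u \hookrightarrow \breve E$ entering the Kottwitz condition of Definition \ref{defSignature}). I would isolate the component corresponding to the chosen $E_0$-structure and identify its contribution to the length with $\mr{Int}(g)$ in the $E_0$-sense. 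Tracking how the polarization transforms between strict $\mcO_{E_0}$-modules and the underlying $p$-divisible groups is the delicate point here, and it is exactly what the appendix on polarizations is designed to handle.

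Analytically, I would seek to match $\partial O_\gamma(1_{S(\mcO_{E_0})})$ with a single derived orbital integral over $\mbQ_p$ attached to the $\mbQ_p$-descent $\gamma'$ of $\gamma$. Choosing compatible Haar measures and aligning the quadratic characters $\eta_{E/E_0}$ and $\eta_{E/\mbQ_p}$ should let one rewrite the $E_0$-integral as a contribution to its $\mbQ_p$-counterpart, once one uses that $1_{S(\mcO_{E_0})}$ becomes a factor of $1_{S(\mbZ_p)}$ after base change. Combining the geometric decomposition with this analytic matching, the $E_0$-AFL reduces to a known instance of the $\mbQ_p$-AFL; for elements $g$ whose underlying $\mbQ_p$-data fall in the minuscule range, one concludes by \cite{RTZ}.

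The main obstacle, I expect, is the analytic comparison. On the geometric side the identification of intersection numbers should follow rather formally from Theorems \ref{thmMain1} and \ref{thmMain2}, once the polarization issue is settled. But the reduction of an orbital integral from $E_0$ to $\mbQ_p$ is an independent harmonic-analytic statement: it requires matching Haar measures, exploiting the ``all but one place'' Kottwitz condition on the analytic side so that only one summand survives, and verifying that the derivative at $s=0$ together with the transfer factor $\Omega(\gamma)$ transform correctly under descent from $E_0$ to $\mbQ_p$. This is where I would expect the technical heart of the argument to lie.
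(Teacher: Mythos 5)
The statement you set out to prove is a conjecture (Zhang's AFL), and the paper does not prove it: it only establishes equivalences between instances of the conjecture and deduces special cases. Your proposal cannot close that gap. The reduction via Theorems \ref{thmMain1} and \ref{thmMain2} applies only to elements $g$ \emph{of inductive type}, i.e.\ with $\mcO_E\subset \mbZ_{p^2}[g]$, and even for such $g$ the output is Theorem \ref{thmMain3}, an \emph{equivalence} $(\mr{AFL}_{E_0,V,u,g})\Leftrightarrow(\mr{AFL}_{\mbQ_p,V_{\mbQ_{p^2}},u,i(g)})$ between two a priori open instances of the conjecture. An unconditional statement is obtained only when the reduced instance lands in known territory ($n\le 3$ by \cite{Zhang}, or the minuscule case of \cite{RTZ}). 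A general artinian $g$ need not be of inductive type, and the reduced instance need not be known, so no proof of the conjecture as stated can result from this strategy; at best you recover the paper's partial results.

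Within that more limited scope, your identification of the ``technical heart'' is also misplaced. The analytic comparison is not a harmonic-analytic descent of orbital integrals requiring matched Haar measures, characters and transfer factors. Both $\Omega(\gamma)O_\gamma(1_{S(\mcO_{E_0})},s)$ and its derivative are first converted into signed lattice counts $\sum_i(-1)^i|M_i|q^{-(i+l(\gamma))s}$ via \cite{RTZ} (Lemma \ref{lemorbint}, Corollary \ref{lemorbints}); the comparison between $E_0$ and $\mbQ_p$ then reduces to the elementary observation that when $\mcO_A\subset\mbZ_{p^2}[x]$ every $x$-stable $\mbZ_{p^2}$-lattice is automatically an $\mcO_A$-lattice, whence $M(x,j)=M(x,j)^A$ and $\partial O(x,j)=f\cdot\partial O(x,j)^A$ with $f$ odd (Theorem \ref{thmAFLgenmain}). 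The same factor $f$ appears on the geometric side from $\mcZ(\mcO_E)\iso\coprod_{i=1}^f\mcN_{E_0,(1,n-1)}$ (Theorem \ref{thmZoe}) and cancels. The genuinely delicate input is the one you mention only in passing: the comparison of polarizations between strict $\mcO_{E_0}$-modules and their underlying $p$-divisible groups (Theorem \ref{thmEquiv} and the appendix on Lubin--Tate frames), not the orbital integrals.
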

Here, the indexing quadruple $(E/E_0,(V,J_1),u,g)$ is chosen in such a way that it allows an unambiguous reconstruction of the terms involved in the identity $(\mr{AFL}_{E/E_0,(V,J_1),u,g})$.
Our main result on the AFL in the group version is a simplification of the AFL identity for elements $g$ of so-called inductive type, cf. Theorem \ref{thmMain3} below.
\begin{defn}
An element $g\in U(J_1)_{\mr{rs}}$ is \emph{of inductive type} if there exists a non-trivial étale algebra $A_0/E_0$ and, setting $A:=A_0\tensor_{E_0}E$, an inclusion
$$\mcO_A \subset \mcO_E[g]$$
that is equivariant for the Galois conjugation of $A/A_0$ and the Rosati involution on $\mcO_E[g]$.
\end{defn}
Let us fix such a $g$ and let us for simplicity also assume that $A$ is a field. Then $V$ becomes an $A$-vector space and we set $n' := \dim_A(V)$. Let $ϑ_A$ be a generator of the inverse different of $A_0/E_0$ and let $J^A_1$ be the unique $A$-valued hermitian form such that
$$J_1 = \mr{tr}_{A/E}(ϑ_AJ^A_1).$$
We assume for simplicity that $J_1^A(u,u)=1$, see Section \ref{subsect:mainfield} for variants.
\begin{thm}\label{thmMain3}
For $g$ of inductive type as above, there is an equivalence
$$(\mr{AFL}_{E/E_0,(V,J_1),u,g})\ \ \Leftrightarrow\ \ (\mr{AFL}_{A/A_0,(V,J^A_1), u, g}).$$
\end{thm}
Since the AFL has been proven for $n\leq 3$, we get the following corollary.
\begin{cor}
Let $g$ be of inductive type as above with $n'\leq 3$. Then the AFL identity for $g$,\\
$(\mr{AFL}_{E/E_0,(V,J_1),u,g}),$
holds.
\end{cor}
The proof of the Theorem relies on a comparison of the $g$-fixed points on $\mcN_{E_0,(1,n-1)}$ with the $g$-fixed points on $\mcN_{A_0,(1,n'-1)}$.
For better distinguishability, we write $g^A$ for the endomorphism $g$, viewed as $A$-linear endomorphism. We denote by $\mcZ(\mcO_A)\subset \mcN_{E_0,(1,n-1)}$ the closed formal subscheme of $(X,ρ)$ such that $ρ^{-1}\mcO_Aρ\subset \End(X)$. Then $\mcZ(g)\subset \mcZ(\mcO_A)$. We also let $f$ denote the inertia degree of $A/E$.
\begin{thm}\label{thmMain2}
There is an isomorphism of formal schemes,
$$\mcZ(\mcO_A) \iso \coprod_{i = 1}^f \mcN_{A_0,(1,n'-1)},$$
which is compatible with the formation of $\mcZ(g)$ in the following sense. It induces an identification
$$\mcZ(g) \iso \coprod_{i = 1}^f \mcZ(g^A).$$
\end{thm}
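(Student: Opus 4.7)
The plan is to give $\mcZ(\mcO_E)$ a moduli interpretation, decompose it according to the possible $\mcO_E$-signatures on $\Lie(X)$, and apply Theorem \ref{thmMain1} to each piece. Since the condition $\rho^{-1}\mcO_E\rho\subset\End(X)$ is closed in $\mcN_{\mbQ_p,(1,nd-1)}$, the subfunctor $\mcZ(\mcO_E)$ is a closed formal subscheme. On any $S$-valued point the induced $\mcO_E$-action $\iota_E:\mcO_E\hookrightarrow\End(X)$ extending $\iota_{\mbZ_{p^2}}$ is uniquely determined by $\rho$ and the standard $\mcO_E$-action on the framing object $\mbX_{\mbQ_p,(1,nd-1)}$ from \eqref{eqIdentIntro}. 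Hence $\mcZ(\mcO_E)$ represents the moduli of tuples $(X,\iota_E,\lambda,\rho)$ whose underlying $\mbZ_{p^2}$-tuple lies in $\mcN_{\mbQ_p,(1,nd-1)}$.

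Next, over $\Spf\mcO_{\breve E}$ the sheaf $\Lie(X)$ is a module over $\mcO_E\otimes_{\mbZ_p}\mcO_{\breve E}$, which splits according to the $f$ embeddings $\psi:E_0^u\hookrightarrow\breve E$ and their two extensions to $E$. The principal polarization imposes a $\sigma$-symmetry on the two extensions at each $\psi$, and combined with the fixed $\mbZ_{p^2}$-Kottwitz condition $(1,nd-1)$ this pins down the allowed refined signatures: at $f-1$ of the embeddings $\psi$ the signature must be induced from the unramified subfield $\mcO_{E_0^u}\subset\mcO_{E_0}$, while at a single distinguished place $\psi_0$ it must be of type $(1,n-1)$. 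This is precisely the signature of Definition \ref{defSignature}. The $f$ choices of $\psi_0$ yield a decomposition $\mcZ(\mcO_E)=\coprod_{\psi_0}\mcZ(\mcO_E)^{\psi_0}$ into open-and-closed subfunctors, each of which is by construction the PEL RZ-space $\mcN_{E_0/\mbQ_p,(1,n-1)}$ of Theorem \ref{thmMain1} (with distinguished place $\psi_0$ and polarizations taken in the $p$-divisible group sense). Applying that theorem yields $\mcZ(\mcO_E)^{\psi_0}\iso\mcN_{E_0,(1,n-1)}$ and hence the first assertion.

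For the cycle statement, the inductive-type hypothesis $\mcO_E\subset\mbZ_{p^2}[g]$ implies that extending $i(g)$ to an endomorphism of $X$ automatically extends all of $\mcO_E$, so $\mcZ(i(g))\subset\mcZ(\mcO_E)$. Under the identification just constructed, the condition $\rho^{-1}i(g)\rho\in\End(X)$ translates on each component $\mcN_{E_0,(1,n-1)}$ into the condition that the corresponding element of $\End_E(V)$ extend to an $\mcO_E$-linear endomorphism of the hermitian $\mcO_E$-module, i.e., the defining condition of $\mcZ(g)$. This step uses that the comparison isomorphism of Theorem \ref{thmMain1} is natural on quasi-endomorphism algebras and compatible with \eqref{eqIdentIntro}. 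The main obstacle I expect is the signature analysis: one must verify that the polarization compatibility together with the $\mbZ_{p^2}$-signature $(1,nd-1)$ leaves exactly the $f$ ``unramified-induced-except-at-one-place'' signatures of Definition \ref{defSignature}. This requires a careful bookkeeping of the decomposition of $\mcO_E\otimes\mcO_{\breve E}$ and of the Rosati-induced $\sigma$-symmetry, exploiting the rank asymmetry $1$ versus $nd-1$ in the $\mbZ_{p^2}$-signature.
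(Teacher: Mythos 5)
Your proposal follows essentially the same route as the paper: the paper (Theorem \ref{thmZoe} together with Remarks \ref{rmkComp1} and \ref{rmkComp2}) decomposes $\mcZ(\mcO_E)$ according to the unique $\psi_0\in\Psi_0$ with $\Lie(X)_{\psi_0}\neq 0$ --- forced to be a line bundle by the rank-one part of the $\mbZ_{p^2}$-signature --- identifies each open-and-closed piece with $\mcN^{\psi_0}_{E_0/\mbQ_p,(1,n'-1)}$, and then applies Theorem \ref{mainThm1} and the compatibility of $\mcC$ with quasi-endomorphisms for the cycle statement. The one point you underplay, and which resolves the ``signature analysis'' obstacle you flag, is that the structure morphism $S\to\Spf\mcO_{\breve E}$ needed to regard a point of $\mcZ(\mcO_E)$ as a point of $\mcN^{\psi_0}_{E_0/\mbQ_p,(1,n'-1)}$ is itself produced by the $\mcO_E$-action on the line bundle $\Lie(X)_{\psi_0}$ (which simultaneously makes Kottwitz condition (ii) of Definition \ref{defSignature} automatic); this is exactly why the argument is confined to signature $(1,n-1)$.
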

This result follows from the comparison isomorphism Theorem \ref{thmMain1}. Namely, the moduli description of the cycle $\mcZ(\mcO_A)$ corresponds precisely to an RZ-space $\mcN_{A_0/E_0,(1,n'-1)}$ which (the relative variant of) Theorem \ref{thmMain1} identifies with $\mcN_{A_0,(1,n'-1)}$.

In the main text, we also prove variants of Theorem \ref{thmMain3} in the Lie algebra version of the AFL which is more flexible than the group version. In this formalism, there are also situations where Theorem \ref{thmMain2} is applied to cycles in the smaller space $\mcN_{E_0,(1,n-2)}$, see Corollary \ref{corLieFlat}.
It is worth pointing out that all considered cases are corollaries of the Theorems \ref{thmAFLgenmain} and \ref{thmMainEtale}. These theorems are formulated uniformly for the AFL in the group and the Lie algebra version for artinian elements. We introduce this uniform treatment in Section \ref{sect:GeneralAFL}.

\section*{Acknowledgements}

I heartily thank my advisor M. Rapoport for suggesting to think about the AFL, for many helpful discussions and for the many remarks on earlier drafts of this paper. I also thank T. Zink for his notes on $\mcO$-displays and for his remarks on an earlier version of the appendix. Also, I thank D. Kirch, M. Morrow and P. Scholze for various helpful discussions.
I also thank the referee for several comments that helped improve the article.
This paper is the author's PhD thesis at the university of Bonn. I thank the Bonn International Graduate School for its financial support.

\part{Relative Unitary RZ-spaces}
\section{The moduli spaces $\mcN_{E_0/K,(r,s)}$}

In this chapter, we formulate a moduli problem of PEL-type where the PEL-datum is given over a finite extension $K$ of $\mbQ_p$. It generalizes the moduli problem of Vollaard and Wedhorn \cite{VW} in that it is also associated to the basic Frobenius-conjugacy class in a unitary group for an unramified quadratic extension. If $K=\mbQ_p$, then the moduli problem is a special case of the PEL-formalism of Rapoport and Zink \cite{RZ}. 

\subsection{Skew-hermitian $E$-$K$-modules}
\label{subsect:Set-up}
Let $p>2$ be a prime and fix finite extensions $\mbQ_p\subset K\subset E_0\subset E$ where $E/E_0$ is unramified quadratic. Let $d:=[E_0:K]$ with $d=ef$ where $e$ denotes the ramification index and $f$ the inertia degree. We denote the Galois conjugation of $E/E_0$ by $σ$ and the rings of integers by $\mcO_K\subset \mcO_{E_0}\subset \mcO_E$. We also fix a uniformizer $π_K\in \mcO_K$.

\begin{defn}\label{defshEMod}
A \emph{skew-hermitian $E$-$K$-module} $(V,\langle\ ,\ \rangle)$ is an $E$-vector space together with a perfect alternating $K$-bilinear pairing $\langle\ ,\ \rangle:V\times V→K$ such that $\langle a\ ,\ \rangle = \langle\ ,a^σ\ \rangle$ for all $a\in E$.
An isomorphism of skew-hermitian $E$-$K$-modules $(V,\langle\ ,\ \rangle)$ and $(V',\langle\ ,\ \rangle)$ is an $E$-linear isometry $V\iso V'$. We denote by $U(V)$ the group of automorphisms of $(V,\langle\ ,\ \rangle)$.
\end{defn}

For every $n$, there exist two isomorphism classes of skew-hermitian $E$-$K$-modules $(V,\langle\ ,\ \rangle)$ of dimension $n$. We say that $V$ is \emph{even} if there exists a self-dual $\mcO_E$-lattice in $V$. Otherwise we call $V$ \emph{odd}. This distinguishes the two isomorphism classes. Note that $(V,\langle\ ,\ \rangle)$ is even (resp.\! odd) if and only if the index $[M^\vee:M]$ is even (resp.\! odd) for every $\mcO_K$-lattice $M\subset V$. Here, $M^\vee := \{v\in V\mid \langle M,v\rangle \in \mcO_K\}$ is the lattice dual to $M$.

The category of skew-hermitian $E$-$K$-modules is endowed with the \emph{adjoint involution} $*$. If $V_1$ and $V_2$ are two such modules, then this is the isomorphism
$$\begin{aligned}
*:\Hom_E(V_1,V_2)&\overset{\iso}{→} \Hom_E(V_2,V_1)\\
f &\longmapsto f^* : V_2 \iso V_2^\vee \overset{f^\vee}{→} V_1^\vee \iso V_1\end{aligned}$$
where the identifications $V_1 \iso V_1^\vee$ and $V_2 \iso V_2^\vee$ are induced by the alternating pairings.

\subsection{Hermitian $\mcO_E$-$\mcO_K$-modules}
As usual, $\breve K$ denotes the completion of a maximal unramified extension of $K$. We denote by $E^u\subset E$ the maximal subfield which is unramified over $K$ and define $Ψ:=\Hom_{K}(E^u,\breve K)$. We choose a decomposition $Ψ=Ψ_0\sqcup Ψ_1$ such that $σ(Ψ_0)=Ψ_1$ and we fix an element $ψ_0\in Ψ_0$. Finally, we define $\breve E:= E\tensor_{E^u,ψ_0}\breve K$ which is the completion of a maximal unramified extension of $E$.

We denote the ring of integers in $\breve K$ (resp.\! in $\breve E$) by $\mcO_{\breve K}$ (resp.\! $\mcO_{\breve E}$). Let $\mbF$ be their residue field and let $x\mapsto {}^Fx$ denote the Frobenius on $\breve K$. There is a natural identification
\begin{equation}\label{eqGrading}
\mcO_E\tensor_{\mcO_K} \mcO_{\breve K} = \prod_{ψ\in Ψ} \mcO_{\breve E}
\end{equation}
such that the Frobenius $1\tensor {}^F$ is homogeneous and acts simply transitive on the indexing set.

In the following, the notions of height, slope, Dieudonné crystal, polarization (with respect to the fixed uniformizer $π_K$) etc. are always used in the relative sense for strict formal $\mcO_K$-modules. We recall some of these definitions in Section \ref{sect:strmod}.
\begin{defn}\label{defHermmodule}
Let $S$ be a scheme over $\Spf \mcO_{\breve K}$. A \emph{hermitian $\mcO_E$-$\mcO_K$-module over $S$} is a triple $(X,ι,λ)$ where $X/S$ is a supersingular\footnote{Meaning that the slopes of $X$ are $1/2$.} strict formal $\mcO_K$-module together with an action $ι:\mcO_E→\End(X)$ and a principal polarization $λ:X\overset{\sim}{→} X^\vee$ such that
$$λ^{-1}ι(a)^\vee λ = ι(a^σ).$$
An \emph{isomorphism} (resp.\! \emph{quasi-isogeny}) of two hermitian $\mcO_E$-$\mcO_K$-modules $(X,ι,λ)$ and $(X',ι',λ')$ is an $\mcO_E$-linear isomorphism (resp.\! quasi-isogeny) $µ:X→X'$ of the underlying strict formal $\mcO_K$-modules such that $µ^*λ' = λ$.
The hermitian $\mcO_E$-$\mcO_K$-module $(X,ι,λ)$ is \emph{of rank $n$} if the height of $X$ is $2nd$. This implies $\dim X = nd$.
By \emph{hermitian $\mcO_E$-module}, we mean a hermitian $\mcO_E$-$\mcO_{E_0}$-module.
\end{defn}

\begin{defn}\label{defRosati}
The category of hermitian $\mcO_E$-$\mcO_K$-modules over a scheme $S$ is endowed with the \emph{Rosati involution} $*$. If $(X_1,ι_1,λ_1)$ and $(X_2,ι_2,λ_2)$ are two such modules, then this is defined as the isomorphism
$$\begin{aligned}
*:\Hom_{\mcO_E}(X_1,X_2) &\overset{\iso}{→}\Hom_{\mcO_E}(X_2,X_1)\\
f & \longmapsto f^* := λ_1^{-1}\circ f^\vee \circ λ_2.\end{aligned}$$
\end{defn}

\begin{defn}
A \emph{skew-hermitian $E$-$K$-isocrystal} is a tuple $(N,\langle\ ,\ \rangle,F,ι)$ where $N$ is a finite $\breve K$-vector space, $\langle\ ,\ \rangle:N\times N→\breve K$ is an alternating perfect pairing, $F:N→N$ is an ${}^F$-linear isomorphism with all slopes $1/2$ such that $\langle F\ ,F\ \rangle = π_K{}^F\langle\ ,\ \rangle$ and $ι:E→\End(N,F)$ is an action of $E$ such that $\langle a\ ,\ \rangle = \langle \ ,a^σ\ \rangle$ for all $a\in E$. For two skew-hermitian $E$-isocrystals $N_1,N_2$, the adjoint involution $*:\Hom_E(N_1,N_2)\iso \Hom_E(N_2,N_1)$ is defined as in the case of skew-hermitian $E$-$K$-modules.
\end{defn}
By Dieudonné theory, the category of hermitian $\mcO_E$-$\mcO_K$-modules up to quasi-isogeny over $\Spec \mbF$ is equivalent to the category of skew-hermitian $E$-$K$-isocrystals.

\begin{prop}\label{propIsocrys}
There is an equivalence of categories
$$
\begin{aligned}
&\{\text{skew-hermitian }E\text{-}K\text{-modules }(V,\langle\ ,\ \rangle)\}\\
\iso\ &\{\text{skew-hermitian }E\text{-}K\text{-isocrystals }(N,\langle\ ,\ \rangle,F,ι)\}.
\end{aligned}
$$
that is compatible with the adjoint involutions on both sides. In particular for a given rank $n$, there are precisely two hermitian $\mcO_E$-$\mcO_K$-modules over $\mbF$ up to quasi-isogeny.
\end{prop}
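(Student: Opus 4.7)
My plan is to construct a functor from skew-hermitian $E$-modules to skew-hermitian $E$-isocrystals, show it is an equivalence compatible with the adjoint involutions, and then deduce the counting statement from the classification of skew-hermitian $E$-modules recalled after Definition \ref{defshEMod}.

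\textbf{Forward functor.} Given $(V,\langle\,,\,\rangle)$, I set $N := V\otimes_{\mbQ_p}\breve{\mbQ}_p$, with pairing extended $\breve{\mbQ}_p$-bilinearly and with $E$-action via the first factor. What remains is to produce an $F$ with all slopes $1/2$, commuting with $ι$ and satisfying $\langle F\,,F\,\rangle = p\cdot{}^F\langle\,,\,\rangle$. Using the decomposition \eqref{eqGrading}, write $N = \bigoplus_{ψ\in Ψ} N_ψ$; any $F$ commuting with the $E$-action is a family of $\breve{\mbQ}_p$-semilinear isomorphisms $F_ψ:N_ψ\xrightarrow{\sim} N_{{}^Fψ}$, and the pairing and slope conditions cut out a set of valid choices. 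To see this set is nonempty I would pick a basis of $V$ putting $\langle\,,\,\rangle$ in standard Gram-matrix form (splitting $V$ into hyperbolic planes, plus possibly one odd summand), and write $F$ explicitly as a block operator with entries in $\{1,p\}$ permuting the factors of \eqref{eqGrading}—this is modelled on the Dieudonné module of a product of supersingular elliptic curves, twisted by $\mcO_E$-action.

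\textbf{Quasi-inverse.} For the reverse direction I would exploit that isoclinic isocrystals of slope $1/2$ form a Tannakian category Morita-equivalent to $\mbQ_p$-vector spaces, with Morita bi-object the isocrystal $N_0$ of a supersingular elliptic curve. Concretely, from $(N,\langle\,,\,\rangle,F,ι)$ one recovers
$$V := \Hom_{F\text{-iso}}(N_0,\,N),$$
which is a $\mbQ_p$-vector space inheriting an $E$-action from $ι$ and, after fixing a self-duality datum on $N_0$, a $\mbQ_p$-valued perfect alternating pairing from $\langle\,,\,\rangle$ satisfying the skew-hermitian condition. Checking that this is inverse to the forward functor is routine inspection of the two constructions on bases. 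The compatibility with the adjoint involutions is tautological: on both sides $f^*$ is defined by $\langle f(x),y\rangle = \langle x,f^*(y)\rangle$, and the pairings correspond under the equivalence by construction.

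\textbf{Main obstacle and last clause.} The main difficulty is the existence and uniqueness statements in the forward direction, which amount to the assertion that there is a unique $σ$-conjugacy class in $U(V)(\breve{\mbQ}_p)$ giving rise to an isoclinic slope-$1/2$ isocrystal compatibly with the pairing. I would handle this either by hand using the explicit model of $F$ above, or by invoking Kottwitz's classification of $B(U(V),µ)$: since $U(V)$ is quasi-split and the relevant Hodge cocharacter is the central minuscule one landing at slope $1/2$, the basic locus contains a unique class. For the \emph{in particular} statement, contravariant Dieudonné theory identifies hermitian $\mcO_E$-$\mbZ_p$-modules over $\mbF$ up to quasi-isogeny with skew-hermitian $E$-isocrystals, and by the equivalence just established with skew-hermitian $E$-modules of the same rank $n$; by the classification recalled after Definition \ref{defshEMod} there are exactly two such classes, namely the even and the odd one.
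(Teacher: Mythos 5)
Your forward functor is essentially the one in the paper, but be careful to make it basis-free: no choice of hyperbolic basis of $V$ is needed or wanted. Setting $α := \mr{id}_V\tensor {}^F$ on $N = V\tensor_{\mbQ_p}\breve{\mbQ}_p$ and defining $F_ψ := pα_ψ$ for $ψ\in Ψ_0$ and $F_ψ := α_ψ$ for $ψ\in Ψ_1$ already gives a canonical, visibly functorial $F$; one then checks $F^{2f} = p^fα^{2f}$ (so all slopes are $1/2$) and $\langle Fx,Fy\rangle = p\,{}^F\langle x,y\rangle$ using that exactly one of $ψ,σψ$ lies in $Ψ_0$. A basis-dependent choice of $F$ would not assemble into a functor.

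The genuine gap is in your quasi-inverse. Isoclinic isocrystals of slope $1/2$ are Morita-equivalent to modules over the quaternion division algebra $D = \End(N_0)$, not to $\mbQ_p$-vector spaces (and they do not form a Tannakian category: the tensor square has slope $1$). Accordingly $\Hom_{F\text{-iso}}(N_0,N)$ is a $D$-module of $\mbQ_p$-dimension $2\,\mr{rk}_{\breve{\mbQ}_p}(N)$ --- twice what you need --- and to cut it down you would still have to use the $E$-action to split $D\tensor_{\mbQ_p}E$ and apply a second Morita equivalence; moreover the result would not visibly invert your forward functor, whose $F$ depends on the chosen decomposition $Ψ = Ψ_0\sqcup Ψ_1$. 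The correct inverse is elementary and exactly undoes the twist: set $α_ψ := p^{-1}F_ψ$ for $ψ\in Ψ_0$ and $α_ψ := F_ψ$ for $ψ\in Ψ_1$. Then $α$ is isoclinic of slope $0$, so $V := N^{α=1}$ satisfies $V\tensor_{\mbQ_p}\breve{\mbQ}_p = N$, and $\langle α\ ,α\ \rangle = p^{-1}\langle F\ ,F\ \rangle = {}^F\langle\ ,\ \rangle$ shows that the restricted pairing takes values in the ${}^F$-invariants of $\breve{\mbQ}_p$, which are $\mbQ_p$. This also makes the two constructions inverse to each other by inspection. Finally, note that appealing to Kottwitz's classification would at best handle isomorphism classes; the proposition asserts an equivalence of categories compatible with the adjoint involutions, which is what is used later to identify endomorphism algebras and unitary groups, so the explicit functorial construction cannot be bypassed.
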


\begin{defn}
A skew-hermitian $E$-$K$-isocrystal is called \emph{even} (resp.\! \emph{odd}) if it corresponds to an even (resp.\! odd) skew-hermitian $E$-$K$-module under the above equivalence of categories.
\end{defn}

\begin{proof}
Given a skew-hermitian $E$-$K$-module $(V,\langle\ ,\ \rangle)$, the associated skew-hermitian $E$-$K$-isocrystal is defined as follows. Let $N:=V\tensor_K\breve K$ be the scalar extension and extend both the pairing $\langle\ ,\ \rangle$ and the $E$-action in the $\breve K$-(bi)linear way to $N$. Note that $N$ is a module over $E\tensor_K \breve K$ and hence graded according to \eqref{eqGrading},
$$N = \prod_{ψ\in Ψ} N_ψ.$$
The pairing satisfies
\begin{equation}
\langle\ ,\ \rangle\vert_{N_ψ\times N_{ψ'}} \equiv 0\ \ \ \tif\ ψ'\neq ψσ \tag{$*$}
\end{equation}
and the ${}^F$-linear operator $α:=\mr{id}_V\tensor {}^F$ is homogeneous in the sense that $α(N_ψ)=N_{{}^Fψ}$. Let $α_ψ:N_ψ→N_{{}^Fψ}$ denote the $ψ$-component of $α$ and
define a supersingular Frobenius $F:=\prod F_ψ$ on $N$ as
\begin{equation}\label{eqFrob}
[F_ψ:N_ψ→N_{{}^Fψ}]:=\begin{cases} π_Kα_ψ & \tif ψ\in Ψ_0\\
                                  α_ψ & \tif ψ\in Ψ_1.\end{cases}
\end{equation}
Then $(N,F)$ is supersingular since $F^{2f} = π_K^fα^{2f}$. In particular, there exists a $\mcO_{\breve K}$-lattice $M\subset N$ such that $F^{2f}M = π_K^fM$. 
Furthermore, the previously defined pairing $\langle\ ,\ \rangle$ is a polarization of $(N,F)$ since, for $(x,y)\in N_ψ\times N_{ψσ}$,
$$\langle Fx,Fy \rangle = π_K\langle αx,αy\rangle = π_K{}^F(\langle x,y\rangle).$$
(For the first equality, we used that precisely one out of $\{ψ,ψσ\}$ lies in $Ψ_0$.)

The $E$-action on $N$ is compatible with $\langle\ ,\ \rangle$ and commutes with $F$, so $(N,\langle\ ,\ \rangle, ι, λ)$ defines a skew-hermitian $E$-$K$-isocrystal. Since the scalar extension from $K$ to $\breve K$ is functorial and commutes with taking duals, this defines a functor as asserted in the proposition.
To prove that it is an equivalence, we give the inverse construction.

For a skew-hermitian $E$-$K$-isocrystal $(N,\langle\ ,\ \rangle,F,ι)$, define the ${}^F$-linear operator $α=\prod α_ψ$ as
\begin{equation}\label{eqalpha}
[α_ψ:N_ψ→N_{{}^Fψ}]:=\begin{cases} π_K^{-1}F_ψ & \tif ψ\in Ψ_0\\
                                  F_ψ & \tif ψ\in Ψ_1.\end{cases}
\end{equation}
Set $V:=N^{α=1}$ and restrict the form $\langle\ ,\ \rangle$ to $V$. Note that $α$ is isoclinic of slope $0$ since $N$ is supersingular. Relation $(*)$ holds for any skew-hermitian $E$-$K$-isocrystal and thus
$$\langle α\ ,α\ \rangle = π_K^{-1}\langle F\ ,F\ \rangle = {}^F\langle\ ,\ \rangle.$$
So the form $\langle\ ,\ \rangle\vert_{V}$ takes values in $K =\breve K^{{}^F=1}$. Finally, the $E$-action commutes with $α$ and hence $E$ acts on $V$. Then $(V,\langle\ ,\ \rangle)$ defines a skew-hermitian $E$-$K$-module with $V\tensor_K \breve K \iso N$.
\end{proof}

\subsection{Moduli of hermitian $\mcO_E$-$\mcO_K$-modules}
Let $r,s\in \mbZ_{\geq 0}$ and set $n:=r+s$.

\begin{defn}
For $a\in E$, we define the following polynomials.
$$
\begin{aligned}
P_{(0,1)}(a;t) &:=\prod_{ψ\in Ψ_1} ψ(\mr{charpol}_{E/E^u}(a;t))\ \ &\in \breve K[t].\\
P_{(1,0)}(a;t) &:=P_{(0,1)}(a;t)(t-a)(t-a^σ)^{-1}\ \ &\in \breve E[t].\\
P_{(r,s)}(a;t) &:= P_{(1,0)}(a;t)^rP_{(0,1)}(a;t)^s\ \ &\in \breve E[t].\\
\end{aligned}$$
\end{defn}

If $X$ is a hermitian $\mcO_E$-$\mcO_K$-module over a $\Spf \mcO_{\breve K}$-scheme $S$, then its Lie algebra is $Ψ$-graded,
\begin{equation}\label{eqGradedLie}
\Lie(X) = \bigoplus_{ψ\in Ψ} \Lie_ψ(X),
\end{equation}
where $\Lie_ψ(X)$ is the direct summand on which $\mcO_{E^u}$ acts via the embedding $ψ:\mcO_{E^u}→\mcO_{\breve K}$. By definition
$$\mcO_{\breve E} = \mcO_E\tensor_{\mcO_{E^u},ψ_0} \mcO_{\breve K}$$
and we consider any $\Spf \mcO_{\breve E}$-scheme as an $\mcO_E$-scheme via the first and as a $\mcO_{\breve K}$-scheme via the second projection.

\begin{defn}\label{defSignature}
Let $S$ be a scheme over $\Spf \mcO_{\breve E}$. A hermitian $\mcO_E$-$\mcO_K$-module $(X,ι,λ)$ of rank $n$ over $S$ is \emph{of signature $(r,s)$} if the following two conditions hold.\vspace{-2mm}
\begin{itemize}
\item[(i)] $\mr{charpol}(ι(a)\mid \Lie(X);t) = P_{(r,s)}(a;t)\ \ \ \forall\ a\in \mcO_E.$
\item[(ii)] $(ι(a) - a)) \mid_{\Lie_{ψ_0}(X)} = 0\ \ \ \forall\ a \in \mcO_E.$
\end{itemize}
Here in (i), we view $P_{(r,s)}(a;t)$ as element of $\mcO_S[t]$ via the structure morphism. Condition (ii) means that $\mcO_E$ acts on $\Lie_{ψ_0}(X)$ via the structure morphism.
\end{defn}
\begin{rmk}
In the case $\mbQ_p = K = E_0$, our definition of signature agrees with the one from Vollaard-Wedhorn \cite{VW}. Moreover in the case of an unramified extension $E_0/K$, condition (ii) is automatically satisfied.
\end{rmk}
\begin{lem}\label{lemSign}
A hermitian $\mcO_E$-$\mcO_K$-module $(X,ι,λ)/S$ is of signature $(r,s)$ if and only if it satisfies (ii) from Definition \ref{defSignature} and the following rank condition. \vspace{-2mm}
\begin{itemize}
\item[(i')] The ranks of the summands in Equation \eqref{eqGradedLie} are as follows:
$$
\mr{rk}_{\mcO_S} \Lie_{ψ}(X) = \begin{cases} 0 & \tif ψ\in Ψ_0\setminus\{ψ_0\}\\
                                           r & \tif ψ = ψ_0\\
                                           ne & \tif ψ\in Ψ_1\setminus\{ψ_0σ\}\\
                                           ne - r & \tif ψ=ψ_0σ.
                                           \end{cases}
$$
\end{itemize}
\end{lem}
To prove the lemma, we first introduce the so-called local model. Let $D$ be the $\mcO_{\breve E}$-module $D:=(\mcO_E\tensor_{\mcO_K}\mcO_{\breve E})^n$ and let $\langle\ ,\ \rangle:D\times D→\mcO_{\breve E}$ be a perfect alternating $\mcO_{\breve E}$-bilinear pairing such that $\langle a\tensor 1\ ,\ \rangle = \langle\ ,a^σ\tensor 1\ \rangle$ for all $a\in \mcO_E$. The pair $(D,\langle\ ,\ \rangle)$ is unique up to isomorphism since the form is automatically split in the sense that $D$ has a grading $D=\prod_{ψ\in Ψ} D_ψ$ such that $\langle D_ψ,D_{ψ'}\rangle = 0$ if $ψ'\neq ψσ$. Let $Gr_{dn}(D)→\Spec \mcO_{\breve E}$ be the Grassmannian of $dn$-dimensional subspaces of $D$, viewed as $\mcO_{\breve E}$-module. Denote by $G\subset Gr_{dn}(D)$ the closed subscheme of $\mcO_E$-stable isotropic subspaces,
$$G(S):=\left\{\mcF \in Gr_{dn}(D)(S)\ \mid\ \mcF^\perp = \mcF,\ \mcO_E\cdot \mcF=\mcF \right\}.$$
\begin{defn}
The \emph{local model} is the closed subfunctor $M_{E_0/K,(r,s)}^\mr{loc}\subset G$ of those $\mcF$ such that the $\mcO_E$-action on the quotient $Q(\mcF) := D\tensor_{\mcO_{\breve E}}\mcO_S/\mcF$ satisfies the conditions (i) and (ii) (where $\Lie(X)$ is replaced by $Q(\mcF)$).
\end{defn}
\begin{prop}\label{propCompLMs}
The local model $M_{E_0/K,(r,s)}^\mr{loc}$ is smooth over $\Spec \mcO_{\breve E}$ of relative dimension $rs$. It can be equivalently described as the subfunctor $M'\subset G$ of those $\mcF$ such that the $\mcO_E$-action on $Q(\mcF)$ satisfies the conditions (i') and (ii) (again with $\Lie(X)$ replaced by $Q(\mcF)$).
\end{prop}
\begin{proof}
We claim that $M'\subset G$ is a closed subscheme, isomorphic to
$$Gr_r(D_{ψ_0}\tensor_{\mcO_E\tensor_{\mcO_{E^u},ψ_0}\mcO_{\breve E}} \mcO_{\breve E})→\Spec \mcO_{\breve E}.$$
Indeed, a subbundle $\mcF = \prod_{ψ\in Ψ}\mcF_ψ\in G(S)$ lies in $M'(S)$ if and only if
$$\begin{cases} \begin{aligned}& \mcF_ψ = D_ψ\tensor_{\mcO_{\breve E}}\mcO_S & \text{ if }&ψ\in Ψ_0\setminus\{ψ_0\}\\
				& \mcF_ψ = 0 & \text{ if }&ψ\in Ψ_1\setminus\{ψ_0σ\}\end{aligned}\\
				\mcF_{ψ_0} \text{ rank }r\text{ quotient of } D_{ψ_0}\tensor_{\mcO_E\tensor_{\mcO_{E^u},ψ_0}\mcO_S}\mcO_S\\
				\mcF_{ψ_0σ} = \mcF_{ψ_0}^\perp. \end{cases}$$
In particular, $M'$ is integral and smooth over $\mcO_{\breve E}$. Evaluating (i) on elements of $\mcO_{E^u}$ shows that $M_{E_0/K,(r,s)}^\mr{loc}\subset M'$. Moreover, their generic fibers are equal which implies equality.
\end{proof}
\emph{Proof of Lemma \ref{lemSign}.}
Let $\mbD_X$ be the covariant Dieudonné crystal of $X$ on the $\mcO_K$-crystalline site of $S$ (see \cite[Chapter 3]{ACZ}). It comes with an action of $\mcO_E$ and a skew-hermitian perfect pairing $\mbD_X\times \mbD_X→\mcO_{S_{crys}}$.
\begin{lem}\label{lemLocFree}
For any $\mcO_K$-pd-thickening $S\hookrightarrow S'$, $\mbD_X(S')$ is locally on $S'$ isomorphic to $D\tensor_{\mcO_{\breve E}} \mcO_{S'}$. 
\end{lem}
\begin{proof}
The crucial point is to show that $\mbD_X(S')$ is locally free over $\mcO_E\tensor_{\mcO_K}\mcO_{S'}$. If this is done, then one can construct such an isomorphism e.g. by choosing $\mcO_E\tensor_{\mcO_{E^u},ψ}\mcO_{S'}$-bases of the $\mbD_X(S')_ψ$ for $ψ\in Ψ_0$ and taking the dual bases of the $\mbD_X(S')_ψ$ for $ψ\in Ψ_1$, at least locally on $S'$.

The sheaf $\mbD_X(S')$ is locally free of rank $2nd$ as $\mcO_{S'}$-module. For $s\in S$ with residue field $κ(s)$, there is a canonical identification
$$\mbD_X(S')\tensor_{\mcO_{S'}}κ(s)\iso \mbD_{X\times_{S}\Spec κ(s)}(\Spec κ(s)).$$
It follows from considering the Dieudonné module for the base change to the perfection $X\times_S\Spec κ(s)^{\mr{perf}}$ that this fiber is free of rank $n$ over $\mcO_E\tensor_{\mcO_K}κ(s)$. Lifting generators in a neighborhood $U'\subset S'$ of $s$ yields a map
$$(\mcO_E\tensor_{\mcO_K}\mcO_{U'})^n→\mbD_X(U')$$
which is surjective in the fiber over $s$. Since both modules are also locally free of the same rank over $\mcO_{U'}$, the map is an isomorphism in a neighborhood of $s$.
\end{proof}
We now consider the Hodge filtration $\mcF_X\subset \mbD_X(S)$. Working locally on $S$, we choose an isomorphism as in Lemma \ref{lemLocFree}. Then $\mcF_X$ defines an element in $G(S)$. Applying Proposition \ref{propCompLMs} finishes the proof.\qed

\begin{lem}\label{lemExistframing}
Consider the quasi-isogeny class of a hermitian $\mcO_E$-$\mcO_K$-module over $\mbF$ corresponding to a skew-hermitian $E$-$K$-module $(V,\langle\ ,\ \rangle)$ of rank $n$.\\
There exists a formal hermitian $\mcO_E$-$\mcO_K$-module $(X,ι,λ)$ of signature $(r,s)$ in this class if and only if the parity of $V$ coincides with the parity of $r$.
\end{lem}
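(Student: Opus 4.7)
The plan is to pass to Dieudonné theory and reduce to a lattice problem, then extract a parity invariant which matches $r$ modulo $2$.

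By classical Dieudonné theory over $\mbF$, together with Proposition \ref{propIsocrys}, a hermitian $\mcO_E$-$\mbZ_p$-module $(X,ι,λ)$ of signature $(r,s)$ in the given quasi-isogeny class corresponds to an $\mcO_E\otimes_{\mbZ_p}\breve{\mbZ}_p$-stable, self-dual $\breve{\mbZ}_p$-lattice $M\subset N$ with $pM\subseteq FM\subseteq M$ such that the pieces $M_ψ/FM_{F^{-1}ψ}$ have the $\mbF$-dimensions prescribed in condition (i'). Using the slope-$0$ operator $α$ from the proof of Proposition \ref{propIsocrys}, the combined Dieudonné and signature data become $α_ψM_{F^{-1}ψ}=p^{-1}M_ψ$ for $ψ\in Ψ_0\setminus\{ψ_0\}$, $α_ψM_{F^{-1}ψ}=pM_ψ$ for $ψ\in Ψ_1\setminus\{σψ_0\}$, together with two ``defect'' constraints of $\mbF$-codimension $r$ at $ψ_0$ and $ne-r$ at $σψ_0$.

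I fix an auxiliary $\mcO_E$-lattice $L_0\subset V$ and form the $α$-stable lattice $L:=L_0\otimes_{\mbZ_p}\breve{\mbZ}_p\subset N$. The generic relations above allow one to propagate an initial choice of $M_{ψ_0}$ around the ${}^F$-orbit: comparing with $L$, this yields integer shifts $c_ψ$ with $M_ψ=p^{c_ψ}L_ψ$ at every non-exceptional embedding $ψ$, while at $ψ_0$ and $σψ_0$ the lattice $M_ψ$ lies in a more general relative position to $L_ψ$ controlled by the defect. Self-duality $M_ψ^\vee=M_{σψ}$ combined with the duality $L_ψ^\vee=p^{d_ψ}L_{σψ}$ for some $d_ψ\in\mbZ$ forces $c_ψ+c_{σψ}=d_ψ$, and summing over $(ψ,σψ)$-pairs recovers $\mr{length}_{\mbZ_p}[L_0^\vee:L_0]$. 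Now requiring the propagation to close up consistently around the full ${}^F$-cycle, and comparing the two exceptional defects $r$ and $ne-r$ through the duality between $ψ_0$ and $σψ_0$, gives the congruence
$$\mr{length}_{\mbZ_p}[L_0^\vee:L_0]\equiv r\pmod 2.$$
Since the parity of $V$ is by definition the parity of $[L_0^\vee:L_0]$ for any $\mbZ_p$-lattice $L_0\subset V$, this yields the necessary direction. For sufficiency, if the parity of $V$ matches that of $r$, I pick any $\mcO_E$-lattice $L_0\subset V$ (its parity is automatically $r\bmod 2$), apply the reverse procedure to define $M$ by inserting a sublattice of $\breve{\mbZ}_p$-colength $r$ at $ψ_0$ (and the dual choice at $σψ_0$) and shifting the other components by the prescribed $p^{c_ψ}$; the matching parity ensures the construction closes up, producing the required $M$.

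The main obstacle is the combinatorial parity bookkeeping: tracking how the two exceptional defects $r$ at $ψ_0$ and $ne-r$ at $σψ_0$, together with the forced integer shifts at the $2(f-1)$ non-exceptional embeddings, combine under self-duality into a single parity invariant. The essential simplification is that the non-exceptional shifts contribute in $(ψ,σψ)$-pairs whose total is even, so the entire parity obstruction is concentrated at the exceptional pair, where the imbalance between $r$ and $ne-r$ produces a net residue $r\bmod 2$; this is what must be handled carefully when both the ramification index $e$ and the inertia degree $f$ are nontrivial.
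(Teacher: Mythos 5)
Your overall strategy (pass to Dieudonné theory, use the slope-zero operator $\alpha$, and extract a parity by comparing the graded pieces of $M$ with those of an auxiliary lattice) is the right one, but two of your intermediate claims are false, and the second error destroys exactly the parity you are trying to detect. First, the assertions $M_\psi=p^{c_\psi}L_\psi$ at non-exceptional $\psi$ and $L_\psi^\vee=p^{d_\psi}L_{\sigma\psi}$ cannot hold in general: $M_{\psi_0}$ sits in an arbitrary relative position to $L_{\psi_0}$, and the relations $\alpha M_\psi=p^{\pm1}M_{{}^F\psi}$ only transport that same arbitrary relative position around the Frobenius cycle; likewise $L_0^\vee$ is typically not a $p$-power multiple of $L_0$ (e.g.\ $L_0^\vee=\pi_E^{-1}L_0$ for the odd rank-one module). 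This part is repairable by working throughout with relative indices $c_\psi:=[M_\psi:L_\psi]$ and $d_\psi:=[L_\psi^\vee:L_{\sigma\psi}]$.

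The fatal point is the claim that the non-exceptional $(\psi,\sigma\psi)$-pairs contribute an even total, so that the parity concentrates at the exceptional pair. Setting $\delta_\psi:=[\alpha M_\psi:M_{{}^F\psi}]$, the signature condition gives $\delta_{\psi_0}=r$, $\delta_{\sigma\psi_0}=-r$ and $\delta_\psi=0$ otherwise, and one has $c_{{}^F\psi}=c_\psi-\delta_\psi$. Since $\sigma\psi={}^{F^f}\psi$ and any interval of $f$ consecutive elements in the $2f$-cycle $\Psi$ contains exactly one of $\psi_0,\sigma\psi_0$, one gets $c_\psi+c_{\sigma\psi}=2c_\psi\mp r\equiv r\pmod 2$ for \emph{every} pair: each of the $f$ pairs picks up the defect once. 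Hence $\sum_{\psi\in\Psi_0}d_\psi\equiv fr$ and $\mathrm{length}_{\mbZ_p}(L_0^\vee/L_0)=\sum_{\psi\in\Psi}d_\psi\equiv 2fr\equiv 0\pmod 2$. So the $\mbZ_p$-index is always even (as it must be: a perfect alternating $\mbQ_p$-form always admits a self-dual $\mbZ_p$-lattice) and carries no information, and even the half-sum loses the parity of $r$ whenever $f$ is even; your final congruence $\mathrm{length}_{\mbZ_p}[L_0^\vee:L_0]\equiv r\pmod 2$ is false for odd $r$, and the characterization of the parity of $V$ you invoke must be taken with the $\mcO_E$-index of an $\mcO_E$-lattice, not the $\mbZ_p$-index of a $\mbZ_p$-lattice. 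The parity has to be read off at a single embedding, which is what the paper does: it shows $[\alpha^fM_{\psi_0}:M_{\psi_0}^\vee]=r$ exactly, notes that this index mod $2$ is a quasi-isogeny invariant, and compares with the value $0$ computed from a self-dual $\mcO_E$-lattice via Lemma \ref{lemExistCan}. Your existence direction is also only a sketch; the paper handles it by reducing to the signatures $(0,1)$ and $(1,0)$ and giving explicit lattice constructions there.
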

We prepare the proof with the following lemma, where the operator $α$ is as in Equation \eqref{eqalpha}.
\begin{lem}\label{lemExistCan}
Let $V$ be a skew-hermitian $E$-$K$-module and let $N:=V\tensor_{K}\breve K$ be the induced skew-hermitian $E$-$K$-isocrystal. Then there is a bijection
$$
\big\{\text{self-dual }\mcO_E\text{-lattices }Λ\subset V\big\} \iso
\left\{\begin{gathered}\text{self-dual }\mcO_E\text{-stable Dieudonné-lattices }\\
M\text{ in } N\text{ of signature }(0,n)\end{gathered}\right\}.$$
given by $Λ\mapsto Λ\tensor_{\mcO_K}\mcO_{\breve K}$ and $M\mapsto M^{α=\mr{id}}$.
\end{lem}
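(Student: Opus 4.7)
The plan is to exhibit the stated maps as mutually inverse and to check that they respect each of the three conditions (self-duality, $\mcO_E$-stability, signature $(0,n)$). The backbone is a Dieudonné-type descent argument based on the operator $α$ from equation \eqref{eqalpha}.

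First, I would set up the descent. In the proof of Proposition \ref{propIsocrys} it is shown that $α$ is ${}^F$-semilinear and isoclinic of slope zero (the twist by $p^{-1}$ on $\Psi_0$-components exactly offsets the slope $1/2$ of $F$), with $V = N^{α=\id}$ recovering the underlying skew-hermitian $E$-module. Standard semi-linear descent then gives a bijection
$$\bigl\{\breve{\mbZ}_p\text{-lattices } M\subset N \mid αM = M\bigr\} \iso \bigl\{\mbZ_p\text{-lattices } Λ\subset V\bigr\},$$
with mutually inverse maps $M\mapsto M^{α=\id}$ and $Λ\mapsto Λ\tensor_{\mbZ_p}\breve{\mbZ}_p$.

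Next I would check that the extra structures correspond. Since $ι:E\to\End(N,F)$ commutes with $α$, $\mcO_E$-stability of $M$ is equivalent to $\mcO_E$-stability of $Λ$. For self-duality, the identity $\langle α x,αy\rangle = {}^F\langle x,y\rangle$ established in the proof of Proposition \ref{propIsocrys} shows that the $\breve{\mbZ}_p$-bilinear extension of $\langle\ ,\ \rangle$ to $N$ restricts to $\langle\ ,\ \rangle$ on $V$, and implies $M^\vee = M$ iff $Λ^\vee = Λ$ by faithfully flat descent.

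The main step is then to show that, for a self-dual $\mcO_E$-stable $\breve{\mbZ}_p$-lattice $M\subset N$, the condition $αM=M$ is equivalent to $M$ being a Dieudonné lattice (i.e.\ $pM\subset FM\subset M$) of signature $(0,n)$. Assume $αM=M$. Unpacking the definition \eqref{eqalpha}, one reads
$$F_ψ(M_ψ) = \begin{cases} pM_{{}^Fψ} & \text{if } ψ\in Ψ_0,\\ M_{{}^Fψ} & \text{if } ψ\in Ψ_1,\end{cases}$$
which both verifies $pM\subset FM\subset M$ and, via the graded decomposition \eqref{eqGradedLie}, computes $\Lie_ψ(X) = M_ψ/F(M_{{}^{F^{-1}}ψ})$ to be zero on $Ψ_0$ and of full $\mcO_{\breve E}$-rank $n$ on $Ψ_1$; this is exactly the rank condition (i') for $(r,s)=(0,n)$. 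Conversely, the signature condition (i') with $(r,s)=(0,n)$ pins down the cokernel of $F$ on each graded piece, and dividing by $p$ on the $Ψ_0$-components recovers $αM=M$.

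The main obstacle is this last equivalence, as it requires careful bookkeeping with the $Ψ_0/Ψ_1$-grading and with the Frobenius shift relating $\Lie_ψ(X)$ to $F(M_{{}^{F^{-1}}ψ})$. No new idea is needed beyond the definition of $F$ in terms of $α$, but the index tracking is where one has to be meticulous.
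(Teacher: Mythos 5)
Your argument is correct and is precisely the expansion of what the paper dismisses as ``essentially trivial'': descent along the slope-zero operator $α$, compatibility of the pairing and the $\mcO_E$-action with $α$, and the equivalence of $α$-stability with the signature-$(0,n)$ Dieudonné condition, all read off from the definition $F_ψ = pα_ψ$ for $ψ\in Ψ_0$ and $F_ψ = α_ψ$ for $ψ\in Ψ_1$. One bookkeeping slip: in the covariant theory used here one has $\Lie_ψ(X) = M_ψ/V(M_{{}^Fψ})$ with $V = pF^{-1}$, not $M_ψ/F(M_{{}^{F^{-1}}ψ})$ --- with your formula the $Ψ_0$-components would not come out zero when ${}^{F^{-1}}ψ\in Ψ_0$ --- but the corrected formula yields exactly the ranks you state, so the argument stands.
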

\begin{proof}
The map $Λ\mapsto M(Λ) := Λ\tensor_{\mcO_K}\mcO_{\breve K}$ is an injective map from self-dual $\mcO_E$-lattices to self-dual $\mcO_E$-stable $\mcO_{\breve K}$-lattices. Each $M(Λ)$ is $F$-stable and of signature $(0,n)$ by definition of $F$, see \eqref{eqFrob}. Conversely, a Dieudonné-lattice $M$ of signature $(0,n)$ is stable under $α$.
\end{proof}
\emph{Proof of Lemma \ref{lemExistframing}.}
We first prove the existence of hermitian $\mcO_E$-$\mcO_K$-modules for all signatures. It is enough to do this in the cases of signature $(0,1)$ and $(1,0)$. Taking direct products then settles the general case. The case of signature $(0,1)$ is taken care of by Lemma \ref{lemExistCan}, so we are left with the case $(1,0)$.

Let $(V,\langle\ ,\ \rangle)$ be any odd skew-hermitian $E$-$K$-module of rank $1$. Let $(N,F,ι,\langle\ ,\ \rangle)$ be the associated isocrystal. Fix a uniformizer $π_E\in E$ and an $\mcO_E$-lattice $Λ\subset V$ such that $Λ^\vee = π_E^{-1}Λ$.
Let $M:=Λ\tensor_{\mcO_K}\mcO_{\breve K}$ be the associated $\mcO_E$-stable $\mcO_{\breve K}$-lattice. It decomposes as
$$M = \bigoplus_{ψ\in Ψ} M_ψ.$$
Define $M':= \bigoplus_{ψ\in Ψ} M'_ψ$ as
$$M'_{ψ} :=\begin{cases} M_ψ & \tif ψ\in \{{}^Fψ_0,{}^{F^2}ψ_0,\ldots,{}^{F^f}ψ_0\}\\
                         π_E^{-1}M_{ψ} & \totherwise\!.\end{cases}$$
This lattice is self-dual, stable under $\mcO_E$, stable under $F$, stable under $π_KF^{-1}$ and of signature $(1,0)$, so existence is proved.

Now let $(X,ι,λ)/\mbF$ of signature $(r,s)$ be given. Let $N=\prod_{ψ\in Ψ} N_ψ$ be the isocrystal of $X$ together with the alternating pairing $\langle\ ,\ \rangle$ induced by $λ$. For any $\mcO_{\breve E}$-lattice $L_{ψ_0}\subset N_{ψ_0}$, we denote by $L_{ψ_0}^\vee\subset N_{ψ_0σ}$ the dual lattice with respect to the form $\langle\ ,\ \rangle$.

Let $X$ correspond to the $\mcO_{\breve E}$-lattice $M\subset N$. It is self-dual and thus $M_{ψ_0}^\vee = M_{ψ_0σ}$. The signature condition for $M$ implies that
$$[α^fM_{ψ_0}:M_{ψ_0}^\vee] = r.$$
It follows that for every $\mcO_{\breve E}$-lattice $L_{ψ_0}\subset N_{ψ_0}$, $[α^fL_{ψ_0}:L_{ψ_0}^\vee] \equiv r$ mod $2$. By Proposition \ref{propIsocrys}, there are precisely two quasi-isogeny classes of formal hermitian $\mcO_E$-$\mcO_K$-modules over $\mbF$. In particular, formal hermitian $\mcO_E$-$\mcO_K$-modules $X$ and $X'$ over $\mbF$ of signatures $(r,s)$ and $(r',s')$ respectively are quasi-isogeneous if and only $r\equiv r'$ mod $2$. By Lemma \ref{lemExistCan}, $r$ and $V$ have the same parity.
\qed

For every signature $(r,s)$, we fix a hermitian $\mcO_E$-$\mcO_K$-module $\mbX_{E_0/K,(r,s)}$ over $\mbF$ of that signature.\footnote{It would be enough to fix any triple $(\mbX,ι,λ)$ quasi-isogeneous to a hermitian $\mcO_E$-$\mcO_K$-module of signature $(r,s)$.} 
\begin{defn}\label{defRZ}
Let $\mcN_{E_0/K,(r,s)}$ denote the following set-valued functor on schemes over $\Spf \mcO_{\breve E}$. It associates to $S$ the set of isomorphism classes of quadruples $(X,ι,λ,ρ)$ where $(X,ι,λ)$ is a hermitian $\mcO_E$-$\mcO_K$-module of signature $(r,s)$ over $S$ and where
$$ρ:X\times_S \overbar{S}→\mbX_{E_0/K,(r,s)}\times_{\Spec \mbF} \overbar{S}$$
is a quasi-isogeny of hermitian $\mcO_E$-$\mcO_K$-modules. Here, $\overbar S$ denotes the special fiber $\overbar S = S \times_{\Spf \mcO_{\breve E}} \Spec \mbF$. The quasi-isogeny $ρ$ is called the \emph{framing} and $\mbX_{E_0/K,(r,s)}$ is called the \emph{framing object}.
As a matter of notation, we set
$$\mcN_{E_0,(r,s)} := \mcN_{E_0/E_0,(r,s)}.$$
\end{defn}

\begin{prop}\label{propSmoothness}
The functor $\mcN_{E_0/K,(r,s)}$ is representable by a formal scheme which is locally formally of finite type over $\Spf \mcO_{\breve E}$ and formally smooth over $\Spf\mcO_{\breve E}$ of relative dimension $rs$.
\end{prop}
\begin{proof}
The representability is proven in \cite{RZ}. We prove the formal smoothness which follows from the Grothendieck-Messing Theorem. Consider a point $X\in \mcN_{E_0/K,(r,s)}(S)$ and an $\mcO_K$-pd-thickening $S\hookrightarrow S'$ (e.g. a square-zero thickening). Working locally on $S'$, we choose an $\mcO_E$-linear isometry of $\mbD_X(S')$ with $D\tensor_{\mcO_{\breve E}}\mcO_{S'}$ as in Lemma \ref{lemLocFree}. Then deforming $X$ becomes equivalent to lifting the Hodge filtration
$$\left[\mcF_X\subset \mbD_X(S)\right]\in M_{E_0/K,(r,s)}^{\mr{loc}}(S)$$
to $S'$. This deformation problem is formally smooth of rank $rs$ over $\mcO_{\breve E}$ by Proposition \ref{propCompLMs}.
\end{proof}

\section{Comparison of moduli spaces}
Let $(V,\langle\ ,\ \rangle)$ be a skew-hermitian $E$-$\mbQ_p$-module as in Definition \ref{defshEMod}. Let $K$ be an intermediate field $\mbQ_p\subset K\subset E_0$ and choose a generator $ϑ_K$ of the inverse different of $K/\mbQ_p$. There exists a unique non-degenerate $K$-bilinear alternating form
$$\langle\ ,\ \rangle_K:V\times V→K$$
such that $\mr{tr}_{K/\mbQ_p}(ϑ_K \langle\ ,\ \rangle_K) = \langle\ ,\ \rangle$. It is still $E$-hermitian in the sense that
$$\langle a\ ,\ \rangle_K = \langle\ ,a^{σ}\ \rangle_K,\ \ \ a\in E.$$
So $(V,\langle\ ,\ \rangle_K)$ is a skew-hermitian $E$-$K$-module and the groups of $E$-linear isometries of $\langle\ ,\ \rangle$ and $\langle\ ,\ \rangle_K$ are then identical. Note that a lattice $Λ\subset V$ is self-dual with respect to the lifted form $\langle\ ,\ \rangle_K$ if and only if it is self-dual for the original form $\langle\ ,\ \rangle$.

For every such intermediate field $K$, we fix a uniformizer $π_K\in \mcO_K$ in order to talk about polarizations of strict formal $\mcO_K$-modules, see Remark \ref{rmkDualModule}. We make $N:=(V,\langle\ ,\ \rangle_K)\tensor_K\breve {K}$ into a polarized $K$-isocrystal as in the proof of Proposition \ref{propIsocrys}. By Lemma \ref{lemExistframing}, if $r$ and $V$ have the same parity, then $V$ gives rise to a whole family of framing objects
$$\{\mbX_{E_0/K,(r,s)}\}_{\mbQ_p\subset K\subset E_0}$$
which all come with an action (by quasi-isogenies) of the unitary group $U(V)$. Our main result in this section is that the corresponding RZ-spaces are all isomorphic.

\begin{thm}\label{mainThm1}
Let $(V,\langle\ ,\ \rangle)$ be a skew-hermitian $E$-module and let $r$ have the same parity as $V$. For any intermediate field $\mbQ_p\subset K\subset E_0$, there is a $U(V)$-equivariant isomorphism
$$c:\mcN_{E_0/K,(r,s)}\iso \mcN_{E_0,(r,s)}.$$
In particular, the formal scheme $\mcN_{E_0/K,(r,s)}$ is independent of the choice of the decomposition $Ψ = Ψ_0\sqcup Ψ_1$.
\end{thm}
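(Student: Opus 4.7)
The plan is to construct the isomorphism $c$ explicitly at the level of moduli problems, then to produce an inverse in the same spirit. Throughout, the main technical input will be the theory of relative displays of Zink--Lau--Ahsendorf and the polarization comparison of the appendix.

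First, on objects I would send $(X,ι,λ,ρ)\in \mcN_{E_0/K,(r,s)}(S)$ to a quadruple $(X',ι,λ',ρ')$ in $\mcN_{E_0,(r,s)}(S)$, where the underlying formal group of $X'$ is obtained from that of $X$ by a ``strictification'' of the $\mcO_{E_0}$-action. More precisely, I would pass to the $\mcO_K$-display $\mcP$ of $X$ and use the $\mcO_E$-action on $X$ to endow $\mcP$ with an $\mcO_{E_0}$-action. The two signature conditions (i) and (ii) for $\mcN_{E_0/K,(r,s)}$ ensure that on $\Lie(X)$, the $\mcO_{E_0}$-action satisfies the normalization required by Ahsendorf's equivalence: in particular condition (ii) forces $\mcO_{E_0}$ to act on $\Lie_{ψ_0}(X)$ through the structure morphism, which is precisely the ``strictness'' condition for an $\mcO_{E_0}$-display. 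Invoking the Zink--Lau--Ahsendorf equivalence for $\mcO_{E_0}$, this display then comes from a unique strict $\mcO_{E_0}$-module $X'$ whose signature in the sense of Definition \ref{defSignature} is again $(r,s)$, as one reads off from condition (i').

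Second, I would transport the polarization. The map $λ:X\to X^\vee$ is a polarization in the category of strict $\mcO_K$-modules, while the target $\mcN_{E_0,(r,s)}$ requires a polarization as strict $\mcO_{E_0}$-module. These two notions of duality differ in a controlled way. Guided by the linear-algebra identity $\mr{tr}_{K/\mbQ_p}(ϑ_K\langle\,,\,\rangle_K)=\langle\,,\,\rangle$ used at the level of skew-hermitian modules, the appendix provides for any finite extension of rings a comparison between polarizations in the two categories, twisted by a generator of the inverse different. Applying this to the extension $\mcO_{E_0}/\mcO_K$ yields the desired polarization $λ':X'\iso (X')^\vee$. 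One checks that principality is preserved and that the new $λ'$ still satisfies the Rosati compatibility $(λ')^{-1}ι(a^\vee)λ' = ι(a^σ)$, using that the trace form is $E$-hermitian.

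Third, I would match the framings. Both framing objects $\mbX_{E_0/K,(r,s)}$ and $\mbX_{E_0,(r,s)}$ are constructed from the same skew-hermitian $E$-module $V$ via the (appropriate relative variant of) Proposition \ref{propIsocrys} and Lemma \ref{lemExistframing}. So their isocrystals are canonically identified with $V\tensor_{\mbQ_p}\breve\mbQ_p$, and the same is true after the strictification/polarization-twist above. Hence $ρ$ transports to an $E$-linear quasi-isogeny $ρ':X'\times_S\ob S \to \mbX_{E_0,(r,s)}\times_{\mbF}\ob S$ compatible with $λ'$. Since the entire construction is functorial in the strict $\mcO_K$-module and is canonical on isocrystals, the functor $c$ is $U(V)$-equivariant. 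An inverse functor is produced by restriction of scalars: a strict $\mcO_{E_0}$-module is \emph{a fortiori} a strict $\mcO_K$-module, and the polarization is transported back by the inverse of the trace formula of the appendix. The verification that these two procedures are mutually inverse reduces, via the display equivalences, to a routine check on Zink--Ahsendorf data.

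The main obstacle is the polarization comparison, i.e.\ showing that the appendix-style trace twist matches principal $\mcO_K$-polarizations with principal $\mcO_{E_0}$-polarizations in a way compatible with both the Rosati involution and the $\mcO_E$-action. Everything else is either a formal consequence of the display equivalence (strictification of the $\mcO_{E_0}$-action, compatibility of signatures, functoriality in $S$) or is built into the parallel construction of framing objects from a single skew-hermitian module $V$.
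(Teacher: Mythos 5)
There is a genuine gap at the first step, and it propagates through the rest of the outline. You claim that the signature conditions make the $\mcO_{E_0}$-action on (the display of) $X$ strict, so that the Zink--Lau--Ahsendorf equivalence applies directly and $X'$ is just a ``strictification'' of $X$, with inverse given by restriction of scalars. This is false. Condition (ii) of Definition \ref{defSignature} only normalizes the action on the summand $\Lie_{ψ_0}(X)$, which has rank $r$; on the remaining summands, of total rank $n[E_0:K]-r$, the algebra $\mcO_E$ acts through the conjugate embeddings $ψ\neq ψ_0$ and, in the ramified direction, with characteristic polynomial built from the conjugates of $a$ over $K$. In particular $\dim X = n[E_0:K]$ while every object of $\mcN_{E_0,(r,s)}$ has dimension $n$: the two formal groups cannot be obtained from one another by reinterpreting the same display, and your proposed inverse (restriction of scalars along $\mcO_K\subset\mcO_{E_0}$) yields a formal group of the wrong dimension which, moreover, is not supersingular as a strict $\mcO_K$-module. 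The essential missing idea is that the Hodge filtration must genuinely change: in the paper's construction one keeps $P$ and $Q_0$ but replaces $Q_1$ by the orthogonal complement $Q_1'$ of $Q_0$ with respect to the $\mcO_{E_0}\tensor W_{\mcO_K}(R)$-valued lift $(\ ,\ )$ of the polarization pairing, and rescales $\dot F$ on $Q_1'$ by the element $θ$ of Lemma \ref{lemMagic}; the result is a window over a Lubin-Tate frame, which is only afterwards compared to the Witt $\mcO_{E_0}$-frame.

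A second gap concerns the inverse and the passage to general bases. Undoing the rescaling requires dividing $\dot F'$ by $σ(θ)$, which is only possible when $σ(θ)$ is a non-zero-divisor, i.e.\ essentially over reduced rings in characteristic $p$. The paper therefore establishes the equivalence in three steps: construction of $\mcC$, proof that it is an equivalence over reduced special fibers, and an identification of the deformation theories of $X$ and $\mcC(X)$ via Grothendieck--Messing, followed by a noetherian $\mfn$-adic limit argument. None of this is ``a routine check on Zink--Ahsendorf data.'' Finally, while you rightly flag the polarization comparison as the main obstacle, its resolution is not merely a twist by the inverse different: it needs the form $(\ ,\ )$ lifting $\langle\ ,\ \rangle$ through $\mathbf{tr}$, the verification that $(\ ,\ )$ polarizes the Lubin-Tate window, and the existence of a unit $ε$ with $σ(ε)ε^{-1}=κ/u$ so that Lemma \ref{lemDual} applies.
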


The proof relies on the following equivalence of categories. We consider the category $\textsl{Sch}/\Spf \mcO_{\breve E}$ of locally noetherian schemes over $\Spf \mcO_{\breve E}$ together with the Zariski topology.

\begin{defn}
We denote by $\mcO_E\text{-}\mcO_K\text{-}\textsl{Herm}$ the stack of hermitian $\mcO_E$-$\mcO_K$-modules $(X,ι,λ)$ \emph{that have a signature} over $\textsl{Sch}/\Spf \mcO_{\breve E}$. By the condition, we mean that locally for the Zariski topology, the hermitian $\mcO_E$-$\mcO_K$-module is of signature $(r,s)$ for some integers $r,s\in \mbZ_{\geq 0}$. The morphisms in this category are the $\mcO_E$-linear morphisms of $p$-divisible groups. We also write $\mcO_E\text{-}\textsl{Herm}$ for the stack of hermitian $\mcO_E$-modules.
\end{defn}

\begin{thm}\label{thmEquiv}
There is an isomorphism of stacks on $\textsl{Sch}/ \Spf \mcO_{\breve E}$
$$\mcC:\mcO_E\text{-}\mcO_K\text{-}\textsl{Herm}\overset{\iso}{→} \mcO_E\text{-}\textsl{Herm}$$
that satisfies the following properties. It is equivariant for the Rosati involution and it sends objects of signature $(r,s)$ to objects of signature $(r,s)$.
\end{thm}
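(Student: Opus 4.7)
The approach is to define $\mcC$ essentially as the forgetful functor on the underlying $p$-divisible group, with the polarization twisted by the fixed generator $\vartheta_K$ of the inverse different of $K/\mbQ_p$ to translate a strict $\mcO_K$-polarization into a Serre-dual polarization. The theoretical foundation for this translation is the polarization comparison developed in the appendix.

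Concretely, given $(X, \iota, \lambda) \in \mcO_E\text{-}\mcO_K\text{-}\textsl{Herm}(S)$, let $X^\flat$ denote the underlying $p$-divisible group, which is well-defined since $p = u \pi_K^e$ for some unit $u \in \mcO_K^\times$; it carries the inherited $\mcO_E$-action. The polarization $\lambda : X \iso X^{\vee_K}$ lands in the strict $\mcO_K$-dual, not in the Serre dual. The appendix provides a canonical isomorphism $X^{\vee_K} \iso (X^\flat)^\vee$ determined by $\vartheta_K$, and composing $\lambda$ with this isomorphism yields a principal polarization $\lambda^\flat : X^\flat \iso (X^\flat)^\vee$. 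Set $\mcC(X, \iota, \lambda) := (X^\flat, \iota, \lambda^\flat)$. The Rosati relation $\lambda^{\flat,-1}\iota(a^\vee)\lambda^\flat = \iota(a^\sigma)$ follows from that of $\lambda$ since $\vartheta_K \in K \subset E_0$ is fixed by $\sigma$. For signature preservation, $\Lie(X)$ and $\Lie(X^\flat)$ coincide as $\mcO_E \otimes \mcO_S$-modules, and the $\Hom_K(E^u, \breve K)$-grading appearing on the $\mcO_K$-module side is compatible with the $\Hom_{\mbQ_p}(E^u, \breve \mbQ_p)$-grading on the $p$-divisible side under a compatible choice of the decomposition $\Psi = \Psi_0 \sqcup \Psi_1$, so that conditions (i), (ii) of Definition \ref{defSignature} match term-by-term.

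For the quasi-inverse, I would take $(X', \iota', \lambda') \in \mcO_E\text{-}\textsl{Herm}(S)$ of signature $(r,s)$ and endow $X'$ with a strict $\mcO_K$-module structure via the restriction $\iota'|_{\mcO_K}$. The crucial check is \emph{strictness}, namely that $\mcO_K$ acts on $\Lie(X')$ through the structure map. On $\Lie_{\psi_0}(X')$ this is condition (ii). On each $\Lie_{\psi}(X')$ with $\psi \in \Psi_1$, writing $\psi = \sigma \psi'$ with $\psi' \in \Psi_0$, the fact that $\sigma$ fixes $K \subset E_0$ pointwise gives $\psi|_K = \psi'|_K$, which equals $\psi_0|_K$ (the structure map) under a compatible choice of $\Psi_0, \Psi_1$; the characteristic polynomial condition (i) then pins down the action of the ramified generators of $\mcO_K$. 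Undoing the $\vartheta_K$-twist produces the desired strict $\mcO_K$-polarization, and full faithfulness of $\mcC$ on morphisms is automatic once strictness has been established.

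The principal obstacle is the polarization comparison itself: strict $\mcO_K$-duality and Serre duality genuinely differ when $K \neq \mbQ_p$, and the assertion that they are intertwined by a $\vartheta_K$-twist is non-trivial---it is the substance of the appendix and is developed there through the theory of polarized $\mcO$-displays. Verifying that this twist preserves principality and correctly intertwines the Rosati involutions of the two formalisms is the technical heart of the argument, governing both the definition of $\mcC$ and its essential surjectivity.
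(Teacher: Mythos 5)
Your construction of $\mcC$ cannot be correct, because the functor is not---and cannot be---the identity on underlying $p$-divisible groups. A hermitian $\mcO_E$-$\mcO_K$-module of rank $n$ and signature $(r,s)$ has $\Lie(X)$ locally free of rank $n[E_0:K]$ over $\mcO_S$ (sum the ranks in condition (i') of the remark following Definition \ref{defSignature}), whereas an object of $\mcO_E\text{-}\textsl{Herm}$ of rank $n$ is a supersingular strict $\mcO_{E_0}$-module of relative height $2n$ and therefore has $\Lie$ of rank $n$. For $K\neq E_0$ these ranks differ, so $X$ and $\mcC(X)$ are genuinely different $p$-divisible groups, and ``$X^\flat$ with a $\vartheta_K$-twisted polarization'' does not land in the target category. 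The same rank count defeats your quasi-inverse: restricting the strict $\mcO_{E_0}$-action of an object of $\mcO_E\text{-}\textsl{Herm}$ to $\mcO_K$ yields a strict $\mcO_K$-module whose Lie algebra is far too small to satisfy the signature condition of the source category. Indeed, the signature condition is precisely \emph{not} a strictness condition: $\mcO_E$ acts through the structure morphism only on the single summand $\Lie_{\psi_0}(X)$, while the summands indexed by $\Psi_1$ have rank $ne$ and carry conjugate actions.

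The actual proof is an Ahsendorf/Drinfeld-type descent on displays, and that is where all the work lies. In the unramified step (Proposition \ref{propUram}) one passes to the $(\psi_0,\sigma\psi_0)$-graded piece of the $\mcO_K$-display and composes Frobenii to obtain a polarized $f$-$\mcO_K$-display, then base-changes along a strict morphism of frames. In the totally ramified step (Proposition \ref{propRam}) one lifts the pairing along the trace form, replaces $Q_1$ by a modification defined via the inverse different, uses the element $\theta$ of Lemma \ref{lemMagic} to divide $\dot F$, and base-changes along a $\kappa/u$-isomorphism from a Lubin-Tate frame to the Witt $\mcO_{E_0}$-frame; essential surjectivity and full faithfulness over non-reduced bases are then \emph{not} automatic but are obtained by matching Grothendieck--Messing deformation theories (Corollary \ref{corCrucial}) and passing to the limit over a noetherian base. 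You have correctly identified that the comparison of strict $\mcO_K$-duality with display duality is delicate and lives in the appendix, but that is only one ingredient; the change of the underlying $p$-divisible group is the substance of the theorem and is entirely absent from your argument.
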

This section is devoted to the proof of these two theorems. 

\subsection{The unramified case}
\begin{prop}\label{propUram}
Consider an intermediate field $\mbQ_p\subset K\subset E_0$ and let $E_0^u\subset E_0$ be the maximal subfield which is unramified over $K$. Then there is an isomorphism of stacks
$$\mcC:\mcO_E\text{-}\mcO_K\text{-}\textsl{Herm}\overset{\iso}{→} \mcO_E\text{-}\mcO_{E_0^u}\text{-}\textsl{Herm}$$
that is equivariant for the Rosati involutions and sends objects of signature $(r,s)$ to objects of signature $(r,s)$.
\end{prop}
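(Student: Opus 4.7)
\emph{Proof plan.} The plan is to build $\mcC$ as the hermitian upgrade of the classical equivalence between strict formal $\mcO_K$-modules equipped with a compatible $\mcO_{E_0^u}$-action and strict formal $\mcO_{E_0^u}$-modules, which holds because $E_0^u/K$ is unramified. Concretely, given $(X,\iota,\lambda)$ a hermitian $\mcO_E$-$\mcO_K$-module over $S$, I would pass to its $\mcO_K$-display $\mcP$ in the sense of Ahsendorf \cite{ACZ}. The restriction $\iota|_{\mcO_{E_0^u}}$ equips $\mcP$ with an $\mcO_{E_0^u}$-action; since $E_0^u/K$ is unramified, the structure morphism $\mcO_{E_0^u}\to \mcO_S$ determines a canonical idempotent in $\mcO_{E_0^u}\tensor_{\mcO_K}W_{\mcO_K}(\mcO_S)$, and the corresponding direct summand of $\mcP$ is naturally a $W_{\mcO_{E_0^u}}$-display, hence the display of a strict $\mcO_{E_0^u}$-module $\mcC(X)$ carrying an induced $\mcO_E$-action. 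An inverse functor is obtained by extension of scalars from $W_{\mcO_{E_0^u}}$- to $W_{\mcO_K}$-displays along the same idempotent decomposition, and the two constructions are mutually inverse by construction.

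For the polarization I would rely on the duality results of the appendix. The crucial observation is that since $\sigma$ is the Galois involution of $E/E_0$ and $E_0^u\subset E_0$, the compatibility $\lambda^{-1}\iota(a^\vee)\lambda = \iota(a^\sigma)$ forces $\lambda$ to be $\mcO_{E_0^u}$-linear. Choosing $\pi_{E_0^u} := \pi_K$ (possible since $E_0^u/K$ is unramified), the $\pi_K$-dual of a strict $\mcO_K$-module carrying an $\mcO_{E_0^u}$-action compares to the $\pi_{E_0^u}$-dual of its $\mcO_{E_0^u}$-refinement by a translation formula involving the different of $E_0^u/K$, worked out in the appendix. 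This produces a canonical $\pi_{E_0^u}$-polarization $\lambda'$ on $\mcC(X)$; the hermitian compatibility for the full $\mcO_E$-action then transports without change, since any generator of $\mcO_E$ over $\mcO_{E_0^u}$ acts through the induced $\mcO_E$-action on both sides.

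Signature preservation amounts to bookkeeping: a $K$-embedding $E^u\hookrightarrow \breve K$ is the same data as a $K$-embedding $E_0^u\hookrightarrow\breve K$ together with an $E_0^u$-embedding $E^u\hookrightarrow \breve K$, so the grading $\Lie(X) = \bigoplus_{\psi\in\Psi}\Lie_\psi(X)$ on the $\mcO_K$-side refines the analogous grading on the $\mcO_{E_0^u}$-side exactly along the idempotent decomposition above, and conditions (i) and (ii) of Definition \ref{defSignature} translate term-by-term. The main obstacle, flagged already in the introduction, is the polarization step: polarizations of strict $\mcO$-modules depend on the choice of uniformizer and transform non-trivially under change of base ring, so equivariance for the Rosati involution requires carefully tracking both the idempotent decomposition of the display and the different of $E_0^u/K$, which is precisely what the appendix provides.
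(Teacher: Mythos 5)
Your overall strategy --- pass to $\mcO_K$-displays, decompose along the idempotents of $\mcO_{E_0^u}\tensor_{\mcO_K}W_{\mcO_K}(R)$, isolate the distinguished summand, and transport the polarization --- is the same as the paper's, but two of your key steps contain genuine gaps. First, the distinguished idempotent summand $P_{\psi_0}\oplus P_{\sigma\psi_0}$ is \emph{not} "naturally a $W_{\mcO_{E_0^u}}$-display": the operators $F$ and $\dot F$ are homogeneous of degree $1$ for the Frobenius action on $\Psi$, so they move this summand to the next one rather than preserving it. One has to compose $f$ times through the other graded pieces, and this is only possible because the signature condition forces $Q_\psi = P_\psi$ for $\psi\in\Psi_0\setminus\{\psi_0\}$ and $Q_\psi = I_{\mcO_K}(R)P_\psi$ for $\psi\in\Psi_1\setminus\{\sigma\psi_0\}$, so that $\dot F_\psi$ (resp.\ $F_\psi$) is a Frobenius-linear isomorphism there. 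For the same reason your appeal to the "classical equivalence" is off the mark: Ahsendorf's equivalence requires the $\mcO_{E_0^u}$-action to be \emph{strict}, whereas here $\mcO_{E_0^u}$ acts on the nonzero summands $\Lie_\psi(X)$, $\psi\in\Psi_1\setminus\{\sigma\psi_0\}$, through $\psi\neq\psi_0$; the signature condition of Definition \ref{defSignature} is precisely what substitutes for strictness. The iteration produces an $f$-$\mcO_K$-display, i.e.\ a window over $\mcA_{\mcO_{E_0^u}/\mcO_K}(R)=(W_{\mcO_K}(R),I_{\mcO_K}(R),{}^{F^f},{}^{F^{f-1}}{}^{V^{-1}})$, and only after base change along the strict frame morphism to $\mcW_{\mcO_{E_0^u}}(R)$ does one get an honest $\mcO_{E_0^u}$-display.

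Second, the polarization step is mislocated: the different of $E_0^u/K$ is trivial because that extension is unramified, so there is no "translation formula involving the different" to invoke --- the trace twist by a generator of the inverse different (and the element $\theta$) is the mechanism of the \emph{totally ramified} case, treated separately. What actually has to be checked here is that the restriction $\langle\ ,\ \rangle\vert_{P'}$ of the pairing to the distinguished summand satisfies $\langle\dot F'\ ,\dot F'\ \rangle = {}^{F^{f-1}}{}^{V^{-1}}\langle\ ,\ \rangle$, which follows by iterating the identities $\langle F\ ,\dot F\ \rangle=\langle\dot F\ ,F\ \rangle={}^F\langle\ ,\ \rangle$; this exhibits the pairing as a principal polarization of the $f$-$\mcO_K$-display, and compatibility with duality under the final base change is then automatic because the frame morphism $\mcA_{\mcO_{E_0^u}/\mcO_K}(R)\to\mcW_{\mcO_{E_0^u}}(R)$ is strict, see \eqref{eqDualStrict}. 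Your observation that $\lambda$ is $\mcO_{E_0^u}$-linear is correct and needed, but it does not by itself produce the polarization on $\mcC(X)$.
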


\emph{Proof.}
We will construct the functor $\mcC$ and its quasi-inverse. Let $S = \Spec R$ be an affine scheme over $\Spf \mcO_{\breve E}$ and let $(X,ι,λ)$ be a hermitian $\mcO_E$-$\mcO_K$-module of signature $(r,s)$ over $R$. Let $(P,Q,F,\dot{F})$ be the $\mcO_K$-display of $(X,ι,λ)$. We denote by $\langle\ ,\ \rangle:P\times P→W_{\mcO_K}(R)$ the perfect alternating form induced by the polarization $λ$.

Recall that $Ψ = \Hom_K(E^u,\breve K)$ and note that there exists a natural morphism $\mcO_{E^u}→W_{\mcO_K}(R)$ of $\mcO_K$-algebras that lifts the morphism $\mcO_{E^u}→R$, see \cite[Section 1.2]{FF}. This morphism induces gradings
$$P = \prod_{ψ\in Ψ} P_ψ,\ \ Q = \prod_{ψ\in Ψ} Q_ψ\ \ \mr{with}\ Q_ψ = Q \cap P_ψ.$$

For $ψ\notin\{ψ_0,ψ_0σ\}$, we define the Frobenius-linear isomorphism
$$[\mathbf{F}_{ψ}:P_ψ→P_{{}^Fψ}]:=\begin{cases} \dot{F}_ψ & \tif ψ\in Ψ_0\setminus \{ψ_0\}\\
                                                F_ψ & \tif ψ\in Ψ_1\setminus \{ψ_0σ\}.\end{cases}
                                                $$
Here we used that $(X,ι,λ)$ has a signature, which implies $Q_ψ = P_ψ$ whenever $ψ\in Ψ_0\setminus \{ψ_0\}$. We set $P' := P_{ψ_0}\oplus P_{ψ_0σ}$ with submodule $Q' := Q_{ψ_0}\oplus Q_{ψ_0σ}$ and define the ${}^{F^f}$-linear operators $F',\dot {F}'$ on $P'$ and $Q'$ as
$$F':=\mathbf{F}^{f-1} \circ F,\ \ \ \dot{F}':=\mathbf{F}^{f-1} \circ \dot{F}.$$
Then $\mcP':=(P',Q',F',\dot F')$ defines an $f$-$\mcO_{K}$-display in the sense of Ahsendorf \cite{ACZ}. There is an induced $\mcO_E$-action and a $W_{\mcO_K}(R)$-valued alternating pairing $\langle\ ,\ \rangle':=\langle\ ,\ \rangle\vert_{P'}$ on $P'$.

Note that $f$-$\mcO_K$-displays are the same as windows over the $\mcO_{E_0^u}$-frame
$$\mcA_{\mcO_{E_0^u}/\mcO_K}(R):=(W_{\mcO_K}(R),I_{\mcO_K}(R),{}^{F^f},{}^{F^{f-1}}{}^{V^{-1}}).$$
In our case, the pairing $\langle\ ,\ \rangle'$ takes values in $\mcA_{\mcO_{E_0^u}/\mcO_K}(R)$ in the sense that
$$\langle \dot{F}'\ ,\ \dot{F}'\ \rangle' = {}^{F^{f-1}}{}^{V^{-1}}\langle\ ,\ \rangle'$$
which follows immediately from the identities $\langle F\ ,\dot F\ \rangle = \langle \dot{F}\ ,F\ \rangle = {}^F\langle\ ,\ \rangle$ for the pairing $\langle\ ,\ \rangle$ on $\mcP$. In other words, the pairing defines a principal polarization of the $f$-$\mcO_K$-display $\mcP'$, see Proposition \ref{propDualForm}.

As explained in the appendix, \eqref{eqDualStrict}, base change along the natural strict morphism of $\mcO_{E_0^u}$-frames
$$\mcA_{\mcO_{E_0^u}/\mcO_K}(R)→(W_{\mcO_{E_0^u}}(R),I_{\mcO_{E_0^u}}(R),{}^{F'},{}^{V'^{-1}})$$
defines a principally polarized strict formal $\mcO_{E_0^u}$-module $\mcC(X)$ together with a compatible $\mcO_E$-action. This module is of signature $(r,s)$ and hence an element of $\mcO_E$-$\mcO_{E_0^u}$-$\textsl{Herm}(S)$.

\emph{Construction of a quasi-inverse of $\mcC$:} Let $\mcP':=(P',Q',F',\dot{F}')$ be the $f$-$\mcO_K$-display associated to a hermitian $\mcO_E$-$\mcO_{E_0^u}$-module $(X,ι,λ)$ over $S$. By functoriality, it comes with an $\mcO_E$-action and a compatible principal polarization. To construct the associated hermitian $\mcO_E$-$\mcO_K$-module, we apply a slightly modified version of the construction from the proof of Proposition \ref{propfOwindow}.

The $\mcO_E$-action induces a bigrading, $P' = P'_0\oplus P'_1$, $Q' = Q'_0\oplus Q'_1$. We set
$$\begin{aligned}
P_{ψ_0} := P'_0&\ \ \ \ P_{ψ_0σ} := P'_1,\\
Q_{ψ_0} := Q'_0&\ \ \ \ Q_{ψ_0σ} := Q'_1.
\end{aligned}$$
For $i = 0,\ldots,f-2$, we define
$$
P_{{}^{F^{i+1}}ψ_0} := P^{(F)}_{{}^{F^{i}}ψ_0},\ \ \ P_{{}^{F^{i+1}}ψ_0σ} := P^{(F)}_{{}^{F^{i}}ψ_0σ}.
$$
The signature condition forces us to set, for $ψ\notin \{ψ_0,ψ_0σ\}$,
$$Q_ψ = \begin{cases} P_ψ & \tif ψ\in \Psi_0\setminus \{ψ_0\}\\
I_{\mcO_K}(R)P_ψ & \tif ψ\in \Psi_1\setminus \{ψ_0σ\}.\end{cases}$$
The display structure is defined by giving a normal decomposition. Let $(P' = L'\oplus T',ϕ)$ be a normal decomposition of $\mcP'$. Then we define a normal decomposition $(P = L\oplus T, \Phi)$ as
$$\begin{aligned}
L &= L_{ψ_0}\oplus L_{ψ_0σ}\oplus \bigoplus_{ψ\in Ψ_0\setminus \{ψ_0\}} P_ψ,\\
T &= T_{ψ_0}\oplus T_{ψ_0σ}\oplus \bigoplus_{ψ\in Ψ_1\setminus \{ψ_0σ\}} P_ψ
\end{aligned}$$
and the ${}^F$-linear operator
$$Φ = \left(\begin{matrix} & & & & ϕ\\
1 & & & & \\
& 1 & & & \\
& & \ddots & & \\
& & & 1 & \\
\end{matrix}\right)\circ {}^F$$
with respect to the decomposition $P = (P_{ψ_0}\oplus P_{ψ_0σ})\oplus (P_{ψ_0}\oplus P_{ψ_0σ})^{(F)}\oplus\ldots\oplus (P_{ψ_0}\oplus P_{ψ_0σ})^{(F^{f-1})}$. This already defines an $f$-$\mcO_K$-display $\mcP:=(P,Q,F,\dot F)$ equipped with an $\mcO_E$-action of the correct signature.

We now construct the polarization (as $\mcO_K$-display) on $\mcP$. Recall that we have a perfect pairing
$$\langle\ , \rangle_{ψ_0}:P_{ψ_0}\times P_{ψ_0σ}→W_{\mcO_K}(R).$$
There is at most one way to extend it to a pairing $\langle\ ,\ \rangle$ on all of $P$ such that the relations of a polarization are satisfied. Let us explain this for the direct summand $P_{{}^Fψ_0}\oplus P_{{}^Fψ_0σ}$. If $l+t\in L_{ψ_0}\oplus T_{ψ_0}$ and $l'+t'\in L_{ψ_0σ}\oplus T_{ψ_0σ}$, then we have to set
$$\begin{aligned}
\langle Φ(l+t), Φ(l'+t')\rangle & := \langle \dot{F}(l)+F(t), \dot{F}(l')+F(t')\rangle\\
& = {}^{V^{-1}} \langle l,l'\rangle_{ψ_0} + {}^F\langle l, t'\rangle_{ψ_0} + {}^F\langle t, l'\rangle_{ψ_0} + π_K{}^F\langle t, t'\rangle_{ψ_0}.\end{aligned}$$
Since $Φ$ is a Frobenius-linear isomorphism, this is well-defined and extends in a unique way to all of $P_{{}^Fψ_0}\oplus P_{{}^Fψ_0σ}$. We apply the same formulas to define $\langle\ ,\ \rangle$ on $P_{{}^{F^{i}}ψ_0}\oplus P_{{}^{F^{i}}ψ_0σ}$ for $i = 1,\ldots, f-1$. Note that due to the special form of the normal decomposition at these indices, we get
$$\langle\ ,\ \rangle\vert_{P_{{}^{F^{i+1}}ψ_0}\oplus P_{{}^{F^{i+1}}ψ_0σ}} = \langle\ ,\ \rangle\vert_{P_{{}^{F^i}ψ_0}\oplus P_{{}^{F^i}ψ_0σ}}^{(F)}.$$
We leave it to the reader to check that this defines a principal polarization on $\mcP$ which finishes the proof.
\qed

\begin{rmk}
Note that we did not use the assumption of $R$ being noetherian. We will only need this assumption in the ramified situation.
\end{rmk}

\subsection{The totally ramified case}

\begin{prop}\label{propRam}
Let $\mbQ_p\subset K\subset E_0$ be an intermediate field such that $E_0/K$ is totally ramified. There is an isomorphism of stacks over $\textsl{Sch}/\Spf \mcO_{\breve E}$
$$\mcC:\mcO_E\text{-}\mcO_K\text{-}\textsl{Herm}\overset{\iso}{→} \mcO_E\text{-}\textsl{Herm}$$
that is equivariant for the Rosati involution and that sends objects of signature $(r,s)$ to objects of signature $(r,s)$.
\end{prop}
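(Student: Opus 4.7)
The plan is to follow the same blueprint as the unramified case, Proposition \ref{propUram}: work at the level of displays, construct the functor $\mcC$ and its quasi-inverse there, and then translate the principal polarization across the change of uniformizer. The essential new ingredient — which replaces the "Frobenius-folding" of the pieces $P_ψ$ used in the unramified case — is the strictification theory for $\mcO_K$-displays with a compatible $\mcO_{E_0}$-action developed in the appendix and in Ahsendorf \cite{ACZ}.

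Concretely, let $(X, ι, λ)$ be a hermitian $\mcO_E$-$\mcO_K$-module of signature $(r,s)$ over an affine $\Spf \mcO_{\breve E}$-scheme $S = \Spec R$, and let $\mcP = (P, Q, F, \dot F)$ be its $\mcO_K$-display. Because $E_0/K$ is totally ramified, $Ψ$ has only two elements and $Ψ_0 = \{ψ_0\}$ is a singleton. Signature condition (ii) then says that $\mcO_{E_0}$ acts on $P_{ψ_0}/Q_{ψ_0} = \Lie_{ψ_0}(X)$ via the structure morphism — the "strictness" hypothesis one needs in order to apply Ahsendorf's equivalence between $\mcO_K$-displays carrying a strict $\mcO_{E_0}$-action and honest $\mcO_{E_0}$-displays. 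Invoking this equivalence produces an $\mcO_{E_0}$-display $\mcP'$ over $W_{\mcO_{E_0}}(R)$ that inherits the full $\mcO_E$-action from $\mcP$.

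The polarization is transported via the framework of the appendix. A principal polarization of $\mcP$ is a perfect alternating pairing valued in $W_{\mcO_K}(R)$ satisfying the $π_K$-normalized compatibility, while one on $\mcP'$ must be $π_{E_0}$-normalized; the translation is achieved by composing with multiplication by a fixed generator $ϑ$ of the inverse different $\mcD_{E_0/K}^{-1}$. Since $ϑ\in E_0$ is fixed by $σ$, the hermitian compatibility $λ^{-1}ι(a^\vee)λ = ι(a^σ)$ survives. The quasi-inverse is built by reversing both steps, and preservation of signature $(r,s)$ is immediate because both the $\mcO_{E^u}$-grading on $\Lie$ and the ranks in (i') transfer formally through strictification; functoriality and Rosati-equivariance then follow.

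The main obstacle is the polarization translation. It couples two delicate points: Ahsendorf's strictification must send \emph{principal} polarizations to \emph{principal} polarizations, and the resulting pairing on the $\mcO_{E_0}$-display must retain the $σ$-semilinear hermitian compatibility. Unlike the unramified case, where the change of frame is a strict morphism of frames along which one can base-change directly, here the strictification is a nontrivial equivalence of categories and the interplay between duality of strict $\mcO_K$-modules and duality of strict $\mcO_{E_0}$-modules is subtle — this is precisely what the appendix is designed to handle, and it is also where the noetherian hypothesis on $R$ enters (via Grothendieck-Messing-type lifting arguments).
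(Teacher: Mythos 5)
There is a genuine gap, and it sits exactly where you lean on ``Ahsendorf's equivalence between $\mcO_K$-displays carrying a strict $\mcO_{E_0}$-action and honest $\mcO_{E_0}$-displays.'' The $\mcO_{E_0}$-action on $X$ is \emph{not} strict: condition (ii) of Definition \ref{defSignature} only forces $\mcO_E$ to act through the structure morphism on the summand $\Lie_{ψ_0}(X)$, which has rank $r$, while $\Lie_{σψ_0}(X)$ has rank $ne-r$ and carries a nontrivial $\mcO_{E_0}$-action. Indeed $\dim X = ne$ whereas $\dim \mcC(X) = n$, so no equivalence that merely ``transports'' the display structure can work --- the functor must change the Hodge filtration. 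This is what the paper's construction does: it passes to a window over the Lubin-Tate frame $\mcL_{\mcO_{E_0}/\mcO_K,κ}(R)$ by keeping $Q_0' = Q_0$ but \emph{replacing} $Q_1$ by $Q_1' := \{p_1\in P_1 \mid (p_1,Q_0)\subset J_{\mcO_{E_0}}(R)\}$, where $(\ ,\ )$ is the $\mcO_{E_0}\tensor W_{\mcO_K}(R)$-valued lift of the polarization form via the inverse-different trace, and by setting $\dot F_1'(q_1) := \dot F_1(θ q_1)$ with $θ$ the element of Lemma \ref{lemMagic}. So the polarization is not translated at the end as you propose; it is an essential input to the definition of the new display itself. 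Your inverse-different observation is the right one for lifting the pairing, but it does not by itself produce the functor.

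The second gap is the claim that ``the quasi-inverse is built by reversing both steps.'' Reversing the construction requires dividing $\dot F'$ restricted to $Q_1$ by $σ(θ)$, and $σ(θ)$ is a zero-divisor in $\mcO_{E_0}\tensor W_{\mcO_K}(R)$ for general $R$; the division is legitimate only when $R$ is reduced of characteristic $p$ (where one also needs the lattice claim $Q_1 = θQ_1' + I_{\mcO_K}(R)P_1$). The paper therefore proves essential surjectivity and full faithfulness in three steps: construct $\mcC$ in general, invert it over reduced $\mbF_p$-schemes, and then identify the Grothendieck--Messing deformation theories of $X$ and $\mcC(X)$ so as to bootstrap along the square-zero thickenings $R/\mfn^{i+1}\to R/\mfn^{i}$. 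This bootstrap --- not a lifting argument inside the polarization translation --- is where the noetherian hypothesis is used. Without this three-step structure (or an argument that the division by $σ(θ)$ can be performed over arbitrary bases, which is false as stated), the proposal does not establish that $\mcC$ is an isomorphism of stacks.
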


\emph{Proof.}
The proof consists of three main steps. First, we will construct the functor $\mcC$. Second, we will prove that $\mcC$ is an equivalence on reduced $\mcO_{\breve E}$-schemes in characteristic $p$. Finally, we will prove that $\mcC$ identifies the deformation theories of $X$ and $\mcC(X)$. Together with the restriction to locally noetherian schemes, this will imply the statement.

\emph{First step: Construction of the functor $\mcC$.}

Let $S = \Spec R$ be a scheme over $\Spf \mcO_{\breve E}$. We begin by briefly spelling out the properties of the $\mcO_K$-display of a hermitian $\mcO_E$-$\mcO_K$-module $(X,ι,λ)$ of signature $(r,s)$ over $S$. For this and in the following, we identify $Ψ$ with $\{0,1\}$ such that $ψ_0$ corresponds to $0$.

Let $\mcP:=(P,Q,F,\dot F)$ be the $\mcO_K$-display of $X$. The action $ι$ of $\mcO_E$ on $X$ induces an action of $\mcO_E$ on the display. This makes $P$ and $Q$ into $\mcO_E\tensor_{\mcO_K} W_{\mcO_K}(R)$-modules with $P$ being projective by \cite[Prop. 2.22]{ACZ}. Both $F$ and $\dot F$ are $\mcO_E$-linear. The action of the unramified part induces bigradings $P=P_0\oplus P_1$ and $Q=Q_0\oplus Q_1$ such that both $F$ and $\dot F$ are homogeneous of degree $1$.

The signature condition implies that $P_0/Q_0$ is projective of rank $r$ over $R$ and that $P_1/Q_1$ is projective of rank $ne-r$ over $R$. Furthermore, $\mcO_E$ acts on $P_0/Q_0$ via the structure morphism. In other words,
$$J_{\mcO_{E_0}}(R)P_0\subset Q_0,$$
where $J_{\mcO_{E_0}}(R) := \ker (\mcO_{E_0}\tensor_{\mcO_K} W_{\mcO_K}(R)→R)$. The polarization induces a perfect alternating form $\langle\ ,\ \rangle:P\times P→W_{\mcO_K}(R)$ which satisfies $\langle a\ ,\ \rangle = \langle\ ,a^{σ}\ \rangle$ for all $a\in \mcO_E$. In particular, $P_0$ and $P_1$ are maximal isotropic subspaces of $P$ which are put into duality by $\langle\ ,\ \rangle$.
Furthermore, $\langle Q,Q\rangle \subset I_{\mcO_K}(R)$ and the pairing satisfies $\langle\dot F\ ,\dot F\ \rangle = {}^{V^{-1}}\langle\ ,\ \rangle$. In other words, the pairing takes values in the Witt $\mcO_K$-frame from Definition \ref{defWitt},
$$\mcW_{\mcO_K}(R)=(W_{\mcO_K}(R),I_{\mcO_K}(R),R,{}^F,{}^{V^{-1}}).$$

\emph{Construction of $\mcC(X)$: }
Let $(X,ι,λ)/S$ and $\mcP$ be as above. As an intermediate step, we construct a polarized window $\mcP'=(P',Q',F',\dot F')$ with $\mcO_E$-action over a Lubin-Tate frame over $R$, see Definition \ref{defLT}.

Let $ϑ_{E_0}$ be a generator of the inverse different of $E_0/K$. Consider the $W_{\mcO_K}(R)$-linear extension of the trace
$$\begin{aligned}
\mathbf{tr}:\mcO_{E_0}\tensor_{\mcO_K} W_{\mcO_K}(R)&→W_{\mcO_K}(R)\\
a\tensor w & \longmapsto \mr{tr}_{E_0/K}(ϑ_{E_0}a)w.
\end{aligned}$$
Since $\langle\ ,\ \rangle$ is $\mcO_{E_0}$-equivariant, it has a unique lifting to a perfect 
$\mcO_{E_0}\tensor_{\mcO_K} W_{\mcO_K}(R)$-bilinear alternating form $(\ ,\ ): P\times P→\mcO_{E_0}\tensor_{\mcO_K} W_{\mcO_K}(R)$ such that
$$\langle\ ,\ \rangle = \mathbf{tr}\circ (\ ,\ ).$$
We set $P':=P$ with its given $\mcO_E$-action. Again, $P'$ is bigraded and we set $Q_0' := Q_0$. The form $(\ ,\ )$ automatically satisfies $(a\ ,\ ) = (\ ,a^{σ}\ )$ and hence $Q_0'$ is totally isotropic. We define
$$Q_1' :=\{ p_1\in P_1 \mid (p_1,Q_0') \subset J_{\mcO_{E_0}}(R)\}.$$
Note that $(\ ,\ )$ induces a perfect pairing $(P/J_{\mcO_{E_0}}(R)P) \times (P/J_{\mcO_{E_0}}(R)P)→R$ by base change. Then $Q_1'$ is the inverse image of $(Q_0/J_{\mcO_{E_0}}(R)P_0)^\perp$ under the projection $P_1→P_1/J_{\mcO_{E_0}}(R)P_1$. In particular, $P_1/Q_1'$ is a projective $R$-module of rank $s = n-r$.

Let $θ\in \mcO_{E_0}\tensor_{\mcO_K} W_{\mcO_K}(\mcO_{E_0})$ be an element such that
$$θJ_{\mcO_{E_0}}(\mcO_{E_0}) \subset \mcO_{E_0}\tensor_{\mcO_K}I_{\mcO_K}(\mcO_{E_0})$$
and such that the image of $θ$ in $\mcO_{E_0}\tensor_{\mcO_K}W_{\mcO_K}(\mcO_{E_0}/π_{E_0})\iso \mcO_{E_0}$ is of valuation $e-1$. Its existence is given by Lemma \ref{lemMagic}.
\begin{lem}
There is an inclusion
$$θQ_1' \subset Q_1.$$
\end{lem}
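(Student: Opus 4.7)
The plan is to show that for any $p_1 \in Q_1'$, the element $θp_1$ lies in $Q_1 = Q \cap P_1$. This breaks into two assertions: that $θp_1 \in P_1$ and that $θp_1 \in Q$. The first is essentially bookkeeping. Since $E_0/K$ is totally ramified, $E_0 \cap E^u = K$ inside $E$, so $E \iso E_0 \tensor_K E^u$ and the $\mcO_{E_0}$-action commutes with the $\mcO_{E^u}$-action responsible for the bigrading $P = P_0 \oplus P_1$. Because $θ \in \mcO_{E_0} \tensor_{\mcO_K} W_{\mcO_K}(\mcO_{E_0})$ acts through $\mcO_{E_0}$ and $W_{\mcO_K}(R)$, it preserves the bigrading, and in particular sends $P_1$ into $P_1$.

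For the harder inclusion $θp_1 \in Q$, I would use that since $λ$ is a principal polarization, $Q$ is characterized by self-duality, $Q = \{\, p \in P \mid \langle p, Q\rangle \subset I_{\mcO_K}(R)\,\}$. So it suffices to show $\langle θp_1, Q\rangle \subset I_{\mcO_K}(R)$. Decompose $Q = Q_0 \oplus Q_1$. The pairing $\langle θp_1, Q_1\rangle$ vanishes, because both arguments lie in $P_1$ and $P_1$ is maximal isotropic by the discussion preceding the construction. For the $Q_0$ contribution, invoke the identity $\langle\ ,\ \rangle = \mathbf{tr} \circ (\ ,\ )$ and the $\mcO_{E_0} \tensor W_{\mcO_K}(R)$-bilinearity of the refined form to rewrite
\[
\langle θp_1, Q_0\rangle \;=\; \mathbf{tr}\bigl((θp_1, Q_0)\bigr) \;=\; \mathbf{tr}\bigl(θ \cdot (p_1, Q_0)\bigr).
\]
By the defining condition on $Q_1'$ we have $(p_1, Q_0) \subset J_{\mcO_{E_0}}(R)$, so the right-hand side is contained in $\mathbf{tr}(θ \cdot J_{\mcO_{E_0}}(R))$. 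The proof concludes by quoting Lemma \ref{lemMagic}, whose role is precisely to guarantee $\mathbf{tr}(θ \cdot J_{\mcO_{E_0}}(R)) \subset I_{\mcO_K}(R)$; this is the "magic" that compensates for the mismatch between the ideal of reduction for $\mcO_K$-Witt vectors and its analogue in $\mcO_{E_0} \tensor_{\mcO_K} W_{\mcO_K}(R)$ inherited from $\mcO_{E_0}$-Witt vectors.

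The main obstacle is not really one of argument but of correctly setting up the ingredients: one has to make sure that Lemma \ref{lemMagic} delivers exactly the inclusion $\mathbf{tr}(θ J_{\mcO_{E_0}}(R)) \subset I_{\mcO_K}(R)$ in the form needed, and that the self-duality characterization of $Q$ holds with the precise normalization chosen for $\langle\ ,\ \rangle$ (a principal polarization of a display identifies $P$ with its dual display, so $Q$ is the inverse image of $I_{\mcO_K}(R)$ under $\langle -, Q\rangle$). Once both are in place, the computation above is formal.
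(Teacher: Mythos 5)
Your proof is correct and follows essentially the same route as the paper: characterize $Q_1$ inside $P_1$ by the integrality condition $\langle p_1,Q_0\rangle\subset I_{\mcO_K}(R)$, rewrite $\langle θq_1',Q_0\rangle=\mathbf{tr}(θ(q_1',Q_0))$ using the $\mcO_{E_0}\tensor W_{\mcO_K}(R)$-bilinearity of $(\ ,\ )$, and conclude from $(Q_1',Q_0)\subset J_{\mcO_{E_0}}(R)$ together with property (i) of Lemma \ref{lemMagic}. Your extra remarks on $θ$ preserving the bigrading and on the isotropy of $P_1$ are correct bookkeeping that the paper leaves implicit.
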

\begin{proof}
By definition, $Q_1 = \{ p_1\in P_1 \mid \langle p_1,Q_0\rangle \subset I_{\mcO_K}(R)\}.$ So given $q_1\in Q_1'$, we need to verify that
$$
\begin{aligned}
\langle θq_1,Q_0\rangle & = \mathbf{tr}\left( ( θq_1,Q_0 ) \right)\\
            & = \mathbf{tr}\left( θ ( q_1,Q_0) \right) \subset I_{\mcO_K}(R).
\end{aligned}
$$
For the second equality, we used that $(\ ,\ )$ is $\mcO_{E_0}\tensor_{\mcO_K} W_{\mcO_K}(R)$-bilinear. It is enough to show
$$\mathbf{tr}(θ J_{\mcO_{E_0}}(R))\subset I_{\mcO_K}(R)$$
which follows from the fact that
$$θJ_{\mcO_{E_0}}(R)\subset \mcO_{E_0}\tensor_{\mcO_K}I_{\mcO_K}(R).$$
\end{proof}
In particular, we can define $\dot F_1':Q_1'→P_0$ as $\dot F_1'(q_1) := \dot F_1(θq_1).$ Then $\dot F_1'$ is a Frobenius-linear epimorphism, which can be checked at closed points of $S$. On $Q_0'$, we set $\dot F_0' = \dot F_0$.

Let $\mcL_{\mcO_{E_0}/\mcO_K,κ}(R)$ be the so-called Lubin-Tate $\mcO_{E_0}$-frame
$$\mcL_{\mcO_{E_0}/\mcO_K,κ}(R) := (\mcO_{E_0}\tensor_{\mcO_K} W_{\mcO_K}(R), J_{\mcO_{E_0}}(R), R, σ, \dot{σ})$$
where $\dot{σ}(ξ) = {}^{V^{-1}}(θξ)$, see Example \ref{exLT} (2). The unit $κ\in \mcO_{E_0}\tensor_{\mcO_K} W_{\mcO_K}(R)$ is the element $\dot{σ}(π_{E_0}\tensor 1 - 1\tensor [π_{E_0}])$.
We define the Frobenius $F':P→P$ through the relation
$$F'(x) = κ^{-1}\dot F'((π_{E_0}\tensor 1 - 1\tensor [π_{E_0}])x).$$
The reader may check that this defines the structure of a $\mcL_{\mcO_{E_0}/\mcO_K,κ}(R)$-window $\mcP'=(P',Q',F',\dot{F}')$.

\begin{lem}
The pairing $(\ ,\ )$ is a principal polarization of the $\mcL_{\mcO_{E_0}/\mcO_K,κ}(R)$-window $\mcP'$.
\end{lem}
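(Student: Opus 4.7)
\emph{Plan.} I verify three conditions on $(\,,\,)$: (i) it is a perfect alternating $\mcO_{E_0}\tensor_{\mcO_K}W_{\mcO_K}(R)$-bilinear form; (ii) $(Q', Q') \subset J_{\mcO_{E_0}}(R)$; and (iii) the Frobenius compatibility $(\dot F' q_1, \dot F' q_2) = \dot σ((q_1, q_2))$ for all $q_1, q_2 \in Q'$. Property (i) is built into the construction: $(\,,\,)$ is the unique $\mcO_{E_0}\tensor_{\mcO_K}W_{\mcO_K}(R)$-bilinear lift of $\langle\,,\,\rangle$ along $\mathbf{tr}$, and its perfectness and alternating character transfer from those of $\langle\,,\,\rangle$ using that $ϑ_K$ generates the inverse different (so the trace pairing on $\mcO_{E_0}/\mcO_K$ is perfect) together with $p \neq 2$. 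For (ii), since $P_0$ and $P_1$ are the isotypic summands for the two embeddings $ψ_0, σψ_0 \in Ψ$ of the $\mcO_{E^u}$-action, the hermitian identity $(a\,,\,) = (\,, a^σ\,)$ applied to $a \in \mcO_{E^u}$ with $ψ_0(a) \neq (σψ_0)(a)$ forces $P_0$ and $P_1$ to be totally isotropic. Hence $(Q_0, Q_0) = (Q_1', Q_1') = 0$, while $(Q_1', Q_0) \subset J_{\mcO_{E_0}}(R)$ holds by the very definition of $Q_1'$.

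The main step is (iii). The $\mbZ/2$-grading, with $\dot F$ swapping $P_0 \leftrightarrow P_1$, combined with the isotropicity from (ii), reduces the verification to the cross case $q_1 \in Q_1'$, $q_2 \in Q_0$, where $\dot F' q_1 = \dot F(θq_1) \in P_0$ and $\dot F' q_2 = \dot F(q_2) \in P_1$. Setting $u := θq_1 \in Q_1$ and $v := q_2 \in Q_0$, we have $(u, v) = θ(q_1, q_2)$ by bilinearity, and combining the display-level identity $\langle \dot F\,,\dot F\,\rangle = {}^{V^{-1}}\langle\,,\,\rangle$ with $\mathbf{tr}\circ (\,,\,) = \langle\,,\,\rangle$ gives
\[
\mathbf{tr}\bigl((\dot F u, \dot F v)\bigr) \;=\; {}^{V^{-1}}\mathbf{tr}\bigl(θ(q_1, q_2)\bigr) \;=\; \mathbf{tr}\bigl({}^{V^{-1}}(θ(q_1, q_2))\bigr),
\]
where $θ J_{\mcO_{E_0}}(R) \subset I_{\mcO_K}(R)$ (Lemma~\ref{lemMagic}) ensures ${}^{V^{-1}}$ is defined, and the second equality uses that $\mathbf{tr}$ commutes with ${}^{V^{-1}}$ (being $W_{\mcO_K}(R)$-linear). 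Both $v \mapsto (\dot F u, \dot F v)$ and $v \mapsto {}^{V^{-1}}(θ(q_1, v))$ are $σ$-semi-linear maps $Q_0 \to \mcO_{E_0}\tensor_{\mcO_K}W_{\mcO_K}(R)$, and composition with $\mathbf{tr}$ is a bijection between such maps and $σ$-semi-linear maps $Q_0 \to W_{\mcO_K}(R)$ (by perfectness of the trace pairing). Hence the displayed identity lifts, and the definition $\dot σ(ξ) = {}^{V^{-1}}(θξ)$ then establishes (iii).

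The principal obstacle is precisely this lifting of the identity from the $W_{\mcO_K}(R)$-level to the $\mcO_{E_0}\tensor_{\mcO_K}W_{\mcO_K}(R)$-level in (iii). It relies on the perfectness of the $\mcO_{E_0}/\mcO_K$ trace pairing and on the property $θJ_{\mcO_{E_0}}(R) \subset I_{\mcO_K}(R)$ of the magic element $θ$ to ensure that all inverse Verschiebungs are well-defined. The totally ramified hypothesis on $E_0/K$ enters implicitly throughout the definition of the Lubin--Tate frame $\mcL_{\mcO_{E_0}/\mcO_K, κ}(R)$ and of the magic element $θ$.
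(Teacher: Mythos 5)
Your proof is correct and follows essentially the same route as the paper: check $(Q',Q')\subset J_{\mcO_{E_0}}(R)$ from the isotropy of $P_0,P_1$ and the definition of $Q_1'$, then verify $(\dot F'q_0,\dot F'q_1)=\dot{σ}(q_0,q_1)$ via the chain $(\dot Fq_0,\dot F(θq_1))={}^{V^{-1}}(q_0,θq_1)={}^{V^{-1}}(θ(q_0,q_1))$. Where the paper merely remarks that the middle equality ``holds since it does for the pairing $\langle\ ,\ \rangle$,'' you supply the actual justification — lifting the identity along $\mathbf{tr}$ using the perfectness of the trace pairing and the $σ$-semilinearity of both sides over $\mcO_{E_0}\tensor_{\mcO_K}W_{\mcO_K}(R)$ (the semilinearity over the full ring $\mcO_{E_0}\tensor_{\mcO_K}W_{\mcO_K}(R)$, not just over $W_{\mcO_K}(R)$, is what makes composition with $\mathbf{tr}$ injective) — which is a welcome expansion rather than a deviation.
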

\begin{proof}
By definition of $Q'$, the pairing satisfies $(Q',Q')\subset J_{\mcO_{E_0}}(R)$. We verify for $q_0\in Q_0',q_1\in Q_1'$ that
\begin{equation}
\begin{aligned}
(\dot F'q_0,\dot F'q_1) & = (\dot F q_0, \dot F(θq_1))\\
& = {}^{V^{-1}}(q_0, θq_1)\\
& = {}^{V^{-1}}(θ(q_0,q_1)) = \dot{σ}(q_0,q_1).
\end{aligned}
\end{equation}
Here, the second equality holds since it does for the pairing $\langle\ ,\ \rangle$. The third equality used the $\mcO_{E_0}$-bilinearity of the pairing $(\ ,\ )$.
\end{proof}

By Proposition \ref{propLTFunct}, there is a strict morphism of $\mcO_{E_0}$-frames,
$$\mcL_{\mcO_{E_0}/\mcO_K,κ}(R)→\mcL_{\mcO_{E_0}/\mcO_{E_0},κ}(R).$$
Base change along this morphism defines a supersingular strict $\mcO_{E_0}$-module with $\mcO_E$-action and a principal polarization with values in the $\mcO_{E_0}$-frame $\mcL_{\mcO_{E_0}/\mcO_{E_0},κ}(R)$.

The identity on $W_{\mcO_{E_0}}(R)$ defines a $κ/u$-isomorphism to the Witt $\mcO_{E_0}$-frame
$$\mcL_{\mcO_{E_0}/\mcO_{E_0},κ}(R)→ \mcW_{\mcO_{E_0}}(R)$$
where $u$ is the unit $u = V_{π_{E_0}}^{-1}(π_{E_0} - [π_{E_0}])$. There exists a unit $ε\in W_{\mcO_{E_0}}(\mcO_{\breve E})$ such that
$$σ(ε)ε^{-1} = κ/u.$$
Scaling the polarization by $ε$, see Lemma \ref{lemDual}, we get a principally polarized $\mcO_{E_0}$-display $\mcP''=(P'',Q'',F'',\dot{F}'')$ with $\mcO_E$-action. The corresponding strict formal $\mcO_{E_0}$-module is then the hermitian $\mcO_E$-module $\mcC(X)$ we wanted to construct. It is of signature $(r,s)$.

The construction of we made is functorial (in $R$) and compatible with the Rosati involutions.

\emph{Second step: $\mcC$ is an equivalence over reduced schemes in characteristic $p$.}

We will construct a quasi-inverse. Let $R$ be a reduced $\Spf \mcO_{\breve E}$-scheme and let $\mcP' = (P',Q',F',\dot F')$ be the $\mcL_{\mcO_{E_0}/\mcO_K,κ}(R)$-window equipped with an $\mcO_E$-action $ι$ and a principal polarization $λ$ associated to a hermitian $\mcO_E$-module over $R$. We assume that $\mcP'$ is of signature $(r,s)$. Here, $κ$ is the unit ${}^{V^{-1}}(θ(π_{E_0}\tensor 1 - 1\tensor [π_{E_0}]))$ from the previous paragraph.

The $\mcO_E$-action induces gradings $P' = P_0'\oplus P_1'$ and $Q' = Q_0'\oplus Q_1'$. We set $P := P'$ together with its $\mcO_E$-action.

The polarization $λ$ induces a perfect pairing
$$(\ ,\ ):P_0\times P_1→\mcO_{E_0}\tensor_{\mcO_K} W_{\mcO_K}(R)$$
such that
$$Q_1' = \{ p_1\in P_1 \mid (p_1,Q_0') \subset J_{\mcO_{E_0}}(R)\}.$$
We set $Q_0 := Q_0'$ and
$$Q_1 := \{ p_1\in P_1 \mid (p_1, Q_0) \subset \mcO_{E_0}\tensor_{\mcO_K} I_{\mcO_K}(R)\}.$$

\emph{Claim: $Q_1 = θQ_1'+I_{\mcO_K}(R)P_1$.}\\
The relation $\supseteq$ is clear, so we only need to check that $Q_1 \subseteq θQ_1'$ mod $I_{\mcO_K}(R)P_1$. Let us denote by $\overbar P_0$, $\overbar Q_0$, etc.\! the quotients of the various modules by $I_{\mcO_K}(R)P$. Then the pairing $(\ ,\  )$ induces a perfect pairing
$$\overbar P_0 \times \overbar P_1 → \mcO_{E_0}\tensor_{\mcO_K} R \iso \mcO_{E_0}/π_K \tensor_{\mcO_K/π_K} R.$$
Here, we used that $π_K = 0$ in $R$. We have
$$\begin{aligned}
π_{E_0}\overbar P_0 \subset \overbar{Q}_0 &\subset \overbar{P}_0 &\tand\\
π_{E_0}\overbar P_1 \subset \overbar{Q}'_1 &\subset \overbar{P}_1.&
\end{aligned}$$
Then by definition, $\ob{Q}_1$ is the orthogonal complement of $\ob{Q}_0$. In particular, it is projective of rank $r$ over $R$ and locally a direct summand of $\ob{P}_1$. Let $\ob{θ}$ be the image of $θ$ in $\mcO_{E_0}\tensor_{\mcO_K} \mcO_K/π_K \iso \mcO_{E_0}/π_K$. Then $\ob{θ}\neq 0$, $π_{E_0}\ob{θ} = 0$ and multiplication by $\ob{θ}$ induces an isomorphism
$$\mcO_{E_0}/π_{E_0} \tensor_{\mcO_K} R \iso (π_{E_0}^{e-1})/(π_K) \tensor_{\mcO_K} R.$$
Thus $\ob{θ} \ob{Q}'_1$ is also of rank $r$ over $R$ and locally a direct summand of $\ob{P}_1$. The claim follows from the relation $\ob{θ}\ob{Q}'_1\subseteq \ob{Q}_1$.

Since $R$ is reduced, $W_{\mcO_K}(R)$ is $π_K$-torsion free. Also, there exists an inclusion $R\hookrightarrow \prod_{i\in I} k_i$ into a product of perfect fields and hence $σ(θ)$ is not a zero-divisor in $\mcO_{E_0}\tensor_{\mcO_K} W_{\mcO_K}(R).$ It follows from the claim that $\dot F'\vert_{Q_1}$ is divisible by $σ(θ)$. So we can define a $σ$-linear epimorphism $\dot F:Q_0\oplus Q_1→P_1\oplus P_0$ as
$$\dot F = \dot F'\vert_{Q_0}\oplus σ(θ)^{-1}\dot F'\vert_{Q_1}.$$

We leave it to the reader to check that $(P,Q,F,\dot F)$ is an $\mcO_K$-display with $\mcO_E$-action of signature $(r,s)$ and that $\langle\ ,\ \rangle:=\mathbf{tr}\circ (\ ,\ )$ defines a principal polarization, compatible with the $\mcO_E$-action. This finishes the construction of the quasi-inverse.

\emph{Third step: Identifying the deformation theories of $X$ and $\mcC(X)$.}

Denote by $\mbD$ (resp.\! by $\mbD''$) the $\mcO_K$-crystal of $X$ (resp.\! the $\mcO_{E_0}$-crystal of $\mcC(X)$) on the category of $\mcO_K$-pd-thickenings of $S$ (resp.\! on the category of $\mcO_{E_0}$-pd-thickenings of $S$). Note that both crystals are bigraded by the $\mcO_E$-action and that, in the notation of Step 1 above, $\mbD(S) = P/I_{\mcO_K}(R)P$ and $\mbD''(S) = P''/I_{\mcO_{E_0}}(R)P''$. Furthermore, the Hodge filtration $\mcF\subset \mbD(S)$ is given by $Q/I_{\mcO_K}(R)P$ and is similarly bigraded.

We denote by $\mcD$ the \emph{contraction} of the crystal $\mbD$,
$$
\mcD(\tilde S) := \mbD(\tilde S) \tensor_{\mcO_E\tensor_{\mcO_K} \mcO_{\tilde S}} \mcO_{\tilde S},
$$
for any $\mcO_K$-pd-thickening $S→\tilde {S}$ over $\mcO_{\breve E}$. Let $\ob{J}$ be the kernel of the projection $\mcO_E\tensor_{\mcO_K}\mcO_S→\mcO_S$ so that we have $\mcD(S) = \mbD_0(S)/\ob{J}\mbD_0(S)$. By the signature condition, there is an inclusion $\ob{J}\mbD_0(S)\subset \mcF_0\subset \mbD_0(S)$ which defines a filtration $\mcF_0/\ob{J}\mbD_0(S)\subset \mcD(S)$. We call it the Hodge filtration on $\mcD(S)$. 

\begin{lem}
Let $S→\tilde S$ be a square-zero thickening. There is an $\mcO_E$-linear identification of the contraction $\mcD$ of the crystal of $X$ and the $0$-component of the crystal of $\mcC(X)$ evaluated at $\tilde S$,
$$\mcD(\tilde S)\iso \mbD_0''(\tilde S).$$
This identification is functorial in $X$. In the case $S = \tilde S$, the Hodge filtrations on both sides agree.
\end{lem}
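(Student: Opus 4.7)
The plan is to reduce to the case $S = \tilde S$ and then trace through the explicit construction of $\mcC(X)$ to identify both sides at the level of underlying modules. Both crystals are determined by their displays: $\mbD(\tilde S)$ is governed by the $\mcO_K$-display $\mcP = (P,Q,F,\dot F)$ of $X$, and $\mbD''(\tilde S)$ by the $\mcO_{E_0}$-display $\mcP'' = (P'',Q'',F'',\dot F'')$ of $\mcC(X)$. For a square-zero pd-thickening, each crystal evaluation is computed by base change of the corresponding display along the canonical lift $R \hookrightarrow W_{\mcO_K}(\tilde R)$ (resp.\ $W_{\mcO_{E_0}}(\tilde R)$), and these base changes commute with the tensor-product operations defining the contraction $\mcD$ and the $0$-component $\mcD''_0$. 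So it suffices to construct a functorial isomorphism $\mcD(S) \iso \mcD''_0(S)$ compatible with the Hodge filtrations and then propagate it via the crystal formalism.

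First I would compute the left-hand side directly. Using the $\mcO_{E^u}$-bigrading $P = P_0 \oplus P_1$ and the fact that the contraction uses the structure map $\mcO_E \to R$, which acts as $\psi_0$ on $\mcO_{E^u}$ while $\mcO_{E^u}$ acts on $P_1$ via $\sigma\psi_0$, the $P_1$-summand is annihilated. This yields
$$\mcD(S) \iso P_0 \otimes_{\mcO_{E_0} \otimes_{\mcO_K} W_{\mcO_K}(R)} R.$$
Next I would analyze the right-hand side. In the construction of $\mcP''$ from $\mcP$, only the frame base change $\mcL_{\mcO_{E_0}/\mcO_K,κ}(R) \to \mcL_{\mcO_{E_0}/\mcO_{E_0},κ}(R)$ alters the underlying module, while the earlier passage $\mcP \to \mcP'$ and the final $ε$-twist landing in the Witt $\mcO_{E_0}$-frame are identities on modules. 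Hence $P''_0 = P_0 \otimes_{\mcO_{E_0} \otimes_{\mcO_K} W_{\mcO_K}(R)} W_{\mcO_{E_0}}(R)$, and reducing modulo $I_{\mcO_{E_0}}(R)$ gives
$$\mcD''_0(S) = P_0 \otimes_{\mcO_{E_0} \otimes_{\mcO_K} W_{\mcO_K}(R)} R,$$
which matches $\mcD(S)$ canonically.

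Regarding the Hodge filtration in the case $S = \tilde S$: both sides are the images of $Q_0$, respectively $Q''_0$, in the above quotients. Since the construction sets $Q'_0 := Q_0$ in $\mcP'$ and this summand is unaffected by the subsequent base change to the Witt $\mcO_{E_0}$-frame, $Q''_0$ corresponds to $Q_0$ under the identification of the previous paragraph, so the two Hodge filtrations agree. Functoriality in $X$ is immediate from the functoriality of each of the three construction steps producing $\mcP''$.

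The technical core is therefore the base-change computation in the second paragraph. The main obstacle is bookkeeping: one has to verify that the \emph{two different} ways of passing from the $\mcO_E \otimes_{\mcO_K} W_{\mcO_K}(R)$-module $P_0$ down to an $R$-module — namely, reducing modulo $I_{\mcO_K}(R)$ and then contracting by $\overbar{J}$ on the left, versus base-changing to $W_{\mcO_{E_0}}(R)$ and then reducing modulo $I_{\mcO_{E_0}}(R)$ on the right — produce canonically isomorphic results. This is a formal identity of tensor products using the compatibility of the pd-ideals $I_{\mcO_K}(R)$, $\overbar{J}$, and $I_{\mcO_{E_0}}(R)$ with the structure map $\mcO_{E_0} \otimes_{\mcO_K} W_{\mcO_K}(R) \to W_{\mcO_{E_0}}(R)$ supplied by the Lubin-Tate formalism. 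Once set up, the extension to general square-zero $\tilde S$ is formal from the representability of the crystal values by the displays.
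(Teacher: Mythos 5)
Your computation in the case $S=\tilde S$ is exactly the paper's: the chain $\mcD(S)=P_0/J_{\mcO_{E_0}}(R)P_0=P'_0/J_{\mcO_{E_0}}(R)P'_0=P''_0/I_{\mcO_{E_0}}(R)P''_0=\mcD''_0(S)$, justified by the observations that the passage $\mcP\to\mcP'$ and the final unit twist leave the underlying module untouched while the frame base change $\mcL_{\mcO_{E_0}/\mcO_K,κ}(R)\to\mcL_{\mcO_{E_0}/\mcO_{E_0},κ}(R)$ is $-\otimes_{\mcO_{E_0}\otimes_{\mcO_K}W_{\mcO_K}(R)}W_{\mcO_{E_0}}(R)$ and carries $J_{\mcO_{E_0}}(R)$ to $I_{\mcO_{E_0}}(R)$. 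Your identification of the Hodge filtrations as the common image of $Q_0$ (using $Q'_0=Q_0$ and the signature condition $J_{\mcO_{E_0}}(R)P_0\subset Q_0$) also matches the paper, which phrases it as: $Q''_0$ is the preimage of $(Q_0+J_{\mcO_{E_0}}(R)P_0)/I_{\mcO_K}(R)P_0$.

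The one real gap is the passage to a non-trivial square-zero thickening. There is no canonical ring map $R\hookrightarrow W_{\mcO_K}(\tilde R)$ along which one could "base change the display"; the crystal value $\mbD(\tilde S)$ is computed from the unique lift of the window over the relative frame $W_{\mcO_K}(\tilde R)\to R$, and the assertion that the lifts on the two sides still correspond is precisely what has to be proved — it is not formal from the $S=\tilde S$ case. The paper closes this by using formal smoothness: locally on $S$ one deforms $X$ to a hermitian $\mcO_E$-$\mcO_K$-module $\tilde X$ of signature $(r,s)$ over $\tilde S$; then $\mcC(\tilde X)$ is a deformation of $\mcC(X)$, the values $\mbD(\tilde S)$ and $\mbD''(\tilde S)$ are computed from the displays of $\tilde X$ and $\mcC(\tilde X)$, and your $S=\tilde S$ computation applied over $\tilde S$ gives the identification. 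You should either adopt this local-lifting argument or verify directly that the construction of $\mcC$ is compatible with the relative Witt frames attached to the pd-thickening; as written, the last paragraph of your proposal asserts rather than proves this compatibility.
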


\begin{cor}\label{corCrucial}
Let $S→\tilde S$ be a square-zero thickening and let $X$ be a hermitian $\mcO_E$-$\mcO_K$-module of signature $(r,s)$ over $S$. Then there is a natural bijection between deformations of $X$ to $\tilde S$ and deformations of $\mcC(X)$ to $\tilde S$.

Furthermore, let $r',s'\in \mbZ_{\geq 0}$ and let $Y$ be an $\mcO_E$-$\mcO_K$-module of signature $(r',s')$ over $S$. Let $\tilde{X},\tilde{Y}$ be deformation of the two modules to $\tilde S$. Then an $\mcO_E$-linear homomorphism $f:X→Y$ lifts to $\tilde{X}→\tilde{Y}$ if and only if $\mcC(f)$ lifts to $\mcC(\tilde{X})→\mcC(\tilde{Y})$.
\end{cor}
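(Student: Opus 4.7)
The plan is to deduce both assertions from Grothendieck--Messing on each side, using the preceding lemma as the dictionary between the two deformation theories.

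First, I would set up the object-deformation bijection. By Grothendieck--Messing for strict $\mcO_K$-modules (the appendix provides the relevant display-theoretic input), deformations of $X$ from $S$ to $\tilde S$ are classified by $\mcO_E$-stable isotropic lifts $\tilde{\mcF} \subset \mbD(\tilde S)$ of $\mcF$ on which $\mcO_E$ acts through the structure map on the $ψ_0$-component. The signature bookkeeping already carried out in the proof of Proposition \ref{propSmoothness} forces all $\tilde{\mcF}_ψ$ with $ψ \notin \{ψ_0, σψ_0\}$ to take predetermined values, and isotropy gives $\tilde{\mcF}_{σψ_0} = \tilde{\mcF}_{ψ_0}^\perp$, so the real datum is a lift of the Hodge quotient $\mcD(S) \twoheadrightarrow \Lie_{ψ_0}(X)$ to a quotient of $\mcD(\tilde S)$. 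Applying the same argument to $\mcC(X)$ (now via Grothendieck--Messing for strict $\mcO_{E_0}$-modules) identifies deformations of $\mcC(X)$ with lifts of the Hodge filtration on $\mcD''_0(S)$ inside $\mcD''_0(\tilde S)$. The preceding lemma supplies an $\mcO_E$-linear identification $\mcD(\tilde S) \iso \mcD''_0(\tilde S)$ which matches the Hodge filtrations in the case $S = \tilde S$, so the two sets of lifts coincide.

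For the morphism assertion, I would invoke the Grothendieck--Messing criterion in the form: an $\mcO_E$-linear $f \colon X → Y$ lifts to $\tilde X → \tilde Y$ if and only if the crystal map $\mbD_f(\tilde S)$ sends $\tilde{\mcF}_X$ into $\tilde{\mcF}_Y$. Signature considerations again reduce this to a condition on the $ψ_0$-component alone, namely that the contracted map $\mcD_X(\tilde S) → \mcD_Y(\tilde S)$ carries the lifted Hodge filtration on the source into that on the target. The functoriality asserted in the preceding lemma transports this verbatim to the analogous condition for $\mcC(f)$ on the $ψ_0$-components of the $\mcO_{E_0}$-crystals of $\mcC(X)$ and $\mcC(Y)$, giving the stated equivalence.

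The main obstacle is the preceding lemma itself; granting it, the corollary is a formal consequence of Grothendieck--Messing plus signature bookkeeping. The only point to verify with any care is that the signature condition genuinely determines all $\tilde{\mcF}_ψ$ away from $ψ = ψ_0$: for $ψ \in Ψ_0 \setminus \{ψ_0\}$ and $ψ \in Ψ_1 \setminus \{σψ_0\}$ this is exactly condition (i') in the earlier remark, applied on $\tilde S$, while the $σψ_0$-component is forced by isotropy of $\tilde{\mcF}$ against a principal polarization.
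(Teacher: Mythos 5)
Your argument is correct and is essentially the paper's own proof: both reduce, via Grothendieck--Messing and the signature bookkeeping from Proposition \ref{propSmoothness}, to lifting the Hodge filtration in the contracted crystal $\mcD(\tilde S)$ (resp.\ in $\mcD_0''(\tilde S)$), and then invoke the preceding lemma's functorial identification of these two crystals with matching Hodge filtrations over $S$; the morphism statement is handled by the same mechanism. No further comment is needed.
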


\begin{proof}
We first do the case $\tilde S = S$. Keeping the notation from Step 1, by construction,
$$\mcD(S)=(P_0/J_{\mcO_{E_0}}(R)P_0) = (P'_0/J_{\mcO_{E_0}}(R)P'_0) = P''_0/I_{\mcO_{E_0}}(R)P''_0 = \mbD_0''(S).$$
The submodule $Q_0''\subset P_0''$ was defined as the inverse image of the Hodge filtration
$$(Q_0+J_{\mcO_{E_0}}(R)P_0)/I_{\mcO_K}(R)P_0\subset P_0/I_{\mcO_K}(R)P_0$$
and hence the Hodge filtrations on both sides agree.

For a non-trivial square-zero thickening $S→\tilde S$, we argue as follows. Locally on $S$, we can deform $X$ to a hermitian $\mcO_E$-$\mcO_K$-module $\tilde X$ of signature $(r,s)$ on $\tilde S$. Then $\mcC(\tilde X)$ is a deformation of $\mcC(X)$. The values $\mbD(\tilde S)$ and $\mbD''(\tilde S)$ can then be computed from the displays of $\tilde X$ and $\mcC(\tilde X)$ and the above arguments apply.

The corollary is now an immediate application of Grothendieck-Messing deformation theory. Namely as explained in the proof of Proposition \ref{propSmoothness}, deformations of $X$ (resp.\! of $\mcC(X)$) are in bijection with liftings of the Hodge filtration in $\mcD(S)$ (resp.\! of the Hodge filtration in $\mbD_0''(S)$). A similar result holds for homomorphisms.
\end{proof}

\emph{End of Proof:} Let $R$ be a noetherian $\mcO_{\breve E}$-algebra in which $p$ is nilpotent and set $\mfn:=\ker(R→R_{\mr{red}})$. Then Step 2 applies to $R_{\mr{red}}$. Applying Corollary \ref{corCrucial} to the successive quotients $R/\mfn^{i+1}→R/\mfn^{i}$, we get both the essential surjectivity and fully faithfulness of $\mcC$ on $R$-valued points. This finishes the proof of the proposition and hence of Theorem \ref{thmEquiv}. \qed

\subsection{Proof of Theorem \ref{mainThm1}}
We fix a quasi-isogeny on framing objects, which is possible by Proposition \ref{propIsocrys} and Lemma \ref{lemExistframing},
\begin{equation}\label{eqFram}
α:\mcC(\mbX_{E_0/K,(r,s)}) \iso \mbX_{E_0,(r,s)}.
\end{equation}
Alternatively, we \emph{choose} $\mcC(\mbX_{E_0/K,(r,s)})$ as the framing object in the definition of $\mcN_{E_0,(r,s)}$. Together with $\mcC$, this induces a morphism
$$c:\mcN_{E_0/K,(r,s)}→\mcN_{E_0,(r,s)}$$
which is an isomorphism by Theorem \ref{thmEquiv}. Furthermore, $c$ is equivariant with respect to the isomorphism
$$\begin{aligned}
U(\mbX_{E_0/K,(r,s)})&→U(\mcC(\mbX_{E_0/K,(r,s)})) & → & U(\mbX_{E_0,(r,s)})\\
g & \longmapsto \mcC(g) & \longmapsto & α\mcC(g)α^{-1}.
\end{aligned}$$
\qed

\section{Cycles on unitary RZ-spaces}
\label{sect:cycles}
\subsection{Definitions and compatibility with $\mcC$}

Let $\mbQ_p\subset K \subset E_0$ be finite extensions and let $E/E_0$ be an unramified quadratic extension. We choose uniformizers $π_{K}\in \mcO_{K}$ and $π_{E_0}\in \mcO_{E_0}$ in order to make sense of polarizations of strict $\mcO_{K}$-modules (resp.\! strict $\mcO_{E_0}$-modules).
Let $\mbX_{E_0/K,(r,s)}$ over $\mbF$ be the framing object for $\mcN_{E_0/K,(r,s)}$.
Recall that $\End^0_{E}(\mbX_{E_0/K,(r,s)})$ $\iso M_n(E)$ is isomorphic to a matrix ring over $E$, see Proposition \ref{propIsocrys}.

\begin{defn}\label{defZg}
(1) For a quasi-endomorphism $x\in \End^0_{E}(\mbX_{E_0/K,(r,s)})$, we denote by $\mcZ(x)\subset \mcN_{E_0/K,(r,s)}$ the closed formal subscheme of points $(X,ρ)$ such that the quasi-endomorphism $ρ^{-1}xρ$ is an actual endomorphism, see \cite[Proposition 2.9]{RZ}.\\
(2) For a subring $R\subset \End^0_{E}(\mbX_{E_0/K,(r,s)})$, we denote by $\mcZ(R)\subset \mcN_{E_0/K,(1,n-1)}$ the closed formal subscheme of points $(X,ρ)$ such that $ρ^{-1}Rρ\subset \End(X)$. In other words,
$$\mcZ(R) = \bigcap_{x\in R} \mcZ(x).$$
\end{defn}

\begin{rmk}
Note that $\mcZ(x) = \mcZ(x^*)$ where $*$ denotes the Rosati involution, see Definition \ref{defRosati}. Also, $\mcZ(x)$ only depends on the $\mcO_E$-algebra $\mcO_E[x]$ spanned by $x$. In particular, there are equalities $\mcZ(x) = \mcZ(\mcO_E[x,x^*])$ and $\mcZ(R) = \mcZ(\mcO_E[R,R^*]).$
\end{rmk}

The second kind of cycle we want to consider is defined as follows. Let $\mbY_{E_0/K}$ be a hermitian $\mcO_E$-$\mcO_K$-module over $\mbF$ of signature $(0,1)$. Such an object is even unique up to isomorphism which can be checked with Dieudonné theory. Let $\mcY_{E_0/K}$ be a deformation of it to $\Spf \mcO_{\breve E}$. Such a deformation is unique up to isomorphism according to Proposition \ref{propSmoothness}. Via base change, $\mcY_{E_0/K}$ is defined on any $\Spf \mcO_{\breve E}$-scheme. We also set $\mbY_{E_0}:=\mbY_{E_0/E_0}$ and $\mcY_{E_0} := \mcY_{E_0/E_0}$.

\begin{defn}\label{defKRdiv}
For a quasi-homomorphism $j\in \Hom^0_{E}(\mbY_{E_0/K}, \mbX_{E_0/K,(r,s)})$, we denote by $\mcZ(j)\subset \mcN_{E_0/K,(r,s)}$ the closed formal subscheme of points $(X,ρ)$ such that the quasi-homomorphism
$$ρ^{-1}j:\mcY_{E_0/K}→X$$
is an actual homomorphism.
\end{defn}

Again, the existence of such a closed formal subscheme follows from \cite[Proposition 2.9]{RZ}. As above, $\mcZ(j) = \mcZ(j^*)$ where $j^*:\mbX_{E_0/K,(r,s)}→\mbY_{E_0/K}$ is the Rosati adjoint of $j$ and where $\mcZ(j^*)$ denotes the locus of $(X,ρ)$ such that $j^*ρ:X→\mcY_{E_0/K}$ is a homomorphism. Also, $\mcZ(j)$ only depends on the span $\mcO_Ej$.
The case of interest for the cycles $\mcZ(j)$ is that of signature $(1,n-1)$. In this case, $\mcZ(j)\subset \mcN_{E_0/K,(1,n-1)}$ is a divisor whenever $j\neq 0$. These divisors were first considered by Kudla and Rapoport, see \cite{KRlocal}.

\begin{rmk} \label{rmkComp1}
Let $\mcC$ be the functor from Theorem \ref{thmEquiv} and fix a quasi-isogeny ($E$-linear, preserving the polarization)
$$α:\mcC(\mbX_{E_0/K,(r,s)}) \overset{\iso}{→} \mbX_{E_0,(r,s)}$$
and an isomorphism ($E$-linear, preserving the polarization)
$$β:\mcC(\mcY_{E_0/K}) \overset{\iso}{→}  \mcY_{E_0}.$$
It then follows from Theorem \ref{thmEquiv} and the construction of $c$ in Theorem \ref{mainThm1} that
$$c:\mcZ(x)\iso \mcZ(α\mcC(x)α^{-1})$$
and
$$c:\mcZ(j) \iso \mcZ(α\mcC(j)β^{-1}).$$
\end{rmk}

\subsection{The cycle $\mcZ(\mcO_A)$ for $A/E$ a field extension}
\label{subsect:ZOE}
From now on, we restrict to the case of signature $(1,n-1)$. Our aim is to study the formal scheme $\mcZ(R),\ R\subset \End^0(\mbX_{E_0/K,(1,n-1)})$ in the special case that $R$ is the ring of integers in a field extension $A/E$ whose image in $\End^0(\mbX_{E_0/K,(1,n-1)})$ is stable under the Rosati involution. By the previous results, we may assume $K=E_0$. Let $A_0:=A^{*=\mr{id}}$ and $d:=[A_0:E_0]$ which divides $n$, say $n = dn'$. Since $*$ acts non-trivially on the residue field, $A/A_0$ is unramified quadratic and the inertia degree $f$ of $A_0/E_0$ is odd.

Let $A^u\subset A$ be the maximal subfield that is unramified over $E_0$.
Set $Ψ := \Hom_{E_0}(A^u,\breve E) =Ψ_0\sqcup Ψ_1$ as the unique extension of the decomposition $\{0,1\} = \{0\}\sqcup \{1\}$ for $E/E_0$. Together with the action of $\mcO_A$, $\mbX_{E_0,(1,n-1)}$ becomes a framing object for $\mcN_{A_0/E_0,(1,n'-1)}$.
Recall that in the definition of the signature condition \ref{defSignature}, we also had to choose an element $ψ_0\in Ψ_0$. We now deviate slightly from the previous setting by letting $ψ_0$ vary and denote the associated moduli problem by $\mcN_{A_0/E_0,(1,n'-1)}^{ψ_0}$. Then forgetting the $A$-action induces a morphism
\begin{equation}\label{eqEmbed}
\mcN_{A_0/E_0,(1,n'-1)}^{ψ_0}→\mcN_{E_0,(1,n-1)}
\end{equation}
which is a closed immersion.

\begin{thm}\label{thmZoe}
The above forgetful map induces an isomorphism
$$\coprod_{ψ_0\in Ψ_0} \mcN^{ψ_0}_{A_0/E_0,(1,n'-1)} \overset{\iso}{→} \mcZ(\mcO_A).$$
\end{thm}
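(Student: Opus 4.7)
The strategy is to construct the map as a disjoint union of closed immersions, and then to show it is an isomorphism by combining a $\Psi$-graded signature computation with a deformation-theoretic comparison.

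For the forward direction, I would first identify each framing object $\mbX_{E_0/\mbQ_p,(1,n'-1)}^{\psi_0}$ with $\mbX_{\mbQ_p,(1,n-1)}$ as hermitian $\mbZ_{p^2}$-modules (possible since both lie in a single quasi-isogeny class, by Proposition \ref{propIsocrys}), transporting the $\mcO_E$-action via the fixed embedding $E\hookrightarrow \End^0_{\mbQ_{p^2}}(\mbX_{\mbQ_p,(1,n-1)})$. The forgetful map \eqref{eqEmbed} is then a closed immersion by the general representability results of \cite{RZ}, and its image clearly lies in $\mcZ(\mcO_E)$ since the $\mcO_E$-action on $X$ lifts by construction. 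A direct computation using the rank profile (i') in the remark following Definition \ref{defSignature} shows that the $\mcO_E$-signature $(1,n'-1)$ with distinguished $\psi_0$ induces the $\mbQ_{p^2}$-signature $(1,n-1)$: the single piece $\Lie_{\psi_0}(X)$ contributes rank $1$ to $\Lie_0(X)$, while $\Lie_1(X)$ acquires rank $(f-1)n'e+(n'e-1)=n-1$.

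For the inverse direction, given $(X,\iota',\lambda,\rho)\in \mcZ(\mcO_E)(S)$, I would lift the $\mcO_E$-action via $\rho$ to an action $\iota$ on $X$. The $\Psi$-grading $\Lie(X)=\bigoplus_{\psi\in \Psi} \Lie_\psi(X)$ comes from the idempotents in $\mcO_{E^u}\otimes_{\mbZ_p}\breve\mbZ_p$ acting on the locally free Lie algebra, so each summand is a locally free $\mcO_S$-module. The $\mbQ_{p^2}$-signature $(1,n-1)$ forces $\sum_{\psi\in \Psi_0}\mathrm{rk}\,\Lie_\psi(X)=1$; since the summands are nonnegative integers with constant sum, each is locally constant on $S$, and exactly one $\psi_0\in \Psi_0$ has $\mathrm{rk}\,\Lie_{\psi_0}(X)=1$ on each connected component. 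This produces a decomposition $\mcZ(\mcO_E)=\coprod_{\psi_0\in \Psi_0}\mcZ(\mcO_E)_{\psi_0}$. The Lagrangian condition on the Hodge filtration under the polarization then forces $\mathrm{rk}\,\Lie_\psi+\mathrm{rk}\,\Lie_{\sigma\psi}=n'e$; combined with the above, this reproduces the full rank profile of signature $(1,n'-1)$ with distinguished $\psi_0$, establishing condition (i) of Definition \ref{defSignature}.

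The main obstacle is condition (ii) of Definition \ref{defSignature}: that $\mcO_E$ act on the rank-one module $\Lie_{\psi_0}(X)$ via the structural character $\mcO_E\to \mcO_{\breve E}\to \mcO_S$, and not merely via some other extension of $\psi_0\vert_{\mcO_{E^u}}$. At points of characteristic $p$ this is automatic, because any character of $\mcO_E$ to a characteristic-$p$ field kills a uniformizer of $E_0$, matching the structural character. To promote the fiberwise agreement to a statement in families, I would invoke Grothendieck-Messing deformation theory in the spirit of Proposition \ref{propSmoothness}: the forward map is a closed immersion from $\mcN^{\psi_0}_{E_0/\mbQ_p,(1,n'-1)}$, formally smooth of relative dimension $n'-1$ over $\Spf\mcO_{\breve E}$, into the component $\mcZ(\mcO_E)_{\psi_0}$, and it is bijective on closed points. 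Comparing tangent/deformation functors then forces the closed immersion to be an isomorphism component by component, which completes the proof.
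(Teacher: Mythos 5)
Your first two paragraphs track the paper's argument: the forgetful map is a monomorphism (a closed immersion) with image in $\mcZ(\mcO_E)$, and the decomposition of $\mcZ(\mcO_E)$ according to the unique $\psi_0\in\Psi_0$ with $\Lie_{\psi_0}(X)\neq 0$, together with the rank bookkeeping that yields condition (i), is exactly what the paper does. The divergence — and the gap — is in your last paragraph. You treat condition (ii) of Definition \ref{defSignature} as something to be verified against a pre-existing ``structural character $\mcO_E\to\mcO_{\breve E}\to\mcO_S$''. But an $S$-point of $\mcZ(\mcO_E)$ only comes with $S$ a scheme over $\Spf\breve{\mbZ}_p$ (the base of $\mcN_{\mbQ_p,(1,n-1)}$); no morphism $S\to\Spf\mcO_{\breve E}$ is given, and without one the assertion that $X$ defines an $S$-point of $\mcN^{\psi_0}_{E_0/\mbQ_p,(1,n'-1)}$ does not even parse, since that moduli problem is defined on schemes over $\Spf\mcO_{\breve E}$. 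The step your write-up is missing is the paper's key (and short) observation: because $\Lie_{\psi_0}(X)$ is a line bundle, the $\mcO_E$-action on it is a ring homomorphism $\mcO_E\to\mcO_S$ extending $\psi_0\vert_{\mcO_{E^u}}$, i.e.\ precisely a morphism $S\to\Spf\big(\mcO_E\tensor_{\mcO_{E^u},\psi_0}\breve{\mbZ}_{p^2}\big)=\Spf\mcO_{\breve E}$. One \emph{defines} the structure morphism to be this character, and condition (ii) then holds tautologically. (This is also exactly why the construction is restricted to signature $(1,n-1)$.)

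Once this is in place, your deformation-theoretic detour is unnecessary, and as sketched it does not actually close the gap: to compare the deformation functor of the component of $\mcZ(\mcO_E)$ with that of $\mcN^{\psi_0}_{E_0/\mbQ_p,(1,n'-1)}$ over $\Spf\breve{\mbZ}_p$, one must account for the direction in which the character $\mcO_E\to\mcO_{S'}$ itself deforms, and identifying that direction with the lift of the $\mcO_{\breve E}$-structure is precisely the observation above — so the argument presupposes the point it is meant to establish. Moreover, the direct functorial argument is what gives the identification of moduli functors (not merely an abstract isomorphism of formal schemes) asserted by the theorem.
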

\begin{proof}
Clearly, the map is a monomorphism with image contained in $\mcZ(\mcO_A)$. So we only have to show $\mcZ(\mcO_A)\subset \coprod \mcN^{ψ_0}_{A_0/E_0,(1,n'-1)}$ which we do on $S$-valued points with $S$ connected. Let $(X,ι,λ,ρ)\in \mcZ(\mcO_A)(S)$. Then the $\mcO_{A^u}$-action induces a decomposition of the Lie algebra as
$$\Lie(X)=\prod_{ψ\in Ψ} \Lie(X)_{ψ}$$
which extends the bigrading from the $\mcO_E$-action. Since $X$ has signature $(1,n-1)$, there is a unique index $ψ_0\in Ψ_0$ (amongst the indices in $\Psi_0$) such that $\Lie(X)_{ψ_0} \neq 0$. This summand is then a line bundle on $S$. The $\mcO_A$-action on $\Lie(X)_{ψ_0}$ induces a morphism
$$S→\Spf \mcO_A\tensor_{\mcO_{A^u},ψ_0}\mcO_{\breve E} = \Spf \mcO_{\breve A}.$$
(This explains why we restricted ourselves to the case of signature $(1,n-1)$.) Then by definition, $X$ is an $S$-valued point of $\mcN^{ψ_0}_{A_0/E_0,(1,n'-1)}$.
\end{proof}

\begin{rmk}\label{rmkComp2}
The identification is compatible with the formation of the cycle $\mcZ(R)$ in the following sense. Assume that $R\subset \End^0_E(\mbX_{E_0,(1,n-1)})$ is such that $\mcO_A\subset R$. Then
$$\mcZ(R)\subset \mcZ(\mcO_A)$$
and
$$c:\coprod_{ψ_0\in Ψ_0} \mcZ(R)^{ψ_0}\iso \mcZ(R).$$
Here, the source are the cycles in the RZ-spaces $\mcN_{A_0/E_0,(1,n'-1)}^{ψ_0}$. Furthermore, the $\mcZ(R)^{ψ_0}$ are all isomorphic which can be seen by identifying the $\mcN_{A_0/E_0,(1,n'-1)}^{ψ_0}$ with $\mcN_{A_0,(1,n'-1)}$ and using Remark \ref{rmkComp1}.
\end{rmk}

We fix a generator $ϑ_A$ of the inverse different of $A_0/E_0$ which induces an $\mcO_{A_0}$-linear isomorphism
$$ϕ:\mcO_{A_0}→\Hom_{\mcO_{E_0}}(\mcO_{A_0},\mcO_{E_0}),\ a \mapsto ϕ(a)(-) := \mr{tr_{A_0/E_0}}(ϑ_Aa-).$$
\begin{defn}\label{defSerre}
To any hermitian $\mcO_E$-module $(Y,ι,λ)$, we associate a hermitian $\mcO_A$-$\mcO_{E_0}$-module $(\mcO_{A_0}\tensor_{\mcO_{E_0}} Y,ι',λ')$ as follows. The $\mcO_{E_0}$-module $\mcO_{A_0}\tensor_{\mcO_{E_0}} Y$ is given by the Serre tensor construction. The $\mcO_A= \mcO_{A_0}\tensor_{\mcO_{E_0}} \mcO_E$-action $ι'$ is given by the natural $\mcO_{A_0}$-action on the first factor and $λ'$ is defined as,
$$λ'(a\tensor y) := ϕ(a)\tensor λ(y).$$
\end{defn}

Let $j\in \Hom^0(\mbY_{E_0}, \mbX_{E_0, (1,n-1)})$ be a quasi-homomorphism with associated divisor $\mcZ(j)\subset \mcN_{E_0,(1,n-1)}$. 

\begin{lem}\label{lemIsoCan}
There is an isomorphism of hermitian $\mcO_A$-$\mcO_{E_0}$-modules over $\mbF$,
$$\mcO_{A_0}\tensor_{\mcO_{E_0}} \mbY_{E_0} \iso \mbY_{A_0/E_0}.$$
Similarly for their deformations to $\Spf \mcO_{\breve E}$,
$$\mcO_{A_0}\tensor_{\mcO_{E_0}} \mcY_{E_0} \iso \mcY_{A_0/E_0}.$$
\end{lem}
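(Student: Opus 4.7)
The plan is to show that the Serre tensor construction of Definition \ref{defSerre}, applied to $\mbY_{\mbQ_p,(0,1)}$ and to its canonical deformation $\mcY_{\mbQ_p,(0,1)}$, produces a hermitian $\mcO_E$-$\mbZ_p$-module of signature $(0,1)$, and then to invoke uniqueness. Both isomorphisms will follow: the first from the statement right after Definition \ref{defKRdiv} that hermitian $\mcO_E$-$\mbZ_p$-modules of signature $(0,1)$ over $\mbF$ are unique up to isomorphism, and the second from Proposition \ref{propSmoothness}, which says that $\mcN_{E_0/\mbQ_p,(0,1)}$ is formally smooth of relative dimension $rs = 0$ over $\Spf\mcO_{\breve E}$, hence formally étale, so lifts of a given $\mbF$-point are unique up to unique isomorphism.

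Since we are in the setting of Subsection \ref{subsect:ZOE} with $f$ odd, the natural map $\mcO_{E_0}\tensor_{\mbZ_p}\mbZ_{p^2}\to \mcO_E$ is an isomorphism and $\sigma$ corresponds to $\mr{id}\tensor\sigma_{\mbQ_{p^2}/\mbQ_p}$. First I would verify that $(\mcO_{E_0}\tensor\mbY_{\mbQ_p,(0,1)},\iota',\lambda')$ is a hermitian $\mcO_E$-$\mbZ_p$-module. Under the identification $(\mcO_{E_0}\tensor\mbY_{\mbQ_p,(0,1)})^\vee\iso\Hom_{\mbZ_p}(\mcO_{E_0},\mbZ_p)\tensor \mbY_{\mbQ_p,(0,1)}^\vee$, the polarization $\lambda' = \phi\tensor\lambda$ is principal because $\phi$ is (by the defining property of $\vartheta_E$) and so is $\lambda$. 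Its self-duality $(\lambda')^\vee = \lambda'$ follows from $\mr{tr}_{E_0/\mbQ_p}(\vartheta_E ab) = \mr{tr}_{E_0/\mbQ_p}(\vartheta_E ba)$ together with $\lambda^\vee = \lambda$. The Rosati compatibility $(\lambda')^{-1}\iota'(a^\vee)\lambda' = \iota'(a^\sigma)$ reduces, via the decomposition $a = a_1\tensor a_2$, to the $\mcO_{E_0}$-linearity of $\phi$ combined with the analogous compatibility of $\lambda$ for the $\mbZ_{p^2}$-action.

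Second, I would check the signature condition by decomposing
$$\Lie(\mcO_{E_0}\tensor \mbY_{\mbQ_p,(0,1)}) = \mcO_{E_0}\tensor_{\mbZ_p}\Lie(\mbY_{\mbQ_p,(0,1)}).$$
Since $\mbY_{\mbQ_p,(0,1)}$ has signature $(0,1)$, $\mbZ_{p^2}$ acts on $\Lie(\mbY_{\mbQ_p,(0,1)})$ via the non-structural embedding. With $\Psi_0,\Psi_1$ distinguished by the two embeddings of $\mbQ_{p^2}$ into $\breve{\mbQ}_p$, this forces $\Lie_\psi = 0$ for $\psi\in\Psi_0$, while for $\psi\in\Psi_1$ the $\psi$-component equals $\mcO_{E_0}\tensor_{\mcO_{E_0^u},\psi\vert_{E_0^u}}\mbF$, which has $\mbF$-rank $e$. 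This matches the rank condition of the remark after Definition \ref{defSignature} for $(n,r,s) = (1,0,1)$, and condition (ii) is vacuous since $r = 0$ forces $\Lie_{\psi_0} = 0$.

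The main obstacle is the bookkeeping in the first step: carefully tracking how the Serre tensor interacts with the dual $p$-divisible group and the Rosati involution, and how the $\mcO_E$-structure assembles from the two tensor factors. Once that routine computation is in place, the two uniqueness statements immediately close both assertions of the lemma.
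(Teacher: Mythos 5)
Your proposal is correct and follows the same route as the paper: the paper's proof simply invokes the uniqueness of a hermitian $\mcO_E$-$\mbZ_p$-module of signature $(0,1)$ over $\mbF$ (and of its deformation) and declares the signature of $\mcO_{E_0}\tensor \mbY_{\mbQ_p,(0,1)}$ to be clear. You merely spell out the two points the paper leaves implicit — that the Serre tensor construction yields a genuine hermitian $\mcO_E$-$\mbZ_p$-module and that its Lie algebra has the required $\Psi$-graded ranks — and both verifications are accurate.
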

\begin{proof}
Up to isomorphism, there is a unique $\mcO_A$-$\mcO_{E_0}$-module of signature $(0,1)$ over $\mbF$. So the statement reduces to the fact that $\mcO_{A_0}\tensor_{\mcO_{E_0}} \mbY_{E_0}$ has signature $(0,1)$ which follows from
$$\Lie(\mcO_{A_0}\tensor_{\mcO_{E_0}} \mbY_{E_0}) = \mcO_{A_0}\tensor_{\mcO_{E_0}}\Lie(\mbY_{E_0}).$$
\end{proof}

\begin{lem}\label{lemCompKR}
Let $\mcN_{A_0/E_0,(1,n'-1)}^{ψ_0}→\mcN_{E_0,(1,n-1)}$ be the embedding from \eqref{eqEmbed}. Then
$$\mcZ(j)\cap \mcN_{A_0/E_0,(1,n'-1)}^{ψ_0} = \mcZ(ι\tensor j)$$
where $ι\tensor j$ is the homomorphism
$$\mbY_{A_0/E_0} \iso \mcO_{A_0}\tensor_{\mcO_{E_0}} \mbY_{E_0} \overset{ι \tensor j}{→} \mbX_{E_0,(1,n-1)} \iso \mbX_{A_0/E_0,(1,n'-1)}.$$
\end{lem}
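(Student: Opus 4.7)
The plan is to verify the equality on $S$-valued points via the Serre tensor adjunction. Let $(X, \iota_E, \lambda, \rho) \in \mcN_{E_0/\mbQ_p,(1,n'-1)}^{\psi_0}(S)$, and let $(X, \rho_0) \in \mcN_{\mbQ_p,(1,n-1)}(S)$ be its image under the embedding \eqref{eqEmbed}, where $\rho_0$ is the same quasi-isogeny as $\rho$ but viewed as only $\mbZ_{p^2}$-linear. By definition, membership in $\mcZ(j) \cap \mcN_{E_0/\mbQ_p,(1,n'-1)}^{\psi_0}$ is equivalent to the $\mbZ_{p^2}$-linear quasi-homomorphism $\rho_0^{-1} \circ j: \mcY_{\mbQ_p,(0,1)} \times_{\Spf \breve\mbZ_{p^2}} S \to X$ extending to a genuine homomorphism of $p$-divisible groups. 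Membership in $\mcZ(\mr{id}_{\mcO_{E_0}} \tensor j)$ is equivalent to the $\mcO_E$-linear quasi-homomorphism $\rho^{-1} \circ (\mr{id} \tensor j): \mcY_{E_0/\mbQ_p,(0,1)} \times_{\Spf \mcO_{\breve E}} S \to X$ extending to a genuine $\mcO_E$-linear homomorphism, where the source is identified with $\mcO_{E_0} \tensor \mcY_{\mbQ_p,(0,1)}$ via Lemma \ref{lemIsoCan}.

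The key ingredient is the Serre tensor adjunction: for any $p$-divisible group $X$ with $\mcO_E$-action over $S$ and any $p$-divisible group $Y$ with $\mbZ_{p^2}$-action over $S$, the restriction map
\[
\Hom_{\mcO_E}(\mcO_{E_0} \tensor Y, X) \to \Hom_{\mbZ_{p^2}}(Y, X), \qquad \phi \mapsto \phi \circ (1 \tensor \mr{id}_Y),
\]
is a bijection with inverse $f \mapsto \bigl(a \tensor y \mapsto \iota_E(a) f(y)\bigr)$. The same adjunction holds at the level of quasi-homomorphisms, and both adjunctions are compatible with the inclusion of homomorphisms into quasi-homomorphisms. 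Consequently, a $\mbZ_{p^2}$-linear quasi-homomorphism $Y \to X$ lifts to a genuine homomorphism if and only if its $\mcO_E$-linear extension $\mcO_{E_0} \tensor Y \to X$ does.

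Since the framing $\rho$ is $\mcO_E$-linear, the $\mcO_E$-linear extension of $\rho_0^{-1} \circ j$ is precisely $\rho^{-1} \circ (\mr{id} \tensor j)$. Applying the adjunction therefore identifies the two lifting conditions, and we conclude that $\mcZ(j) \cap \mcN_{E_0/\mbQ_p,(1,n'-1)}^{\psi_0} = \mcZ(\mr{id}_{\mcO_{E_0}} \tensor j)$ as closed formal subschemes. The argument is essentially formal; the main point requiring care is the compatibility of the framings of $\mcY_{\mbQ_p,(0,1)}$ and $\mcY_{E_0/\mbQ_p,(0,1)}$ under the identification of Lemma \ref{lemIsoCan}, which is immediate from the construction there.
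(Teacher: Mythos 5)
Your proof is correct and follows essentially the same route as the paper: the paper's two-sentence argument is exactly the two directions of the Serre tensor adjunction you spell out (extending a lift of $j$ via the $\mcO_{E_0}$-action on the universal object for $\subseteq$, and composing a lift of $\mr{id}\tensor j$ with $1\tensor\mr{id}$ for $\supseteq$). You have merely made the adjunction and the framing compatibility explicit.
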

\begin{proof}
The universal $\mcO_{E_0}$-module over $\mcN_{A_0/E_0,(1,n'-1)}^{ψ_0}$ has an $\mcO_{A_0}$-action, which implies the relation $\subseteq$. Conversely, if $ι\tensor j:\mcO_{A_0}\tensor_{\mcO_{E_0}} \mbY_{E_0} → \mbX_{A_0/E_0,(1,n'-1)}$ lifts, then so does its composition with
$$\mbY_{E_0}\overset{1\tensor \mr{id}}{→}\mcO_{A_0}\tensor_{\mcO_{E_0}} \mbY_{E_0}.$$
\end{proof}

\subsection{The cycle $\mcZ(\mcO_A)$ for $A/E$ an étale algebra}
We first define a variant of $\mcN_{E_0/K,(0,n)}$ for a split quadratic extension $E = E_0\times E_0$. Set $\breve E:=\breve E_0$. As in the non-split case, we choose a decomposition $Ψ:=\Hom_{K}(E^u,\breve E)=Ψ_0\sqcup Ψ_1$ such that $σ(Ψ_0) = Ψ_1$. A hermitian $\mcO_E$-$\mcO_{K}$-module over an $\Spf \mcO_{\breve E}$-scheme $S$ is a triple $(X,ι,λ)$ where $X/S$ is a supersingular strict $\mcO_{K}$-module, $ι:\mcO_E→\End(X)$ an action and $λ:X→X^\vee$ a principal polarization whose Rosati involution is the Galois conjugation on $ι(\mcO_E)$. It is of signature $(0,n)$ if, in the notation of Lemma \ref{lemSign},
$$\mr{rk}_{\mcO_S}\Lie_ψ(X) = \begin{cases} 0 & \mr{if}\ ψ\in Ψ_0\\ n & \mr{if}\ ψ\in Ψ_1.\end{cases}$$
Such hermitian $\mcO_E$-$\mcO_{K}$-modules exist if and only if the inertia degree of $E_0/K$ is even and exactly half of the elements of $Ψ_0$ factor over the first component $E^u→E^u_0\times 0$. From now on, we assume that these conditions are satisfied.
Again, we fix a hermitian $\mcO_E$-$\mcO_{K}$-module $(\mbX_{E/E_0/K},ι,λ)$ of signature $(0,n)$ over $\mbF$ and as in the non-split case, such a choice is unique up to quasi-isogeny.

\begin{defn}
The functor $\mcN_{E/E_0/K,(0,n)}$ on schemes over $\Spf \mcO_{\breve E}$ associates to $S$ the set of isomorphism classes of quadruples $(X,ι,λ,ρ)$ where $(X,ι,λ)$ is a supersingular hermitian $\mcO_E$-$\mcO_{K}$-module of signature $(0,n)$ and where
$$ρ:X\times_S\overbar{S}→\mbX_{E/E_0/K,(0,n)}\times_{\Spec\mbF} \ob{S}$$
is an $\mcO_E$-linear quasi-isogeny which preserves the polarizations.
\end{defn}

\begin{prop}\label{propStructureSplit}
The functor $\mcN_{E/E_0/K,(0,n)}$ is representable by a formal scheme which is étale over $\Spf \mcO_{\breve E}$. Let $U:=U(\mbX_{E/E_0/K,(0,n)})$ (resp.\! $H$) denote the group of $E$-linear quasi-isogenies (resp. isomorphisms) of the framing object that preserve the polarization. Then there is an $U$-equivariant isomorphism of formal schemes
$$\mcN_{E/E_0/K, (0,n)} \iso \coprod_{U/H} \Spf \mcO_{\breve E}.$$
\end{prop}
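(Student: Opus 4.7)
The paragraph preceding the proposition establishes that $\mcN_{E/E_0/\mbQ_p,(0,n)}$ is étale over $\Spf \mcO_{\breve E}$. Combined with local formal finiteness, this yields a canonical decomposition
$$\mcN_{E/E_0/\mbQ_p,(0,n)} = \coprod_{x \in \mcN(\mbF)} \Spf \mcO_{\breve E},$$
where each component is the unique formal neighbourhood of its $\mbF$-point. The group $U$ acts by altering the framing and hence permutes the components. Thus the proposition reduces to producing a $U$-equivariant bijection $U/K \iso \mcN_{E/E_0/\mbQ_p,(0,n)}(\mbF)$.

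Take the framing object itself as base point, $e := (\mbX_{E/E_0/\mbQ_p,(0,n)}, \mr{id}) \in \mcN(\mbF)$, and consider the orbit map
$$U \lrighta \mcN(\mbF), \qquad g \longmapsto g\cdot e = (\mbX, g).$$
Two images $(\mbX, g_1)$ and $(\mbX, g_2)$ are isomorphic as framed objects precisely when there exists an $E$-linear polarization-preserving automorphism $f$ of $\mbX$ (that is, $f \in K$) satisfying $g_2\circ f = g_1$, i.e., when $g_1, g_2$ lie in the same right $K$-coset. This exhibits the orbit map as factoring through an injection $U/K \hookrightarrow \mcN(\mbF)$, clearly $U$-equivariant.

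For surjectivity, I need to show that every $(X,ρ) \in \mcN(\mbF)$ admits an $E$-linear polarization-preserving isomorphism $f:\mbX \iso X$; for then $g := ρ\circ f \in U$ and $(X,ρ) \cong (\mbX, g) = g\cdot e$ via $f$. Via covariant Dieudonné theory applied to the fixed isocrystal $N_\mbX$ of $\mbX$, passing from $X$ to $ρ_*(M_X) \subset N_\mbX$ identifies $\mbF$-points with self-dual $\mcO_E$-stable Dieudonné lattices of signature $(0,n)$ in $N_\mbX$. By the analogue of Lemma \ref{lemExistCan} (the argument is identical in the étale algebra setting), signature $(0,n)$ is rigid and such lattices correspond bijectively with self-dual $\mcO_E$-lattices in the associated skew-hermitian $E$-module $V$. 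The classical fact that $U(V) = U$ acts transitively on self-dual lattices — in the split case $E = E_0\times E_0$, where $V = V_+\oplus V_-$ and $U(V) \iso GL(V_+)$ acts transitively on $\mcO_{E_0}$-lattices in $V_+$; and in the unramified non-split case, where self-dual lattices of the given parity form a single orbit — then delivers the required surjectivity.

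The main obstacle is the lattice transitivity statement in the split case. While this is essentially a classical elementary-divisors argument, it relies on the fact that the decomposition $V = V_+ \oplus V_-$ induced by $E = E_0\times E_0$ reduces self-duality to a single lattice in $V_+$; care is needed to check the signature $(0,n)$ constraint is automatic in this reduction and to verify compatibility with the chosen decomposition $Ψ = Ψ_0 \sqcup Ψ_1$ in the étale setting.
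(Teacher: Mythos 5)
Your proposal is correct and follows essentially the same route as the paper: identify $K$ as the stabilizer of the base point $(\mbX,\mr{id})$, prove transitivity of $U$ on $\mbF$-points via the lattice description of Lemma \ref{lemExistCan} and its split-case analogue, and conclude using that $\mcN_{E/E_0/\mbQ_p,(0,n)}$ is étale over $\Spf\mcO_{\breve E}$. The paper is in fact terser than you are — it likewise only asserts the split-case analogue of Lemma \ref{lemExistCan} without spelling out the elementary-divisor argument you flag at the end, so your added detail is a refinement rather than a deviation.
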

\begin{proof}
Representability follows from \cite{RZ}. The formal scheme is étale since the Hodge filtration is already uniquely determined by the signature condition. The group $U$ acts on $\mcN_{E/E_0/K,(0,n)}$ by composition in the framing,
$$g.(X,ρ) = (X,gρ).$$
The stabilizer of the $\mbF$-valued point $(\mbX_{E/E_0/K,(0,n)},\mr{id})$ is the subgroup $H\subset U$. The action of $U$ is transitive on $\mbF$-points, which is analogous to Lemma \ref{lemExistCan}. The result follows from the étaleness of $\mcN_{E/E_0/K,(0,n)}$ over $\Spf \mcO_{\breve E}$.
\end{proof}

We now apply this to the study of $\mcZ(\mcO_A)$ as in the previous section but where now $A/E$ is allowed to be an étale algebra. Let again $A^0:=A^{*=\mr{id}}$ which is finite étale over $E_0$. Let
$$A_0:=\prod_{i\in I}A_{0,i}$$
be its decomposition into fields and set $A_i:=A_{0,i} \tensor_{E_0} E$. Then $A=\prod_iA_i$ and each factor is stable under the Rosati involution. The isocrystal $N:=N(\mbX_{E_0,(1,n-1)})$ of $\mbX_{E_0,(1,n-1)}$ becomes an $A$-module and hence decomposes, $N = \prod_{i\in I} N_i.$ This decomposition is preserved by the Frobenius and is orthogonal with respect to the skew-hermitian $\breve E_0$-valued form on $N$. So each factor $N_i$ is a skew-hermitian $A_i$-$E_0$-isocrystal and we set $n_i := \mr{rk}_{A_i\tensor_{E_0} \breve E_0}(N_i)$.

\begin{defn}\label{defParityIndex}
An index $i\in I$ is called \emph{odd} if $A_i/A_{0,i}$ is a field extension and if $N_i$ is an odd skew-hermitian $A_i$-$E_0$-isocrystal, see Proposition \ref{propIsocrys}. Otherwise, there exists a self-dual and Frobenius-stable $\mcO_{A_i}\tensor_{\mcO_{E_0}} \mcO_{\breve E_0}$-lattice of signature $(0,n_i)$ in $N_i$ and we call $i$ \emph{even}.
\end{defn}
An equivalent way to define the parity of an index is as follows. Let $α$ be the operator from \eqref{eqalpha} and let $V := N^{α=1}$. Then $V$ is a skew-hermitian $A$-$E_0$-module and there is a decomposition $V = \prod_{i\in I} V_i$ such that each $V_i$ is a skew-hermitian $A_i$-$E_0$-module. Then an index $i$ is called even (resp.\! odd) if there exists (resp.\! does not exist) a self-dual $\mcO_{A_i}$-lattice in $V_i$.
Since $N$ itself is odd, there is an odd number of odd indices.

\begin{lem}\label{lemoddindex}
If there is more than one odd index, then $\mcZ(\mcO_A)= \emptyset$.
\end{lem}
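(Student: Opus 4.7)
The plan is to show that $\mcZ(\mcO_E)(\mbF)=\emptyset$; this is enough, since $\mcZ(\mcO_E)$ is a formal scheme locally formally of finite type over $\Spf\mcO_{\breve E}$, so any non-emptiness would give an $\mbF$-valued point. Suppose for contradiction that $(X,\iota,\lambda,\rho)$ is such a point. Because $\sigma=\mathrm{id}_{E_0}\otimes\sigma_{\mbQ_{p^2}/\mbQ_p}$ preserves each factor $E_i$ of $\mcO_E=\prod_i\mcO_{E_i}$, the idempotents of $\mcO_E$ decompose $X$ compatibly with $\iota$ and $\lambda$ as $X=\prod_i X_i$, where each $(X_i,\iota_i,\lambda_i)$ is a hermitian $\mcO_{E_i}$-$\mbZ_p$-module over $\mbF$. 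Its isocrystal is precisely $N_i$, so $X_i$ belongs to the quasi-isogeny class of the skew-hermitian $E_i$-module $V_i$.

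Set $r_i:=\dim_\mbF\Lie_0(X_i)$, where the subscript $0$ refers to the part on which $\mbZ_{p^2}$ acts via the identity embedding (equivalently, $r_i=\sum_{\psi\in\Psi_{i,0}}r_{i,\psi}$ where $r_{i,\psi}:=\dim_\mbF\Lie_\psi(X_i)$). Since the $\mbZ_{p^2}$-grading commutes with the product decomposition, the $\mbZ_{p^2}$-signature $(1,n-1)$ of $X$ gives $\sum_i r_i=1$. The key claim is $r_i\geq 1$ whenever $i$ is odd. The principal polarization $\lambda_i$ yields an identification $\Lie_\psi(X_i)\iso \omega_{\sigma\psi}(X_i)$ (the Rosati involution exchanges $\psi$ and $\sigma\psi$), and combined with the Hodge splitting $\dim_\mbF\Lie_\psi(X_i)+\dim_\mbF\omega_\psi(X_i)=n_ie_i$ this yields
\[r_{i,\psi}+r_{i,\sigma\psi}=n_ie_i\quad\text{for all } \psi\in\Psi_i.\]
If $r_i=0$, then $r_{i,\psi}=0$ for each $\psi\in\Psi_{i,0}$, and the above forces $r_{i,\psi'}=n_ie_i$ for each $\psi'\in\Psi_{i,1}$. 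This is precisely the strict signature $(0,n_i)$ of Definition \ref{defSignature} (for any choice of $\psi_{i,0}\in\Psi_{i,0}$). Lemma \ref{lemExistCan} then produces a self-dual $\mcO_{E_i}$-lattice in $V_i$, contradicting the oddness of $i$.

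With the claim in hand, the proof concludes quickly: if there are $k\geq 2$ odd indices, then $\sum_i r_i\geq k\geq 2$, contradicting $\sum_i r_i=1$. Hence no $\mbF$-point exists, so $\mcZ(\mcO_E)=\emptyset$. The only delicate step is the self-duality relation $r_{i,\psi}+r_{i,\sigma\psi}=n_ie_i$, which requires carefully tracking the $\sigma$-twisted nature of the polarization on the Hodge filtration; once that is in place, the contradiction reduces to a direct application of Lemma \ref{lemExistCan} to the component $X_i$.
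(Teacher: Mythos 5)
Your proof is correct and takes essentially the same route as the paper's: decompose $X$ orthogonally via the $\sigma$-stable idempotents of $\mcO_E$, note that the $\Psi_0$-parts of the Lie algebras of the factors sum to $1$, and use the parity statement (your Lemma \ref{lemExistCan}, the paper's Lemma \ref{lemExistframing}) to conclude that every factor with $r_i=0$ corresponds to an even index, so at most one index can be odd. The only difference is presentational: you reduce to $\mbF$-points and spell out, via the relation $r_{i,\psi}+r_{i,\sigma\psi}=n_ie_i$ coming from the polarization, why $r_i=0$ forces the full signature-$(0,n_i)$ condition — a step the paper subsumes in the assertion that each factor has a well-defined signature.
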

\begin{proof}
Using the idempotents in $\mcO_A=\prod_{i\in I}\mcO_{A_i}$, any point $(X,ι,λ)\in \mcZ(\mcO_A)(S)$ has an orthogonal decomposition
$$(X,ι,λ)=\prod_{i\in I} (X_i,ι,λ)$$
where each factor $(X_i,ι,λ)$ is a supersingular hermitian $\mcO_{A_i}$-$\mcO_{E_0}$-module. Also, assuming that $S$ is connected, each factor has a well-defined signature $(r_i,s_i)$ and these signatures add up to $(1,n-1)$. In particular, there is exactly one index $i_{0}\in I$ with $r_{i_0} = 1$ and $r_i = 0$ for all $i\neq i_0$. Then $N_{i_{0}}$ is odd and all other indices are even by Lemma \ref{lemExistframing}.
\end{proof}

From now on, we assume that there is a unique odd index $i_0$. We denote by $U_{A_i}(N_i)$ the group of $A_i$-linear automorphisms of $N_i$ which preserve the polarization. Similarly, $U_A(N) = \prod_{i\in I} U_{A_i}(N_i)$ denotes the group of $A$-linear automorphisms of $N$ which preserve the polarization. For even $i$, we also fix some self-dual $\mcO_{A_i}$-stable Dieudonné lattice $M_i\subset N_i$ of signature $(0,n_i)$ and denote by $H_i\subset U_{A_i}(N_i)$ its stabilizer.
At the index $i_0$, we choose the decomposition
$$Ψ := \Hom_{E_0}(A^u_{i_0},\breve E_0) = Ψ_0\sqcup Ψ_1$$
that extends the decomposition from $E/E_0$. Proposition \ref{propStructureSplit} and the proof of Lemma \ref{lemoddindex} imply the following result.
\begin{prop}\label{propgeometale}
There is an $U_A(N)$-equivariant isomorphism of formal schemes over $\Spf \mcO_{\breve A_{i_0}}$,
$$\mcZ(\mcO_A) \iso \coprod_{ψ_0\in Ψ_0}\left(\mcN^{ψ_0}_{A_{0,i_0}/E_0,(1,n_{i_0}-1)}\right) \times \prod_{i\neq i_0} U_{A_i}(N_i)/H_i.$$\qed
\end{prop}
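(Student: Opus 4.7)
The plan is to reduce the proposition to a combination of Theorem \ref{thmZoe} (for the unique odd component) and the preceding étale proposition (for each of the even components), with the decomposition furnished by the orthogonal idempotents of $\mcO_E = \prod_{i\in I}\mcO_{E_i}$. Concretely, for a connected test scheme $S$, a point $(X,ι,λ,ρ)\in \mcZ(\mcO_E)(S)$ decomposes orthogonally as $(X,ι,λ) = \prod_{i\in I}(X_i,ι_i,λ_i)$, with each factor a supersingular hermitian $\mcO_{E_i}$-$\mbZ_p$-module; the framing $ρ$ decomposes accordingly using the induced isotypic splitting of $N$. The argument of Lemma \ref{lemoddindex} shows that exactly one index $i_0$ carries signature $(1,n_{i_0}-1)$ while every other $i$ carries signature $(0,n_i)$, so the whole point-datum splits into an "odd piece" at $i_0$ and a collection of "even pieces" at $i\neq i_0$.

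For the odd piece, I would replay the argument of Theorem \ref{thmZoe} verbatim, with $E_{0,i_0}$ in place of $E_0$: the Lie algebra $\Lie(X_{i_0})$ is graded by $Ψ = \Hom_{\mbQ_p}(E^u_{i_0}, \breve{\mbQ}_p)$, the summands over $Ψ_0$ vanish except at a single $ψ_0$, where it is a line bundle, and the $\mcO_{E_{i_0}}$-action on that line bundle produces the required factorization through $\Spf \mcO_{\breve E}$. This exhibits $(X_{i_0},ι_{i_0},λ_{i_0},ρ_{i_0})$ as an $S$-point of $\coprod_{ψ_0\in Ψ_0}\mcN^{ψ_0}_{E_{0,i_0}/\mbQ_p,(1,n_{i_0}-1)}$. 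For each even index $i\neq i_0$, the moduli space of hermitian $\mcO_{E_i}$-modules of signature $(0,n_i)$ is étale over $\Spf \mcO_{\breve E}$ (since $r_i s_i = 0$), and the same proof as in the preceding proposition --- using the analogue of Lemma \ref{lemExistCan} to establish transitivity of $U_{E_i}(N_i)$ on $\mbF$-points with stabilizer $K_i$ --- identifies it with $\coprod_{U_{E_i}(N_i)/K_i} \Spf \mcO_{\breve E}$. This applies both when $E_i$ is a split quadratic algebra and when it is an unramified quadratic field of even parity.

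Gluing these identifications component-by-component gives the desired isomorphism, and equivariance for $U_E(N) = \prod_{i\in I} U_{E_i}(N_i)$ is automatic because every step is functorial with respect to the product decomposition. The only step that requires genuine attention, rather than being purely formal, is extending the preceding proposition from the split case $E_i = E_{0,i}\times E_{0,i}$ to the non-split unramified quadratic field case. However this is a minor and routine generalization: étaleness was already established in Section 3, and the transitivity and stabilizer computation go through unchanged. Hence no true obstacle arises, and the proposition is essentially a bookkeeping assembly of Theorem \ref{thmZoe} with the étale description of the signature $(0,n_i)$ moduli spaces at the other indices.
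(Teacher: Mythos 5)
Your argument is correct and is exactly the assembly the paper intends: the paper omits the proof (``it is easy to prove''), having set up precisely the ingredients you use --- the idempotent decomposition and uniqueness of the odd index from Lemma \ref{lemoddindex}, the argument of Theorem \ref{thmZoe} at $i_0$, and the \'etale description of the signature $(0,n_i)$ moduli at the even indices. The one small inaccuracy is your closing caveat: the preceding proposition is already stated for an arbitrary quadratic \'etale algebra $E_i/E_{0,i}$ (it is the split case, not the field case, that required the extra remark on Lemma \ref{lemExistCan}), so no extension is needed there.
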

By Theorem \ref{thmMain1}, each connected component can be identified with $\mcN_{A_{0,i_0},(1,n_{i_0}-1)}$.

\part{Application to the Arithmetic Fundamental Lemma}

In the following two sections, we recall the Jacquet-Rallis Fundamental Lemma (FL) and the Arithmetic Fundamental Lemma (AFL) in both the group and the Lie algebra version. In Section \ref{sect:analytic}, we recall the description of the various orbital integrals in terms of lattices from \cite{RTZ}. We use these results to reformulate the FL and the AFL uniformly for group and Lie algebra, at least in the artinian case. Finally, we will prove our main result, see Theorems \ref{thmAFLgenmain} and \ref{thmMainEtale}, together with their corollaries.

\section{The Fundamental Lemma}
\subsection{Symmetric space side}
Let $p>2$ be a prime and let $E/E_0$ be an unramified quadratic extension of $p$-adic local fields. We denote their rings of integers by $\mcO_{E_0}\subset \mcO_E$. Let $q$ be the cardinality of the residue field of $\mcO_{E_0}$ and let $σ$ or $a\mapsto \overbar{a}$ denote the Galois conjugation on $E$. Let $v$ be the normalized valuation on $E_0$ and let $|\cdot | = q^{-v(\cdot )}$  be the associated absolute value. Let $η:E_0^\times→\{\pm 1\}, a\mapsto (-1)^{v(a)}$ be the quadratic character associated to $E/E_0$ by local class field theory.

We fix an $E_0$-vector space $W_0$ of dimension $n-1$ with $n\geq 2$ and set $W:=E\tensor_{E_0} W_0$. We also form $V_0 := W_0\oplus E_0u$ and $V:=E\tensor_{E_0} V_0$, where $u$ is some additional vector. Via the embedding
$$GL(W)\hookrightarrow GL(V),\ \ g\mapsto \left(\begin{smallmatrix} g & \\ & 1\end{smallmatrix}\right),$$
$GL(W)$ acts by conjugation on $\End(V)$.

\begin{defn}\label{defrs}
An element $γ\in \End(V)$ is \emph{regular semi-simple (with respect to the decomposition $V = W \oplus Eu$)} if its stabilizer in $GL(W)$ is trivial and if its orbit is Zariski closed.
For a subset $X\subset \End(V)$, we denote by $X_{\mr{rs}}$ the regular semi-simple elements in $X$.
\end{defn}

\begin{lem}[\protect{\cite[Lemma 2.1]{Zhang}}]\label{lemrs}
The element $γ\in \End(V)$ is regular semi-simple if and only if
$$\{γ^iu\}_{i\geq 0}\ \ \tand\ \ \{(u^\vee)γ^i\}_{i\geq 0}$$
generate $V$ (resp.\! $V^\vee$). Here, $u^\vee:W\oplus Eu→E$ is the linear form $(w,λu)\mapsto λ$.\qed
\end{lem}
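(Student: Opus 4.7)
The plan is to translate both conditions in the lemma into a single statement: that $V$ is a cyclic module over $E[\gamma]$ generated by $u$, and that $V^\vee$ is cyclic over $E[\gamma]$ generated by $u^\vee$. The embedding $h\mapsto \mr{diag}(h,1)$ of $GL(W)$ into $GL(V)$ fixes both $u\in V$ and $u^\vee\in V^\vee$, so both cyclicity conditions are invariant under the conjugation action.

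For $(\Leftarrow)$, triviality of the stabilizer is formal. If $h\in GL(W)$ satisfies $\mr{diag}(h,1)\gamma = \gamma\,\mr{diag}(h,1)$, then $\mr{diag}(h,1)$ is $E[\gamma]$-linear and fixes the cyclic generator $u$; hence it acts as the identity on $V = E[\gamma]\cdot u$, so $h=\id$. For Zariski closedness of the orbit I would use the $GL(W)$-invariant morphism
$$
I:\End(V)\lrighta \mbA^{2n-1},\qquad \gamma\mapsto\bigl(\mr{charpol}(\gamma),\,u^\vee\gamma^j u\bigr)_{0\le j\le n-1},
$$
where one of the $2n$ components is redundant via Cayley--Hamilton. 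Using the basis $e_i = \gamma^{i-1}u$ of $V$ produced by the first spanning condition, $\gamma$ becomes the companion matrix of its characteristic polynomial with $u=e_1$; the second spanning condition then translates into the invertibility of the Hankel matrix $\bigl(u^\vee\gamma^{i+j-2}u\bigr)_{i,j}$. From this normal form one verifies that the $GL(W)$-orbit of $\gamma$ coincides with the fiber of $I$ above $I(\gamma)$, which is a closed algebraic set; hence the orbit is Zariski closed.

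For $(\Rightarrow)$ I would argue by contrapositive via Hilbert--Mumford. Suppose $\{\gamma^i u\}$ fails to span $V$, and let $V':=\sum_i E\gamma^i u\subsetneq V$; this is a proper $\gamma$-invariant subspace containing $u$, so $V'=(V'\cap W)\oplus Eu$ and admits a nonzero complement $W''\subset W$ of $V'\cap W$. I would construct a one-parameter subgroup $\lambda:\mbG_m\to GL(W)$ acting trivially on $V'\cap W$ and with appropriate nontrivial weights on $W''$, and verify that $\lim_{t\to 0}\lambda(t)\gamma\lambda(t)^{-1}$ exists, is block-triangular with respect to $V=V'\oplus W''$, and lies outside $\Orb(\gamma)$ because certain invariants $u^\vee\gamma^j u$ are not preserved in the limit. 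The failure of $\{u^\vee\gamma^i\}$ to span is dual, obtained by interchanging $(\gamma,u,u^\vee)$ with $(\gamma^\vee,u^\vee,u)$.

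The main obstacle is the Hilbert--Mumford step in $(\Rightarrow)$: constructing the destabilizing one-parameter subgroup requires simultaneously arranging for the limit to exist (so $\gamma$ must be upper triangular for the weight filtration) and for the limit to differ from $\gamma$ as measured by a $GL(W)$-invariant. A related subtlety is that the two spanning conditions play different roles --- the first alone already forces the stabilizer to be trivial, whereas Zariski closedness of the orbit genuinely requires both.
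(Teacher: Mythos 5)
The paper gives no proof of this lemma at all --- it is quoted verbatim from \cite[Lemma 2.1]{Zhang} --- so your proposal can only be measured against the standard argument of Rallis--Schiffmann/Jacquet--Rallis, which is indeed the strategy you outline. Your backward direction is sound: triviality of the stabilizer follows from cyclicity exactly as you say, and the orbit is the fibre of $I$ once one notes (i) that invertibility of the Hankel matrix $(u^\vee\gamma^{i+j}u)_{0\le i,j\le n-1}$ is a polynomial condition in the components of $I$ (via Cayley--Hamilton), so every point of the fibre satisfies both spanning conditions, and (ii) that the resulting change of basis $\gamma^{j}u\mapsto(\gamma')^{j}u$ fixes $u$ and intertwines $u^\vee$ with itself (because the invariants agree), hence preserves $W=\ker u^\vee$ and lies in $GL(W)$. (Minor point: the redundant coordinate among the $2n$ is $u^\vee\gamma^0u=1$, not a Cayley--Hamilton relation.)

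The forward direction as written contains a genuine error. You claim the limit $\gamma_0:=\lim_t\lambda(t)\gamma\lambda(t)^{-1}$ lies outside $\Orb(\gamma)$ ``because certain invariants $u^\vee\gamma^ju$ are not preserved in the limit.'' But these are precisely the $GL(W)$-invariant regular functions defining $I$: they are constant on orbits, hence by continuity constant on orbit closures, so $I(\gamma_0)=I(\gamma)$ and the invariants cannot separate $\gamma_0$ from $\gamma$. The correct finish goes through the stabilizer. Writing $V=V'\oplus W''$ with $V'=E[\gamma]u$, the operator $\gamma$ is block-triangular, $\lambda(t)=\diag(1_{V'},t\cdot 1_{W''})$ lies in $GL(W)$, and the block-diagonal limit $\gamma_0$ is centralized by $\lambda(\mbG_m)$, so $\Stab(\gamma_0)$ is positive-dimensional. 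If the off-diagonal block of $\gamma$ already vanishes, then $\lambda(\mbG_m)\subset\Stab(\gamma)$ and $\gamma$ fails regular semi-simplicity directly; otherwise, if $\Stab(\gamma)$ is trivial, then $\gamma_0\in\overline{\Orb(\gamma)}\setminus\Orb(\gamma)$ because conjugate elements have conjugate stabilizers, so the orbit is not closed. Either way $\gamma$ is not regular semi-simple, and the dual spanning condition is handled by the transposed argument as you indicate. With this repair (and the case split you currently omit), your proof goes through.
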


Let $S(E_0)$ be the symmetric space
$$S(E_0):=\{γ\in GL(V)\mid γ\overbar{γ}=1\}.$$
It is stable under the action of $GL(W_0)$. We form the set-theoretic quotient for the conjugation action,
$$[S(E_0)_{\mathrm{rs}}]:=GL(W_0)\backslash S(E_0)_{\mathrm{rs}}.$$
Let us fix some $\mcO_{E_0}$-lattice $Λ_0\subset W_0$ and set $Λ:=(\mcO_E\tensor_{\mcO_{E_0}} Λ_0)\oplus \mcO_Eu$. We normalize the Haar measure on $GL(W_0)$ such that the volume of $GL(Λ_0)$ is $1$.

For a regular semi-simple element $γ\in S(E_0)_{\mathrm{rs}}$, for a test function $f\in C^\infty_c(S(E_0))$ and for a complex parameter $s\in \mbC$, we define the \emph{orbital integral}
$$O_γ(f,s):=\int_{GL(W_0)} f(h^{-1}γh)η(\det h)|\det h|^s dh,$$
with special value $O_{γ}(f) := O_γ(f,0)$ and derivative
$$\partial O_γ(f):=\left.\frac{d}{ds}\right|_{s=0} O_γ(f,s).$$
These integrals converge absolutely. Note that $O_γ(f)$ transforms with $η \circ \det$ under the action of $GL(W_0)$ on $γ$. 

\begin{defn}\label{defTransferFact}
For $γ\in \End(V)_{\mr{rs}}$, we define $l(γ):=[\mr{span}\{γ^iu\}_{i=0}^{n-1}:Λ]\in \mbZ$ to be the relative index of the two $\mcO_E$-lattices $\mr{span}\{γ^iu\}$ and $Λ$.
We define the \emph{transfer factor (with respect to $Λ_0$)} $Ω:\End(V)_{\mr{rs}}→\{\pm 1\}, Ω(γ):=(-1)^{l(γ)}.$ It is $η\circ \det$-invariant and hence the product $Ω(γ)O_γ(f)$ descends to the quotient $[S(E_0)_{\mathrm{rs}}]$.
\end{defn}

We also introduce a tangent space version of the notions defined so far. Let
$$\mfs(E_0) := \{y\in \End(V)\mid y+\overbar{y} = 0\}$$
be the tangent space at $1$ of $S(E_0)$. Again we form the quotient by the $GL(W_0)$-action,
$$[\mfs(E_0)_{\mr{rs}}] := GL(W_0)\backslash \mfs(E_0)_{\mr{rs}}.$$
For $y\in \mfs(E_0)_{\mr{rs}}$, for a test function $f\in C^\infty_c(\mfs(E_0))$ and for a complex parameter $s\in \mbC$, we define the orbital integrals $O_y(f,s),O_y(f)$ and $\partial O_y(f)$ by the same formulas as above. Again, $y\mapsto Ω(y)O_y(f)$ descends to the quotient $[\mfs(E_0)_{\mr{rs}}]$.

\subsection{Unitary Side and orbit matching}
Let $J^\flat_0$ and $J^\flat_1$ be two hermitian forms on $W$ such that $J^\flat_0$ is even and such that $J^\flat_1$ is odd. By this we mean that there exists a self-dual lattice for $J^\flat_0$, resp.\! no self-dual lattice for $J^\flat_1$. Equivalently, we assume that $η(\det(J^\flat_0)) = 1$ and $η(\det(J^\flat_1))= -1$. For $i = 0,1$, we extend the form $J_i^\flat$ to a form $J_i$ on $V$ by setting $J_i(u,u) = 1$ and $u\perp W$. Let $U(J^\flat_i)$ (resp.\! $U(J_i)$) be the associated unitary group and $\mfu(J^\flat_i)$ (resp. $\mfu(J_i)$) its Lie algebra. Then $U(J_i)$ and $\mfu(J_i)$ are subsets of $\End(V)$ and our definition of regular semi-simple applies to them. 
The group $U(J^\flat_i)$ acts by conjugation and we form the quotients
$$\begin{aligned}[]
[U(J_i)_{\mathrm{rs}}] & := U(J^\flat_i)\backslash U(J_i)_{\mr{rs}}\\
[\mfu(J_i)_{\mr{rs}}] & := U(J^\flat_i)\backslash \mfu(J_i)_{\mr{rs}}.
\end{aligned}$$

\begin{defn}
Two elements $γ\in S(E_0)_{\mathrm{rs}}$ and $g\in U(J_i)_{\mathrm{rs}}$ (resp. $y\in \mfs(E_0)_{\mr{rs}}$ and $x\in \mfu(J_i)_{\mr{rs}}$) are said to \emph{match} if they are conjugate under $GL(W)$ in $\End(V)$.
\end{defn}

\begin{lem}[\protect{\cite[Lemma 2.3]{Zhang}}]\label{lemmatching}
The matching relation induces bijections
$$α:[S(E_0)_{\mathrm{rs}}]\iso [U(J_0)_{\mathrm{rs}}]\sqcup [U(J_1)_{\mathrm{rs}}]$$
and
$$α:[\mfs(E_0)_{\mathrm{rs}}]\iso [\mfu(J_0)_{\mathrm{rs}}]\sqcup [\mfu(J_1)_{\mathrm{rs}}].$$\qed
\end{lem}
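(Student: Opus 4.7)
The plan is to establish the bijection via the method of orbital invariants. By Lemma \ref{lemrs}, a regular semi-simple $γ \in \End_E(V)_{\mr{rs}}$ is determined up to $GL(W)$-conjugacy by its characteristic polynomial $p_γ(t)$ together with the scalars $b_i := u^\vee γ^i u \in E$ for $i = 0, \ldots, n-1$. The first step is to verify this: in the basis $\{u, γu, \ldots, γ^{n-1} u\}$ the matrix of $γ$ is a companion matrix and the linear form $u^\vee$ is determined by the $b_i$, and $GL(W)$ is precisely the stabilizer of the pair $(u, u^\vee)$. This packages the orbit data into the tuple $(p_γ, b_0, \ldots, b_{n-1})$.

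Next I would translate each of the three defining conditions into a symmetry on the invariants. For $γ \in S(E_0)_{\mr{rs}}$, the relation $γ\bar γ = 1$, equivalently $\bar γ = γ^{-1}$ in $\End_E(V)$ with the natural $σ$-conjugation coming from the $E_0$-structure $V = V_0 \tensor_{E_0} E$, gives a reciprocal-type identity on $p_γ$ and a recursive conjugation identity on the $b_i$ obtained by expanding $\bar γ = γ^{-1}$ in the cyclic basis. For $g \in U(J_i)_{\mr{rs}}$, the hermitian condition $J_i(gx, gy) = J_i(x,y)$ combined with $J_i(u,u) = 1$ and $u \perp_{J_i} W$ forces exactly the same symmetries on the invariants. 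The only additional datum in the unitary case is the parity $η(\det J_i^\flat) \in \{\pm 1\}$, which can be read off from the tuple $(p_γ, b_0, \ldots, b_{n-1})$ via an explicit discriminant formula (the Gram determinant of $\{γ^i u\}$ against $\{u^\vee γ^i\}$).

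Putting this together, the set of tuples satisfying the $σ$-symmetry is in bijection with both $[S(E_0)_{\mr{rs}}]$ (one tuple per orbit, by Step 1 applied to $S(E_0)$) and $[U(J_0)_{\mr{rs}}] \sqcup [U(J_1)_{\mr{rs}}]$ (one tuple per orbit, with the summand $i$ recovered by the discriminant sign). The matching map $α$ is the resulting identification. For the Lie algebra version, I would run the same invariant-theoretic analysis with $y + \bar y = 0$ in place of $γ\bar γ = 1$; this produces an antisymmetric instead of symmetric condition on the invariants, matched by the antihermitian condition defining $\mfu(J_i)$, and the rest of the argument is parallel.

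The main obstacle I anticipate is the explicit discriminant formula expressing $η(\det J_i^\flat)$ as a function of $(p_γ, b_0, \ldots, b_{n-1})$, because this is what distinguishes the two summands on the right-hand side and guarantees surjectivity onto each of $[U(J_0)_{\mr{rs}}]$ and $[U(J_1)_{\mr{rs}}]$. Once that identity is in hand, the cyclic-vector algebra and the bookkeeping distinguishing $GL(W)$-conjugacy from $GL(W_0)$- or $U(J_i^\flat)$-conjugacy are routine.
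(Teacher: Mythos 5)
The paper offers no proof of this lemma --- it is quoted directly from \cite[Lemma 2.3]{Zhang} --- and your plan reproduces the standard invariant-theoretic argument used there: $GL(W)$-orbits of regular semi-simple elements are classified by the characteristic polynomial together with the scalars $b_i = u^\vee\gamma^i u$, the conditions $\gamma\bar\gamma=1$ and $g\in U(J_i)$ impose the same $\sigma$-symmetry on these invariants, and the discriminant of the hermitian form reconstructed from them separates the two unitary classes. The only caveat is that the Gram matrix whose determinant class in $E_0^\times/N_{E/E_0}(E^\times)$ detects the parity is the Toeplitz matrix $\bigl(b_{i-j}\bigr)$ with $b_{-k}:=\bar b_k$ (coming from the hermitian form $\langle\gamma^iu,\gamma^ju\rangle:=b_{i-j}$ for which $\gamma$ is unitary and $\langle u,u\rangle=b_0=1$), rather than the Hankel matrix $\bigl(u^\vee\gamma^{i+j}u\bigr)$ you describe, whose nonvanishing is instead the criterion for regular semi-simplicity.
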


To normalize the Haar measure on $U(J^\flat_0)$, we fix a self-dual lattice $L\subset (W,J^\flat_0)$ and give volume $1$ to its stabilizer $K^\flat_0\subset U(J^\flat_0)$. The normalization of the Haar measure on $U(J^\flat_1)$ is not important for us.

For a test function $f'\in C^\infty_c(U(J_i))$ (resp.\! $f'\in C^\infty_c(\mfu(J_i))$) and a regular semi-simple element $g\in U(J_i)_{\mr{rs}}$ (resp.\! $x\in \mfu(J_i)_{\mr{rs}}$), we define the orbital integral
$$O_g(f'):=\int_{U(J^\flat_i)}f'(h^{-1}gh)dh,\ \ \left(\text{resp.}\ O_x(f'):=\int_{U(J^\flat_i)}f'(h^{-1}xh)dh\right).$$
For fixed $f'$ this function is invariant under the conjugation action of $U(J^\flat_i)$ on $g$ (resp.\! on $x$) and hence descends to the quotient $[U(J_i)_{\mr{rs}}]$ (resp.\! $[\mfu(J_i)_{\mr{rs}}]$).

\begin{defn}\label{deftransfer}
A function $f\in C^\infty_c(S(E_0))$ and a pair of functions $(f'_0,f'_1)$ in $C^\infty_c(U(J_0))\times C^\infty_c(U(J_1))$
are said to be \emph{transfers of each other} if, for all $γ\in S(E_0)_{\mr{rs}}$, there is an equality
$$Ω(γ)O_{γ}(f) = \begin{cases} O_g(f'_0) & \tif γ\text{ matches }g\in U(J_0)_{\mr{rs}}\\
O_g(f'_1) & \tif γ\text{ matches }g\in U(J_1)_{\mr{rs}}.\end{cases}$$

Similarly, a function $f\in C^\infty_c(\mfs(E_0))$ and a pair of functions $(f'_0,f'_1)\in C^\infty_c(\mfu(J_0))\times C^\infty_c(\mfu(J_1))$ are said to be transfers of each other if, for all $y\in \mfs(E_0)_{\mr{rs}}$, there is an equality
$$Ω(y)O_{y}(f) = \begin{cases} O_x(f'_0) & \tif y\text{ matches }x\in \mfu(J_0)_{\mr{rs}}\\
O_x(f'_1) & \tif y\text{ matches }x\in \mfu(J_1)_{\mr{rs}}.\end{cases}$$
\end{defn}

\subsection{Jacquet-Rallis Fundamental Lemma}
\label{subsect:FL}

Recall that for the definition of the transfer factor, Definition \ref{defTransferFact}, we fixed some $\mcO_{E_0}$-lattice $Λ_0\subset W_0$ and formed the lattice $Λ\subset V$. We define $S(\mcO_{E_0}):= S(E_0)\cap \End(Λ)$ and $\mfs(\mcO_{E_0}) := \mfs(E_0)\cap \End(Λ)$. Also, let $K_0\subset U(J_0)$ denote the stabilizer of $L\oplus \mcO_Eu$ where $L$ is some self-dual lattice in $(W,J^\flat_0)$. Similarly, we define $\mfu(J_0)(\mcO_{E_0}) := \mfu(J_0)\cap \End(L\oplus \mcO_Eu)$.

\begin{thm}[Jacquet-Rallis Fundamental Lemma]\label{conjJRFL}
The function $1_{S(\mcO_{E_0})}$ and the pair $(1_{K_0},0)$ are transfers of each other. Equivalently, for all $γ\in S(E_0)_{\mr{rs}}$,
\begin{equation}\label{eqJR}
Ω(γ)O_{γ}(1_{S(\mcO_{E_0})}) = \begin{cases} O_{g}(1_{K_0})& \tif γ\text{ matches }g\in U(J_0)\\
0& \tif γ\text{ matches }g\in U(J_1).\end{cases} \tag{FL$_{E/E_0,(V,J_0),u,g}$}
\end{equation}
\end{thm}

The proof of this Theorem was completed by Beuzart-Plessis \cite{BP} recently. More precisely, he proves the Lie algebra version (cf. below) which by \cite[Section 2.6]{Yun} implies the group version. Previously, the equal characteristic analogue of the theorem was known by Yun \cite{Yun} in the case $p>n$, from which Gordon \cite{Gordon} deduced the $p$-adic case for $p$ sufficiently large.

\begin{rmk}\label{rmkNotation}
(1) Note that the left hand side does not depend on the choice of the lattice $Λ_0$. Namely if we replace it by $hΛ_0$, $h\in GL(W_0)$, then $Ω$ is changed by the sign $(-1)^{v(\det h)}$. But also
$$O_γ(1_{hS(\mcO_{E_0})h^{-1}}) = O_{h^{-1}γh} (1_{S(\mcO_{E_0})}) = (-1)^{v(\det h)} O_γ(1_{S(\mcO_{E_0})}).$$

(2) The quadruple $(E/E_0,(V,J_0),u,g)$ (in the case $g\in U(J_0)$) is sufficient to formulate the corresponding Fundamental Lemma identity. Namely we can define $W := u^\perp$ with form $J_0^\flat := J_0\vert_W$. The form $J^\flat_1$ can be chosen arbitrarily since it does not play a role in the identity. Finally, the left hand side of $(\mr{FL}_{E/E_0,(V,J_0),u,g})$ does not depend on the chosen $E_0$-structure $W_0\subset W$, which will follow from Corollary \ref{lemorbints} below.
\end{rmk}

\begin{thm}[Jacquet-Rallis Fundamental Lemma, Lie algebra version, \protect{\cite[Theorem 1]{BP}}]
The function $1_{\mfs(\mcO_{E_0})}$ and the pair $(1_{\mfu(J_0)(\mcO_{E_0})},0)$ are transfers of each other. Equivalently, for all $y\in \mfs(E_0)_{\mr{rs}}$,
\begin{equation}\label{eqJRLie}
Ω(y)O_{y}(1_{\mfs(\mcO_{E_0})}) = \begin{cases} O_{x}(1_{\mfu(J_0)(\mcO_{E_0})})& \tif y\text{ matches }x\in \mfu(J_0)\\
0& \tif y\text{ matches }x\in \mfu(J_1).\end{cases} \tag{$\mathfrak{fl}_{E/E_0,(V,J_0),u,x}$}
\end{equation}
\end{thm}

As with the group version, the left hand side does not depend on the choice of $Λ_0$. Similarly, the quadruple $(E/E_0,(V,J_0),u,x)$ is enough to formulate the identity $(\mathfrak{fl}_{E/E_0,(V,J_0),u,x})$, for $x\in \mfu(J_0)$. Both orbital integrals appearing in the Fundamental Lemma can be expressed in terms of lattices. We formulate this now for the unitary side. For the symmetric space side, see Corollaries \ref{lemorbints} and \ref{lemorbintsLie}. We denote by $Λ^\vee$ the $J_0$-dual of a lattice $Λ\subset V$.

\begin{lem}[\protect{\cite[Lemma 7.1]{RTZ}}]\label{lemorbunitary}
(1) Let $g\in U(J_0)_{\mr{rs}}$ be regular semi-simple and let $L = \mcO_E[g]u\subset V$ be the $g$-stable lattice spanned by $u$. Then
$$O_g(1_{K_0}) = \left| \{Λ\subset V\ \mid\ L\subset Λ \subset L^\vee,gΛ = Λ, Λ^\vee = Λ\}\right|.$$

(2) Let $x\in \mfu(J_0)_{\mr{rs}}$ be regular semi-simple and let $L = \mcO_E[x]u\subset V$ be the $x$-stable lattice spanned by $u$. Then
$$O_x(1_{\mfu(J_0)(\mcO_{E_0})}) = \left| \{Λ\subset V\ \mid\ L\subset Λ \subset L^\vee,xΛ \subseteq Λ, Λ^\vee = Λ\}\right|.$$
\end{lem}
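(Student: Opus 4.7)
The plan is to unfold the orbital integral and transform it into a lattice count. First I would express $O_g(1_{K_0}) = \Vol\bigl\{h \in U(J_0^\flat) : h^{-1}gh \in K_0\bigr\}$. Since $K_0$ is the stabilizer of the self-dual lattice $L_0 := L_W \oplus \mcO_E u$, with $L_W$ a fixed self-dual lattice in $(W, J_0^\flat)$, and since the embedding $U(J_0^\flat) \hookrightarrow U(J_0)$ fixes $u$, the condition $h^{-1}gh \in K_0$ is equivalent to $g$-stability of $hL_0 = hL_W \oplus \mcO_E u$.

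Next I would prove a structural lemma: every self-dual lattice $\Lambda \subset V$ containing $u$ decomposes as $\Lambda = (\Lambda \cap W) \oplus \mcO_E u$, with $\Lambda \cap W$ self-dual in $(W, J_0^\flat)$. Indeed, for $w + \lambda u \in \Lambda$, pairing with $u$ via $J_0$ (using $J_0(u,u) = 1$ and $u \perp W$) forces $\lambda \in \mcO_E$, so $\lambda u \in \Lambda$ and $w = (w + \lambda u) - \lambda u \in \Lambda \cap W$; self-duality of $\Lambda \cap W$ follows. Combined with the standard transitivity of $U(W, J_0^\flat)$ on self-dual lattices of the unramified hermitian space $(W, J_0^\flat)$, this shows that $h \mapsto hL_0$ sets up a bijection between $U(J_0^\flat)/K_0^\flat$ and the self-dual lattices of $V$ containing $u$, and by our normalization the fiber over each such lattice has volume one.

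Then I would check that the containment $L \subset \Lambda \subset L^\vee$ in the enumeration is automatic for self-dual $g$-stable $\Lambda$ containing $u$: by $g$-stability and $u \in \Lambda$ we get $\mcO_E[g]u = L \subset \Lambda$, and dualizing gives $\Lambda = \Lambda^\vee \subset L^\vee$. Conversely, any $\Lambda$ in the enumerated set contains $u$ by $u \in L \subset \Lambda$, so the two descriptions coincide. Assembling, $O_g(1_{K_0})$ equals the cardinality of the enumerated set.

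The Lie algebra statement (2) follows by the identical argument, with the condition $h^{-1}xh \in \mfu(J_0)(\mcO_{E_0}) = \mfu(J_0) \cap \End(L_0)$ translating to $x(hL_0) \subseteq hL_0$, which replaces $g\Lambda = \Lambda$ by $x\Lambda \subseteq \Lambda$ in the enumeration. The only slightly delicate ingredient is the decomposition of self-dual lattices containing $u$, which depends crucially on the normalization $J_0(u,u)=1$; beyond this point, the argument is a routine combination of orbit–stabilizer and transitivity on self-dual lattices in an unramified hermitian space.
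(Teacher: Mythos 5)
Your proof is correct and is essentially the standard unfolding argument of \cite[Lemma 7.1]{RTZ}, which the paper simply cites for part (1) (and invokes analogously for part (2)) rather than reproving: one rewrites the orbital integral as a volume, uses the decomposition of self-dual lattices containing $u$ together with transitivity of $U(J_0^\flat)$ on self-dual lattices of $(W,J_0^\flat)$ and the normalization $\Vol(K_0^\flat)=1$, and observes that the constraint $L\subset\Lambda\subset L^\vee$ is automatic. No gaps; the one point worth making explicit is that $K_0\cap U(J_0^\flat)=K_0^\flat$, so the fibers of $h\mapsto hL_0$ are exactly the cosets $hK_0^\flat$.
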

\begin{proof}
Part (1) is precisely \cite[Lemma 7.1]{RTZ} and part (2) is proved in the same way.
\end{proof}

\section{The Arithmetic Fundamental Lemma}
\subsection{Intersection numbers}
\label{ss:intersect}
Let $n\geq 2$ and let $\mcN_{E_0,(1,n-2)}$ be the RZ-space from Definition \ref{defRZ} with framing object $\mbX_{E_0,(1,n-2)}$. Let $\mbY_{E_0} := \mbX_{E_0, (0,1)}$ be the hermitian $\mcO_E$-module of signature $(0,1)$ over $\mbF$ and take $\mbX_{E_0,(1,n-1)}:= \mbX_{E_0,(1,n-2)}\times \mbY_{E_0}$ as the framing object for $\mcN_{E_0,(1,n-1)}$. We make $\Hom^0(\mbY_{E_0}, \mbX_{E_0,(1,n-2)})$ into a hermitian $E$-vector space with form $h$ defined by
$$h(x,y)\mr{id_{\mbY_{E_0}}} = λ_{\mbY_{E_0}}^{-1}\circ x^\vee \circ λ_{\mbX_{E_0,(1,n-2)}} \circ y,$$
see \cite[Definition 3.1]{KRlocal}. Let $V(\mbY_{E_0})$ resp. $V(\mbX_{E_0,(1,n-2)})$ denote the hermitian $E$-vector spaces corresponding to the hermitian $\mcO_E$-modules under the equivalence from Proposition \ref{propIsocrys}. Then $V(\mbY_{E_0})$ is even one-dimensional and $V(\mbX_{E_0,(1,n-2)})$ is odd $(n-1)$-dimensional by Lemma \ref{lemExistframing}. It follows that $h$ is odd and we fix an isometry
$$(\Hom^0(\mbY_{E_0}, \mbX_{E_0,(1,n-2)}),h) \iso (W,J_1^\flat)$$
which we extend to an isometry
$$
\Hom^0(\mbY_{E_0}, \mbX_{E_0,(1,n-1)}) = \Hom^0(\mbY_{E_0}, \mbX_{E_0,(1,n-2)}) \times E\cdot \mr{id}_{\mbY_{E_0}}
\iso W \oplus Eu = V
$$
by sending $(0,\mr{id}_{\mbY_{E_0}})$ to $u$. Then $\End^0(\mbX_{E_0,(1,n-1)})$ acts on $V$ which induces an identification
\begin{equation}\label{eqIdent}
\End^0(\mbX_{E_0,(1,n-1)}) \iso \End(V)
\end{equation}
that is equivariant for the Rosati involution on the left and the adjoint involution of $J_1$ on the right. The product decompositions $\mbX_{E_0,(1,n-1)} = \mbX_{E_0,(1,n-2)}\times \mbY_{E_0}$ and $V = W\times Eu$ give rise to projection (resp.\! inclusion) operators to (resp.\! from) the vector spaces $\End^0(\mbX_{E_0,(1,n-2)})$, $\Hom(\mbY_{E_0},\mbX_{E_0,(1,n-2)})$, etc.\! and $\End(W),\Hom(Eu,W)$, etc. The identification in \eqref{eqIdent} is compatible with all these homomorphisms. Finally, the isomorphism \eqref{eqIdent} identifies the unitary group $U(J^\flat_1)$ (resp.\! $U(J_1)$) with the group of quasi-isogenies of $\mbX_{E_0,(1,n-2)}$ (resp.\! $\mbX_{E_0,(1,n-1)}$). From now on, we will take the identification from \eqref{eqIdent} as self-evident.

By Proposition \ref{propSmoothness}, there is a unique deformation $\mcY_{E_0}$ of the hermitian $\mcO_E$-module $\mbY_{E_0}$ to $\Spf \mcO_{\breve E}$. We define $\mcY_{E_0}$ on every $\Spf \mcO_{\breve E}$-scheme by base change. This induces a closed immersion
\begin{equation}\label{eqDelta}
δ:\mcN_{E_0,(1,n-2)}\hookrightarrow \mcN_{E_0,(1,n-1)},\ \ X\mapsto X\times \mcY_{E_0}
\end{equation}
which is equivariant with respect to the inclusion $U(J^\flat_1)\hookrightarrow U(J_1)$. Here, the groups act on the RZ-spaces by composition in the framing, $g.(X,ρ) := (X,gρ).$
We consider the graph of $δ$,
$$\Delta:\mcN_{E_0,(1,n-2)}→\mcN_{E_0,(1,n-2)}\times_{\Spf \mcO_{\breve E}} \mcN_{E_0,(1,n-1)}.$$
By abuse of notation, we denote its image also by $\Delta$. Note that the source is regular of dimension $n-1$ while the target is regular of dimension $2(n-1)$. Hence $\Delta$ defines a cycle in middle dimension.  For $g\in U(J_1)$, we denote by
$$\Delta_g := (1,g)\Delta$$
its translate under $g$.

\begin{lem}[\protect{\cite[Lemma 2.8]{Zhang}}]\label{lemfininter}
For regular semi-simple $g\in U(J_1)_{\mr{rs}}$, the schematic intersection
$Δ\cap Δ_g$ is a projective scheme over $\Spf \mcO_{\breve E}$.
\end{lem}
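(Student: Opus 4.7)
The plan is to reinterpret $\Delta \cap \Delta_g$, via the first projection, as an explicit Kudla-Rapoport cycle in $\mcN_{\mbQ_p,(1,n-2)}$, and then to establish its projectivity using the Vollaard-Wedhorn stratification together with the regular semi-simplicity of $g$.

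First I would use that $\Delta$ is the graph of the closed immersion $\delta$, so the first projection $p_1$ restricts to a closed immersion on $\Delta$ with image $\mcN_{\mbQ_p,(1,n-2)}$. A point $(X_1, Y) \in \Delta \cap \Delta_g$ then satisfies both $Y = \delta(X_1)$ and $g^{-1}Y = \delta(X_1')$ for some $X_1'$. Since the action of $g$ twists only the framing, the second condition unpacks into the requirement that the quasi-morphism $gu \in V$ lift to an honest morphism $\ob{\mcY} \to X_1 \times \ob{\mcY}$. Writing $gu = w + \lambda u$ according to $V = W \oplus Eu$ with $w \in W = \Hom^0(\ob{\mbY}, \mbX_{\mbQ_p,(1,n-2)})$ and $\lambda \in E = \End^0(\ob{\mbY})$, this separates into the arithmetic condition $\lambda \in \mcO_E$ (otherwise the intersection is already empty) and the geometric condition that $w$ lift to a morphism $\ob{\mcY} \to X_1$, i.e.\! $X_1 \in \mcZ(w)$. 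Thus $p_1$ identifies $\Delta \cap \Delta_g$ with the Kudla-Rapoport cycle $\mcZ(w) \subset \mcN_{\mbQ_p,(1,n-2)}$.

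Next I would bring in the regular semi-simplicity of $g$. By Lemma \ref{lemrs}, the orbit $\{g^i u\}_{i \geq 0}$ spans $V$, so $u$ is not an eigenvector of $g$ and in particular $w \neq 0$. This already rules out the pathology $\mcZ(w) = \mcN_{\mbQ_p,(1,n-2)}$ that would violate projectivity. To finish, I would appeal to the Vollaard-Wedhorn description: the reduction $\mcN_{\mbQ_p,(1,n-2)}^{\mr{red}}$ is a locally finite union of projective Deligne-Lusztig varieties indexed by vertex lattices $\Lambda$ in the hermitian space underlying $\mbX_{\mbQ_p,(1,n-2)}$, and on reductions the cycle $\mcZ(w)$ decomposes as the union of those strata whose indexing lattice contains (the Dieudonné image of) $w$. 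Combined with the defining constraint $\Lambda \subset \Lambda^\vee \subset \pi_E^{-1}\Lambda$ for vertex lattices and the boundedness of their type, only finitely many vertex lattices contain a fixed non-zero $w$, so $\mcZ(w)^{\mr{red}}$ is a finite union of projective strata. Since $\mcN_{\mbQ_p,(1,n-2)}$ is locally formally of finite type over $\Spf \mcO_{\breve E}$, this yields projectivity of $\mcZ(w)$ as a formal scheme.

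The main obstacle lies in the finiteness step above, namely the bound on the set of vertex lattices containing $w$. The easy case is $\langle w, w \rangle \neq 0$ with respect to the hermitian form on $W$: the sub-lattice $\mcO_E\cdot w$ is then non-degenerate and pinches any admissible $\Lambda$ between two explicit bounds. The delicate case is the isotropic one, where one has to use in addition the duality-type constraint $\Lambda \subset \Lambda^\vee$ and the ambient polarization to exclude infinite chains of vertex lattices along an isotropic line; without the regular semi-simplicity of $g$ this fails outright, as the extreme case $gu = u$ gives $w = 0$ and $\Delta \cap \Delta_g = \mcN_{\mbQ_p,(1,n-2)}$, which is manifestly not projective.
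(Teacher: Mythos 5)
First, a point of context: the paper does not prove this lemma at all --- it cites \cite[Lemma 2.8]{Zhang}, and the remark immediately following explains that Zhang's argument is \emph{global} ($p$-adic uniformization plus a properness result of Kudla--Rapoport), which is precisely why the statement is open for general $E_0$. You are therefore attempting a genuinely new, purely local proof; unfortunately it has two gaps, the second of which is fatal.

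The first gap is the identification of $\Delta\cap\Delta_g$. The condition $\delta(X_1)=g\cdot\delta(X_1)$ is that the full quasi-endomorphism $g$ of $\mbX_{\mbQ_p,(1,n-2)}\times\ob{\mbY}$ lifts to $X_1\times\ob{\mcY}$, not merely that $gu$ lifts. Writing $g=\left(\begin{smallmatrix} g^\flat & v\\ w & d\end{smallmatrix}\right)$, equation \eqref{eqModx} of the paper gives $\Delta\cap\Delta_g\iso\mcZ(g^\flat)\cap\mcZ(v)\cap\mcZ(w^*)$ (for $d\in\mcO_E$); you retain only the factor $\mcZ(v)$ and discard the conditions on $g^\flat$ and on $w^*$, so you have only an inclusion into a larger scheme. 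That would still suffice if the larger scheme were projective, but it is not: this is the second, decisive gap. A single Kudla--Rapoport divisor is never projective for $n\geq 4$ (already $\mcZ(u)\iso\mcN_{\mbQ_p,(1,n-2)}$ is a non-quasi-compact formal scheme of positive dimension), and your finiteness claim for vertex lattices is false. Concretely, in $W=\mcO_E e\oplus\mcO_E f\oplus\langle a\rangle$ with $h(e,f)=1$, $h(e,e)=h(f,f)=0$, $h(a,a)=\pi$, the lattices $\Lambda_k=\pi^{-k}\mcO_E e\oplus\pi^{k}\mcO_E f\oplus\mcO_E a$, $k\geq 0$, are all vertex lattices ($\pi\Lambda_k^\vee\subset\Lambda_k\subset\Lambda_k^\vee$) containing the fixed vector $e$, and also all containing the anisotropic vector $a$ --- so the constraint $\Lambda\subset\Lambda^\vee$ does not ``pinch'' $\Lambda$ from above, in either your easy or your delicate case, because the dual of $\mcO_E w$ in $W$ is not a lattice. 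The finiteness that actually makes the lemma true comes from regular semi-simplicity used in full: since $u,gu,\dots,g^{n-1}u$ span $V$ (Lemma \ref{lemrs}), any special lattice fixed by $g$ and containing $u$ is sandwiched between $L=\mcO_E[g]u$ and $L^\vee$, a finite set --- and this is invisible if one only remembers the single vector $gu$. Any repair of your argument has to feed the entire algebra $\mcO_E[g]$ into the lattice count, not just its first iterate.
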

\begin{proof}
The proof in \cite{Zhang} is global and only applies to the case $E_0=\mbQ_p$, so we provide a local argument in the general case. The schematic intersection $\Delta\cap \Delta_g$ has the following moduli-theoretic interpretation. The image $δ(\mcN_{E_0,(1,n-2)})\subset \mcN_{E_0,(1,n-1)}$ can be identified with the KR-divisor $\mcZ(u)$ from Definition \ref{defKRdiv}. Via the second projection, $\Delta \cap \Delta_g$ can then be identified with the formal scheme $\mcZ(u)\cap \mcZ(g)$, where $\mcZ(g)$ is defined in Definition \ref{defZg}. We assume that $g$ is integral over $\mcO_E$, otherwise $\mcZ(g)=\emptyset$. Let
$$L:=\mcO_E [g]u\subset \Hom^0(\mbY_{E_0},\mbX_{E_0,(1,n-1)})$$
be the lattice generated by the $\{g^iu\}_{i\geq 0}$. It has full rank by Lemma \ref{lemrs}. Since $\mcZ(g^iu)\subset \mcZ(u)\cap \mcZ(g)$ for all $i\geq 0$, the natural quasi-isogeny
\begin{equation}\label{eqisog}
L\tensor_{\mcO_E} \mbY_{E_0}→\mbX_{E_0,(1,n-1)}
\end{equation}
lifts to an isogeny $L \tensor_{\mcO_E}\mcY_{E_0} → \mcX$ on $\mcZ(u)\cap \mcZ(g)$, where $\mcX$ denotes the universal hermitian $\mcO_E$-module. The space of isogenies from $L\tensor_{\mcO_E}\mcY_{E_0}$ of height equal to the height of \eqref{eqisog} is representable by a projective scheme over $\mcO_{\breve E}$. The intersection $\mcZ(u)\cap \mcZ(g)$ is then the $p$-adic completion of the closed subscheme of isogenies such that:\\
1) The $\mcO_E[g]$-action descends to the quotient.\\
2) The $\mcO_E$-action on the quotient is of signature $(1,n-1)$ on its Lie algebra.\\
3) The (non-principal) polarization on $L\tensor_{\mcO_E}\mbY_{E_0}$ induced from the hermitian form on $L$ and the polarization on $\mbY_{E_0}$ descends to a (for degree reasons necessarily principal) polarization on the quotient.\\
This closed subscheme has empty generic fiber since $L\tensor_{\mcO_E}\mcY_{E_0}$ has signature $(0,n)$.
\end{proof}

\begin{defn}\label{defHigherTor}
For regular semi-simple $g\in U(J_1)$, we define
$$\mr{Int}(g) := χ(\mcO_{\Delta} \tensor ^{\mbL} \mcO_{\Delta_g}).$$
This number is finite by the previous lemma. Moreover, the function $U(J_1)_{\mr{rs}}\ni g\mapsto \mr{Int}(g)$ descends to the quotient $[U(J_1)_{\mr{rs}}]$.
\end{defn}

\begin{rmk}\label{rmkModg}
All spaces occurring in the definition of $\mr{Int}(g)$ are regular. So if the schematic intersection $\Delta\cap \Delta_g$ is $0$-dimensional, then there is an equality
$$\mr{Int}(g) = \mr{len}_{\mcO_{\breve E}} \mcO_{\Delta \cap \Delta_g},$$
see \cite[Proposition 4.2]{RTZ}. 
\end{rmk}

\begin{defn}\label{defInter}
A quasi-endomorphism $x\in \End^0_E(\mbX_{E_0,(1,n-1)})$ is called \emph{artinian (with respect to the quasi-homomorphism $u\in \Hom^0(\mbY_{E_0},\mbX_{E_0,(1,n-1)})$)}, if the intersection $\mcZ(x)\cap \mcZ(u)$ is an artinian scheme. For artinian $x$, we define
$$\mr{Int}(x) := \mr{len}_{\mcO_{\breve E}} \mcO_{\mcZ(x)\cap \mcZ(u)}.$$
\end{defn}

\begin{rmk}
There is no known group-theoretic characterization of the artinian elements  in $U(J_1)_{\mr{rs}}$.
\end{rmk}

It is also possible to give a moduli description of $\mcZ(x)\cap \mcZ(u)$ in the smaller space $\mcZ(u) = δ(\mcN_{E_0,(1,n-2)})$. If $x$ has the form
$$x = \left( \begin{smallmatrix} x^\flat & v \\ w & d\end{smallmatrix} \right)\in \End^0(\mbX_{E_0,(1,n-2)}\times \mbY_{E_0}),$$
then
\begin{equation}\label{eqModx}
\mcZ(x)\cap \mcZ(u) = \begin{cases} \emptyset & \text{if } d\notin \mcO_E\\
\mcZ(x^\flat)\cap \mcZ(v) \cap \mcZ(w^*) & \text{otherwise}\end{cases}
\end{equation}
where $w^*:\mbY_{E_0}→\mbX_{E_0,(1,n-2)}$ is the Rosati adjoint of $w$, see Definition \ref{defRosati}.

\begin{rmk}\label{rmkModx}
If $x\in \mfu(J_1)$, then $x$ has the form
$$x = \left( \begin{smallmatrix} x^\flat & j \\ -j^* & d\end{smallmatrix} \right)$$
with $x^\flat\in \mfu(J^\flat_1)$ and $\ob{d} = -d$. So in this case,
$$\Delta \cap \Delta_x \iso \begin{cases} \emptyset & \tif d\notin \mcO_E\\
\mcZ(x^\flat)\cap \mcZ(j) & \text{otherwise.}\end{cases}$$
\end{rmk}

\subsection{The Arithmetic Fundamental Lemma}
 
The Fundamental Lemma gives an expression of the orbital integral function $O_γ(1_{S(\mcO_{E_0})})$ on the unitary side. By contrast, the Arithmetic Fundamental Lemma conjecturally expresses the derivative $\partial O_γ(1_{S(\mcO_{E_0})})$ on the unitary side whenever $γ$ matches an element $g\in U(J_1)_{\mr{rs}}$. Note that for such $γ$, the orbital integral $O_γ(1_{S(\mcO_{E_0})})$ vanishes and thus $\partial O_γ(1_{S(\mcO_{E_0})})$ is $η\circ \det$-invariant.

\begin{conj}[Arithmetic Fundamental Lemma, Group version, \protect{\cite{Zhang}}]\label{conjAFLgrp}
Let $γ\in S(E_0)_{\mr{rs}}$ be a regular semi-simple element that matches an element $g\in U(J_1)$. Then there is an equality
\begin{equation}
Ω(γ)\partial O_γ(1_{S(\mcO_{E_0})}) = -\mr{Int}(g)\log(q). \tag{$\mr{AFL}_{E/E_0,(V,J_1),u,g}$}
\end{equation}
\end{conj}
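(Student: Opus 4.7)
The strategy I would pursue is to reduce this $E_0$-version of the AFL to the $\mbQ_p$-version via the comparison tools developed in Parts I and II of this paper, thereby leveraging the known low-dimensional case ($n \leq 3$, due to Zhang) and the minuscule case (due to Rapoport-Terstiege-Zhang) over $\mbQ_p$. This approach will succeed when $g$ is of inductive type, i.e.\! $\mcO_E \subset \mbZ_{p^2}[g]$; the full conjecture for arbitrary $g$ will genuinely lie beyond this method and presumably requires fundamentally new ideas.

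For the geometric side, I would use Theorem \ref{thmZoe} to identify the cycle $\mcZ(\mcO_E) \subset \mcN_{\mbQ_p,(1,nd-1)}$ with the disjoint union $\coprod_{i=1}^{f} \mcN_{E_0,(1,n-1)}$. The inductive-type hypothesis ensures $\mcZ(i(g)) \subset \mcZ(\mcO_E)$, so on each component this cycle pulls back to $\mcZ(g)$. The KR-divisor $\mcZ(u)$ that computes $\mr{Int}(i(g))$ intersects each component in the KR-divisor attached to $\mr{id}_{\mcO_{E_0}} \tensor u$ by Lemmas \ref{lemIsoCan} and \ref{lemCompKR}, which under the identification becomes the KR-divisor cutting out $\mr{Int}(g)$. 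Since the $f$ components are isomorphic and contribute equally, taking lengths should yield an identity of the shape $\mr{Int}(i(g)) = f \cdot \mr{Int}(g)$.

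For the analytic side, I would aim to establish a matching identity $\partial O_{γ_{\mbQ_{p^2}}}^{\mbQ_p}(1_{S(\mbZ_p)}) = f \cdot \partial O_{γ}^{E_0}(1_{S(\mcO_{E_0})})$ together with compatibility of transfer factors $Ω^{\mbQ_p}(γ_{\mbQ_{p^2}}) = Ω^{E_0}(γ)$. Following the setup of Section \ref{sect:analytic} and modeled on Lemma \ref{lemorbunitary}, I would rewrite both derived orbital integrals as signed weighted counts of lattices in $V$ (resp.\! $V_{\mbQ_{p^2}}$), where the $η$-twist combined with differentiation at $s=0$ produces a sum of the form $\sum_Λ (\pm 1) \cdot v(\cdot)$ over appropriate lattices stable under $γ$. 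The key combinatorial identity is then that imposing $\mcO_{E_0}$-stability on the $\mbQ_p$-side sum cuts it down to precisely $f$ copies of the $E_0$-side sum, reflecting the splitting $\mcO_E \tensor \mbZ_{p^2} \iso \prod_{ψ \in Ψ} \mcO_{\breve E}$ from \eqref{eqGrading}.

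The main obstacle I anticipate is controlling the derivative at $s=0$, not merely the value, of the orbital integral. Because $γ$ matches an element of $U(J_1)$ whose associated hermitian form has odd discriminant, both orbital integrals vanish identically at $s=0$ by the vanishing case of the Jacquet-Rallis fundamental lemma, so the comparison cannot be made on values but must instead be carried out term-by-term in the signed lattice expansion; this requires a careful sign-keeping argument. An extension beyond inductive-type $g$ would furthermore demand an entirely new geometric input, since then $\mcZ(i(g)) \not\subset \mcZ(\mcO_E)$ and the clean reduction provided by Theorem \ref{thmZoe} is no longer available.
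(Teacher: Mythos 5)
This statement is a conjecture (Zhang's AFL), and the paper offers no proof of it; what the paper does prove are special cases, and your proposed reduction is precisely the paper's strategy for those cases (Theorems \ref{thmZoe}, \ref{thmAFLgenmain}, \ref{thmMainEtale} and their corollaries), including your correct caveat that the method only reaches inductive-type $g$ and your identification of the two halves $\mr{Int}(i(g)) = f\cdot \mr{Int}(g)$ and $\partial O = f\cdot \partial O^{E_0}$. One small clarification on the analytic side: in the paper the lattice sets are literally equal (an $x$-stable $\mbZ_{p^2}$-lattice is automatically $\mcO_E$-stable once $\mcO_E\subset \mbZ_{p^2}[x]$), and the factor $f$ arises from rescaling the length index $i\mapsto fi$ (with $f$ odd preserving the signs), rather than from the $\mbQ_p$-side sum splitting into $f$ copies.
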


For the Lie algebra version, we have to restrict to artinian elements since we defined the intersection product only in this case, see Definition \ref{defInter}.

\begin{conj}[Arithmetic Fundamental Lemma, Lie algebra version]\label{conjAFLLie}
For any $y\in \mfs(E_0)_{\mr{rs}}$ matching an artinian element $x\in \mfu(J_1)$, there is an equality
\begin{equation}
\tag{$\mathfrak{afl}_{E/E_0,(V,J_1),u,x}$}
Ω(y)\partial O_y(1_{\mfs(\mcO_{E_0})}) = -\mr{Int}(x)\log(q).
\end{equation}
\end{conj}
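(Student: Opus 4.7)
The plan is to follow the strategy already carried out for the group version in Theorem \ref{thmMain3} and prove a conditional equivalence
\[
(\mathfrak{afl}_{E_0,(V,J_1),u,x})\ \Leftrightarrow\ (\mathfrak{afl}_{\mbQ_p,(V_{\mbQ_{p^2}},J_{1,\mbQ_{p^2}}),u,i(x)})
\]
whenever $x$ is artinian and of inductive type, so that $\mcO_E \subset \mbZ_{p^2}[x]$ inside $\End(V)$. The known cases of the $\mbQ_p$-AFL (minuscule, or $n\leq 3$) would then yield unconditional cases of the Lie algebra AFL over $E_0$, exactly as in the group setting. Since the artinian intersection formalism is identical in both versions (Section 9), it is enough to explain the key steps for the Lie algebra case.

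On the geometric side, I would combine Theorem \ref{mainThm1} with Theorem \ref{thmZoe} to identify
\[
\mcZ(\mcO_E) \iso \coprod_{i=1}^f \mcN_{E_0,(1,n-1)} \subset \mcN_{\mbQ_p,(1,nd-1)}.
\]
For $x$ of inductive type, the cycle $\mcZ(i(x))$ lies inside $\mcZ(\mcO_E)$ and decomposes as $\coprod_{i=1}^f \mcZ(x)$ under this identification. To compare intersection numbers, I would use Lemmas \ref{lemIsoCan} and \ref{lemCompKR} to identify the Kudla-Rapoport divisor $\mcZ(u_{\mbQ_p})$ restricted to each of the $f$ components with $\mcZ(u)$ inside $\mcN_{E_0,(1,n-1)}$. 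The moduli description in \eqref{eqModx}, together with the artinian hypothesis, then yields
\[
\mr{Int}(i(x)) = f\cdot \mr{Int}(x).
\]

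On the analytic side, I would express the derived orbital integrals as derivatives of weighted lattice counts in the spirit of \cite[Section 7]{RTZ} and Lemma \ref{lemorbunitary}. The point is that the defining relation $J_{1,\mbQ_{p^2}} = \mr{tr}_{E/\mbQ_{p^2}}(\vartheta_E J_1)$ makes self-duality of $\mcO_E$-lattices for $J_1$ equivalent to self-duality of $\mbZ_{p^2}$-lattices for $J_{1,\mbQ_{p^2}}$, while the inductive type hypothesis on the matching element $y$ forces the $\mbZ_{p^2}$-lattices contributing on the $\mbQ_p$-side to be automatically $\mcO_E$-stable. Combined with careful bookkeeping of the transfer factor $\Omega$ under enlarging the base field, this should give the parallel identity
\[
\Omega(y_{\mbQ_{p^2}})\,\partial O_{y_{\mbQ_{p^2}}}(1_{\mfs(\mcO_{\mbQ_p})}) \;=\; f\cdot \Omega(y)\,\partial O_y(1_{\mfs(\mcO_{E_0})}),
\]
so that the factor of $f$ cancels between the two sides of the AFL equation.

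The main obstacle will be the bookkeeping for transfer factors and for compatible framings. The identification \eqref{eqIdent} and the comparison isomorphism $c$ involve auxiliary choices of $E$-linear quasi-isogeny (Remark \ref{rmkComp1}), and one must check that these can be made in such a way that $\mcZ(i(x))$ really equals $\coprod \mcZ(x)$ on the nose and not merely up to conjugation by an element of $U(J_{1,\mbQ_{p^2}})$. Similarly, verifying that the relative index $l(\cdot)$ behaves exactly as expected under the change from $E_0$ to $\mbQ_p$ — and in particular that the power of $f$ (rather than some other combination of $e$ and $f$) appears on the analytic side — is the central computation. Once these compatibilities are settled, the two sides differ by the same factor $f$ and the equivalence of Conjectures $(\mathfrak{afl}_{E_0})$ and $(\mathfrak{afl}_{\mbQ_p})$ follows.
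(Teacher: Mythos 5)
The statement you are asked to prove is labelled a \emph{conjecture} in the paper, and the paper does not prove it; it only (i) reformulates it via the pair formalism of Section 9 (Lemma \ref{lemAFLLie}, Conjecture \ref{conjAFLgeneral}) and (ii) establishes \emph{equivalences} between instances of it under restrictive hypotheses (Theorems \ref{thmAFLgenmain} and \ref{thmMainEtale}), from which unconditional cases follow only when the reduction lands in the known range ($n\leq 3$ or minuscule). Your proposal does exactly this kind of reduction, and in that sense it faithfully mirrors the paper's actual strategy — the geometric comparison via Theorem \ref{mainThm1}, Theorem \ref{thmZoe}, Remarks \ref{rmkComp1}--\ref{rmkComp2} and Lemma \ref{lemCompKR}, and the analytic comparison via the lattice-counting formulas of Section 8, with the factor $f$ appearing on both sides, are precisely the content of the proof of Theorem \ref{thmAFLgenmain}. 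But this cannot constitute a proof of the statement as given: the conjecture is asserted for \emph{all} artinian $x\in\mfu(J_1)_{\mr{rs}}$, whereas your argument only applies to $x$ of inductive type (i.e.\ with $\mcO_E\subset\mbZ_{p^2}[x]$), and even then only yields an equivalence with another open instance of the conjecture unless that instance happens to be known. A generic artinian $x$, e.g.\ one for which $\mbZ_{p^2}[x]$ contains no ring of integers of a nontrivial extension of $\mbZ_{p^2}$, is untouched by any such reduction. So there is a genuine gap between what you prove and what the statement asserts; you should present your result as the Lie-algebra analogue of Theorem \ref{thmMain3} (which is what the paper does via Theorem \ref{thmAFLgenmain} and Lemma \ref{lemAFLLie}), not as a proof of Conjecture \ref{conjAFLLie}.

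One further point of direction: you write that the known cases of the $\mbQ_p$-AFL would yield unconditional cases of the AFL over $E_0$. For an element of inductive type the $\mbQ_p$-side lives in dimension $nd\geq n$, so the useful implication runs the other way, as in the paper's corollaries: one inputs the known low-dimensional cases over the \emph{larger} field $E_0$ (or $A_0$), i.e.\ $\dim_E V\leq 3$, and deduces new high-dimensional cases of $(\mathfrak{afl}_{\mbQ_p,V_{\mbQ_{p^2}},u,i(x)})$. Since your main claim is an equivalence this is repairable, but as written the harvest of unconditional cases is described backwards.
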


\begin{rmk}
Just as in the case of the Fundamental Lemma, see Remark \ref{rmkNotation}, the left hand side of the AFL identities does not depend on the chosen lattice $Λ_0$. Also, it does not depend on the chosen $E_0$-structure $W_0\subset W$, which will follow from the Corollaries \ref{lemorbints} and \ref{lemorbintsLie} below. Hence the quadruples $(E/E_0,(V,J_1),u,g)$ and $(E/E_0,(V,J_1),u,x)$ are sufficient to formulate the respective identity.
\end{rmk}

The group and the Lie algebra version of the AFL are related by the following result.

\begin{prop}[\protect{\cite{Mihatsch}}]\label{propEqAFL}
Assume that $q \geq n+2$. Then the AFL for all artinian elements $g\in U(J_1)_{\mr{rs}}$ is equivalent to the AFL in the Lie algebra formulation for all artinian $x\in \mfu(J_1)_{\mr{rs}}$.\footnote{In \cite{Mihatsch}, it is specified for which $x$ one needs $(\mr{AFl}_{E/E_0,(V,J_1),u,x})$ to obtain $(\mr{AFL}_{E/E_0,(V,J_1),u,g})$ and conversely.}
\end{prop}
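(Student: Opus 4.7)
The natural strategy is a Cayley-type comparison between the group and Lie-algebra settings, carried out simultaneously on the analytic side (relating $S(E_0)$ with $\mfs(E_0)$ and $U(J_1)$ with $\mfu(J_1)$) and on the geometric side (relating $\mcZ(g)$ with $\mcZ(x)$). Concretely, for a suitable $\lambda \in \mcO_E^\times$ with $\lambda + \bar\lambda = 0$, I would consider the partial Cayley transform
$$g \longmapsto x = \lambda(g-1)(g+1)^{-1},$$
defined whenever $g+1$ is invertible, together with the analogous transform on $S(E_0)$. Elementary algebra shows this maps $U(J_1)$ to $\mfu(J_1)$ and $S(E_0)$ to $\mfs(E_0)$, preserves regular semi-simplicity and matching, and generates the same $\mcO_E$-subalgebra $\mcO_E[g] = \mcO_E[x]$ after inverting $g+1$. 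The role of $q \geq n+2$ is to guarantee, for every artinian $g \in U(J_1)_{\mr{rs}}$, the existence of a translate $g' = \zeta g$ (for some root of unity $\zeta$) whose Cayley transform is defined; since the "bad locus" where $g'+1$ is not invertible has bounded degree $n$ as a polynomial condition on the scalar parameter, the counting argument works as soon as there are more than $n$ admissible scalars in the residue field.

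\textbf{Step 1 (geometric side).} I would check that the cycles $\mcZ(g)$ and $\mcZ(x)$ in $\mcN_{E_0,(1,n-1)}$ coincide. Indeed, both equal $\mcZ(\mcO_E[g,g^*]) = \mcZ(\mcO_E[x,x^*])$ by the Remark following Definition~\ref{defZg}, because the Cayley transform is an isomorphism of $*$-stable $\mcO_E$-algebras on the appropriate localization. Consequently, the artinian property of $g$ transfers to $x$ and there is an equality of intersection numbers $\mr{Int}(g) = \mr{Int}(x)$, using the moduli description from Remark~\ref{rmkModg}.

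\textbf{Step 2 (analytic side).} On both the symmetric-space and the unitary sides, the Cayley transform realizes the requisite integral identities between $O_\gamma(\mathbf{1}_{S(\mcO_{E_0})})$ and $O_y(\mathbf{1}_{\mfs(\mcO_{E_0})})$, as well as between $O_g(\mathbf{1}_{K_1})$ and $O_x(\mathbf{1}_{\mfu(J_1)(\mcO_{E_0})})$, once the test functions are transported appropriately. The key computation is a Jacobian change-of-variables for the action of $GL(W_0)$ (resp.\ $U(J_1^\flat)$), which produces a predictable factor involving $\eta \circ \det$ and $|\det|^s$; differentiating at $s=0$ then relates $\partial O_\gamma(\mathbf{1}_{S(\mcO_{E_0})})$ and $\partial O_y(\mathbf{1}_{\mfs(\mcO_{E_0})})$ up to an explicit term that is itself an ordinary orbital integral already known to vanish by the Jacquet–Rallis FL (the "vanishing part" of~\eqref{eqJR}, \eqref{eqJRLie}). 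Equality of transfer factors $\Omega(\gamma) = \Omega(y)$ follows from an analogous computation of the relative lattice index $l$ along the Cayley-transformed vectors $\{x^i u\}$ versus $\{\gamma^i u\}$.

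\textbf{Step 3 (combining).} Granted Steps 1 and 2, the AFL identity for an artinian $g$ is term-by-term equivalent to the AFL identity for $x = c_\lambda(g)$, and conversely by running the inverse Cayley transform. Since every artinian $g$ (resp.\ $x$) admits such a $\lambda$ whenever $q \geq n+2$, the two versions of the AFL become equivalent. \textbf{The main obstacle} is Step 2: tracking how the derivative at $s=0$ picks up auxiliary ordinary orbital integrals during the change of variables, and showing these auxiliary terms cancel by invoking the Jacquet–Rallis FL. The $s$-derivative interacts non-trivially with the Jacobian factor $|\det(\cdot)|^s$, and the book-keeping—splitting into contributions from the "Cayley chart" and its complement, and verifying compatibility of transfer factors throughout—is the most delicate part.
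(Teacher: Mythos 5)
Your overall strategy --- a Cayley transform $g\mapsto x=(g-1)(g+1)^{-1}$ combined with the fact that both sides of the AFL depend only on the $*$-stable order generated by the element --- is the right one and is essentially the route of the cited paper \cite{Mihatsch}. But there are two errors. The small one is the normalization: for $g\in U(J_1)$ the element $(g-1)(g+1)^{-1}$ already satisfies $x^*=-x$, so multiplying by $\lambda$ with $\lambda+\ob{\lambda}=0$ produces a \emph{self-adjoint} element, not one in $\mfu(J_1)$ (and similarly $\ob{y}=y$ rather than $\ob{y}=-y$ on the symmetric-space side); the auxiliary scalar must lie in $E_0^\times$.

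The substantive problem is Step 2, which you flag as the main obstacle: there is no Jacobian, no auxiliary orbital integrals, and no appeal to the vanishing part of the Jacquet--Rallis FL. One never changes variables in $h$, because conjugation commutes with the Cayley transform. Once $\zeta g+1$ is a unit in $\mcO_E[g]$ one has $\mcO_E[\zeta\gamma]=\mcO_E[y]$ for $y=c(\zeta\gamma)$, so $h^{-1}\zeta\gamma h\in\End(\Lambda)$ iff $h\Lambda$ is $\mcO_E[\zeta\gamma]$-stable iff $h^{-1}yh\in\End(\Lambda)$; hence the integrands $1_{S(\mcO_{E_0})}(h^{-1}\zeta\gamma h)\,\eta(\det h)|\det h|^s$ and $1_{\mfs(\mcO_{E_0})}(h^{-1}yh)\,\eta(\det h)|\det h|^s$ are equal as functions of $h$, the orbital integrals agree \emph{identically in} $s$ (values and derivatives alike), and $\mcO_E[\zeta\gamma]u=\mcO_E[y]u$ gives $l(\zeta\gamma)=l(y)$, so the transfer factors agree. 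Equivalently, compare the lattice-counting formulas of Lemmas \ref{lemorbint} and \ref{lemorbintLie}, whose data $M_i$, $\tau$, $l$ visibly depend only on the pair $(\mcO_E[g],u)$. What your write-up does leave unaddressed, and what actually needs an argument, is: (i) that replacing $g$ by $\zeta g$ (resp.\ $x$ by $\lambda x$ in the converse direction) changes neither side of the respective identity --- this uses $\mcZ(\zeta g)=\mcZ(g)$ and the invariance of $1_{S(\mcO_{E_0})}$ under unit scalars; (ii) that one may assume the characteristic polynomial of $g$ is integral (otherwise both sides vanish), so that $g^*=g^{-1}\in\mcO_E[g]$ and your Step 1 identification $\mcZ(g)=\mcZ(\mcO_E[g])=\mcZ(x)$ is legitimate; and (iii) the converse direction, where the admissible scalars are the $q-1$ residues of $\mcO_{E_0}^\times$ rather than the $q+1$ norm-one classes --- this is precisely where the stated bound $q\geq n+2$ comes from, and your count covers only one of the two directions.
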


The AFL conjecture has been verified for $n\leq 3$ in \cite{Zhang}. Note that if $n\leq 3$, then any regular semi-simple element $g\in U(J_1)_{\mr{rs}}$ is artinian. There exists a slight simplification of this computation in \cite{Mihatsch} which relies on Proposition \ref{propEqAFL}.

More cases of the AFL for any $n$, but under restrictive conditions on $g$, have been verified by Rapoport, Terstiege and Zhang in \cite{RTZ}. Note that in these cases, $g$ is also artinian. Their proof was subsequently simplified by Li and Zhu \cite{LZ1,LZ2} and He, Li and Zhu \cite{HLZ}.

Before we continue, we would like to modify the AFL for Lie algebras slightly. Namely let $y\in \mfs(E_0)_{\mr{rs}}$ match $x\in \mfu(J_1)$ of the form
$$x = \left( \begin{smallmatrix} x^\flat & j \\ -j^* & d\end{smallmatrix} \right).$$
Then $d$ is also the lower right entry of the matrix $y$. If $d\notin \mcO_E$, then it is easy to see that both sides of ($\mathfrak{afl}_{E/E_0,(V,J_1),u,x}$) vanish. If instead $d\in \mcO_E$, then we can replace $y$ by $y-d\cdot \mr{id}_V$ and $x$ by $x-d\cdot \mr{id}_V$ without changing either side of ($\mathfrak{afl}_{E/E_0,(V,J_1),u,x}$). Furthermore, $y-d\cdot\mr{id}_V$ lies in $\mfs(E_0)_{\mr{rs}}$ and matches $x-d\cdot\mr{id}_V\in \mfu(J_1)_{\mr{rs}}$.

\begin{defn}\label{defZero}
We define
$$\begin{aligned}
\mfs(E_0)^0 & := \left\{ \left(\begin{smallmatrix} y^\flat & w \\ v & d\end{smallmatrix}\right)\in \mfs(E_0) \mid d = 0\right\}\ \ \tand\\
\mfu(J)^0 & := \left\{ \left(\begin{smallmatrix} x^\flat & j \\ - j^* & d\end{smallmatrix}\right)\in \mfu(J) \mid d = 0\right\}.\end{aligned}$$
Then the matching relation induces a bijection
$$[\mfs(E_0)^0_{\mr{rs}}]\iso [\mfu(J_0)^0_{\mr{rs}}] \sqcup [\mfu(J_1)^0_{\mr{rs}}]$$
and it is enough to consider the Lie algebra formulation of the AFL for $x\in \mfu(J_1)^0$.
\end{defn}

\section{Orbital integrals as lattice counts}\label{sect:analytic}

In this section, we recall the expression of the orbital integrals $O_γ(1_K)$ and $\partial O_γ(1_K)$ in terms of lattices from \cite[Section 7]{RTZ}. We deduce analogous results for the Lie algebra formulation.

\subsection{Orbital integrals on $S_n$}
\label{sect:anaGrp}

Let $γ\in S(E_0)_{\mr{rs}}$ match the element $g\in U(J_0)_{\mr{rs}}\sqcup U(J_1)_{\mr{rs}}$. From now on, we consider $V$ with the hermitian form $J\in \{J_0,J_1\}$ determined by $g$. Recall that $V=W\os{\perp}{\oplus}  Eu$ with $(u,u)=1$ and note that $u,gu,\ldots,g^{n-1}u$ is a basis of $V$ since $g$ is regular semi-simple. We define a $σ$-linear involution $τ:V→V$ by $τ(g^iu)=g^{-i}u$ for $i=0,\ldots,n-1$.

\begin{rmk}\label{rmkAdjoint}
The involution $τ$ can also be defined as follows. The vector $u$ defines an isomorphism of $E[g]$-modules, $E[g]\iso E[g]u = V$. Under this isomorphism, $τ$ corresponds to the adjoint involution with respect to the hermitian form $J$ on $E[g]\subset \End(V)$.
\end{rmk}

Let $L:=\mcO_E[g]u$ be the $g$-stable lattice spanned by $u$ and denote by $L^\vee$ its dual with respect to $J$. Let
$$M:=\{Λ\subset V\mid L\subset Λ\subset L^\vee,\ gΛ\subset Λ,\ Λ^τ=Λ\}$$
and, for $i\in \mbZ$,
$$M_i:=\{Λ\in M\mid \mr{len}(Λ/L)=i\}.$$

\begin{lem}[\protect{\cite[Proof of Corollary 7.3]{RTZ}}]\label{lemorbint}
$$Ω(γ)O_γ(1_{S(\mcO_{E_0})},s)=\sum_{i\in \mbZ}(-1)^i|M_i|q^{-(i+l(γ))s}.$$
\qed
\end{lem}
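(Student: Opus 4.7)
The plan is to parametrize the integration in the orbital integral by lattices in $V$. Writing $\Lambda^\circ := (\mcO_E\tensor\Lambda_0)\oplus\mcO_E u$ for the reference lattice defining $S(\mcO_{E_0}) = S(E_0)\cap\End(\Lambda^\circ)$, I substitute $\Lambda'' := h\cdot\Lambda^\circ$. Since the stabilizer of $\Lambda^\circ$ in $GL(W_0)$ is $GL(\Lambda_0)$, to which we assigned volume $1$, the integral reduces to a sum over the $GL(W_0)$-orbit of $\Lambda^\circ$ in the set of $\mcO_E$-lattices of $V$. The indicator $h^{-1}\gamma h\in S(\mcO_{E_0})$ translates to $\gamma\Lambda''\subseteq\Lambda''$, while the weight $\eta(\det h)|\det h|^s$ depends only on $v(\det h)$, equivalently on the signed relative index of $\Lambda^\circ$ and $\Lambda''$.

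The next step uses the matching relation to transport the problem to the unitary side. Fix $k\in GL(W)$ with $kgk^{-1}=\gamma$ and $ku = u$ (such $k$ exists by the matching and regular semi-simplicity, see Lemma \ref{lemrs}), and set $\Lambda := k^{-1}\Lambda''$. Then $\gamma\Lambda''\subseteq\Lambda''$ becomes $g\Lambda\subseteq\Lambda$, and $u = k^{-1}u\in\Lambda$ combined with $g$-stability gives $L = \mcO_E[g]u\subseteq\Lambda$. The essential observation is that $\Lambda'' = h\Lambda^\circ$ is automatically stable under the Galois involution $\sigma$ on $V$ (arising from $V = E\tensor V_0$), because $h$ is $E_0$-linear and $\Lambda^\circ$ is visibly $\sigma$-stable. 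In the basis $\{\gamma^i u\}$ of $V$, the Galois action computes as $\sigma(\gamma^i u) = \bar\gamma^i u = \gamma^{-i}u$, using $\gamma\bar\gamma=1$ and $\sigma u = u$. Under the identification $V\iso E[g]u$ induced by $k$ (which sends $g^i u\mapsto\gamma^i u$), this matches $\tau(g^i u) = g^{-i}u$; hence $\sigma$-stability of $\Lambda''$ is equivalent to $\tau$-stability of $\Lambda$. The remaining condition $\Lambda\subseteq L^\vee$ then follows from $L\subseteq\Lambda$ by dualizing with respect to $J$, invoking Remark \ref{rmkAdjoint} to identify $\tau$ with the $J$-adjoint involution on $E[g]$.

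This establishes a bijection $\Lambda''\mapsto \Lambda = k^{-1}\Lambda''$ between the support of the sum and the lattices $\Lambda\in M$. Setting $i := \mr{len}(\Lambda/L)$ and using $kL = \mr{span}\{\gamma^i u\}_{i=0}^{n-1}$ together with the definition of $l(\gamma)$, additivity of relative indices relates $v(\det h)$ to $i + l(\gamma)$ by a signed identity whose bookkeeping is determined by the convention for $[-:-]$. Substituting into $\eta(\det h)|\det h|^s = (-1)^{v(\det h)}q^{-v(\det h)s}$ and multiplying by $\Omega(\gamma) = (-1)^{l(\gamma)}$ absorbs the parity shift, and aggregating contributions over each $M_i$ produces the claimed sum.

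The main obstacle is the identification of the Galois involution $\sigma$ on $V$ (arising from the $E/E_0$-structure) with the adjoint involution $\tau$ on $V\iso E[g]u$ under the conjugation by $k$. This identification rests critically on the symmetric space relation $\gamma\bar\gamma = 1$ and on the ability to choose $k$ fixing $u$; once established, the remaining work is routine tracking of lattice indices.
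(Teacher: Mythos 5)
Your overall strategy is the standard one from \cite{RTZ} (the paper itself gives no argument and simply cites loc.\ cit.): unfold the integral over $GL(W_0)/GL(\Lambda_0)$, transport by an element $k\in GL(W)$ realizing the matching, and identify the Galois involution $\sigma$ with $\tau$. The $\sigma$--$\tau$ identification and the index bookkeeping are correct. However, there is a genuine gap at the condition $\Lambda\subseteq L^\vee$. Dualizing $L\subseteq\Lambda$ gives $\Lambda^\vee\subseteq L^\vee$, which says nothing about whether $\Lambda\subseteq L^\vee$; and the remaining conditions defining $M$ do not imply it: for any $\Lambda\in M$ the lattices $p^{-N}\Lambda$ are still $g$-stable, $\tau$-stable and contain $L$, yet leave $L^\vee$ for $N$ large (so without this condition $M$ would be infinite and the identity false). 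Invoking Remark \ref{rmkAdjoint} does not repair this, since $\Lambda^\tau=\Lambda$ and $\Lambda^\vee=\Lambda$ are unrelated conditions.

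The condition $\Lambda\subseteq L^\vee$ is precisely where you must use that $\Lambda''$ lies in the $GL(W_0)$-orbit of $\Lambda^\circ$, and not merely that it is a $\sigma$-stable lattice containing $u$ --- a constraint you record when parametrizing the integral but then drop when establishing the bijection. Membership in the orbit is equivalent to $\sigma$-stability together with $\mr{pr}_u(\Lambda'')=\mcO_Eu$, since then $\Lambda''=(\Lambda''\cap W)\oplus\mcO_Eu$ and the $\sigma$-stable summand $\Lambda''\cap W$ descends to an $\mcO_{E_0}$-lattice in $W_0$. As $k$ commutes with $\mr{pr}_u$ and $\mr{pr}_u(v)=J(v,u)u$ on the unitary side, this translates into $J(\Lambda,u)\subseteq\mcO_E$. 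Combining this with $g\Lambda=\Lambda$ (note that $\sigma$- and $\gamma$-stability of $\Lambda''$ force $\bar\gamma=\gamma^{-1}$-stability, hence equality) and $g^*=g^{-1}$, one gets $J(\Lambda,\mcO_E[g]u)=J(\mcO_E[g]^*\Lambda,u)\subseteq J(\Lambda,u)\subseteq\mcO_E$, i.e.\ $\Lambda\subseteq L^\vee$. The same ingredient is needed for the surjectivity of $\Lambda''\mapsto k^{-1}\Lambda''$ onto $M$, which you assert but do not prove: given $\Lambda\in M$, the inclusion $\Lambda\subseteq L^\vee$ supplies $\mr{pr}_u(k\Lambda)=\mcO_Eu$ and hence the product decomposition as in Lemma \ref{lemLattice}, while $\tau$-stability supplies the $E_0$-rational structure by Galois descent for the unramified extension $E/E_0$.
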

(Here, $l(γ)$ was defined in Definition \ref{defTransferFact}.) Taking the value at $s=0$, resp., taking the derivative at $s=0$ yields
\begin{cor}[\protect{\cite[Corollary 7.3]{RTZ}}]\label{lemorbints}
$$Ω(γ)O_γ(1_{S(\mcO_{E_0})})=\sum_{i\in \mbZ}(-1)^i|M_i|$$
and, in the case $J = J_1$,
\begin{equation}\label{eqDeriv}
Ω(γ)\partial O_{γ}(1_{S(\mcO_{E_0})})=-\log(q) \sum_{i\in \mbZ}(-1)^ii|M_i|.
\end{equation}
\qed
\end{cor}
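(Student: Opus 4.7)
The statement is an immediate formal consequence of Lemma \ref{lemorbint}, so the plan is essentially to evaluate and to differentiate at $s=0$ and then to account for the factor $q^{-(i+l(γ))s}$ appropriately.

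First, setting $s=0$ in the formula
$$Ω(γ)O_γ(1_{S(\mcO_{E_0})},s)=\sum_{i\in \mbZ}(-1)^i|M_i|q^{-(i+l(γ))s}$$
gives $q^{-(i+l(γ))\cdot 0} = 1$ for every $i$, and hence the first identity
$$Ω(γ)O_γ(1_{S(\mcO_{E_0})})=\sum_{i\in \mbZ}(-1)^i|M_i|$$
follows at once, without any assumption on whether $J=J_0$ or $J=J_1$.

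For the derivative, I would differentiate the right-hand side of Lemma \ref{lemorbint} at $s=0$. Since $\tfrac{d}{ds}\big|_{s=0} q^{-(i+l(γ))s} = -(i+l(γ))\log(q)$, this yields
$$Ω(γ)\partial O_γ(1_{S(\mcO_{E_0})}) = -\log(q)\sum_{i\in\mbZ}(-1)^i(i+l(γ))|M_i| = -\log(q)\sum_{i\in\mbZ}(-1)^i i|M_i| - \log(q)\,l(γ)\sum_{i\in\mbZ}(-1)^i|M_i|.$$
The claim is that the second summand vanishes in the case $J=J_1$. This is where the hypothesis enters: by the first identity just established, the sum $\sum_i (-1)^i |M_i|$ equals $Ω(γ)O_γ(1_{S(\mcO_{E_0})})$, and by the vanishing part of the Jacquet-Rallis Fundamental Lemma (which is known unconditionally by \cite[Corollary 7.3]{RTZ} as recalled in the text), this orbital integral is zero whenever $γ$ matches an element of $U(J_1)$. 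This kills the $l(γ)$-term and leaves exactly the claimed formula \eqref{eqDeriv}.

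There is no real obstacle here: the only subtle point is remembering that the $l(γ)$-term in the exponent only contributes after differentiation, and that its contribution is killed precisely because we are on the unitary group where $O_γ(1_{S(\mcO_{E_0})})=0$. In the case $J=J_0$ one would not obtain a clean formula for the derivative from this method, which is consistent with the statement restricting the derivative identity to $J=J_1$.
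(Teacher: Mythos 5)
Your proof is correct and is exactly the intended argument: the paper gives no details beyond ``taking the value, resp.\ the derivative, at $s=0$'' with a reference to \cite[Corollary 7.3]{RTZ}, and the only nontrivial point --- that the $l(γ)$-term produced by differentiating $q^{-(i+l(γ))s}$ is killed by the vanishing $\sum_i(-1)^i|M_i|=0$ in the $J=J_1$ case --- is handled by you in the same way (that vanishing is the fixed-point-free involution $Λ\mapsto Λ^\vee$ argument reproduced later as Lemma \ref{lemFLvanishing}). Nothing to add.
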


\subsection{Orbital integrals on $\mfs_n$}

Let $y\in \mfs(E_0)^0_{\mr{rs}}$ match the element
$$x = \left(\begin{smallmatrix} x^\flat & j \\ - j^* & \end{smallmatrix}\right)\in \mfu(J_0)^0_{\mr{rs}}\sqcup \mfu(J_1)^0_{\mr{rs}}$$
and let $J \in \{J_0,J_1\}$ be the hermitian form determined by $x$. Again, $τ:V→V$ is the adjoint involution on $V = E[x]u\subset \End(V)$, i.e.\! $τx^iu=(-1)^ix^iu$ for $i=0,\ldots,n-1$.
Let $L:=\mcO_E[x]u$ be the $x$-stable lattice spanned by $u$ and denote by $L^\vee$ its dual. Let
$$M:=\{Λ\subset V\mid L\subset Λ\subset L^\vee,\ xΛ\subset Λ,\ Λ^τ=Λ\}$$
and, for $i\in \mbZ$,
$$M_i:=\{Λ\in M\mid \mr{len}(Λ/L)=i\}.$$
Then the same formula for the orbital integral applies. Its proof is completely analogous to the one for the group version.
\begin{lem}\label{lemorbintLie}
There is an equality
$$Ω(y)O_y(1_{\mfs(\mcO_{E_0})},s)=\sum_{i\in \mbZ}(-1)^i|M_i|q^{-(i+l(y))s}.$$
\qed
\end{lem}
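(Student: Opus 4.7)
I would prove Lemma \ref{lemorbintLie} by adapting the argument for the group version (Lemma \ref{lemorbint}) verbatim, substituting the Lie algebra involution $τ(x^iu) = (-1)^i x^iu$ for the group involution $τ(g^iu) = g^{-i}u$. The common framework, as in Remark \ref{rmkAdjoint}, is the identification
\[
V \iso E[y]u,\qquad u\longleftrightarrow 1,
\]
which is well-defined because $y$ is regular semi-simple (so $\{u, yu,\ldots, y^{n-1}u\}$ is a basis of $V$ by Lemma \ref{lemrs}). Under this identification $L = \mcO_E[y]u$ corresponds to $\mcO_E[y]\subset E[y]$, and the adjoint involution on $E[y]\subset \End(V)$ induced by the form $J$ is precisely the $σ$-linear involution $τ$ (the condition $y + \overbar y = 0$ forces $y^τ = -y$, which in turn forces $τ(y^iu) = (-1)^iy^iu$).

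Next, I would rewrite the integral in terms of lattices. The centralizer of $y$ in $GL(W)$ is $E[y]^\times$ (by regular semi-simplicity), and the vector $u$ gives a splitting. The assignment $h\mapsto hΛ$ establishes a bijection between the relevant double cosets for $GL(W_0)$ and $y$-stable $\mcO_E$-lattices $Λ'\subset V$ with $u\in Λ'$; the conditions $h^{-1}yh\in \mfs(\mcO_{E_0})\cap \End(Λ)$, together with the hermitian structure (self-duality), translate into the constraints
\[
L\subset Λ'\subset L^\vee,\qquad yΛ'\subset Λ',\qquad (Λ')^τ = Λ'.
\]
This is the set $M$ stratified by $M_i = \{Λ'\in M \mid \len(Λ'/L) = i\}$.

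On each stratum the integrand is constant: the index $i = \len(hΛ/L)$ enters via $v(\det h) = -(i+l(y))$, so $|\det h|^s = q^{-(i+l(y))s}$ and $η(\det h) = (-1)^{i+l(y)}$, while the transfer factor $Ω(y) = (-1)^{l(y)}$ cancels the $(-1)^{l(y)}$ coming from $η(\det h)$. Combined with the normalization of the Haar measure (giving $GL(Λ_0)$ volume $1$), this yields
\[
Ω(y)\,O_y(1_{\mfs(\mcO_{E_0})},s) = \sum_{i\in \mbZ}(-1)^i |M_i|\, q^{-(i+l(y))s}.
\]

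The only place where one must verify that the Lie algebra case behaves identically to the group case is in checking that the involution $τ$ correctly encodes the dual-lattice condition: concretely, that the $σ$-linear involution of $E[y]$ sending $y$ to $-y$ is the one induced by $J$ on $E[y]\subset \End(V)$. This follows from $y^* = -y$ together with Remark \ref{rmkAdjoint}. Once this is in place, every subsequent bookkeeping step---bijection between cosets and lattices, computation of $v(\det h)$, cancellation of signs via $Ω(y)$---proceeds mutatis mutandis, and the main obstacle (getting the signs right) is resolved by the fact that $τ$ here is $σ$-linear and satisfies $τy = -yτ$, which mirrors the relation $τg = g^{-1}τ$ used in the group version.
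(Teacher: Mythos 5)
Your proposal is correct and follows exactly the route the paper intends: the paper gives no independent proof of Lemma \ref{lemorbintLie} but declares it ``completely analogous to the group version,'' which is in turn quoted from the proof of Corollary 7.3 in \cite{RTZ}, and your adaptation --- replacing $τ(g^iu)=g^{-i}u$ by $τ(x^iu)=(-1)^ix^iu$ and checking via Remark \ref{rmkAdjoint} that this is the adjoint involution on $E[x]u$ --- is precisely that analogy. One bookkeeping slip: since $|\det h|=q^{-v(\det h)}$, the weight $q^{-(i+l(y))s}$ requires $v(\det h)=i+l(y)$ rather than $-(i+l(y))$ as written; this does not affect $η(\det h)=(-1)^{i+l(y)}$ or the final formula, but one of your two displayed claims must be adjusted to the paper's index convention.
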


\begin{cor}\label{lemorbintsLie}
The value and the derivative of the orbital integral at $s=0$ have the expressions
$$Ω(y)O_y(1_{\mfs(\mcO_{E_0})})=\sum_{i\in \mbZ}(-1)^i|M_i|$$
and, in the case $J = J_1$,
\begin{equation}\label{eqDerivLie}
Ω(y)\partial O_y(1_{\mfs(\mcO_{E_0})})=-\log(q) \sum_{i\in \mbZ}(-1)^ii|M_i|.
\end{equation}
\qed
\end{cor}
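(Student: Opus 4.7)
The plan is to differentiate the generating series of Lemma~\ref{lemorbintLie} term-wise at $s=0$. Since the containment $L\subset Λ\subset L^\vee$ bounds the index of $Λ$ over $L$, only finitely many $M_i$ are non-empty, so the sum is a finite sum of exponentials and term-wise specialisation and differentiation are unproblematic.

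Setting $s=0$, each factor $q^{-(i+l(y))s}$ specialises to $1$, which gives the special-value identity
\[
Ω(y)\,O_y(1_{\mfs(\mcO_{E_0})}) \;=\; \sum_{i\in\mbZ}(-1)^i|M_i|
\]
directly from Lemma~\ref{lemorbintLie}. For the derivative, I compute $\frac{d}{ds}\big|_{s=0} q^{-(i+l(y))s} = -(i+l(y))\log q$ and obtain
\[
Ω(y)\,\partial O_y(1_{\mfs(\mcO_{E_0})}) \;=\; -\log(q)\sum_{i\in\mbZ}(-1)^i\bigl(i+l(y)\bigr)|M_i|.
\]
Separating the two summands, the claimed formula \eqref{eqDerivLie} follows once I show that $l(y)\sum_i(-1)^i|M_i|=0$ in the case $J=J_1$. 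By the value identity already established, this reduces to verifying $Ω(y)\,O_y(1_{\mfs(\mcO_{E_0})})=0$ whenever $y$ matches an element of $\mfu(J_1)$, i.e.\! the vanishing half of the Jacquet-Rallis Fundamental Lemma for Lie algebras.

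This vanishing is the Lie algebra analogue of \cite[Corollary~7.3]{RTZ}. The argument there is a combinatorial involution argument on the set of $τ$-invariant lattices $M$, driven by the oddness of $V$; since the definitions of $L$, $τ$, and $M$ in the Lie algebra setting are formally identical to the group setting (modulo replacing $g$-stability by $x$-stability, which is irrelevant for the involution), it transports verbatim. The main (and essentially only) obstacle is therefore to confirm that this transport goes through without modification, which I expect to be routine and in fact is already implicit in the statement that the proof of Lemma~\ref{lemorbintLie} is "completely analogous" to that of Lemma~\ref{lemorbint}.
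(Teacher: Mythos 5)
Your proposal is correct and is exactly the argument the paper leaves implicit (the corollary is stated with no written proof, as the direct specialisation/differentiation of Lemma \ref{lemorbintLie}). You correctly isolate the only non-trivial point, namely that the extra term $l(y)\sum_i(-1)^i|M_i|$ vanishes for $J=J_1$, and this is precisely the fixed-point-free involution $Λ\mapsto Λ^\vee$ argument from \cite[Corollary 7.3]{RTZ}, which the paper itself records in this generality as Lemma \ref{lemFLvanishing}.
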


Let us define $L^\flat := \mcO_E[x^\flat]j\subset W$. We denote the orthogonal projection $V→W$ by $\mr{pr}$ and denote by $\mr{pr}_u$ the projection to $Eu$.

\begin{prop}\label{propLatt}
There is an equality of sets of lattices in $V = W\oplus Eu$,
\begin{equation}\label{eqProperties}
M = \{ Λ^\flat \oplus \mcO_Eu\ \mid\ L^\flat \subset Λ^\flat \subset L^{\flat,\vee},\ x^\flat Λ^\flat \subset Λ^\flat,\ (Λ^\flat)^τ = Λ^\flat\}.
\end{equation}
\end{prop}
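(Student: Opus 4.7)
My plan is to show that every $\Lambda \in M$ automatically splits as $\Lambda = (\Lambda \cap W) \oplus \mcO_E u$, and then to translate the four defining conditions of $M$ into the corresponding conditions on $\Lambda^\flat := \Lambda \cap W$. If either side of \eqref{eqProperties} is empty the equality is trivial, so I may assume $L \subset L^\vee$.

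A preliminary step is to verify that $τ$ preserves both $W$ and $\mcO_E u$. Under the identification $E[x] \iso V$, $T \mapsto Tu$ from Remark \ref{rmkAdjoint}, the involution $τ$ corresponds to the $J$-adjoint involution on $E[x]$, so $τ(u) = u$ and $(T^*)^* = T$ give $J(τ v, u) = J(u, v)$; hermiticity of $J$ then yields $τ(W) = W$, while $σ$-linearity of $τ$ together with $τ(u) = u$ gives $τ(\mcO_E u) = \mcO_E u$. For the splitting, take $v = w + eu \in \Lambda \subset L^\vee$: pairing with $u \in L$ forces $\bar e = J(v, u) \in \mcO_E$, so $eu \in \mcO_E u \subset L \subset \Lambda$ and therefore $w = v - eu \in \Lambda \cap W$.

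The translation of conditions relies on the formulas $xu = j$ and $xw = x^\flat w - j^*(w) u$ for $w \in W$, where $j^*(w) = J^\flat(j, w)$. The inclusion $\Lambda \subset L^\vee$ restricts to $\Lambda^\flat \subset L^{\flat, \vee}$ since $J|_{W \times W} = J^\flat$; the latter in turn gives $j^*(\Lambda^\flat) \subset \mcO_E$, so the $\mcO_E u$-component of $xw$ is absorbed into $\mcO_E u \subset \Lambda$, and the $x$-stability of $\Lambda$ becomes precisely the $x^\flat$-stability of $\Lambda^\flat$. The $τ$-stability of $\Lambda^\flat$ is immediate from $τ(W) = W$. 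The converse direction, starting from any $\Lambda^\flat$ with the stated properties, is a direct verification using these same identities together with $J(u, u) = 1$.

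The main technical step is the containment $L^\flat \subset \Lambda^\flat$. For this I argue inductively that the $Eu$-components $w_i \in E$ of $x^{i+1} u = z_i + w_i u$ (with $z_i \in L^\flat$ obtained from the recursion $z_{i+1} = x^\flat z_i + w_i j$, $z_0 = j$) all lie in $\mcO_E$: indeed $\overline{w_i} = J(u, x^{i+1} u) \in \mcO_E$ since $x^{i+1} u \in L \subset L^\vee$. An upper-triangular change of basis between $\{z_i\}_{i=0}^{n-2}$ and $\{(x^\flat)^i j\}_{i=0}^{n-2}$ (the latter being an $\mcO_E$-basis of $L^\flat$ by regular semi-simplicity of $x$) then yields $L = L^\flat \oplus \mcO_E u$, so that $L^\flat \subset L \subset \Lambda$ and hence $L^\flat \subset \Lambda \cap W = \Lambda^\flat$.
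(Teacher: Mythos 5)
Your argument reproduces the paper's proof essentially step for step: the same splitting $\Lambda = (\Lambda\cap W)\oplus\mcO_Eu$ obtained by pairing against $u\in L$, the same formula $xv = x^\flat v + (v,u)j - (j,v)u$ for translating the stability conditions, and the same induction on the components of $x^{i+1}u$ (your recursion $z_{i+1} = x^\flat z_i + w_i j$ with $w_i\in\mcO_E$ is exactly the paper's computation of $\mr{pr}(x^{i+1}u)$), leading to $L = L^\flat\oplus\mcO_Eu$ and hence $L^\flat\subset\Lambda^\flat$. Your explicit verification that $\tau$ preserves $W$ and $\mcO_Eu$ is a welcome addition, since the paper merely asserts that $\tau$ commutes with the projection.

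There is, however, one genuine gap, located in the opening reduction. The claim that ``if either side of \eqref{eqProperties} is empty the equality is trivial'' is not a valid inference: if exactly one side is empty, the equality is \emph{false}. Concretely, when $L\not\subset L^\vee$ you have $M=\emptyset$, and to conclude the proposition you must show that the right-hand side is then empty as well, i.e.\ that $L^\flat\not\subset L^{\flat,\vee}$. Equivalently, you need the implication $L^\flat\subset L^{\flat,\vee}\Rightarrow L\subset L^\vee$, which your write-up never establishes — your main technical step only proves the reverse implication, by producing the decomposition $L = L^\flat\oplus\mcO_Eu$ under the hypothesis $L\subset L^\vee$. The paper isolates the equivalence $L\subset L^\vee\Leftrightarrow L^\flat\subset L^{\flat,\vee}$ as a separate lemma, and the direction you are missing is proved there by an induction of the same flavour as yours: one writes $-(x^{i+1}u,u) = (x^iu,j)$, expands $x^iu$ via the formula for $x$, and uses the integrality of the form on $L^\flat$ together with the inductive hypothesis $(x^{i-1}u,u)\in\mcO_E$ to conclude that each $(x^iu,u)$ is integral. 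Supplying that induction closes the gap; the rest of your proof is sound.
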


We first note the following formula for $v\in V$,
\begin{equation}\label{eqx}
xv = x^\flat v +(v,u)j - (j,v)u.
\end{equation}

\begin{lem}\label{lemLattice}
Let $Λ\subset V$ be any lattice such that $u\in Λ$ and $\mr{pr}_u(Λ) = \mcO_Eu$. Then $Λ = (Λ\cap W) \oplus \mcO_Eu$ and $Λ\cap W = \mr{pr}(Λ)$. Moreover, $\mr{pr}(Λ^\vee) = \mr{pr}(Λ)^\vee$.
\end{lem}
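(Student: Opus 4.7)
The plan is to handle the three assertions in sequence, all by direct computation using the orthogonal decomposition $V = W \oplus Eu$ with $u \perp W$ and $(u,u)=1$.

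First, for the direct sum decomposition $\Lambda = (\Lambda \cap W) \oplus \mcO_E u$: given any $v \in \Lambda$, write $v = w + \lambda u$ with $w \in W$ and $\lambda \in E$. By the hypothesis $\mr{pr}_u(\Lambda) = \mcO_E u$ we have $\lambda \in \mcO_E$, and since $u \in \Lambda$ this forces $\lambda u \in \Lambda$, hence $w = v - \lambda u \in \Lambda \cap W$. The reverse inclusion is immediate. For the identity $\Lambda \cap W = \mr{pr}(\Lambda)$, the inclusion $\subseteq$ is trivial; conversely, for $v \in \Lambda$ write $v = w + \lambda u$ as before, so $\mr{pr}(v) = w \in \Lambda \cap W$ by what we just showed.

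For the last assertion, I first claim that $\Lambda^\vee$ satisfies the same two hypotheses as $\Lambda$. Indeed, for any $v = w + \lambda u \in \Lambda$ with $\lambda \in \mcO_E$, using $u \perp W$ and $(u,u) = 1$ we get $J(u, v) = \bar\lambda \in \mcO_E$, so $u \in \Lambda^\vee$. Conversely, for any $v' = w' + \mu u \in \Lambda^\vee$, pairing with $u \in \Lambda$ gives $J(v', u) = \mu \in \mcO_E$, so $\mr{pr}_u(\Lambda^\vee) \subseteq \mcO_E u$; combined with $u \in \Lambda^\vee$ this gives equality. Applying the first two parts of the lemma to $\Lambda^\vee$ yields $\mr{pr}(\Lambda^\vee) = \Lambda^\vee \cap W$.

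It remains to identify $\Lambda^\vee \cap W$ with $\mr{pr}(\Lambda)^\vee$, where the dual on the right is taken inside $(W, J^\flat)$ with $J^\flat = J|_W$. For $w' \in W$, the orthogonality $u \perp W$ gives $J(w', w + \lambda u) = J^\flat(w', w)$ for any $w \in W$ and $\lambda \in E$. In view of the decomposition $\Lambda = (\Lambda \cap W) \oplus \mcO_E u$, the condition $w' \in \Lambda^\vee$ reduces to $J^\flat(w', w) \in \mcO_E$ for all $w \in \Lambda \cap W = \mr{pr}(\Lambda)$, which is exactly the condition $w' \in \mr{pr}(\Lambda)^\vee$.

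There is no substantive obstacle here: the lemma is a direct consequence of the orthogonality $u \perp W$ together with $(u,u) = 1$, and the only step requiring a small observation is verifying that the hypotheses on $\Lambda$ are inherited by $\Lambda^\vee$ before invoking the first two parts.
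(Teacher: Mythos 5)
Your proof is correct and simply fills in the details that the paper dismisses with ``This is immediate'': the decomposition follows from $u\in Λ$ and integrality of the $u$-coordinates, and the duality statement follows from checking that $Λ^\vee$ inherits the hypotheses and using $u\perp W$, $(u,u)=1$. No divergence from the paper's (implicit) argument; the one step genuinely worth writing down — that $Λ^\vee$ again contains $u$ and has $\mr{pr}_u(Λ^\vee)=\mcO_Eu$ — is handled correctly.
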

\begin{proof}
This is immediate.
\end{proof}

\begin{lem}
The inclusion $L\subset L^\vee$ holds if and only if $L^\flat \subset L^{\flat,\vee}$. In this case, $L^\flat = \mr{pr}(L)$ and $L^{\flat,\vee} = \mr{pr}(L^\vee)$.
\end{lem}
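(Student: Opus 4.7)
The plan is to prove that $L$ itself decomposes as $L = L^\flat \oplus \mcO_E u$, from which both statements of the lemma follow immediately by orthogonality of $W$ and $Eu$.

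First, I would analyze the structure of $L = \mcO_E[x]u$ using the recursion $xv = x^\flat v + (v,u)j - (j,v)u$ from equation \eqref{eqx} together with the fact that $x^\flat$ maps $W$ to $W$. Applying this with $v = u$ gives $xu = j$, since $x^\flat u = 0$, $(u,u) = 1$, and $(j,u) = 0$ (because $j \in W$ and $u\perp W$). By induction on $k$, I claim that $x^k u \in L^\flat + \mcO_E u$ for all $k \geq 1$: given $x^k u = (x^\flat)^{k-1} j + c_k u$ with $c_k \in \mcO_E$, applying $x$ and using $(v,u) = 0$ for $v \in W$ yields $x^{k+1}u = (x^\flat)^k j - (j, (x^\flat)^{k-1} j) u + c_k j$, which lies in $L^\flat + \mcO_E u$. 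Reading the congruence modulo $\mcO_E u$ shows $(x^\flat)^{k-1}j \in \mr{pr}(L)$, while reading it inside $L$ shows $x^k u \in L^\flat + \mcO_E u \subseteq L$. Hence $L \subseteq L^\flat + \mcO_E u$ and $L^\flat \subseteq \mr{pr}(L)$.

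Next, I would verify the hypotheses of Lemma \ref{lemLattice} for $L$: $u \in L$ is immediate, and $\mr{pr}_u(L) = \mcO_E u$ follows from $x^k u \in L^\flat + \mcO_E u$ together with $\mr{pr}_u(u) = u$. Lemma \ref{lemLattice} then gives $L = (L \cap W) \oplus \mcO_E u$ with $L \cap W = \mr{pr}(L) = L^\flat$. So $L = L^\flat \oplus \mcO_E u$.

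Finally, since $u \perp W$ and $(u,u) = 1$, the lattice $\mcO_E u$ is self-dual inside $Eu$ and the duality of $V$ splits as the direct sum of the dualities on $W$ and $Eu$. Consequently
\begin{equation*}
L^\vee = (L^\flat)^\vee \oplus (\mcO_E u)^\vee = L^{\flat,\vee} \oplus \mcO_E u.
\end{equation*}
In particular $u \in L^\vee$ and $\mr{pr}_u(L^\vee) = \mcO_E u$, so Lemma \ref{lemLattice} gives $\mr{pr}(L^\vee) = L^{\flat,\vee}$. The equivalence $L \subseteq L^\vee \Leftrightarrow L^\flat \subseteq L^{\flat,\vee}$ then follows directly by comparing the two direct sum decompositions. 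The only nontrivial step is the inductive identification $L = L^\flat \oplus \mcO_E u$; once this is in hand, everything else is formal from the orthogonality of $u$ and $W$.
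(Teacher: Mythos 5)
Your argument hinges on the claim that $L = L^\flat\oplus\mcO_E u$ holds \emph{unconditionally}, and that is where the proof breaks. In your inductive step, writing $x^ku = w_k + c_ku$ with $w_k = \mr{pr}(x^ku)$, formula \eqref{eqx} gives
$$x^{k+1}u = x^\flat w_k + c_k j - (j,w_k)\,u,$$
so to stay inside $L^\flat + \mcO_E u$ you need the new $u$-coefficient $(j,w_k)$ to lie in $\mcO_E$. Since $w_k\in L^\flat$, this integrality is exactly the condition $j\in L^{\flat,\vee}$, i.e.\! a consequence of the hypothesis $L^\flat\subset L^{\flat,\vee}$ that the lemma is supposed to be comparing with $L\subset L^\vee$; it is not available for free. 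A concrete failure: take $W=Ee$ with $(e,e)=π$, $x^\flat=0$ and $j=π^{-1}e$. Then $xu=j$ and $x^2u=-(j,j)u=-π^{-1}u$, so $L=\mcO_E[x]u$ contains $π^{-k}u$ for all $k$ and is certainly not contained in $L^\flat+\mcO_E u$ (here neither inclusion of the lemma holds, as it must be). So the unconditional decomposition, and with it the ``formal'' deduction of both implications at the end, collapses.

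The repair is to run your induction under one of the two hypotheses at a time, which is what the paper does. Assuming $L\subset L^\vee$, one gets $(x^ku,u)\in\mcO_E$ for all $k$ directly from the hypothesis, so Lemma \ref{lemLattice} applies to $L$ and your recursion then shows $\mr{pr}(L)=L^\flat$ (the $u$-coefficients are now integral by assumption rather than by the induction). Assuming instead $L^\flat\subset L^{\flat,\vee}$, the pairings $(j,(x^\flat)^{k-1}j)$ are integral and one proves $(x^{k+1}u,u)\in\mcO_E$ by induction, which yields $L\subset L^\vee$. Your computation is essentially the engine of both directions, but it cannot be made to produce the splitting of $L$ before either hypothesis has been imposed.
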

\begin{proof}
\emph{We first assume that $L\subset L^\vee$.} Then $L = \mr{pr}(L) \oplus \mcO_E u$ and $\mr{pr}(L)^\vee = \mr{pr}(L^\vee)$ by the previous lemma. Hence it is enough to show that $L^\flat = \mr{pr}(L)$.

First note that $\mr{pr}(L)$ is stable under $\mr{pr}\circ x \circ \mr{pr} = x^\flat$. Since also $j\in \mr{pr}(L)$, we get $L^\flat \subset \mr{pr}(L)$. To prove the opposite inclusion, we prove $\mr{pr}(x^iu)\subset L^\flat$ by induction.

The case $i = 0$ is clear. Then we use formula \eqref{eqx},
$$\mr{pr}(x^{i+1}u) = x^\flat x^iu + j(x^iu,u).$$
The first summand lies in $L^\flat$ by induction and by the fact that $L^\flat$ is $x^\flat$-stable. The second summand lies in $L^\flat$ since $j\in L^\flat$ and $(x^iu,u)\in \mcO_E$ by the assumption $L\subset L^\vee$.

\emph{Conversely, let us assume that $L^\flat \subset L^{\flat, \vee}$.} We need to show that $(x^iu,x^ju)\in \mcO_E$ for all $i,j$. Since $x$ is from the unitary Lie algebra, it is enough to prove $(x^iu,u)\in \mcO_E$ for all $i$. Again we prove this by induction, the cases $i = 0$ and $1$ being clear. We compute
$$-(x^{i+1}u,u) = (x^iu,j) = (x^\flat x^{i-1}u,j) + ((x^{i-1}u,u)j,j) - ((j,x^{i-1}u)u, j).$$
The first summand is integral by assumption on $L^\flat$. In the second summand, the pairing $(x^{i-1}u,u)$ is integral by induction. Hence the second summand is integral by assumption on $L^\flat$. The third summand vanishes.
\end{proof}

\begin{proof}[Proof of Proposition \ref{propLatt}]
Let $Λ\in M$. By Lemma \ref{lemLattice}, it is a direct sum, $Λ = Λ^\flat \oplus \mcO_Eu$ where $Λ^\flat = \mr{pr}(Λ)$. If $λ^\flat \in Λ^\flat$, then
$$x^\flat λ^\flat = xλ^\flat + (j,λ^\flat) u \in Λ$$
and hence $Λ^\flat$ is $x^\flat$-stable. Furthermore, $L^\flat \subset Λ^\flat \subset L^{\flat, \vee}$, since this is just the projection of the relation $L\subset Λ \subset L^\vee$. Finally, note that $τ$ commutes with the projection $\mr{pr}$. Hence $Λ^\flat$ has all the properties from \eqref{eqProperties}.

Conversely, let us now assume that $Λ^\flat$ satisfies all properties from \eqref{eqProperties}. We want to show that $Λ := Λ^\flat \oplus \mcO_Eu\in M$. By Lemma \ref{lemLattice}, $L\subset Λ\subset L^\vee$. Furthermore, $Λ$ is $τ$-stable, since both summands are. It is easy to prove that $Λ$ is also stable under $x$ which concludes the proof.
\end{proof}

\section{Uniform version of FL and AFL}
\label{sect:GeneralAFL}
\subsection{Adjoint-stable pairs}
The results of the previous section allow us to treat the AFL (resp.\! the FL) for groups and for Lie algebras at the same time. In this section, $V$ is endowed with either of the two hermitian forms, say $J\in \{J_0,J_1\}$. In particular, the adjoint involution $\End(V)\ni x\mapsto x^*$ and the dual lattice $Λ\mapsto Λ^\vee$ are taken with respect to this form.

\begin{defn}
(1) A pair $(x,j)\in \End_E(V)\times V$ is called \emph{regular semi-simple} if $E[x]j = V$.\\
(2) The pair (resp.\! the element $x$) is called \emph{adjoint-stable} if $\mcO_E[x] = \mcO_E[x^*]$.
\end{defn}

\begin{rmk}
(1) Note that any element $x\in \mfu(J)$ is adjoint-stable. An element $g\in U(J_1)$ is adjoint-stable if and only if $g^* = g^{-1}\in \mcO_E[g]$, which is equivalent to $g$ having integral characteristic polynomial.\\
(2) Let $x\in \End_E(V)$ be such that $E[x] = E[x^*]$ and let $(x,j)$ be regular semi-simple. Then also $E[x]\cdot (\ , j) = V^\vee$. In particular if $x\in \mfu(J)$ or $x\in U(J)$, then $x$ is regular semi-simple in the sense of Lemma \ref{lemrs} if and only if $(x,u)$ is a regular semi-simple pair.
\end{rmk}

\begin{defn}\label{defAFLgeneral}
Let $(x,j)$ be a regular semi-simple and adjoint-stable pair. Then we denote by $L(x,j) := \mcO_E[x]j$ the $x$-stable lattice generated by $j$.

(1) We define the $σ$-linear involution $τ(x,j):V→V$ as follows: The element $j$ induces an isomorphism $ϕ:E[x] \iso E[x]j = V$ and we set $τ(x,j)(v) = ϕ(ϕ^{-1}(v)^*)$. This is possible since $E[x] = E[x]^*$ by assumption.

(2) We define the sets
$$\begin{aligned}
M(x,j) & := \{ Λ\subset V\ \mid\ L(x,j)\subset Λ \subset L(x,j)^\vee, xΛ\subset Λ, τ(x,j)Λ = Λ\},\\
M(x,j)_i & := \{ Λ\in M\ \mid\ \mr{len}_{\mcO_E}(Λ/L(x,j)) = i\},\ \ i\in \mbZ.\end{aligned}$$

(3) For $s\in \mbC$, we define the following numbers.
$$\begin{aligned}
O(x,j;s) &:= \sum_{i\in \mbZ} (-1)^i |M(x,j)_i|q^{-is}\\
O(x,j) & := O(x,j;0) = \sum_{i\in \mbZ} (-1)^i |M(x,j)_i|\\
\partial O(x,j) & := \log(q)^{-1}\left.\frac{d}{ds}\right\vert_{s = 0} O(x,j;s) = -\sum_{i\in \mbZ} (-1)^ii |M(x,j)_i|.\end{aligned}$$
\end{defn}
We will now consider the two possibilities for $J$ separately.

\subsection{The Fundamental Lemma}
\label{subsect:JRFL}
In this section, $J = J_0$ is the even form.

\begin{defn}
Let $(x,j)$ be a regular semi-simple and adjoint-stable pair. We define
$$
I(x,j) := | \{Λ\subset V\ \mid\ L(x,j) \subset Λ \subset L(x,j)^\vee, xΛ \subset Λ, Λ^\vee = Λ\}|
$$
\end{defn}

\begin{conj}[Fundamental Lemma, uniform version]\label{conjFLgeneral}
Let $(x,j) \in \End_E(V)$ be a regular semi-simple and adjoint-stable pair. Then
\begin{equation}\label{eqFLgeneral}
I(x,j) = O(x,j). \tag{FL($x,j$)}
\end{equation}
\end{conj}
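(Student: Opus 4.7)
The plan is to reduce Conjecture \ref{conjFLgeneral} to the classical Jacquet-Rallis Fundamental Lemma (Conjecture \ref{conjJRFL}) and its Lie-algebra analog, both of which are known in wide generality by the work of Yun \cite{Yun} and Gordon \cite{Gordon}. The reformulation is designed precisely to unify the group and Lie-algebra settings, so much of the lattice-theoretic work has already been carried out in Section \ref{sect:analytic}.

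First, I would rewrite both sides intrinsically. The isomorphism $\phi\colon E[x] \xrightarrow{\sim} V$, $a \mapsto aj$, identifies $L(x,j)$ with $\mcO_E[x]$, makes $x$ act by multiplication, and identifies the involution $\tau(x,j)$ with the $*$-involution of $E[x]$ (which is well-defined by adjoint-stability). Under this identification, both $I(x,j)$ and $O(x,j)$ become counts of $\mcO_E[x]$-stable lattices in $E[x]$ with appropriate symmetry conditions: self-dual for $J$ on one side, $*$-stable on the other. The hermitian form $J$ itself is then determined by a single element, namely the unique $\vartheta \in E[x]$ with $\vartheta^* = -\vartheta$ such that $J(aj,bj) = \mathrm{tr}_{E[x]/\mbQ_p}(\vartheta\, a\, b^{*})$ for all $a,b \in E[x]$.

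Second, reduce the classical cases. If $x^* = -x$, then $x \in \mfu(J_0)$, and by Lemma \ref{lemmatching} the pair $(x,j)$ arises from matching with some $y \in \mfs(E_0)_{\mr{rs}}$; by Lemma \ref{lemorbunitary}(2) and Corollary \ref{lemorbintsLie} the identity FL$(x,j)$ is then exactly the Lie-algebra Jacquet-Rallis FL for $y$. The case $x^* = x^{-1}$ with $x$ integral is analogous and reduces via Lemma \ref{lemorbunitary}(1) and Corollary \ref{lemorbints} to the classical group version. Combined with the independence of both sides from the chosen $E_0$-rational structure $W_0 \subset W$ (Remark \ref{rmkNotation}(2)), this settles Conjecture \ref{conjFLgeneral} whenever $(x,j)$ is of one of these two types.

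Third, handle the general adjoint-stable case. Using the idempotent decomposition $E[x] = \prod_i K_i$ into a product of étale $E$-algebras compatible with $*$, one splits $V = \bigoplus_i V_i$ and $(x,j) = (x_i,j_i)_i$. Each factor is one-dimensional over $K_i$, so counting $*$-stable lattices in $V_i$ between $\mcO_{K_i}$ and $\mcO_{K_i}^\vee$ reduces to an explicit computation involving the different of $K_i/E_0$ and the valuation of the corresponding $\vartheta_i$. The main obstacle is the sign bookkeeping: verifying that $O(x,j) = \sum_i (-1)^i |M(x,j)_i|$ factorizes compatibly under the decomposition $E[x] = \prod_i K_i$ requires using the additivity of $\mr{len}_{\mcO_E}(\Lambda/L)$ over factors to turn the global generating function into a product of factor-wise generating functions. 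Once this is set up, FL$(x,j)$ reduces to a product over $i$ of one-dimensional identities, each of which can be verified by hand on $\mcO_{K_i}$.
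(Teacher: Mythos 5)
This statement is a \emph{conjecture} in the paper, not a theorem: the paper never proves $(\mr{FL}(x,j))$. What it does in the surrounding subsection is establish, case by case (Lemmas \ref{lemFLgenLie}, \ref{lemFLvj1}, and the construction for $v(J_0(j,j))\geq 1$), that each instance of $(\mr{FL}(x,j))$ is \emph{equivalent} to an instance of the classical Jacquet--Rallis Fundamental Lemma \eqref{eqJR} or its Lie algebra version \eqref{eqJRLie} --- which are themselves conjectures, known unconditionally only in the vanishing part and otherwise only for $p$ sufficiently large (Yun, Gordon). Your first two steps essentially reproduce these equivalences, but your conclusion that this ``settles'' the conjecture for those types overstates what is available: you have only transferred the problem to the JR FL, which is not known in the generality you would need.

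The third step contains the genuine mathematical gap. You propose to decompose $E[x]=\prod_i K_i$ into fields, split $V=\bigoplus_i V_i$, and reduce to one-dimensional computations over each $\mcO_{K_i}$. This fails for two reasons. First, the idempotents of $E[x]$ need in general \emph{not} lie in the order $\mcO_E[x]$; an $\mcO_E[x]$-stable lattice $Λ$ therefore need not decompose as $\prod_i Λ_i$, and the sets $M(x,j)$ and the self-dual lattice count $I(x,j)$ do not factor. (The paper is explicit about when such a splitting is available: an embedding $\mbZ_{p^2}\times\mbZ_{p^2}\hookrightarrow \mbZ_{p^2}[x]$ exists only when the characteristic polynomial has two distinct prime factors mod $p$ --- see the remark after Theorem \ref{thmMainEtale}.) Second, even when $E[x]=K$ is a single field, the count of $x$-stable lattices between $L$ and $L^\vee$ governed by the possibly highly non-maximal order $\mcO_E[x]\subset \mcO_K$ is precisely the combinatorial core of the Fundamental Lemma; Yun's proof goes through affine Springer fibers and perverse sheaves, and there is no known elementary ``by hand'' verification. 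So the proposal does not prove the statement, and no such proof should be expected: the correct posture, which the paper takes, is to record the equivalences with the classical JR FL and leave the identity conjectural.
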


In the rest of this subsection, we explain some cases of the uniform version that are equivalent to the group and Lie algebra versions of the Fundamental Lemma. We do not know in general if the uniform version can be directly deduced from the Fundamental Lemma.

\begin{lem}\label{lemFLgenLie}
Let $(x,j)$ be a adjoint-stable pair with $x\in \mfu(J_0)$. We set $V' := V\oplus Eu'$ with form $J_0':=J_0\oplus 1$ and we define
$$x' := \left(\begin{smallmatrix} x & j\\ -j^* & \end{smallmatrix}\right) \in \mfu(J_0')^0.$$
Then $(x,j)$ is regular semi-simple if and only if $x'$ is regular semi-simple with respect to $V' = V \oplus Eu'$ in the sense of Definition \ref{defrs}. In the regular semi-simple case, $(\mr{FL}(x,j))$ is equivalent to $(\mathfrak{fl}_{E/E_0,(V',J_0'),u',x'})$.
\end{lem}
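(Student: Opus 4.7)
The plan splits into verifying the equivalence of regular semi-simplicity, and verifying the equivalence of $(\mr{FL}(x,j))$ with $(\mathfrak{jr}_{E_0,V',u',x'})$.

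For the first part, I would apply Lemma \ref{lemrs} to $x' \in \mfu(J_0 \oplus 1)^0$ with respect to the decomposition $V' = V \oplus E u'$. A short induction shows that for $i \geq 1$ the $V$-component of $(x')^i u'$ lies in $\mcO_E[x] \cdot j$ and that as $i$ varies these components generate all of $\mcO_E[x] \cdot j$; since $(x')^0 u' = u'$ contributes the $E u'$-direction, the span of $\{(x')^i u'\}_{i\geq 0}$ equals $(E[x]\cdot j) \oplus Eu'$, which coincides with $V'$ iff $(x,j)$ is regular semi-simple. The dual condition of Lemma \ref{lemrs} is handled analogously, using $x^* = -x$ for $x \in \mfu(J_0)$ to translate assertions about $J_0(x^k \cdot, j)$ into assertions about $x^k j$.

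In the regular semi-simple case, the JR identity reads $\Omega(y)\,O_y(1_{\mfs(\mcO_{E_0})}) = O_{x'}(1_{\mfu(J_0 \oplus 1)(\mcO_{E_0})})$. For the right-hand side, Lemma \ref{lemorbunitary}(2) expresses the integral as the number of self-dual, $x'$-stable lattices $\Lambda'$ sandwiched between $L' := \mcO_E[x'] u'$ and $L'^\vee$. Arguing as in the proof of Proposition \ref{propLatt}, one obtains $L' = L(x,j) \oplus \mcO_E u'$ and $L'^\vee = L(x,j)^\vee \oplus \mcO_E u'$, and Lemma \ref{lemLattice} forces any such $\Lambda'$ to split as $\Lambda^\flat \oplus \mcO_E u'$. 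The explicit formula $x'(v, au') = (xv + aj,\, -J_0(v,j) u')$ then shows that $x'$-stability together with self-duality of $\Lambda'$ is equivalent to $x$-stability together with self-duality of $\Lambda^\flat$ within $L(x,j) \subset \Lambda^\flat \subset L(x,j)^\vee$, giving precisely $I(x,j)$.

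For the left-hand side, Corollary \ref{lemorbintsLie} gives $\Omega(y)\,O_y(1_{\mfs(\mcO_{E_0})}) = \sum_{i\in\mbZ}(-1)^i |M_i|$, where $M_i$ is the lattice set associated to the pair $(x',u')$ in $V'$. Proposition \ref{propLatt} identifies each $M_i$ with a set of $x$-stable lattices $\Lambda^\flat$ in $V$ sandwiched between $L(x,j)$ and $L(x,j)^\vee$ and stable under the restriction to $V$ of $\tau(x',u')$. I would then verify that this restricted involution and the involution $\tau(x,j)$ of Definition \ref{defAFLgeneral} cut out the same collection of stable lattices: a direct computation using $(x')^* = -x'$ and $x^* = -x$ shows that the two $\sigma$-linear involutions agree on $V$ up to an overall sign, which is irrelevant for stability of a lattice since any lattice is closed under negation. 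Hence the left-hand side equals $O(x,j)$, and the equivalence of the two identities follows.

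The step I expect to be the most delicate is the comparison of $\tau(x',u')|_V$ with $\tau(x,j)$, since they are defined via adjoint involutions on two different $E$-algebras, $E[x']$ and $E[x]$; careful sign bookkeeping using the antihermitian relations $x^* = -x$ and $(x')^* = -x'$ is required to see that they define the same lattice-stability notion on $V$.
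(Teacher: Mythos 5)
Your proposal is correct and follows the same route as the paper, whose proof is a one-line citation of Lemma \ref{lemrs}, Lemma \ref{lemorbunitary} and Corollary \ref{lemorbintsLie} (with Proposition \ref{propLatt} doing the real work of splitting off the $\mcO_Eu'$-summand). You have in fact supplied a detail the paper leaves implicit: the involutions $τ(x',u')\vert_V$ and $τ(x,j)$ differ by an overall sign, which, as you note, is harmless for lattice stability.
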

\begin{proof}
This follows from Lemma \ref{lemrs}, Lemma \ref{lemorbunitary} and Corollary \ref{lemorbintsLie}.
\end{proof}

\begin{lem}\label{lemFLvj1}
Let $(g,j)$ be a adjoint-stable pair with $g\in U(J_0)$ and $J_0(j,j)\in \mcO_{E_0}^\times$. Then $(g,j)$ is regular semi-simple if and only if $g$ is regular semi-simple with respect to $V = j^\perp \oplus Ej$ in the sense of Definition \ref{defrs}. In the regular semi-simple case, $(FL(g,j))$ is equivalent to $(\mr{FL}_{E/E_0,(V,J_0),j,g})$.
\end{lem}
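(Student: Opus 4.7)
The lemma has two assertions: first, that $(g,j)$ is a regular semi-simple pair if and only if $g$ is regular semi-simple in the sense of Definition \ref{defrs} with respect to the decomposition $V = j^\perp \oplus Ej$; and second, that $(\mr{FL}(g,j))$ is equivalent to $(\mr{JR}_{E_0,V,j,g})$. The plan is to obtain the first assertion as a direct application of the remark immediately following the definition of adjoint-stable pairs, and to obtain the second by matching the lattice-theoretic expressions of both sides term by term.

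For the first assertion, the hypothesis $J_0(j,j)\in \mcO_{E_0}^\times$ guarantees that the line $Ej$ is non-degenerate, so $V = j^\perp \oplus Ej$ is a legitimate hyperplane decomposition to which Definition \ref{defrs} and Lemma \ref{lemrs} apply. For $g\in U(J_0)$ one has $g^* = g^{-1}$, and adjoint-stability ($\mcO_E[g] = \mcO_E[g^{-1}]$) is precisely what is needed in the remark, which, with $u$ replaced by $j$, immediately gives the desired equivalence.

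For the second assertion, first observe that the involution $\tau(g,j)$ from Definition \ref{defAFLgeneral}(1) sends $g^ij$ to $g^{-i}j$, since $g^*=g^{-1}$; this is exactly the involution $\tau$ defined in Section \ref{sect:anaGrp} with $j$ in place of $u$, so the lattice sets $M(g,j)_i$ coincide with the sets $M_i$ of that section. Corollary \ref{lemorbints} then gives $\Omega(\gamma)O_\gamma(1_{S(\mcO_{E_0})}) = O(g,j)$, and Lemma \ref{lemorbunitary}(1) gives $O_g(1_{K_0}) = I(g,j)$. Since $g\in U(J_0)$, the vanishing half of $(\mr{JR}_{E_0,V,j,g})$ is vacuous, so the two identities become manifestly equivalent.

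The only subtle point---the anticipated main obstacle---is conventional: Lemma \ref{lemorbunitary} and Corollary \ref{lemorbints} are stated in Section \ref{sect:anaGrp} under the normalization $J_0(u,u) = 1$, whereas here we only assume $J_0(j,j) \in \mcO_{E_0}^\times$. One must verify that the proofs in \cite[Section 7]{RTZ} go through in this slightly more general setting. The only role of $J_0(u,u) = 1$ in those arguments is to make $\mcO_E u$ a self-dual $\mcO_E$-sublattice of $V$, and that property persists whenever $J_0(j,j)$ is a unit, so no further modification should be required.
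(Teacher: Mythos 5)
Your proof is correct and follows essentially the same route as the paper, whose entire argument is the citation of Lemma \ref{lemrs}, Lemma \ref{lemorbunitary} and Corollary \ref{lemorbints} that you spell out. Your extra care about the normalization $J_0(j,j)\in\mcO_{E_0}^\times$ versus $J_0(u,u)=1$ is sound (one can also dispose of it by rescaling $j$ by a unit of norm $J_0(j,j)^{-1}$, which exists since $E/E_0$ is unramified and changes neither $L(g,j)$ nor the fixed lattices of $\tau(g,j)$).
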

\begin{proof}
This follows from Lemma \ref{lemrs}, Lemma \ref{lemorbunitary} and Corollary \ref{lemorbints}.
\end{proof}

\begin{lem}
Let $(g,j)$ be a regular semi-simple and adjoint-stable pair such that $J_0(j,j)\notin \mcO_{E_0}$. Then both sides of $(\mr{FL}(g,j))$ vanish.
\end{lem}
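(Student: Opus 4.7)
The plan is to observe that both $I(g,j)$ and $O(g,j)$ are computed from sets of lattices sandwiched between $L := L(g,j) = \mcO_E[g]j$ and its $J_0$-dual $L^\vee$. Hence both quantities automatically vanish whenever the inclusion $L \subset L^\vee$ fails, and I will show that the hypothesis $J_0(j,j) \notin \mcO_{E_0}$ forces precisely this failure.

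Suppose for contradiction that $L \subset L^\vee$. Since $j \in L$, this gives $j \in L^\vee$, which by definition means $J_0(j,v) \in \mcO_E$ for every $v \in L$. Specializing to $v = j \in L$ yields $J_0(j,j) \in \mcO_E$. Because $J_0$ is hermitian, $J_0(j,j) = \overline{J_0(j,j)}$ lies in $E_0$, so
\[
J_0(j,j) \in \mcO_E \cap E_0 = \mcO_{E_0},
\]
contradicting the hypothesis. Hence $L \not\subset L^\vee$.

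Consequently, the sets
\[
M(g,j)_i = \{\Lambda : L \subset \Lambda \subset L^\vee,\ g\Lambda \subset \Lambda,\ \tau(g,j)\Lambda = \Lambda,\ \mr{len}_{\mcO_E}(\Lambda/L) = i\}
\]
and $\{\Lambda : L \subset \Lambda \subset L^\vee,\ g\Lambda \subset \Lambda,\ \Lambda^\vee = \Lambda\}$ are both empty, since any $\Lambda$ in either set would satisfy $L \subset \Lambda \subset L^\vee$. Therefore
\[
O(g,j) = \sum_{i \in \mbZ}(-1)^i |M(g,j)_i| = 0 \quad \text{and} \quad I(g,j) = 0,
\]
which is the desired statement $(\mr{FL}(g,j))$ with both sides zero. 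There is no serious obstacle; the entire argument reduces to the trivial observation that the self-pairing $J_0(j,j)$ of the generator controls the integrality of $L$ relative to $L^\vee$.
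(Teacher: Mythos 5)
Your proof is correct and follows exactly the paper's (much terser) argument: the hypothesis forces $L(g,j)\not\subset L(g,j)^\vee$, so both lattice-counting sets are empty and both sides vanish. You have merely filled in the elementary verification that $L\subset L^\vee$ would imply $J_0(j,j)\in\mcO_E\cap E_0=\mcO_{E_0}$, which the paper leaves implicit.
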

\begin{proof}
If $J_0(j,j)\notin \mcO_{E_0}$, then $L(x,j)\not\subset L(x,j)^\vee$.
\end{proof}

\begin{construction}
Let $(g,j)$ be a adjoint-stable pair with $g\in U(J_0)$ and $v(J_0(j,j))\geq 1$. We also assume that $q_{E_0}+1 > n$. Then we define
$V' := V \oplus E\tilde u$ and $u' := \tilde u + j$. We extend $J_0$ to $J_0'$ on $V'$ by setting $\tilde u \perp V$ and $J'_0(u',u') = 1$. We define $W' := (u')^\perp$, which is an even hermitian space.

Let $P(t)\in \mcO_E[t]$ be the characteristic polynomial of $g$. Note that $q_{E_0}+1$ is the number of residue classes mod $π_E$ of $E^1 := \{a\in E\ \mid\ \mr{Nm}_{E/E_0}(a) = 1\}$. By assumption this number is lager than $\deg(P)$ and hence there exists $a\in E^1$ such that $P(a) \not \equiv 0$ mod $π_E$. We define
$$g' := \left(\begin{smallmatrix} g & \\ & a\end{smallmatrix}\right)\in U(J'_0)$$
where the block matrix decomposition is with respect to $V' = W \oplus E\tilde u$.
\end{construction}

\begin{lem}
Let $(g,j)$ be a adjoint-stable pair with $g\in U(J_0)$ and $v(J_0(j,j))\geq 1$. We also assume that $q_{E_0}+1 > n$. Let $V',u'$ and $g'$ be as above. Then $(g,j)$ is regular semi-simple if and only if $g'$ is regular semi-simple with respect to $V' = W'\oplus Eu'$ in the sense of Definition \ref{defrs}. In the regular semi-simple case, $(\mr{FL}(g,j))$ is equivalent to $(\mr{FL}_{E/E_0,(V',J_0'),u',g'})$.
\end{lem}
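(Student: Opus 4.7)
The plan is to reduce $(\mr{FL}(g,j))$ to $(\mr{JR}_{E_0,V',u',g'})$ by establishing an explicit, index-preserving bijection between the relevant lattice sets, in the spirit of Lemma \ref{lemFLvj1} combined with the analytic formulas of Section \ref{sect:analytic}.

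First, I would establish the equivalence of regular semisimplicity. For $r\in E[t]$ we compute $r(g')u' = r(g)j + r(a)\tilde u$. Since $P(a)\neq 0$, the polynomials $P$ and $t-a$ are coprime, so the Chinese remainder theorem gives $E[g'] \iso E[g]\oplus E$ and consequently
$$E[g']u' = E[g]j\oplus E\tilde u.$$
This equals $V' = V\oplus E\tilde u$ if and only if $E[g]j = V$, equivalently (by Lemma \ref{lemrs}) if and only if $(g,j)$ is regular semisimple. On the other hand, $g'$ is regular semisimple with respect to $V' = W'\oplus Eu'$ if and only if $V' = E[g']u'$ (using that $g'^{-1}\in E[g']$, whence both spanning conditions in Lemma \ref{lemrs} coincide). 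This yields the claimed equivalence.

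Next, I would identify the relevant $\mcO_E$-lattices. Writing any $r(t) = r(a) + (t-a)\tilde r(t)$, one sees $\mcO_E[g']u' = \mcO_E u' + (g-a)L$ where $L := \mcO_E[g]j$. The hypothesis $P(a)\not\equiv 0 \pmod{π_E}$ says $P(a)\in \mcO_E^\times$; combined with the $\mcO_E[g]$-module isomorphism $L \iso \mcO_E[g]$ via $j$ and the fact that $(g-a)\in \mcO_E[g]$ acts with $\mcO_E$-determinant $(-1)^n P(a)$ on this module, we get $(g-a)L = L$ and hence
$$L' := \mcO_E[g']u' = L + \mcO_E\tilde u = L\oplus \mcO_E\tilde u.$$
Since $J'_0(\tilde u,\tilde u) = 1 - J_0(j,j)$ lies in $1+π_E\mcO_E\subset \mcO_E^\times$, the decomposition $V' = V\oplus E\tilde u$ is integrally orthogonal, yielding $L'^\vee = L^\vee\oplus \mcO_E\tilde u$.

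Finally, I would construct the bijection $Λ\mapsto Λ\oplus \mcO_E\tilde u$ between $\{L\subset Λ\subset L^\vee\}$ and $\{L'\subset Λ'\subset L'^\vee\}$, with inverse $Λ'\mapsto Λ'\cap V$. The index $\mr{len}_{\mcO_E}(Λ/L) = \mr{len}_{\mcO_E}(Λ'/L')$ is preserved, and the conditions of $g'$-stability and self-duality on $Λ'$ reduce to $g$-stability and self-duality of $Λ$ (using that $a$ is a unit and the orthogonal decomposition). The subtle point is the $τ'$-stability: by Remark \ref{rmkAdjoint}, $τ'$ corresponds under $V' = E[g']u'$ to the adjoint involution on $E[g'] = E[g]\oplus E$, which splits via CRT as $(τ(g,j), σ)$. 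Under $V' = V\oplus E\tilde u$ this means $τ'$ acts as $τ(g,j)$ on $V$ and as $σ$ on $E\tilde u$ (where $σ$ preserves $\mcO_E\tilde u$ automatically), so $τ'(Λ\oplus \mcO_E\tilde u) = τ(g,j)Λ \oplus \mcO_E\tilde u$. Combining with Lemma \ref{lemorbunitary}(1) and Corollary \ref{lemorbints}, $(\mr{JR}_{E_0,V',u',g'})$ becomes exactly $I(g,j) = O(g,j)$, i.e., $(\mr{FL}(g,j))$; here we also use that $g'\in U(J'_0)$, which follows from $η(\det J'_0) = η(\det J_0)\cdot η(1-J_0(j,j)) = +1$, the second factor being $η$ of a unit in the unramified extension $E/E_0$. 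The main obstacle is the $τ'$-identification, which requires carefully tracking the adjoint involution through the CRT decomposition of $E[g']$-modules.
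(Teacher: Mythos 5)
Your proposal is correct and follows essentially the same route as the paper: reduce via Lemma \ref{lemFLvj1} (equivalently, via Lemma \ref{lemorbunitary} and Corollary \ref{lemorbints}) to comparing $(\mr{FL}(g,j))$ with $(\mr{FL}(g',u'))$, use the splitting $\mcO_E[g'] = \mcO_E[g]\times\mcO_E$ coming from the choice of $a$, and the bijection $Λ\mapsto Λ\oplus\mcO_E\tilde u$. The paper leaves the verification of that bijection (the computation of $L'$, $L'^{\vee}$, and the behaviour of $τ'$) to the reader, and your write-up correctly supplies exactly those details.
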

\begin{proof}
As explained in Lemma \ref{lemFLvj1}, $(g',u')$ is regular semi-simple if and only if $g'$ is regular semi-simple with respect to $V' = W'\oplus Eu'$ in the sense of Definition \ref{defrs}. In this case, $(\mr{FL}_{E/E_0,(V',J_0'),u',g'})$ is equivalent to (FL$(g',u')$). So we have to prove that $(g',u')$ is regular semi-simple if and only if $(g,j)$ is and that in this case, (FL$(g',u')$) is equivalent to (FL$(g,j)$).

Due to our special choice of $a$, there is a decomposition $\mcO_E[g'] = \mcO_E[g]\times \mcO_E$. Its action on  $V' = V\oplus E\tilde u$ is then a factor-wise action. This already proves the claim about the regular semi-simpleness.
We leave it to the reader to check that there is a bijection
$$M(g,j) \iso M(g',u'),\ \ Λ \mapsto Λ\oplus \mcO_E\tilde u.$$
Then $\mr{len}(Λ/L(g,j)) = \mr{len}((Λ\oplus \mcO_E\tilde u)/L(g',u'))$ and hence $O(g',u') = O(g,j)$. Similarly, one gets that $I(g',u') = I(g,j)$.
\end{proof}

\subsection{The Arithmetic Fundamental Lemma}
\label{ssAflgen}
We now assume that $J = J_1$ is the odd hermitian form. In particular, $x \mapsto x^*$, the dual lattice $Λ\mapsto Λ^\vee$ and $M$ are now defined with respect to this form.

\begin{lem}\label{lemFLvanishing}
For all regular semi-simple and adjoint-stable pairs $(x,j)$, there is an equality
$$O(x,j) = 0.$$
\end{lem}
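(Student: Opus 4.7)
The plan is to prove $O(x,j) = 0$ by exhibiting a sign-changing, fixed-point-free involution on $M(x,j)$. The natural candidate, mirroring the proof of the vanishing part of the Jacquet-Rallis fundamental lemma in \cite{RTZ}, is the dual map $\Lambda \mapsto \Lambda^\vee$ with respect to the hermitian form $J_1$.

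I would first verify that $\Lambda^\vee \in M(x,j)$ whenever $\Lambda \in M(x,j)$. The chain $L(x,j) \subset \Lambda^\vee \subset L(x,j)^\vee$ follows by dualizing $L(x,j) \subset \Lambda \subset L(x,j)^\vee$ and using $L^{\vee\vee} = L$. For $x$-stability of $\Lambda^\vee$, the hypothesis that $(x,j)$ is adjoint-stable gives $x^* \in \mcO_E[x^*] = \mcO_E[x]$, hence $x^*\Lambda \subset \Lambda$, and then $J_1(xv, \lambda) = J_1(v, x^*\lambda) \in \mcO_E$ for all $v \in \Lambda^\vee$ and $\lambda \in \Lambda$. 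For $\tau$-stability, the main computation is the identity $J_1(\tau v, \tau w) = \sigma(J_1(v,w))$: writing $v = aj$ and $w = bj$ with $a, b \in E[x]$ via the isomorphism $E[x] \iso V$ induced by $j$ (which exists by regular semi-simplicity), one computes $J_1(\tau v, \tau w) = J_1(a^*j, b^*j) = J_1(j, ab^*j)$ while $\sigma(J_1(v,w)) = \sigma(J_1(j, a^*bj)) = J_1(j, b^*aj)$, and these coincide by the commutativity of $E[x]$. This identity yields $(\tau \Lambda)^\vee = \tau(\Lambda^\vee)$, from which $\tau$-stability of $\Lambda^\vee$ follows.

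Next, I would analyze the parity change. Standard duality for finite $\mcO_E$-modules gives $\len_{\mcO_E}(L^\vee/\Lambda) = \len_{\mcO_E}(\Lambda^\vee/L)$, so additivity along $L \subset \Lambda \subset L^\vee$ yields
$$\len_{\mcO_E}(\Lambda/L) + \len_{\mcO_E}(\Lambda^\vee/L) = \len_{\mcO_E}(L^\vee/L).$$
The right-hand side equals $v_{E_0}(\det J_1\vert_L)$; since $J_1$ is odd (i.e.\ $η(\det J_1) = -1$) and $E/E_0$ is unramified (so $v_E$ and $v_{E_0}$ agree on $E_0^\times$), this integer is odd. Consequently the involution swaps the parities of $\len_{\mcO_E}(\Lambda/L)$ and interchanges $M(x,j)_i$ with $M(x,j)_{c-i}$ for an odd constant $c$.

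Finally, a fixed point $\Lambda = \Lambda^\vee$ would be a self-dual $\mcO_E$-lattice for $J_1$, which cannot exist by the oddness of $J_1$. Hence $M(x,j)$ splits into pairs $\{\Lambda, \Lambda^\vee\}$ contributing opposite signs to $O(x,j) = \sum_{i}(-1)^i|M(x,j)_i|$, giving the claimed vanishing. The only non-formal step is the identity $J_1(\tau v, \tau w) = \sigma(J_1(v,w))$, which hinges on the commutativity of $E[x]$; the remaining verifications are purely formal.
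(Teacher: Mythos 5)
Your proof is correct and follows essentially the same route as the paper: the involution $\Lambda\mapsto\Lambda^\vee$ on $M(x,j)$, the oddness of $l=[L(x,j)^\vee:L(x,j)]$ coming from $J_1$ being the odd form, and the resulting cancellation of $(-1)^i|M(x,j)_i|$ with $(-1)^{l-i}|M(x,j)_{l-i}|$. You merely spell out the well-definedness of the involution (in particular the compatibility of $\tau$ with dualization via $J_1(\tau v,\tau w)=\sigma(J_1(v,w))$), which the paper leaves implicit by citing \cite[Corollary 7.3]{RTZ}.
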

\begin{proof}
The argument is taken from \cite[Corollary 7.3]{RTZ}. Let us assume that $L(x,j)\subset L(x,j)^\vee$ since otherwise $M(x,j) = \emptyset$ and hence $O(x,j) = 0$. Let us also define $l:= [L(x,j)^\vee:L(x,j)]$ which is odd since $J_1$ is the odd form. Then $Λ\mapsto Λ^\vee$ induces an involution on the set $M(x,j)$ which is fixed point free since it interchanges $M(x,j)_i$ and $M(x,j)_{l-i}$. Thus $|M(x,j)_i| = |M(x,j)_{l-i}|$ and the two summands $(-1)^i|M(x,j)_i|$ and $(-1)^{l-i}|M(x,j)_{l-i}|$ in the definition of $O(x,j)$ cancel.
\end{proof}

\begin{defn}
The pair $(x,j)$ is called \emph{artinian}, if the schematic intersection $\mcZ(x)\cap \mcZ(j)\subset \mcN_{E_0,(1,n-1)}$ is an artinian scheme. In this case, we define
$$
\mr{Int}(x,j) := \mr{len}_{\mcO_{\breve E}}(\mcO_{\mcZ(x)\cap \mcZ(j)}).$$
\end{defn}

\begin{conj}[Arithmetic Fundamental Lemma]\label{conjAFLgeneral}
Let $(x,j)$ be a regular semi-simple, adjoint-stable and artinian pair. Then
\begin{equation}\label{eqAFLgeneral}
\partial O(x,j) = -\mr{Int}(x,j).\tag{AFL$(x,j)$}
\end{equation}
\end{conj}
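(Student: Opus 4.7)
My plan is to reduce \textup{(AFL$(x,j)$)} to cases where it is already established, by exploiting the machinery of Part I. Since the conjecture is open in general, I would focus on pairs $(x,j)$ of \emph{inductive type}, meaning $\mcO_E\subset \mbZ_{p^2}[x]$ for a chosen embedding $E\hookrightarrow \End^0_{\mbQ_{p^2}}(\mbX_{\mbQ_p,(1,nd-1)})$, and attempt to reduce to the corresponding AFL identity over $\mbQ_p$ — where the $n\le 3$ and minuscule cases are known. By Proposition \ref{propEqAFL}, I may pass freely between the group and Lie algebra formulations, so I work with $x\in \mfu(J_1)^0$; Lemma \ref{lemFLvanishing} confirms that the undifferentiated integral $O(x,j)$ vanishes, so the conjecture is genuinely an identity about the derivative $\partial O(x,j)$ and the intersection number $\mr{Int}(x,j)$.

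On the geometric side, the inductive-type hypothesis guarantees that $\mcZ(i(x))$ inside $\mcN_{\mbQ_p,(1,nd-1)}$ is contained in $\mcZ(\mcO_E)$. Applying Theorem \ref{thmZoe},
$$\mcZ(\mcO_E) \;\iso\; \coprod_{\psi_0\in\Psi_0} \mcN_{E_0/\mbQ_p,(1,n-1)}^{\psi_0},$$
and each component is identified with $\mcN_{E_0,(1,n-1)}$ via the comparison isomorphism of Theorem \ref{mainThm1}. Transporting the Kudla--Rapoport divisor $\mcZ(i(j))$ across this comparison using Lemma \ref{lemCompKR} and Remark \ref{rmkComp1}, I expect to obtain
$$\mcZ(i(x))\cap \mcZ(i(j)) \;\iso\; \coprod_{i=1}^{f} \bigl(\mcZ(x)\cap \mcZ(j)\bigr),$$
whence $\mr{Int}(i(x),i(j)) = f\cdot \mr{Int}(x,j)$, where $f$ is the inertia degree of $E_0/\mbQ_p$; in particular $i(x)$ is artinian if and only if $x$ is. For étale $E_0$, the refinement of Proposition \ref{propgeometale} together with Lemma \ref{lemoddindex} localizes the intersection at the unique odd index and produces the same relation.

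On the analytic side, I would leverage the purely combinatorial descriptions in Definition \ref{defAFLgeneral} and Proposition \ref{propLatt}, expressing $\partial O(x,j)$ and $\partial O(i(x),i(j))$ as alternating sums over $\tau$-stable and $x$- (respectively $i(x)$-) stable lattices sandwiched between $L$ and $L^\vee$. Inductive type forces every $\mbZ_{p^2}[i(x)]$-stable lattice to already be $\mcO_E$-stable, so the $\mbQ_p$-lattices arise from $E_0$-lattices by restriction of scalars. Matching the adjoint involutions $\tau(x,j)$ and $\tau(i(x),i(j))$ under $V=V_{\mbQ_{p^2}}$ — which entails unpacking the rescaling by $\vartheta_E$ in $J_{1,\mbQ_{p^2}} = \mr{tr}_{E/\mbQ_{p^2}}(\vartheta_E J_1)$ — and verifying that the dualities agree up to the index shift intrinsic to this rescaling, I would obtain the analytic counterpart $\partial O(i(x),i(j)) = f\cdot \partial O(x,j)$. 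Together with the geometric scaling, this renders \textup{(AFL$(x,j)$)} equivalent to \textup{(AFL$(i(x),i(j))$)}.

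The principal obstacle is the analytic matching. The combinatorics of $\mcO_E$-length versus $\mbZ_{p^2}$-length carries both a factor $d = ef$ and a ramification-sensitive sign issue in the alternating sum: the restriction-of-scalars map $\Lambda\mapsto \Lambda$ sends $M(x,j)_i$ to $M(i(x),i(j))_{id}$, but producing the clean factor $f$ requires the non-contributing indices to be forced by the odd-form vanishing of Lemma \ref{lemFLvanishing} and the exact form of $\tau$. Checking that no spurious $\mbZ_{p^2}[i(x)]$-stable lattices (for instance ones failing to be $\tau(i(x),i(j))$-stable) or additional contributions appear, and tracking volumes and normalizations of Haar measures along the reduction, is where the real work lies. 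Without the inductive-type hypothesis no such combinatorial passage is available, which is why the method does not directly tackle the fully general conjecture.
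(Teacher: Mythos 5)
The statement you are addressing is a \emph{conjecture}; the paper does not prove it in general, and indeed no proof is possible with the methods at hand. What the paper does prove is a reduction (Theorem \ref{thmAFLgenmain} and its étale variant Theorem \ref{thmMainEtale}): under the inductive-type hypothesis $\mcO_A\subset \mbZ_{p^2}[x]$, the identity $(\mr{AFL}(x,j))$ over $\mbQ_p$ is \emph{equivalent} to the corresponding identity over $A_0$, and the conjecture then follows in those cases where the reduced instance is already known ($n\le 3$, minuscule). Your proposal reproduces exactly this strategy — comparison of $\mcZ(\mcO_E)$ via Theorem \ref{thmZoe} and Remarks \ref{rmkComp1}, \ref{rmkComp2} on the geometric side, lattice counting on the analytic side — so as a plan for the partial results it is the paper's plan. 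You should, however, state clearly that what you obtain is an equivalence of AFL instances, not a proof of the conjecture.

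There is one concrete error in your analytic bookkeeping. The length comparison is governed by the inertia degree $f$ alone, not by $d=ef$: a simple $\mcO_A$-module is the residue field of $\mcO_A$, which has $\mbZ_{p^2}$-length $f$, so the map on index sets is $M(x,j)^A_i = M(x,j)_{fi}$, and $M(x,j)_i=\emptyset$ for $f\nmid i$ simply because every lattice in $M(x,j)$ is $\mbZ_{p^2}[x]=\mcO_A[x]$-stable, hence an $\mcO_A$-module — no appeal to the vanishing Lemma \ref{lemFLvanishing} is needed or relevant here. This matters for the signs: the paper's argument uses that $f$ is odd (forced by $A=A_0\tensor\mbQ_{p^2}$ being a field) to get $(-1)^i=(-1)^{fi}$ and hence $\partial O(x,j)=f\cdot\partial O(x,j)^A$; with your claimed index shift by $d$, the alternating signs would genuinely break whenever $e$ is even, so the "ramification-sensitive sign issue" you flag is an artifact of the wrong factor. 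Finally, your worry about Haar measure normalizations is moot: the reformulated quantities $O(x,j;s)$ of Definition \ref{defAFLgeneral} are defined purely combinatorially as lattice counts, so no measures enter the comparison at all; likewise the detour through Proposition \ref{propEqAFL} (which carries the hypothesis $q\ge n+2$) is unnecessary, since the pair formalism of Section 9 already subsumes both the group and Lie algebra versions.
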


Again, we explain the relation of this uniform version with the AFL from Section 7. This will be very similar to the explanations for the case $J = J_0$.

\begin{lem}\label{lemAFLLie}
Let $(x,j)$ be a adjoint-stable pair with $x\in \mfu(J_1)$. We set $V' := V\oplus Eu'$ with form $J_1':=J_1\oplus 1$ and we define
$$x' := \left(\begin{smallmatrix} x & j\\ -j^* & \end{smallmatrix}\right) \in \mfu(J_1')^0.$$
Then $(x,j)$ is regular semi-simple and artinian if and only if $x'$ is regular semi-simple with respect to $V' = V \oplus Eu'$ in the sense of Definition \ref{defrs} and artinian with respect to $u'$ in the sense of Definition \ref{defInter}. In the regular semi-simple and artinian case, $(\mr{AFL}(x,j))$ is equivalent to $(\mathfrak{afl}_{E/E_0,(V',J_1'),u',x'})$.
\end{lem}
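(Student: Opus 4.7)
The plan is to verify the lemma in three stages, mirroring the Jacquet--Rallis analogue Lemma \ref{lemFLgenLie}: equivalence of regular semi-simpleness, equivalence of artinian-ness together with equality of intersection numbers, and finally equivalence of the two AFL identities themselves.

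For regular semi-simpleness I would apply Lemma \ref{lemrs}. Using the block form of $x'$, one computes $x' u' = j$ and inductively that the projection of $(x')^k u'$ to $V'/Eu' \iso V$ is $x^{k-1} j$ modulo lower-degree elements of $E[x]\cdot j$. By triangularity, the cyclic subspace $E[x']\cdot u' \subset V'$ projects onto $E[x]\cdot j$ and already contains $Eu'$, so $E[x']\cdot u' = V'$ if and only if $E[x]\cdot j = V$. The dual spanning condition in Lemma \ref{lemrs} is automatic because $x^* = -x$ for $x \in \mfu(J_1)$, hence $\mcO_E[x] = \mcO_E[x^*]$. For artinian-ness I invoke formula \eqref{eqModx}: the lower-right entry of $x'$ is $0 \in \mcO_E$, so $\mcZ(x')\cap \mcZ(u') = \mcZ(x)\cap \mcZ(j)\cap \mcZ((-j^*)^*)$; since $\mcZ(j)$ depends only on the $\mcO_E$-line $\mcO_E\cdot j$, this collapses to $\mcZ(x)\cap \mcZ(j)$, yielding simultaneously the equivalence of the two artinian conditions and the equality $\mr{Int}(x') = \mr{Int}(x,j)$.

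For the equivalence of the two AFL identities, I apply Corollary \ref{lemorbintsLie} to an element $y' \in \mfs(E_0)^0_{\mr{rs}}$ matching $x'$: this writes $\Omega(y')\,\partial O_{y'}(1_{\mfs(\mcO_{E_0})})$ as $-\log(q)\sum_i (-1)^i i\,|M_i|$, where $M_i$ is the set of $\tau$- and $x'$-stable lattices between $L(x',u')$ and its dual, graded by length. Proposition \ref{propLatt} then supplies a length-preserving bijection $M_i \iso M(x,j)_i$ via $\Lambda \mapsto \Lambda \cap V$, with inverse $\Lambda^\flat \mapsto \Lambda^\flat \oplus \mcO_E u'$. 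One must check that the involution $\tau$ on $V'$ appearing in Corollary \ref{lemorbintsLie} (characterized by $\tau((x')^k u') = (-1)^k (x')^k u'$) restricts on $V$ to the involution $\tau(x,j)$ from Definition \ref{defAFLgeneral}; this is immediate from the adjoint description in Remark \ref{rmkAdjoint}, since both arise as adjoint involutions under the cyclic identifications $E[x'] \iso V'$ and $E[x] \iso V$. Combining this with $\mr{Int}(x') = \mr{Int}(x,j)$ from the previous step yields the equivalence $(\mr{AFL}(x,j)) \Leftrightarrow (\mathfrak{afl}_{E_0,V',u',x'})$. The only mildly subtle point is the matching of involutions; the rest is direct bookkeeping.
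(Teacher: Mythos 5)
Your proposal is correct and follows essentially the same route as the paper, whose proof consists precisely of citing Lemma \ref{lemrs} for the regular semi-simpleness, Remark \ref{rmkModx} (equivalently formula \eqref{eqModx}) for the identification $\mcZ(x')\cap\mcZ(u')=\mcZ(x)\cap\mcZ(j)$, and Corollary \ref{lemorbintsLie} together with the lattice comparison of Proposition \ref{propLatt} for the analytic sides. The details you supply, including the check that the involution $\tau$ on $V'$ restricts to $\tau(x,j)$ on $V$, are exactly the bookkeeping the paper leaves implicit.
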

\begin{proof}
This follows from Lemma \ref{lemrs}, Remark \ref{rmkModx} and Corollary \ref{lemorbintsLie}.
\end{proof}

\begin{lem}\label{lemAFLvj1}
Let $(g,j)$ be a adjoint-stable pair with $g\in U(J_1)$ and $J_1(j,j)\in \mcO_{E_0}^\times$. Then $(g,j)$ is regular semi-simple and artinian if and only if $g$ is regular semi-simple with respect to $V = j^\perp \oplus Ej$ in the sense of Definition \ref{defrs} and artinian with respect to $j$ in the sense of Definition \ref{defInter}. In the regular semi-simple and artinian case, $(\mr{AFL}(g,j))$ is equivalent to $(\mr{AFL}_{E/E_0,(V,J_1),j,g})$.
\end{lem}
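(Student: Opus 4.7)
The plan is to argue in parallel with Lemma \ref{lemFLvj1}, replacing the unitary orbital integral comparison on the right by the intersection-theoretic comparison. The proof decomposes into three ingredients, and the only subtlety is a rescaling of $j$.

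First I would address regular semi-simpleness: since $g \in U(J_1)$ satisfies $g^* = g^{-1} \in \mcO_E[g]$, the pair $(g,j)$ is automatically adjoint-stable, and by the remark preceding Definition \ref{defAFLgeneral} (which rests on Lemma \ref{lemrs}), $(g,j)$ is a regular semi-simple pair if and only if $g$ is regular semi-simple in the sense of Definition \ref{defrs} with respect to the decomposition $V = j^\perp \oplus Ej$.

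Second, I would reduce to the case $J_1(j,j) = 1$ so that $j$ may be used as the distinguished vector in the AFL set-up of Section \ref{ss:intersect}. Since $E/E_0$ is unramified, the norm map $\mr{Nm}_{E/E_0}\colon \mcO_E^\times \to \mcO_{E_0}^\times$ is surjective, so one can choose $a \in \mcO_E^\times$ with $\mr{Nm}(a) = J_1(j,j)^{-1}$ and replace $j$ by $aj$. This changes neither the lattice $L(g,j) = \mcO_E[g]j$, nor the sets $M(g,j)_i$ (the involution $\tau(g,j)$ is merely rescaled by the unit $a/\bar a$, which preserves every $\mcO_E$-stable lattice), nor the cycle $\mcZ(j) \subset \mcN_{E_0,(1,n-1)}$ (which depends only on the $\mcO_E$-span of $j$). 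In particular the artinian hypothesis is unaffected.

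Third, after this normalization I would identify the two sides. By Remark \ref{rmkModg}, $\mr{Int}(g) = \mr{len}_{\mcO_{\breve E}} \mcO_{\mcZ(g) \cap \mcZ(j)} = \mr{Int}(g,j)$, so the two notions of ``artinian'' coincide. By Corollary \ref{lemorbints} combined with Definition \ref{defAFLgeneral}(3),
$$\Omega(\gamma)\partial O_\gamma(1_{S(\mcO_{E_0})}) = -\log(q)\sum_{i\in\mbZ}(-1)^i i\,|M_i| = \log(q)\cdot \partial O(g,j).$$
Hence $(\mr{AFL}_{E_0,V,j,g})$ becomes $\log(q)\,\partial O(g,j) = -\mr{Int}(g,j)\log(q)$, which, after cancelling $\log(q)$, is exactly $(\mr{AFL}(g,j))$. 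The only mildly delicate step, which should be spelled out, is the invariance of all objects under the rescaling $j \mapsto aj$; everything else is a direct translation through the dictionaries of Section \ref{sect:analytic}.
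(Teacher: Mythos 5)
Your proof is correct and follows exactly the paper's route: the paper's own proof is a one-line citation of Lemma \ref{lemrs}, Remark \ref{rmkModg} and Corollary \ref{lemorbints}, which are precisely your three ingredients, and your normalization $j\mapsto aj$ via surjectivity of the norm on units (which you rightly flag as the only delicate point) is left implicit there. The only slip is the phrase ``automatically adjoint-stable'': for $g\in U(J_1)$ this is equivalent to integrality of the characteristic polynomial and is part of the self-adjointness hypothesis rather than automatic, but nothing in your argument depends on this.
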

\begin{proof}
This follows from Lemma \ref{lemrs}, Remark \ref{rmkModg} and Corollary \ref{lemorbints}.
\end{proof}

\begin{lem}
Let $(g,j)$ be a regular semi-simple, adjoint-stable and artinian pair such that $J_1(j,j)\notin \mcO_{E_0}$. Then both sides of \eqref{eqAFLgeneral} vanish.
\end{lem}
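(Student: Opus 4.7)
The plan is to show that both sides of $(\mr{AFL}(g,j))$ vanish under the hypothesis $J_1(j,j)\notin \mcO_{E_0}$.

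For the analytic side, note that $j\in L(g,j) = \mcO_E[g]j$ by definition. If $Λ\in M(g,j)$ then $j\in L(g,j)\subset Λ\subset L(g,j)^\vee$, which forces $J_1(j,j')\in \mcO_E$ for every $j'\in L(g,j)$; in particular $J_1(j,j)\in \mcO_E$. Since $J_1$ is hermitian, $J_1(j,j) = \overline{J_1(j,j)}\in E_0$, so $J_1(j,j)\in \mcO_E\cap E_0 = \mcO_{E_0}$, contradicting the hypothesis. Hence $M(g,j) = \emptyset$, so $O(g,j;s)\equiv 0$ and in particular $\partial O(g,j) = 0$.

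For the geometric side, it suffices to show that $\mcZ(j)$ is empty, since then $\mcZ(g)\cap \mcZ(j) = \emptyset$ and $\mr{Int}(g,j) = 0$. Because $\mcZ(j)$ is a closed formal subscheme of $\mcN_{E_0,(1,n-1)}$, which is locally formally of finite type over $\Spf\mcO_{\breve E}$, it is enough to verify $\mcZ(j)(\mbF) = \emptyset$. Suppose for contradiction that $(X,ρ)\in \mcZ(j)(\mbF)$. Then $\tilde{j} := ρ^{-1}\circ j: \ob{\mbY}_{E_0}\to X$ is an honest homomorphism of hermitian $\mcO_E$-modules. The Rosati dual $\tilde{j}^* = λ_{\ob{\mbY}}^{-1}\circ \tilde{j}^\vee\circ λ_X: X\to \ob{\mbY}_{E_0}$ is then also an honest homomorphism, because $λ_X$ and $λ_{\ob{\mbY}}$ are isomorphisms. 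Hence $\tilde{j}^*\circ \tilde{j}$ is an honest endomorphism of $\ob{\mbY}_{E_0}$, and thus lies in $\End(\ob{\mbY}_{E_0}) = \mcO_E$. Since $ρ$ preserves the polarizations, the Rosati involution is compatible with $ρ$, so $\tilde{j}^*\circ \tilde{j} = j^*\circ j$. By the very definition of the hermitian form on $\Hom^0(\ob{\mbY}_{E_0},\mbX_{E_0,(1,n-1)})$, and our identification of this space with $V$ (see the discussion around equation \eqref{eqIdent}), we have $j^*\circ j = J_1(j,j)\cdot \mr{id}_{\ob{\mbY}_{E_0}}$. Hence $J_1(j,j)\in \mcO_E$, and combined with $J_1(j,j)\in E_0$ we obtain $J_1(j,j)\in \mcO_{E_0}$, contradicting the hypothesis.

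Both steps are essentially routine integrality checks; there is no substantial obstacle. The only nontrivial input is the identification of the hermitian form on the space of quasi-homomorphisms $\Hom^0(\ob{\mbY}_{E_0},\mbX_{E_0,(1,n-1)})$ with the form $J_1$ on $V$, which is set up in Section~\ref{ss:intersect} and is in any case used throughout. With this in hand, the two sides of $(\mr{AFL}(g,j))$ vanish for essentially the same reason: the lattice $L(g,j)$ fails to be integral for $J_1$, and correspondingly the quasi-homomorphism $j$ cannot lift to any actual homomorphism into any hermitian $\mcO_E$-module.
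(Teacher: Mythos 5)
Your proof is correct and follows essentially the same route as the paper: $M(g,j)=\emptyset$ because $L(g,j)\not\subset L(g,j)^\vee$, and $\mcZ(j)=\emptyset$ because a lift of $j$ would force $j^*j = J_1(j,j)$ to be an actual ($\mcO_E$-integral) endomorphism of the signature-$(0,1)$ object. Your reduction of the geometric side to $\mbF$-points is a harmless variant of the paper's argument, which works directly with lifts $\ob{\mcY}_{E_0}\to X$.
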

\begin{proof}
If $J_1(j,j)\notin \mcO_{E_0}$, then $\mcZ(j)= \emptyset$. Namely if $X\in \mcN_{E_0,(1,n-1)}$ is such that $j:\mbY_{E_0}→\mbX_{E_0,(1,n-1)}$ lifts to a homomorphism $\mcY_{E_0}→X$, then also the composition $j^* j\in \End_E^0(\mbY_{E_0})$ lifts to $\mcY_{E_0}$. But $\End(\mbY_{E_0}) = \mcO_E$ and $j^* j = J_1(j,j)$.

Also, if $J_1(j,j)\notin \mcO_{E_0}$, then $L(g,j)\not\subset L(g,j)^\vee$ and thus $M(g,j) = \emptyset$.
\end{proof}

\begin{construction}\label{constructionAFL}
Let $(g,j)$ be a adjoint-stable pair with $g\in U(J_1)$ and $v(J_1(j,j))\geq 1$. We also assume that $q_{E_0}+1>n$. Then we define
$V' := V \oplus E\tilde u$ and $u' := \tilde u + j$. We extend $J_1$ to $J_1'$ on $V'$ by setting $\tilde u \perp V$ and $J'_1(u',u') = 1$. We define $W' := (u')^\perp$, which is an odd hermitian space.

As explained in Section \ref{subsect:JRFL}, there exists $a\in E^1$ such that $P(a) \not \equiv 0$ mod $π_E$ where $P$ is the characteristic polynomial of $g$. We define
$$g' := \left(\begin{smallmatrix} g & \\ & a\end{smallmatrix}\right)\in U(J'_1)$$
where the block matrix decomposition is with respect to $V' = W \oplus E\tilde u$.
\end{construction}

\begin{lem}\label{lemAFLgen}
Let $(g,j)$ be a adjoint-stable pair with $g\in U(J_1)$ and $v(J_1(j,j))\geq 1$. We also assume that $q_{E_0}+1 >n$. Let $V',u'$ and $g'$ be as above. Then $(g,j)$ is regular semi-simple and artinian if and only if $g'$ is regular semi-simple with respect to $V' = W'\oplus Eu'$ in the sense of Definition \ref{defrs} and artinian with respect to $u'$ in the sense of Definition \ref{defInter}. In the regular semi-simple and artinian case, the identity $(\mr{AFL}(g,j))$ is equivalent to $(\mr{AFL}_{E/E_0,(V',J_1'),u',g'})$.
\end{lem}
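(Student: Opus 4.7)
The plan is to reduce the equivalence to the previous Lemma \ref{lemAFLvj1} by showing that passing from the pair $(g,j)$ to the pair $(g',u')$ changes neither the combinatorial side $\partial O$ nor the geometric side $\mathrm{Int}$. The key structural input is the choice of $a$: since $P_g(a)\in \mcO_E^\times$ and $q+1>n$ makes such an $a$ exist, the polynomials $P_g(t)$ and $t-a$ are coprime in $\mcO_E[t]$, so the Chinese Remainder Theorem yields a decomposition $\mcO_E[g'] = \mcO_E[g]\times \mcO_E$ as $\mcO_E$-algebras. Let $e_1,e_2\in \mcO_E[g']$ be the corresponding idempotents; with respect to the orthogonal decomposition $V'=V\oplus E\tilde u$ they are the projectors onto the two factors, and in particular they are self-adjoint for $J_1'$ since $V\perp E\tilde u$. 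This gives $L(g',u') = \mcO_E[g']u' = \mcO_E[g]\cdot e_1u' \oplus \mcO_E\cdot e_2u' = L(g,j) \oplus \mcO_E\tilde u$; moreover, because $J_1'(\tilde u,\tilde u) = 1 - J_1(j,j)\in \mcO_E^\times$ (using $v(J_1(j,j))\geq 1$), taking duals respects the decomposition, i.e.\! $L(g',u')^\vee = L(g,j)^\vee \oplus \mcO_E\tilde u$. The same argument over $E$ shows $E[g']u' = V'$ iff $E[g]j = V$, settling the regular semi-simpleness equivalence.

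For the analytic side, any lattice $Λ'\in M(g',u')$ is stable under $e_1,e_2\in \mcO_E[g']$ and hence splits as $Λ'=(Λ'\cap V)\oplus (Λ'\cap E\tilde u)$; the second summand is sandwiched between $\mcO_E\tilde u$ and itself, hence equals $\mcO_E\tilde u$, while the first is an $x$-stable lattice in $M(g,j)$. Self-adjointness of $e_2$ together with the definition of $τ(g',u')$ gives $τ(g',u')\tilde u = \tilde u$ and $τ(g',u')|_V = τ(g,j)$, so the $τ$-invariance is compatible with the splitting. The assignment $Λ\mapsto Λ\oplus \mcO_E\tilde u$ is therefore a bijection $M(g,j)_i \iso M(g',u')_i$ for every $i\in \mbZ$, and consequently $O(g,j;s) = O(g',u';s)$ as functions of $s$, whence $\partial O(g,j) = \partial O(g',u')$.

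For the geometric side, any point $(X,ι,λ,ρ)\in \mcZ(g')$ acquires an action of $\mcO_E[g'] = \mcO_E[g]\times \mcO_E$, so the idempotents $e_1,e_2$ lift to mutually orthogonal self-adjoint endomorphisms of $X$; they produce a product decomposition $X = X_1\times X_2$ with $X_1$ of signature $(1,n-1)$, $X_2$ of signature $(0,1)$, compatible with the polarization. The lift of $u' = j+\tilde u$ is equivalent to the joint lift of $j = e_1u'$ as a quasi-homomorphism $\ob{\mcY}_{E_0}\to X_1$ and $\tilde u = e_2u'$ as one $\ob{\mcY}_{E_0}\to X_2$. Since $\mcN_{E_0,(0,1)}\iso \Spf \mcO_{\breve E}$ and the action of $a\in E^1\subset \mcO_E^\times$ on $X_2$ as well as the lift of $\tilde u$ are automatic, the condition on $X_2$ is vacuous. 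This produces an isomorphism
$$\mcZ(g')\cap \mcZ(u') \iso \bigl(\mcZ(g)\cap \mcZ(j)\bigr)\times_{\Spf \mcO_{\breve E}} \mcN_{E_0,(0,1)} \iso \mcZ(g)\cap \mcZ(j),$$
so one is artinian iff the other is, and in that case $\mathrm{Int}(g',u') = \mathrm{Int}(g,j)$.

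Combining the two equalities yields the equivalence $(\mathrm{AFL}(g,j))\Leftrightarrow (\mathrm{AFL}(g',u'))$, and since $J_1'(u',u')=1\in \mcO_{E_0}^\times$, Lemma \ref{lemAFLvj1} applied to $(g',u')$ identifies the latter with $(\mathrm{AFL}_{E_0,V',u',g'})$. The main subtlety is verifying that the idempotent-induced splitting of $X$ respects the polarization and the signature bookkeeping; this is where self-adjointness of $e_1,e_2$ (a consequence of $V\perp E\tilde u$) and the signature additivity $(1,n)=(1,n-1)+(0,1)$ are essential.
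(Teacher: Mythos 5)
Your proposal is correct and follows essentially the same route as the paper: reduce to $(\mr{AFL}(g',u'))$ via Lemma \ref{lemAFLvj1} using $J_1'(u',u')=1$, exploit the splitting $\mcO_E[g']=\mcO_E[g]\times\mcO_E$ coming from the choice of $a$, match the lattice sets via $Λ\mapsto Λ\oplus\mcO_E\tilde u$, and identify $\mcZ(g')\cap\mcZ(u')$ with $\mcZ(g)\cap\mcZ(j)$. Your geometric step just unpacks the paper's inclusion $\mcZ(g')\subset\mcZ(\tilde u)$ more explicitly (via the idempotent splitting of $X$ and the evenness of $E\tilde u$ forcing $X_2$ to have signature $(0,1)$), which is a welcome elaboration of what the paper leaves to the reader.
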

\begin{proof}
By Lemma \ref{lemAFLvj1}, $(g',u')$ is regular semi-simple and artinian if and only if $g'$ is regular semi-simple with respect to $V' = W'\oplus Eu'$ and artinian with respect to $u'$. In this case, $(\mr{AFL}_{E/E_0,(V',J_1'),u',g'})$ is equivalent to $(\mr{AFL}(g',u'))$. So we have to prove that $(g',u')$ is regular semi-simple and artinian if and only if $(g,j)$ is, and that in this case the two identities $(\mr{AFL}(g',u'))$ and $(\mr{AFL}(g,j))$ are equivalent.

Due to our special choice of $a$, there is a decomposition $\mcO_E[g'] = \mcO_E[g]\times \mcO_E$. Its action on  $V' = V\oplus E\tilde u$ is then a factor-wise action. This already proves the claim about the regular semi-simpleness.
Note that $J_1'(\tilde u, \tilde u) \in \mcO_{E_0}^\times$. We leave it to the reader to check that there is a bijection
$$M(g,j) \iso M(g',u'),\ \ Λ \mapsto Λ\oplus \mcO_E\tilde u.$$
Then $\mr{len}(Λ/L(g,j)) = \mr{len}((Λ\oplus \mcO_E\tilde u)/L(g',u'))$ and hence $\partial O(g',u') = \partial O(g,j)$.
We still have to show $\mr{Int}(g',u') = \mr{Int}(g,j)$ which follows from an identification of formal schemes, $\mcZ(g)\cap \mcZ(u) = \mcZ(g')\cap \mcZ(u')$.

Namely note that because of the decomposition $\mcO_E[g'] = \mcO_E[g] \times \mcO_E$, there is an inclusion
$$\mcZ(g')\subset \mcZ(\tilde u)$$
which identifies the cycle $\mcZ(g')\subset \mcZ(\tilde u)\iso \mcN_{E_0,(1,n-1)}$ with $\mcZ(g)$. Moreover,
$$\mcZ(\tilde u)\cap \mcZ(u') = \mcZ(\tilde u)\cap \mcZ(j)$$
since the intersection only depends on the spanned module
$$\mcO_E\tilde u + \mcO_Eu'\subset \Hom^0(\mbY_{E_0},\mbX_{E_0,(1,n-1)} \times \mbY_{E_0}).$$
Thus
$$\mcZ(g')\cap \mcZ(u') = \mcZ(g')\cap \mcZ(\tilde u) \cap \mcZ(u') \iso \mcZ(g)\cap \mcZ(j).$$
\end{proof}

\section{The AFL in presence of additional multiplication}
We now study the AFL for pairs $(x,j)$ such that there exists a maximal order $\mcO_A\subset \mcO_E[x]$ as in Section \ref{sect:cycles}. We first do this for field extensions $A/E$ which uses the main result about cycles, Theorem \ref{thmZoe}. In the subsequent section, we deal with the case of an étale algebra $A/E$. Here, the Fundamental Lemma is needed as an additional input.

\subsection{Multiplication by a field extension $A/E$}
\label{subsect:mainfield}
Let $A_0/E_0$ be a field extension of degree $d$ such that $A:= A_0\tensor_{E_0}E$ is also a field. Then $A/A_0$ is an unramified quadratic extension and we denote its Galois conjugation also by $σ$. Let
$A\hookrightarrow \End_E(V)$
be an embedding that is equivariant for the Galois conjugation $σ$ on $A$ and the adjoint involution of $J_1$ on $\End(V)$. In other words,
$$J_1(a\ ,\ ) = J_1(\ ,σ(a)\ ),\ \ a\in A.$$
Let $ϑ_A$ be a generator of the inverse different of $A_0/E_0$ and let $J_1^A:V\times V→A$ be the $A/A_0$-hermitian form characterized by the property that
$$\mr{tr}_{A/E}\circ ϑ_AJ_1^A = J_1.$$
Note that for an $\mcO_A$-lattice $Λ\subset V$, the dual lattice $Λ^\vee$ with respect to the form $J_1$ is also the dual of $Λ$ with respect to $J_1^A$. In particular, $(V,J_1^A)$ is an odd hermitian space.

The action of $A$ by quasi-endomorphisms on $\mbX_{E_0,(1,n-1)}$ makes it into a framing object $\mbX_{A_0/E_0,(1,n'-1)}$ for $\mcN_{A_0/E_0,(1,n'-1)}$, where $n=dn'$. We also define $\mbX_{A_0,(1,n'-1)} := \mcC(\mbX_{A_0/E_0,(1,n'-1)})$ where $\mcC$ is the functor from Theorem \ref{thmEquiv}.
We set $\mbY_{A_0/E_0} := \mcO_{A_0} \tensor_{\mcO_{E_0}} \mbY_{E_0}$ with respect to $ϑ_A$ as in Definition \ref{defSerre} and set $\mbY_{A_0} := \mcC(\mbY_{A_0/E_0})$. There is an isomorphism
$$\Hom^0_E(\mbY_{E_0},\mbX_{E_0,(1,n-1)}) \iso \Hom^0_A(\mbY_{A_0/E_0},\mbX_{A_0/E_0,(1,n'-1)}),\ \ j\mapsto \mr{id}_{A_0}\tensor j$$
which is an isometry with respect to $J_1^A$ on the left and the natural form on the right. Via $\mcC$, these hermitian spaces are also isometric to $\Hom^0_A(\mbY_{A_0},\mbX_{A_0,(1,n'-1)})$.

The point is now that any regular semi-simple, adjoint-stable and artinian pair $(x,j)\in \End_E(V)\times V$ such that $x$ is $A$-linear gives rise to two AFL identities, one for the base field $E_0$ and one for $A_0$. We denote by (AFL$(x,j)_{E_0}$) the one for $\mcN_{E_0,(1,n-1)}$ where the $A$-action does not play a role. We denote by $(\mr{AFL}(x,j)_{A_0}):= (\mr{AFL}(\mcC(x),\mcC(\mr{id}_{A_0}\tensor j)))$ the one for $\mcN_{A_0,(1,n'-1)}$. Our main result is the following theorem and its corollaries.

\begin{thm}\label{thmAFLgenmain}
Let $(x,j) \in \End_E(V)\times V$ be a regular semi-simple, adjoint-stable and artinian pair such that $\mcO_A\subset \mcO_E[x]$. Then $(x,j)$ is also regular semi-simple, adjoint-stable and artinian when viewed over $A_0$ and the two identities $(\mr{AFL}(x,j)_{E_0})$ and $(\mr{AFL}(x,j)_{A_0})$ are equivalent.
\end{thm}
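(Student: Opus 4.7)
The plan is to verify that the hypotheses on $(x,j)$ transfer to the $A_0$-setting, then show that the analytic and geometric sides of the AFL identity each acquire the same factor $f = [A_0^u:\mbQ_p]$ when passing from $A_0$ to $\mbQ_p$. The transfer of algebraic conditions is immediate from the hypothesis: since $\mbZ_{p^2}\subset \mcO_A\subset \mbZ_{p^2}[x]$, we have $\mcO_A[x] = \mbZ_{p^2}[x]$ and hence $A[x]j = \mbQ_{p^2}[x]j = V$, so regular semi-simpleness transfers. The adjoint involution $*$ restricts to $\sigma$ on $\mcO_A$ (so $\mcO_A$ is $*$-stable); applying $*$ to $\mcO_A\subset \mbZ_{p^2}[x]$ gives $\mcO_A\subset \mbZ_{p^2}[x^*]$, and thus the equalities $\mcO_A[x] = \mcO_A[x^*]$ and $\mbZ_{p^2}[x] = \mbZ_{p^2}[x^*]$ are equivalent. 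The artinian property will follow from the geometric identification below.

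On the analytic side, the hypothesis forces every $x$-stable $\mbZ_{p^2}$-lattice in $V$ to be $\mcO_A$-stable. Moreover $L(x,j) = \mcO_A[x]j = \mbZ_{p^2}[x]j$, the dual lattice $L(x,j)^\vee$, and the involution $\tau(x,j)$ are the same objects in both settings. For the dual lattice, one uses that $\mbQ_{p^2}/\mbQ_p$ is unramified, so that $\vartheta_A$ generates $\mathfrak{d}(A/\mbQ_{p^2})^{-1}$; the trace-compatibility $J_1 = \mr{tr}_{A/\mbQ_{p^2}}(\vartheta_A J_1^A)$ then makes $J_1(v,L)\subset \mbZ_{p^2}$ equivalent to $J_1^A(v,L)\subset \mcO_A$. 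Hence the underlying sets $M(x,j)_{\mbQ_p}$ and $M(x,j)_{A_0}$ coincide, while their length-gradings are related by $\mr{len}_{\mbZ_{p^2}}(\Lambda/L) = f\cdot \mr{len}_{\mcO_A}(\Lambda/L)$. Since $A = A_0\otimes \mbQ_{p^2}$ is a field, $f$ is odd, so $(-1)^{fk} = (-1)^k$ and
$$\partial O(x,j)_{\mbQ_p} = -\sum_i (-1)^i\, i\,|M_i|_{\mbQ_p} = -f\sum_k (-1)^k\, k\,|M_k|_{A_0} = f\cdot \partial O(x,j)_{A_0}.$$

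On the geometric side, the inclusion $\mcO_A\subset \mbZ_{p^2}[x]$ gives $\mcZ(x)\subset \mcZ(\mcO_A)$ inside $\mcN_{\mbQ_p,(1,n-1)}$. By Theorem \ref{thmZoe}, $\mcZ(\mcO_A)\iso \coprod_{\psi_0\in \Psi_0}\mcN^{\psi_0}_{A_0/\mbQ_p,(1,n'-1)}$ is a disjoint union of $|\Psi_0| = f$ copies. Combining Lemma \ref{lemCompKR} and Remarks \ref{rmkComp1} and \ref{rmkComp2} with the isomorphism $\mcC$ from Theorem \ref{mainThm1} yields
$$\mcZ(x)\cap \mcZ(j) \iso \coprod_{\psi_0\in \Psi_0} \mcZ(\mcC(x))\cap \mcZ(\mcC(\mr{id}\otimes j)) \subset \coprod_{\psi_0\in \Psi_0} \mcN_{A_0,(1,n'-1)},$$
proving in particular that the artinian property transfers. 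Since $\mcO_{\breve \mbQ_{p^2}}$ and $\mcO_{\breve A}$ share the same residue field $\mbF$, the length of any finite-length $\mcO_{\breve A}$-module equals its $\mbF$-dimension, which coincides with its length as $\mcO_{\breve \mbQ_{p^2}}$-module. Summing over the $f$ components, $\mr{Int}(x,j)_{\mbQ_p} = f\cdot \mr{Int}(x,j)_{A_0}$. Substituting both comparisons into $(\mr{AFL}(x,j)_{\mbQ_p})$ gives $f\cdot \partial O(x,j)_{A_0} = -f\cdot \mr{Int}(x,j)_{A_0}$, which is equivalent to $(\mr{AFL}(x,j)_{A_0})$ after division by $f$.

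The main subtlety to anticipate is the length comparison on the geometric side: although $\mcO_{\breve A}/\mcO_{\breve \mbQ_{p^2}}$ is totally ramified of degree $e$, the $\mcO_{\breve A}$-length and $\mcO_{\breve \mbQ_{p^2}}$-length of any finite-length $\mcO_{\breve A}$-module coincide (no factor of $e$ appears), because the two DVRs share the residue field $\mbF$. By contrast, on the analytic side one compares $\mbZ_{p^2}$-length with $\mcO_A$-length, where the residue fields \emph{do} differ (by degree $f$), which is where the factor $f$ enters. It is precisely this asymmetry that allows the factors of $f$ on the two sides to cancel exactly, producing the claimed equivalence.
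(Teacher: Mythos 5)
Your proposal is correct and follows essentially the same route as the paper: identify the lattice sets $M(x,j)$ over the two base fields with the length grading rescaled by the odd inertia degree $f$ to get $\partial O(x,j)_{\mbQ_p} = f\cdot\partial O(x,j)_{A_0}$, and use the comparison isomorphisms (Theorem \ref{thmZoe} with Remarks \ref{rmkComp1} and \ref{rmkComp2}) to decompose $\mcZ(x)\cap\mcZ(j)$ into $f$ copies of the $A_0$-intersection, giving $\mr{Int}(x,j)_{\mbQ_p} = f\cdot\mr{Int}(x,j)_{A_0}$. The extra checks you supply (compatibility of dual lattices via the trace form, and the absence of a ramification factor $e$ in the length comparison because $\mcO_{\breve A}$ and $\breve{\mbZ}_p$ share the residue field $\mbF$) are correct details that the paper leaves implicit.
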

\begin{proof}
Let us keep the notation $L(x,j)$, $τ(x,j)$, $M(x,j)$, $\partial O(x,j), \mr{Int}(x,j)$ etc.\! for the setting over $E_0$. We denote by $L(x,j)^A$, $τ(x,j)^A$, $M(x,j)^A$, $\partial O(x,j)^A, \mr{Int}(x,j)^A$ etc.\! the respective notions for the setting over $A_0$. It is clear that $(x,j)$ is also regular semi-simple and adjoint-stable when viewed over $A_0$, since $\mcO_A[x]j = \mcO_E[x]j$ by assumption.

\emph{Comparison of the analytic sides of $(\mr{AFL}(x,j)_{E_0})$ and $(\mr{AFL}(x,j)_{A_0})$:}\\
Any $x$-stable $\mcO_E$-lattice $Λ\subset V$ is automatically an $\mcO_A[x]$-lattice. In particular, $L(x,j) = L(x,j)^A$. Similarly, the involutions $τ(x,j)$ and $τ(x,j)^A$ agree since they only depend on the Rosati involution on $E[x] = A[x]$ and $j$. This implies that
$$M(x,j) = M(x,j)^A.$$
Let $f$ be the inertia degree of $A_0/E_0$. Then $M(x,j)_i = \emptyset$ if $f\nmid i$ and $M(x,j)_{fi} = M(x,j)_i^A$. Note that $f$ is odd since we assumed that $A_0\tensor_{E_0} E$ is a field. In particular $(-1)^i = (-1)^{fi}$ and hence
$$\partial O(x,j) = \sum_{i\in \mbZ} (-1)^ii |M(x,j)_i| = f\sum_{i\in \mbZ} (-1)^ii |M(x,j)^A_i| = f\cdot \partial O(x,j)^A.$$

\emph{Comparison of the geometric sides of $(\mr{AFL}(x,j)_{E_0})$ and $(\mr{AFL}(x,j)_{A_0})$:}\\
By Remark \ref{rmkComp2}, the cycle $\mcZ(x)\subset \mcN_{E_0,(1,n-1)}$ can be identified with $f$ copies of $\mcZ(x)^A$, where $\mcZ(x)^A := \mcZ(\mcC(x))$ is the corresponding cycle in $\mcN_{A_0,(1,n'-1)}$. By Remark \ref{rmkComp1}, this identification is compatible with the formation of KR-divisors and hence
$$\mcZ(x)\cap \mcZ(j) \iso \coprod_{i = 1}^f \mcZ(x)^A\cap \mcZ(j)^A$$
where again $\mcZ(j)^A = \mcZ(\mcC(\mr{id}_{A_0}\tensor j))$ is the respective cycle in $\mcN_{A_0,(1,n'-1)}$. It follows that $(x,j)$ is also artinian when viewed over $A_0$ and
$$\mr{Int}(x,j) = f\mr{Int}(x,j)^A.$$
The theorem follows.
\end{proof}
We now translate this back into statements about the AFL in the original formulation from Sections 6 and 7.
\begin{cor}
\label{corLieFlat}
Let $x\in \mfu(J_1)^0_{\mr{rs}}$ be regular semi-simple and artinian, of the form
$$x = \left(\begin{smallmatrix} x^\flat & j \\ -j^* & \end{smallmatrix}\right).$$
Let $A_0/E_0$ be a field extension such that $A := A_0\tensor_{E_0} E$ is again a field, together with an embedding $\mcO_A \hookrightarrow \mcO_E[x^\flat]$ that is equivariant for the Galois conjugation $σ$ on $\mcO_A$ and the adjoint involution ${}^*$ on $\mcO_E[x^\flat]$. Let $V^A:=W\oplus Au^A$ be the hermitian $A$-vector space with form $J_1^A:=J_1^{\flat,A}\oplus 1$.

(1) Then $x$ can also be viewed as an element of $\mfu(J_1^A)^0_{\mr{rs}}$ and there is an equivalence
$$(\mathfrak{afl}_{E/E_0,(V,J_1),u,x})\ \Leftrightarrow\ (\mathfrak{afl}_{A/A_0,(V^A,J_1^A),u^A,x}).$$

(2) In particular, if $\dim_A(W) \leq 2$, then $(\mathfrak{afl}_{E/E_0,(V,J_1),u,x})$, holds.
\end{cor}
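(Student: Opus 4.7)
The plan is to deduce the corollary from Theorem \ref{thmAFLgenmain} applied to the pair $(x^\flat, j) \in \End_{\mbQ_{p^2}}(W)\times W$, using Lemma \ref{lemAFLLie} twice to translate between the matrix and the pair formulations of the Lie algebra AFL.

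For part (1), I would first verify that $(x^\flat, j)$ meets the hypotheses of Theorem \ref{thmAFLgenmain}. Regular semi-simplicity and artinianness of the pair follow directly from the corresponding hypotheses on $x$ via Lemma \ref{lemAFLLie} (applied over $\mbQ_p$). Self-adjointness is automatic, since $x^\flat \in \mfu(J_1^\flat)$ gives $(x^\flat)^* = -x^\flat$, hence $\mbZ_{p^2}[x^\flat]$ is stable under $*$. The assumption $\mcO_A \hookrightarrow \mbZ_{p^2}[x^\flat]$ is given. Now Lemma \ref{lemAFLLie}, applied with base field $\mbQ_p$, yields
\[
(\mathfrak{afl}_{\mbQ_p,V,u,x}) \Longleftrightarrow (\mr{AFL}(x^\flat,j)_{\mbQ_p}),
\]
while applied with base field $A_0$ (after noting that $\mcO_A \subset \mbZ_{p^2}[x^\flat]$ gives $(x^\flat, j)$ the structure of an $A$-linear pair, with $W$ an $A$-vector space carrying the hermitian form $J_1^{\flat,A}$ that reproduces $J_1^\flat$ via $\mathrm{tr}_{A/\mbQ_{p^2}}\circ\vartheta_A$) it yields
\[
(\mathfrak{afl}_{A_0,V^A,u^A,x}) \Longleftrightarrow (\mr{AFL}(x^\flat,j)_{A_0}).
\]
The two right-hand sides are equivalent by Theorem \ref{thmAFLgenmain}, so composing the three equivalences establishes part (1).

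For part (2), if $\dim_A(W) \leq 2$, then $\dim_A(V^A) = \dim_A(W)+1 \leq 3$, so the identity $(\mathfrak{afl}_{A_0,V^A,u^A,x})$ is an instance of the Lie algebra AFL in rank $\leq 3$ over the base field $A_0$. This is known by the results of Zhang (together with Proposition \ref{propEqAFL} to pass between the group and Lie algebra versions if needed), so by part (1) the identity $(\mathfrak{afl}_{\mbQ_p,V,u,x})$ holds as well.

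The only real point requiring attention is the careful bookkeeping in the $A_0$-application of Lemma \ref{lemAFLLie}: one must confirm that $J_1^{\flat,A}$ is odd (so that the target space $V^A$ genuinely has signature making the RZ-side of the AFL non-vacuous), but this follows because self-duality of an $\mcO_A$-lattice in $W$ is equivalent to self-duality with respect to $J_1^\flat$ viewed over $\mbQ_{p^2}$, and $J_1^\flat$ is odd by hypothesis. Once this observation is in place, the corollary is a formal consequence of Lemma \ref{lemAFLLie} and Theorem \ref{thmAFLgenmain}, with no further computation needed.
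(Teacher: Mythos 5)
Your proposal is correct and is exactly the paper's argument: the paper's proof of this corollary is the one-line "combination of Lemma \ref{lemAFLLie} and Theorem \ref{thmAFLgenmain}" for part (1) (i.e., Lemma \ref{lemAFLLie} applied once over $\mbQ_p$ and once over $A_0$ to pass to the pair $(x^\flat,j)$, then Theorem \ref{thmAFLgenmain} for that pair), and "the AFL has been proven for $n\leq 3$" for part (2). Your additional checks (self-adjointness of $x^\flat$, oddness of $J_1^{\flat,A}$) are correct and are already built into the paper's setup for Theorem \ref{thmAFLgenmain}.
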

\begin{proof}
Part (1) is a combination of Lemma \ref{lemAFLLie} and Theorem \ref{thmAFLgenmain}. Part (2) follows since the AFL has been proven for $n \leq 3$.
\end{proof}
The same arguments imply an analogous result for the group version.
\begin{cor}
Let $g\in U(J_1)_{\mr{rs}}$ be regular semi-simple and artinian with integral characteristic polynomial.\footnote{This ensures that $\mcO_E[g]$ is stable under the adjoint involution. Note that both sides of (AFL$_{E/E_0,(V,J_1),u,g}$) vanish if the characteristic polynomial of $g$ is not integral. So this is not a serious restriction.}
Let $A_0/E_0$ be a field extension such that $A := A_0\tensor_{E_0} E$ is again a field, together with an embedding $\mcO_A \hookrightarrow \mcO_E[g]$ that is equivariant for the Galois conjugation on $A$ and the adjoint involution on $\mcO_E[g]$. We assume that $J_1^A(u,u) \in \mcO_{A_0}^\times$, where $J_1^A$ is the lifted hermitian form.

(1) Then $g$ is also an element of $U(J_1^A)_{\mr{rs}}$ and there is an equivalence
$$(\text{AFL}_{E/E_0,(V,J_1),u,g})\ \Leftrightarrow\ (\text{AFL}_{A/A_0,(V,J_1^A),u,g}).$$

(2) In particular, if $\dim_A(V) \leq 3$, then the AFL for $g$, $(\text{AFL}_{E/E_0,(V,J_1),u,g})$, holds.\qed
\end{cor}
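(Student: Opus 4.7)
\emph{Plan of proof.} The strategy is parallel to the preceding Lie algebra corollary: the plan is to reduce both identities $(\mr{AFL}_{\mbQ_p,V,u,g})$ and $(\mr{AFL}_{A_0,V,u,g})$ to instances of the base-field-free identity $(\mr{AFL}(g,u))$ from Conjecture \ref{conjAFLgeneral} via Lemma \ref{lemAFLvj1}, and then apply Theorem \ref{thmAFLgenmain} to the pair $(x,j) := (g,u)$.

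First I would check that $(g,u)$ meets the hypotheses of Theorem \ref{thmAFLgenmain}. Regular semi-simpleness and artinianness of the pair $(g,u)$ are respectively equivalent to the regular semi-simpleness and artinianness of $g$ by Lemma \ref{lemAFLvj1}, and these are standing hypotheses. Self-adjointness $\mcO_A[g^*] = \mcO_A[g]$ follows from the integrality of the characteristic polynomial, which forces $g^{-1} = g^*$ to lie in $\mbZ_{p^2}[g]$; combined with the given inclusion $\mcO_A \subset \mbZ_{p^2}[g]$ this also yields the equality of subrings $\mbZ_{p^2}[g] = \mcO_A[g]$ that we need below. The inclusion $\mcO_A \subset \mbZ_{p^2}[g]$ itself is part of the data.

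Next I would invoke Lemma \ref{lemAFLvj1} twice. Over $\mbQ_p$ the hypothesis $J_1(u,u) = 1$ is automatic, so the lemma converts $(\mr{AFL}_{\mbQ_p,V,u,g})$ into $(\mr{AFL}(g,u))$ set up with respect to the $\mbZ_{p^2}$-lattice $L = \mbZ_{p^2}[g]u$. Over $A_0$ the explicit assumption $J_1^A(u,u) \in \mcO_{A_0}^\times$ allows the same lemma (applied with $E_0 = A_0$, $E = A$ and $J_1$ replaced by $J_1^A$) to convert $(\mr{AFL}_{A_0,V,u,g})$ into $(\mr{AFL}(g,u))$ set up with respect to $L^A = \mcO_A[g]u$. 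Because $\mcO_A[g] = \mbZ_{p^2}[g]$, the two lattices coincide and the two instances of the general-form identity refer to the same underlying data. Theorem \ref{thmAFLgenmain} is precisely the equivalence of these two instances, which proves part (1). Part (2) is then immediate: the assumption $\dim_A V \leq 3$ places $(\mr{AFL}_{A_0,V,u,g})$ within the AFL range already settled by Zhang \cite{Zhang}, and part (1) transports this validity back to $(\mr{AFL}_{\mbQ_p,V,u,g})$. The whole argument is essentially bookkeeping once Theorem \ref{thmAFLgenmain} is in hand; the only mild subtlety is the matching of the sets $M(g,u)$ and of the intersection cycles $\mcZ(g)\cap \mcZ(u)$ under change of base field, and this matching is already incorporated in Theorem \ref{thmAFLgenmain} via Remarks \ref{rmkComp1} and \ref{rmkComp2}.
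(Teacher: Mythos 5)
Your proposal is correct and is exactly the argument the paper intends: the corollary is stated with a \qed because its proof is the verbatim analogue of the preceding Lie-algebra corollary, namely part (1) is the combination of Lemma \ref{lemAFLvj1} (applied once over $\mbQ_p$ and once over $A_0$, where the hypothesis $J_1^A(u,u)\in\mcO_{A_0}^\times$ is used) with Theorem \ref{thmAFLgenmain} applied to the pair $(g,u)$, and part (2) follows from Zhang's proof of the AFL for $n\leq 3$. Your additional remarks on self-adjointness via integrality of the characteristic polynomial and on the equality $\mcO_A[g]=\mbZ_{p^2}[g]$ are the same points the paper records in its footnote and in the proof of Theorem \ref{thmAFLgenmain}.
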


Let us now formulate the variant for $v(J_1^A(u,u))\geq 1$. As in Lemma \ref{lemAFLgen}, we raise the dimension by $1$ for this.

\begin{cor}
Assume that $q_{A_0}+1>n$. Let $g$, $A_0$, $A$ and the embedding $\mcO_A\hookrightarrow \mcO_E[g]$ be as in the previous corollary, but let $v(J_1^A(u,u))\geq 1$. Let $V^A := V \oplus A\tilde u$, $u^A:=u + \tilde u$ and extend $J_1^A$ to a hermitian form $J_1^{A,\sharp}$ on $V^A$ by defining $\tilde u \perp V$ and $J_1^{A,\sharp}(u^A,u^A) = 1$.

Let $P\in A[t]$ be the characteristic polynomial of $g$ as $A$-linear endomorphism of $V$ and let $a\in A^1$ be such that $P(a)\not\equiv 0$ modulo $π_A$ where $π_A$ is a uniformizer of $A$. Define $g^A\in U(J_1^{A,\sharp})$ as
$$g^A := \left(\begin{smallmatrix} g & \\ & a\end{smallmatrix}\right) \in \End(V^A).$$

(1) Then $g^A\in U(J_1^{A,\sharp})_{\mr{rs}}$ is regular semi-simple with respect to $V^A = (u^A)^\perp \oplus Au^A$, artinian with respect to $u^A$ and there is an equivalence
$$(\mr{AFL}_{E/E_0,(V,J_1),u,g})\ \Leftrightarrow\ (\mr{AFL}_{A/A_0,(V^A,J_1^{A,\sharp}),u^A,g^A}).$$

(2) In particular, if $\dim_A(V) \leq 2$, then the AFL for $g$, $(\mr{AFL}_{E/E_0,(V,J_1),u,g})$, holds.\qed
\end{cor}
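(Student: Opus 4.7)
The proof is a three-step chain that combines Lemma~\ref{lemAFLvj1}, Theorem~\ref{thmAFLgenmain}, and Lemma~\ref{lemAFLgen}. The plan is to pass from the original AFL identity over $\mbQ_p$ into the generalized pair formalism of Section~\ref{ssAflgen}, apply the base-change theorem to move this pair identity from $\mbQ_p$ to $A_0$, and then re-inflate to an ordinary AFL identity over $A_0$ for the enlarged space $V^A$.

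\textbf{Step 1 (descend to the pair formalism over $\mbQ_p$).} Since $J_1(u,u)=1$ is a unit and $g\in U(J_1)$ has integral characteristic polynomial (so $g^* = g^{-1}\in \mbZ_{p^2}[g]$), the pair $(g,u)$ is regular semi-simple, self-adjoint and artinian in $\End_{\mbQ_{p^2}}(V)\times V$. Lemma~\ref{lemAFLvj1} then supplies the equivalence $(\mr{AFL}_{\mbQ_p,V,u,g})\Leftrightarrow (\mr{AFL}(g,u))_{\mbQ_p}$.

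\textbf{Step 2 (base change to $A_0$).} The embedding $\mcO_A\hookrightarrow \mbZ_{p^2}[g]$ is exactly the hypothesis of Theorem~\ref{thmAFLgenmain}. Since $\mbZ_{p^2}\subset \mcO_A$, one has $\mcO_A[g]=\mbZ_{p^2}[g]$, so the regular semi-simpleness, self-adjointness and artinian-ness of $(g,u)$ transfer without change to the $A$-linear viewpoint. The theorem gives $(\mr{AFL}(g,u))_{\mbQ_p}\Leftrightarrow (\mr{AFL}(g,u))_{A_0}$, where the right-hand side is the pair identity for $(g,u)\in \End_A(V)\times V$ with hermitian form $J_1^A$.

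\textbf{Step 3 (re-inflate over $A_0$).} Over $A_0$ the pair $(g,u)$ has $v(J_1^A(u,u))\geq 1$, so Lemma~\ref{lemAFLgen} and Construction~\ref{constructionAFL} apply: the hypothesis $q+1>\dim_A V = n/d$ of that lemma is implied by $p^f+1>n$, and the existence of $a\in A^1$ with $P(a)\not\equiv 0\pmod{\pi_A}$ is guaranteed for the same reason (there are $p^f+1$ residue classes in $A^1$, exceeding $\deg P = n/d$). Unwinding Construction~\ref{constructionAFL} reproduces precisely the triple $(V^A,u^A,g^A)$ in the statement (after identifying the auxiliary vector with $\tilde u$), and yields $(\mr{AFL}(g,u))_{A_0}\Leftrightarrow (\mr{AFL}_{A_0,V^A,u^A,g^A})$. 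Concatenating the three equivalences gives part~(1). Part~(2) is then immediate: $\dim_A V\leq 2$ forces $\dim_A V^A\leq 3$, and Zhang's verification of the AFL in dimension at most three handles the right-hand side.

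\textbf{Main obstacle.} All of the substantive mathematics — the comparison isomorphism of RZ-spaces underlying Theorem~\ref{thmAFLgenmain} and the bookkeeping of the pair formalism — has already been done. The only real check is matching the data produced by Construction~\ref{constructionAFL} with the explicit $(V^A,u^A,g^A)$ of the corollary, and verifying the routine hypotheses (regular semi-simpleness, self-adjointness, artinian-ness) over $A_0$; none of this is delicate once $\mcO_A[g]=\mbZ_{p^2}[g]$ is observed.
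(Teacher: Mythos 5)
Your proposal is correct and follows essentially the same route as the paper: the paper's proof cites Lemma \ref{lemAFLgen} and Theorem \ref{thmAFLgenmain} (with the initial descent to the pair formalism via Lemma \ref{lemAFLvj1}, applicable since $J_1(u,u)=1$, left implicit), which is exactly your three-step chain. Your explicit checks that $p^f+1>n$ supplies both the hypothesis $q+1>\dim_A V$ of Lemma \ref{lemAFLgen} over $A_0$ and the existence of $a\in A^1$ with $P(a)\not\equiv 0 \bmod \pi_A$ are accurate and fill in details the paper leaves to the reader.
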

\begin{proof}
Part (1) is a combination of Lemma \ref{lemAFLgen} and Theorem \ref{thmAFLgenmain}. Part (2) follows since the AFL has been proven for $n\leq 3$.
\end{proof}

\subsection{Multiplication by an étale algebra $A/E$}
\begin{thm}\label{thmMainEtale}
Let $(x,j)\in \End_E(V)\times V$ be a regular semi-simple, adjoint-stable and artinian pair. Assume that there exists a product decomposition $\mcO_E[x] = R_0\times R_1$ that is stable under ${}^*$. Let $ V = V_0\times V_1$ be the corresponding decomposition of $V$ and let $(x_0,j_0)$ resp.\! $(x_1,j_1)$ denote the components of $(x,j)$ in $V_0$ resp.\! $V_1$. We assume that $J_1\vert_{V_0}$ is even, which implies that $J_1\vert_{V_1}$ is odd.

Then the two identities $(\mr{FL}(x_0,j_0))$ and $(\mr{AFL}(x_1,j_1))$ imply the identity $(\mr{AFL}(x,j))$.
\end{thm}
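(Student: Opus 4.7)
The strategy is to decompose both sides of $(\mr{AFL}(x,j))$ as products indexed by the factors $(x_0,j_0)$ and $(x_1,j_1)$, apply the hypothesized identities to each, and recombine. First, since $e_0,e_1\in\mbZ_{p^2}[x]$ satisfy $e_i^*=e_i$, the decomposition $V=V_0\oplus V_1$ is orthogonal with respect to $J_1$; because $(V,J_1)$ is odd and $(V_0,J_1|_{V_0})$ is even, $(V_1,J_1|_{V_1})$ is odd. The pair $(x_i,j_i)$ on $V_i$ is regular semi-simple and self-adjoint: $R_i=\mbZ_{p^2}[x_i]$ is $*$-stable by assumption, and $R_ij_i=V_i$ since $\mbZ_{p^2}[x]j=V_0\oplus V_1$.

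\emph{Analytic side.} Any $\Lambda\in M(x,j)$ is $x$- and $\mbZ_{p^2}$-stable, hence a $\mbZ_{p^2}[x]=R_0\times R_1$-module, and therefore splits as $\Lambda=e_0\Lambda\oplus e_1\Lambda$ with $e_i\Lambda\subset V_i$. The involution $\tau(x,j)$ preserves each $V_i$ and restricts to $\tau(x_i,j_i)$; $J_1$-duality preserves the orthogonal decomposition; and $L(x,j)=L(x_0,j_0)\oplus L(x_1,j_1)$. Together these yield a bijection $M(x,j)_k\iso\coprod_{a+b=k}M(x_0,j_0)_a\times M(x_1,j_1)_b$, whence $O(x,j;s)=O(x_0,j_0;s)\cdot O(x_1,j_1;s)$. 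Differentiating at $s=0$ and invoking Lemma \ref{lemFLvanishing} to get $O(x_1,j_1)=0$ (since $(V_1,J_1|_{V_1})$ is odd), one obtains
\[
\partial O(x,j)=O(x_0,j_0)\cdot\partial O(x_1,j_1).
\]

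\emph{Geometric side.} The self-adjoint idempotents $e_0,e_1$ decompose the framing object $\mbX=\mbX_0\times\mbX_1$ as hermitian $\mbZ_{p^2}$-modules; by Lemma \ref{lemExistframing}, together with additivity of signatures and the parities of the $V_i$, we must have $\mbX_0$ of signature $(0,n_0)$ and $\mbX_1$ of signature $(1,n_1-1)$, where $n_i:=\dim_E V_i$. The product of framing data gives a closed immersion
\[
\mcN_{\mbQ_p,(0,n_0)}\times_{\Spf\mcO_{\breve E}}\mcN_{\mbQ_p,(1,n_1-1)}\hookrightarrow\mcN_{\mbQ_p,(1,n-1)}
\]
whose image is the locus where the idempotents lift; this image contains $\mcZ(x)$ since $e_i\in\mbZ_{p^2}[x]$. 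Pulling back, $\mcZ(x)=\mcZ(x_0)\times\mcZ(x_1)$ and $\mcZ(j)=\mcZ(j_0)\times\mcZ(j_1)$, so
\[
\mcZ(x)\cap\mcZ(j)\cong(\mcZ(x_0)\cap\mcZ(j_0))\times_{\Spf\mcO_{\breve E}}(\mcZ(x_1)\cap\mcZ(j_1)).
\]
Since $\mcN_{\mbQ_p,(0,n_0)}$ is étale over $\Spf\mcO_{\breve E}$, the first factor is a disjoint union of copies of $\Spf\mcO_{\breve E}$; via Lemma \ref{lemExistCan} and a translation of the KR-divisor condition, its $\mbF$-points biject with the set of self-dual $x_0$-stable $\mcO_E$-lattices in $V_0$ containing $j_0$, of cardinality $I(x_0,j_0)$. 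The artinian assumption forces the second factor to be artinian whenever the first is nonempty (otherwise both sides of $(\mr{AFL}(x,j))$ trivially vanish), so $(x_1,j_1)$ is artinian and $\mr{Int}(x,j)=I(x_0,j_0)\cdot\mr{Int}(x_1,j_1)$. Substituting $(\mr{FL}(x_0,j_0))$ and $(\mr{AFL}(x_1,j_1))$ into the two product formulas immediately yields $\partial O(x,j)=-\mr{Int}(x,j)$.

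The main technical hurdle I anticipate is the precise identification of $\mbF$-points of $\mcZ(x_0)\cap\mcZ(j_0)$ with the lattice set defining $I(x_0,j_0)$: one must reconcile the framing conventions on $\mbX_0$ with the Dieudonné description of Lemma \ref{lemExistCan}, and translate the condition ``$\rho_0^{-1}j_0$ is an honest homomorphism $\overbar{\mcY}\to X_0$'' into the containment $j_0\in\Lambda$ at the level of lattices in $V_0$. Once the right duality and integrality conventions are fixed, everything else is bookkeeping.
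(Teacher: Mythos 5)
Your proposal is correct and follows essentially the same route as the paper: the analytic side is the identical lattice-decomposition argument giving $O(x,j;s)=O(x_0,j_0;s)\cdot O(x_1,j_1;s)$ together with Lemma \ref{lemFLvanishing}, and your geometric side is exactly the special case of Proposition \ref{propgeometale} (combined with Remarks \ref{rmkComp1} and \ref{rmkComp2}) that the paper invokes, which you simply unpack by hand, including the identification of the even factor with the self-dual lattice count $I(x_0,j_0)$. Your explicit handling of the edge case where the even factor is empty is a small extra care the paper leaves implicit.
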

\begin{proof}
\emph{Computation of the analytic side of $(\mr{AFL}(x,j))$:}
Any $x$-stable lattice $Λ\subset V$ is a product $Λ = Λ_0\times Λ_1$ where $Λ_i\subset V_i$ is an $x_i$-stable lattice. A special case is $L(x,j) = L(x_0,j_0)\times L(x_1,j_1)$. Furthermore, $τ(x,j) = τ(x_0,j_0)\times τ(x_1,j_1)$ and hence there is a bijection
$$\begin{aligned}
M(x_0,j_0)\times M(x_1,j_1) &\overset{\iso}{→} M(x,j)\\
(Λ_0,Λ_1)&\longmapsto Λ_0\times Λ_1\end{aligned}$$
which induces a bijection
$$\coprod_{k+l = m}M(x_0,j_0)_k\times M(x_1,j_1)_l \iso M(x,j)_m.$$
This implies the relation
\begin{equation}\label{eqProdOrbInt}
O(x,j;s) = O(x_0,j_0;s)\cdot O(x_1,j_1;s).
\end{equation}
Taking the derivative and using the vanishing part of the FL, Lemma \ref{lemFLvanishing}, we get
$$\partial O(x,j) = O(x_0,j_0) \cdot \partial O(x_1,j_1).$$

\emph{Computation of the geometric side of $(\mr{AFL}(x,j))$:}
Let $\mcO_E\times \mcO_E\subset R_0\times R_1$ be the $\mcO_E$-algebra generated by the non-trivial idempotent.
Using Proposition \ref{propgeometale}, we get
$$\mcZ(\mcO_E\times \mcO_E) \iso \left(\coprod_{\{Λ_0\subset V_0\ \mid\ Λ_0^* = Λ_0\}} \Spf \mcO_{\breve E}\right)\times_{\Spf \mcO_{\breve E}} \mcN_{E_0,(1,n_1-1)}$$
where $n_1 = \dim_E(V_1)$. By the remarks \ref{rmkComp1} and \ref{rmkComp2}, this description is compatible with the formation of $\mcZ(x)$ and $\mcZ(j)$ and we get
$$\mcZ(x)\cap \mcZ(j) = \left(\coprod_{\{Λ_0\ \mid\ Λ_0^* = Λ_0, x_0Λ_0\subset Λ_0, j_0\in Λ_0\}} \Spf \mcO_{\breve E}\right)\times_{\Spf \mcO_{\breve E}} \big(\mcZ(x_1)\cap \mcZ(j_1)\big).$$
This implies
$$\mr{Int}(x,j) = I(x_0,j_0)\cdot \mr{Int}(x_1,j_1)$$
which finishes the proof of the theorem.
\end{proof}

\begin{rmk}
Let us assume that the characteristic polynomial of $x$ is integral. Then an inclusion $\mcO_E\times \mcO_E\hookrightarrow \mcO_E[x]$ exists if and only if this polynomial has two different prime factors modulo $p$. Implicitly, this was already used in \cite[Section 8]{RTZ}.
\end{rmk}

We conclude this paper with three corollaries. For this, we take up the notation from Sections 6 and 7.

\begin{cor}
Let $x\in \mfu(J_1)^0_{\mr{rs}}$ be regular semi-simple and artinian, of the form
$$x = \left(\begin{smallmatrix} x^\flat & j \\ -j^* & \end{smallmatrix}\right).$$
Assume that there exists an embedding $\mcO_E\times \mcO_E\hookrightarrow \mcO_E[x^\flat]$ that is equivariant for the factor-wise Galois conjugation and the adjoint involution of $J_1^\flat$. Let $W = W_0\times W_1$ be the corresponding decomposition of $W$ and assume that $J_1\vert_{W_0}$ is even. Let $x_0^\flat,x_1^\flat, j_0$ and $j_1$ be the components of $x^\flat$ and $j$. For $i = 0,1$, form the vector space $V_i := W_i \oplus Eu_i$ where $u_i$ is some additional vector. We extend the form $J_1^\flat\vert_{W_i}$ to a form $J_{1,i}$ on $V_i$ by defining $(u_i,u_i) = 1$ and $u_i\perp W_i$.

Then the element
$$x_i = \left(\begin{smallmatrix} x_i^\flat & j_i \\ -j_i^* & \end{smallmatrix}\right)$$
lies in $\mfu(J_{1,i})^0_{\mr{rs}}$ and the identity $(\mr{AFL}_{E/E_0,(V_1,J_{1,1}),u_1,x_1})$ implies the identity $(\mr{AFL}_{E/E_0,(V,J_1),u,x})$.
\end{cor}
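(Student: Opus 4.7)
The plan is to reduce everything to the general pair formulation of Section \ref{ssAflgen} and apply Theorem \ref{thmMainEtale}. First I would invoke Lemma \ref{lemAFLLie} to convert the target identity $(\mathfrak{afl}_{\mbQ_p,V,u,x})$ into the pair statement $(\mr{AFL}(x^\flat,j))$: the lemma tells us that $(x^\flat,j) \in \End_{\mbQ_{p^2}}(W) \times W$ is regular semi-simple, self-adjoint and artinian, and that the two identities are equivalent. Self-adjointness of the pair is automatic from $x^\flat \in \mfu(J_1^\flat)$.

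Next I would check that the hypotheses of Theorem \ref{thmMainEtale} are satisfied for $(x^\flat,j)$ together with the decomposition $W = W_0 \times W_1$. The hypothesized embedding $\mbZ_{p^2}\times \mbZ_{p^2}\hookrightarrow \mbZ_{p^2}[x^\flat]$ gives a ${}^*$-stable product decomposition of $\mbZ_{p^2}[x^\flat]$ (playing the role of $R_0 \times R_1$), and the induced decomposition of $W$ is exactly $W_0 \times W_1$ with $(x^\flat,j)$ decomposing as $(x_0^\flat,j_0)$ and $(x_1^\flat,j_1)$. Since $J_1^\flat$ is odd on $W$ and $J_1^\flat\vert_{W_0}$ is even by assumption, multiplicativity of discriminants forces $J_1^\flat\vert_{W_1}$ to be odd, so the parity hypothesis of the theorem holds. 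We conclude
\[(\mr{FL}(x_0^\flat,j_0))\ \text{and}\ (\mr{AFL}(x_1^\flat,j_1))\ \Longrightarrow\ (\mr{AFL}(x^\flat,j)).\]

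It then remains to translate the two pair identities back into their original Lie algebra formulations. For $i=0$, Lemma \ref{lemFLgenLie} gives the equivalence $(\mr{FL}(x_0^\flat,j_0)) \Leftrightarrow (\mathfrak{jr}_{\mbQ_p,V_0,u_0,x_0})$, using that $J_1^\flat\vert_{W_0}$ is even so $V_0 = W_0 \oplus \mbQ_{p^2}u_0$ carries an even form of the type required for the Jacquet–Rallis side; for $i=1$, Lemma \ref{lemAFLLie} gives $(\mr{AFL}(x_1^\flat,j_1)) \Leftrightarrow (\mathfrak{afl}_{\mbQ_p,V_1,u_1,x_1})$, using that $J_1^\flat\vert_{W_1}$ is odd. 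Assembling these three reductions proves the corollary.

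The main obstacle I expect is the book-keeping of the artinian condition through the reductions. In particular, Theorem \ref{thmMainEtale} implicitly needs $(x_1^\flat,j_1)$ to be artinian so that $\mr{Int}(x_1^\flat,j_1)$ is defined. This should follow from the product description of intersection cycles used inside the proof of that theorem: $\mcZ(x^\flat)\cap \mcZ(j)$ is a disjoint union of copies of $\mcZ(x_1^\flat)\cap \mcZ(j_1)$, so artinianness of the ambient intersection forces artinianness of the second factor. A parallel book-keeping step, implicit in Lemma \ref{lemAFLLie}, then transports this artinian property across to $(x_1,V_1,u_1)$, and similarly for the regularity properties in the pair/Lie algebra dictionary.
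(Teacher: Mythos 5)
Your proposal is correct and follows exactly the paper's route: the paper's proof is the one-line citation of Theorem \ref{thmMainEtale} together with Lemmas \ref{lemFLgenLie} and \ref{lemAFLLie}, and you have simply spelled out the same three reductions (including the correct observation that $J_1^\flat\vert_{W_1}$ is forced to be odd and that artinianness of $(x_1^\flat,j_1)$ follows from the product decomposition of $\mcZ(x^\flat)\cap\mcZ(j)$ inside the proof of Theorem \ref{thmMainEtale}). No gaps.
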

\begin{proof}
The Fundamental Lemma $(\mr{FL}(x^\flat_0,j_0))$ holds by Lemma \ref{lemFLgenLie}. Then $(\mr{AFL}(x^\flat_1,j_1))$ implies $(\mr{AFL}(x^\flat,j))$ by Theorem \ref{thmMainEtale}. Lemma \ref{lemAFLLie} yields the translation into the Lie algebra version of the AFL.
\end{proof}

\begin{cor}
Let $g\in U(J_1)_{\mr{rs}}$ be regular semi-simple and artinian.
Assume that there exists an embedding $\mcO_E\times \mcO_E\hookrightarrow \mcO_E[g]$ that is equivariant for the factor-wise Galois conjugation and the adjoint involution of $J_1$. Let $V = V_0\times V_1$ be the corresponding decomposition of $V$ and assume that $J_1\vert_{V_0}$ is even. Let $g_0$ and $g_1$ be the components of $g$ and let $u_0$ and $u_1$ be the components of $u$. Assume that the identity $(\mr{FL}(g_0,j_0))$ holds.

(1) If $J_1(u_1,u_1) \in \mcO_{E_0}^\times$, then the identity $(\mr{AFL}_{E/E_0,(V_1,J_1\vert_{V_1}),u_1,g_1})$, implies the AFL for $g$, $(\mr{AFL}_{E/E_0,(V,J_1),u,g})$. In particular, $(\mr{AFL}_{E/E_0,(V,J_1),u,g})$ holds if $\dim V_1\leq 3$ or if $g_1$ is minuscule in the sense of \cite{RTZ}.

(2) Assume that $q_{E_0}+1>n$ and that $v(J_1(u_1,u_1)) \geq 1$. We define $V_1':=V_1\oplus E\tilde u_1$ and $u_1':=u_1 + \tilde u_1$. We extend $J_1\vert_{V_1}$ to a hermitian form $J_1'$ on $V_1'$ by defining $\tilde u_1\perp V_1$ and $(u_1',u_1') = 1$. We choose an element $a\in E^1$ such that $P(a)\not\equiv 0$ modulo $p$, where $P$ denotes the characteristic polynomial of $g_1$ on $V_1$. We set
$$g_1':=\left(\begin{smallmatrix} g_1 & \\ & a\end{smallmatrix}\right)\in U(J_1')$$
where the block matrix decomposition is with respect to $V_1' = V \oplus E\tilde u_1$.

Then $(\mr{AFL}_{E/E_0,(V_1',J_1'),u_1',g_1'})$ implies $(\mr{AFL}_{E/E_0,(V,J_1),u,g})$. In particular, $(\mr{AFL}_{E/E_0,(V,J_1),u,g})$ holds if $\dim V_1\leq 2$.
\end{cor}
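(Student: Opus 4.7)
The plan is to combine Theorem \ref{thmMainEtale} with the dictionary of Subsection \ref{ssAflgen} that translates between the pair formulation $(\mr{AFL}(x,j))$ and the original group-theoretic formulation $(\mr{AFL}_{\mbQ_p,V,u,g})$. Throughout, one may assume that the characteristic polynomial of $g$ is integral, for otherwise both sides of $(\mr{AFL}_{\mbQ_p,V,u,g})$ vanish and the claim is trivial; integrality forces $\mbZ_{p^2}[g]$ to be ${}^*$-stable so that $(g,u)$ is a self-adjoint pair and the hypotheses of Theorem \ref{thmMainEtale} make sense.

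For part (1), since $J_1(u,u) = 1 \in \mbZ_p^\times$, Lemma \ref{lemAFLvj1} gives the equivalence $(\mr{AFL}_{\mbQ_p,V,u,g}) \Longleftrightarrow (\mr{AFL}(g,u))$. The ${}^*$-equivariant embedding $\mbZ_{p^2}\times\mbZ_{p^2} \hookrightarrow \mbZ_{p^2}[g]$ produces a ${}^*$-stable product decomposition $\mbZ_{p^2}[g] = R_0\times R_1$ together with the splitting $V = V_0 \oplus V_1$, so Theorem \ref{thmMainEtale} reduces $(\mr{AFL}(g,u))$ to the conjunction of $(\mr{FL}(g_0,u_0))$, which is assumed, and $(\mr{AFL}(g_1,u_1))$. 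Under the additional hypothesis $J_1(u_1,u_1)\in \mbZ_p^\times$, a second application of Lemma \ref{lemAFLvj1} transforms $(\mr{AFL}(g_1,u_1))$ into $(\mr{AFL}_{\mbQ_p,V_1,u_1,g_1})$, which is the remaining input.

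For part (2), the argument proceeds exactly as above through the application of Theorem \ref{thmMainEtale}. The final translation of $(\mr{AFL}(g_1,u_1))$ into a group-theoretic identity now has to go through Construction \ref{constructionAFL} rather than Lemma \ref{lemAFLvj1}: the hypotheses $v(J_1(u_1,u_1))\geq 1$ and $p+1 > n$ are precisely what is needed to manufacture $V_1'$, $u_1'$ and $g_1'$, after which Lemma \ref{lemAFLgen} supplies the equivalence $(\mr{AFL}(g_1,u_1)) \Longleftrightarrow (\mr{AFL}_{\mbQ_p,V_1',u_1',g_1'})$. The ``in particular'' assertions then follow by quoting known cases of the AFL: in part (1), $(\mr{AFL}_{\mbQ_p,V_1,u_1,g_1})$ holds when $\dim_{\mbQ_{p^2}} V_1 \leq 3$ by \cite{Zhang} and when $g_1$ is minuscule by \cite{RTZ}; in part (2), $\dim_{\mbQ_{p^2}} V_1 \leq 2$ gives $\dim_{\mbQ_{p^2}} V_1' \leq 3$, so $(\mr{AFL}_{\mbQ_p,V_1',u_1',g_1'})$ holds by \cite{Zhang}.

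There is no deep obstacle here; the step that needs the most care is the verification that $(g,u)$ and $(g_i,u_i)$ are regular semi-simple, self-adjoint and artinian \emph{pairs} in the sense of Definition \ref{defAFLgeneral}, rather than merely that $g\in U(J_1)_{\mr{rs}}$ is artinian in the sense of Definition \ref{defInter}. Regular semi-simplicity of the pair follows from Lemma \ref{lemrs} applied to the decomposition $V = u^\perp\oplus \mbQ_{p^2}u$, and the artinian property is transported by Remark \ref{rmkModg}; the whole reduction then rests on Theorem \ref{thmMainEtale}, which already packages the essential geometric and combinatorial content.
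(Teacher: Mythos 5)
Your proposal is correct and follows essentially the same route as the paper: Theorem \ref{thmMainEtale} combined with Lemma \ref{lemAFLvj1} for part (1) and with Lemma \ref{lemAFLgen} (via Construction \ref{constructionAFL}) for part (2), plus the known cases of the AFL from \cite{Zhang} and \cite{RTZ}. The extra care you take with integrality of the characteristic polynomial and with checking that the relevant pairs are regular semi-simple, self-adjoint and artinian is consistent with the paper's (terser) argument.
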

\begin{proof}
Part (1) follows from Theorem \ref{thmMainEtale} and Lemma \ref{lemAFLvj1} and the fact that the AFL has been proven in the minuscule case and in the case $n\leq 3$.

Part (2) follows from Theorem \ref{thmMainEtale} and Lemma \ref{lemAFLgen}.
\end{proof}

The case of a general finite étale $A_0/E_0$ follows with an inductive argument. We decompose $A_0$ into fields,
$$A_0 := \prod_{i\in I} A_{0,i}$$
and set $A := A_0\tensor_{E_0} E$ as well as $A_i := A_{0,i}\tensor_{E_0} E$, all with Galois conjugation $σ := \mr{id}\tensor σ$. Let $\mcO_A$ be the ring of integral elements in $A$.

Any embedding $A\hookrightarrow \End(V)$ that is equivariant for the Galois conjugation of $A$ and the adjoint involution of $J_1$ on $V$ induces an orthogonal decomposition $V = \prod_{i\in I} V_i$. Just as in Definition \ref{defParityIndex}, we call an index $i$ even if there exists a self-dual $\mcO_{A_i}$-lattice in $V_i$. Otherwise, we call $i$ odd. Note that since $V$ itself is odd, there is an odd number of odd indices. Also note that if $i$ is odd, then $A_i$ is necessarily a field.

\begin{cor}
Let $g\in U(J_1)_{\mr{rs}}$ be regular semi-simple and artinian. Assume that there exists an embedding $\mcO_A\hookrightarrow \mcO_E[g]$ that is equivariant for the Galois conjugation and the adjoint involution of $J_1$. Let $V = \prod_{i\in I}V_i$ be the corresponding decomposition of $V$ and let $(g_i)_{i\in I}$ and $(u_i)_{i\in I}$ be the components of $g$ and $u$.

(1) If there is more than one odd index, then both sides of $(\mr{AFL}_{E/E_0,(V,J_1),u,g})$ vanish.

(2) Otherwise, let $i_0 \in I$ be the unique odd index and let us assume that $(\mr{FL}(g_i,u_i))$ holds for $i \neq i_0$. Let us take up the notation from Theorem \ref{thmAFLgenmain} for the factor $V_{i_0}$. Then
$$(\mr{AFL}(g_{i_0},u_{i_0}))_{A_{i_0}}\ \ \Rightarrow\ \ (\mr{AFL}_{E/E_0,(V,J_1),u,g}).$$

(3) Under the assumption $J_1^{A_{i_0}}(u_{i_0},u_{i_0}) \in \mcO_{A_{0,i_0}}^\times$, we get
$$(\mr{AFL}_{A_{i_0}/A_{0,i_0},(V_{i_0},J_1\vert_{V_{i_0}}),u_{i_0},g_{i_0}})\ \ \Rightarrow\ \ (\mr{AFL}_{E/E_0,(V,J_1),u,g}).$$
\end{cor}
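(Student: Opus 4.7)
The strategy is to reduce the group-theoretic assertion $(\mr{AFL}_{\mbQ_p,V,u,g})$ to its generic counterpart $(\mr{AFL}(g,u))$ of Section \ref{ssAflgen}, in which form the pieces $(g_i,u_i)$ fit together cleanly under products. Since $J_1(u,u) = 1 \in \mbZ_p^\times$, Lemma \ref{lemAFLvj1} identifies $(\mr{AFL}_{\mbQ_p,V,u,g})$ with $(\mr{AFL}(g,u))$. The inclusion $\mcO_A = \prod_{i\in I}\mcO_{A_i}\subset \mbZ_{p^2}[g]$ produces mutually orthogonal central idempotents $e_i\in \mbZ_{p^2}[g]$; because $\mcO_A$ (and each individual factor $\mcO_{A_i}$) is stable under $\sigma = {}^*$, each $e_i$ is self-adjoint. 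This gives, for any partition $I = S\sqcup S^c$, a ${}^*$-stable product decomposition
$$\mbZ_{p^2}[g] \;=\; \Bigl(\sum_{i\in S} e_i\Bigr)\mbZ_{p^2}[g]\,\times\,\Bigl(\sum_{i\notin S} e_i\Bigr)\mbZ_{p^2}[g]$$
corresponding to the orthogonal decomposition $V = \bigl(\prod_{i\in S}V_i\bigr)\oplus \bigl(\prod_{i\notin S}V_i\bigr)$. The parity of $J_1$ restricted to such a product is multiplicative in the parities of the pieces, so any splitting off a single even factor lands in the hypotheses of Theorem \ref{thmMainEtale}.

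\emph{Part (1).} Since $\mcO_A\subset \mbZ_{p^2}[g]$, the cycle inclusion $\mcZ(g)\subset \mcZ(\mcO_A)$ holds by Definition \ref{defZg}. If there are at least two odd indices, Lemma \ref{lemoddindex} (applied via Proposition \ref{propgeometale}) gives $\mcZ(\mcO_A) = \emptyset$, hence $\mcZ(g)\cap \mcZ(u) = \emptyset$ and $\mr{Int}(g,u) = 0$. On the analytic side, the product formula \eqref{eqProdOrbInt} iterates to $O(g,u;s) = \prod_{i\in I}O(g_i,u_i;s)$, so by Leibniz
$$\partial O(g,u) \;=\; \sum_{i_0\in I}\Bigl(\prod_{i\neq i_0}O(g_i,u_i)\Bigr)\cdot \partial O(g_{i_0},u_{i_0}).$$
For any odd $i$, the restriction $J_1\vert_{V_i}$ is the odd hermitian form on $V_i$, so Lemma \ref{lemFLvanishing} applied at the $i$-th factor yields $O(g_i,u_i) = 0$. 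With at least two odd indices, every summand above contains such a vanishing factor, so $\partial O(g,u) = 0$.

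\emph{Parts (2) and (3).} I proceed by induction on $|I|$, peeling off one even index at a time. If $|I|\geq 2$, pick an even $i_1 \neq i_0$ and apply Theorem \ref{thmMainEtale} with $V_0 = V_{i_1}$ (even) and $V_1 = \prod_{i\neq i_1}V_i$ (odd, since removing an even factor preserves oddness of the product). This yields
\begin{align*}
\partial O(g,u) &= O(g_{i_1},u_{i_1})\cdot \partial O\bigl((g_i)_{i\neq i_1},(u_i)_{i\neq i_1}\bigr),\\
\mr{Int}(g,u) &= I(g_{i_1},u_{i_1})\cdot \mr{Int}\bigl((g_i)_{i\neq i_1},(u_i)_{i\neq i_1}\bigr).
\end{align*}
By the assumed $(\mr{FL}(g_{i_1},u_{i_1}))$, the scalar factors agree, so $(\mr{AFL}(g,u))_{\mbQ_p}$ is equivalent to the analogous AFL for the pair indexed by $I\setminus\{i_1\}$. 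Iterating removes every even index and reduces the assertion to $(\mr{AFL}(g_{i_0},u_{i_0}))_{\mbQ_p}$. For part (2), Theorem \ref{thmAFLgenmain} applied to the field extension $A_{i_0}/\mbQ_p$ and $(g_{i_0},u_{i_0})$ (with $\mcO_{A_{i_0}}\subset \mbZ_{p^2}[g_{i_0}]$, inherited from $\mcO_A\subset \mbZ_{p^2}[g]$) provides the equivalence $(\mr{AFL}(g_{i_0},u_{i_0}))_{\mbQ_p}\Leftrightarrow (\mr{AFL}(g_{i_0},u_{i_0}))_{A_{i_0}}$. For part (3), the additional hypothesis $J_1^{A_{i_0}}(u_{i_0},u_{i_0})\in \mcO_{A_{0,i_0}}^\times$ puts us in the setting of Lemma \ref{lemAFLvj1} over $A_{0,i_0}$, which rewrites $(\mr{AFL}(g_{i_0},u_{i_0}))_{A_{i_0}}$ as $(\mr{AFL}_{A_{0,i_0},V_{i_0},u_{i_0},g_{i_0}})$, completing the chain.

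\emph{The main obstacle} is verifying that the inductive hypothesis of Theorem \ref{thmMainEtale} applies at every stage: one must check at each step that the truncated pair $((g_i)_{i\in I'},(u_i)_{i\in I'})$ remains regular semi-simple, self-adjoint, and artinian in $\End_{\mbQ_{p^2}}(\prod_{i\in I'}V_i)$. Regular semi-simpleness and self-adjointness descend directly via the idempotent decomposition. Artinianness is the delicate point: one reads it off from the product factorization $\mcZ(g)\cap\mcZ(u) = \mcZ(g_{i_1})\cap\mcZ(u_{i_1}) \times_{\Spf \mcO_{\breve E}} \mcZ(g_{I\setminus\{i_1\}})\cap\mcZ(u_{I\setminus\{i_1\}})$ implicit in the proof of Theorem \ref{thmMainEtale}, which forces each factor to be artinian whenever the product is.
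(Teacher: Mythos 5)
Your argument is correct and follows exactly the route of the paper's proof: reduce to $(\mr{AFL}(g,u))$ via Lemma \ref{lemAFLvj1}, handle Part (1) with $\mcZ(g)\subset\mcZ(\mcO_A)$, Lemma \ref{lemoddindex}, the product formula \eqref{eqProdOrbInt} and Lemma \ref{lemFLvanishing}, and obtain Parts (2) and (3) by induction on Theorem \ref{thmMainEtale} combined with Theorem \ref{thmAFLgenmain} and Lemma \ref{lemAFLvj1}. You simply spell out the details (Leibniz expansion, the peeling-off induction, the persistence of the artinian condition) that the paper leaves implicit.
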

\begin{proof}
First note that $(\mr{AFL}_{E/E_0,(V,J_1),u,g})$ is equivalent to $(\mr{AFL}(g,u))$ by Lemma \ref{lemAFLvj1} and we work with this simpler version.
We first prove Part (1). For the geometric side, note that $\mcZ(g)\subset \mcZ(\mcO_A)$. So if there is more than one odd index, then $\mcZ(g) \subset \mcN_{E_0,(1,n-1)}$ is empty by Lemma \ref{lemoddindex}.

On the analytic side, we use the idempotents $\prod_{i\in I}\mcO_E \subset \mcO_A\subset \mcO_E[g]$ to get a product decomposition just as in formula \eqref{eqProdOrbInt},
$$O(g,u;s) = \prod_{i\in I} O(g_i,u_i;s).$$
Taking the derivative and using the vanishing part of the Fundamental Lemma, Lemma \ref{lemFLvanishing}, we get that $\partial O(g,u) = 0$ if there is more than one odd index.

Part (2) follows from Theorem \ref{thmMainEtale} by an induction argument and from Theorem \ref{thmAFLgenmain}.

Part (3) is then an application of Lemma \ref{lemAFLvj1}.
\end{proof}

\part{Appendix on strict formal $\mcO$-modules}
\section{Introduction}
Let $\mcO$ be the ring of integers in a $p$-adic local field with uniformizer $π$. Let $R$ be a $π$-adic $\mcO$-algebra. In \cite{Ahs}, Ahsendorf constructs an equivalence of categories
\begin{equation}\label{eqEquiv}
\left\{\text{strict formal }\mcO\text{-modules over }R\right\}\iso \left\{\text{nilpotent }\mcO\text{-displays over }R\right\}.
\end{equation}
We refer to \cite{ACZ} for more information. By Lau \cite{Lau}, there is a good notion of duality on the right hand side. This defines good notions of duality and polarization on the left hand side.
By definition, there is also an equivalence
\begin{equation}\label{eqEquiv2}
\{\text{strict formal $\mcO$-modules over $R$}\}\iso \{\text{nilpotent displays over $R$ with strict $\mcO$-action}\}.
\end{equation}
This equivalence has the advantage that one can forget the $\mcO$-action on both sides to read off the underlying $p$-divisible group and its display. This is not possible in \eqref{eqEquiv}. The aim of this appendix is to identify the correct notion of duality on the right hand side of \eqref{eqEquiv2}.

More precisely, our results are the following. For each finite and totally ramified extension $\mcO\subset \mcO'$ of rings of integers in $p$-adic local fields, we define the \emph{Lubin-Tate $\mcO'$-frame} $\mcL_{\mcO'/\mcO}(R)$ and prove the equivalence
$$\{\text{strict formal $\mcO'$-modules over $R$}\}\iso \{\text{nilpotent $\mcL_{\mcO'/\mcO}(R)$-windows}\}.$$
Depending on the existence of certain units, this equivalence is compatible with duality, see Lemma \ref{lemDual}. To prove this compatibility, we reinterpret the construction of Ahsendorf in \cite[Definition 2.24]{ACZ} as a base change along a morphism of frames
$$\mcL_{\mcO'/\mcO}(R)→\mcL_{\mcO'/\mcO'}(R).$$

\section{Strict $\mcO$-modules and duality}
\label{sect:strmod}
\subsection{Windows and Duality}
We work with the definitions of $\mcO$-frames and $\mcO$-windows from \cite[Section 3]{ACZ}, but keep the terminology of Lau \cite{Lau} concerning strict and not necessarily strict morphisms of frames. We now recall the definition of the dual $\mcO$-window.

Let $\mcA = (S,I,R,σ,\dot{σ})$ be an $\mcO$-frame and let $\mcP=(P,Q,F,\dot{F})$ be an $\mcA$-window. Choose a normal decomposition $P = L\oplus T,\ Q = L\oplus IT$ and consider the linearization
\begin{equation}\label{eqlin}
\mathbf{F}:=(\dot{F}\oplus F)^\sharp:S\tensor_{σ,S} (L\oplus T)→P.
\end{equation}
Let $P^\vee := \Hom_S(P,S)$ and $Q^\vee := \{ϕ\in P^\vee\mid ϕ(Q)\subset I\}$. We define the $\mcA$-window
$$\mcP^\vee=(P^\vee, Q^\vee, F^\vee, \dot{F}^\vee)$$
through the operator $(\mathbf{F}^\vee)^{-1}$ and the normal decomposition $P^\vee = L^\vee \oplus T^\vee, Q^\vee = IL^\vee \oplus T^\vee$, see \cite[Lemma 3.6]{ACZ}.

\begin{defn}\label{defDual}
The $\mcA$-window $\mcP^\vee$ is the \emph{dual $\mcA$-window} of $\mcP$.
\end{defn}

It is clear that dualizing is an anti-equivalence of the category of $\mcA$-windows and that there is a canonical identification $(\mcP^\vee)^\vee \iso \mcP$ coming from the canonical identification $(P^\vee)^\vee \iso P$.

Let
$$α:\mcA→\mcA':=(S',I',R',σ',\dot{σ}')$$
be a $u$-morphism of frames for some unit $u\in S'$, i.e.\! $uα\circ \dot{σ} = \dot{σ}'\circ α$. If $u=1$, i.e.\! if $α$ is a strict morphism, and if $\mcP = (P,Q,F,\dot F)$ is an $\mcA$-window, then
\begin{equation}\label{eqDualStrict}
α_*(\mcP^\vee) \iso (α_*\mcP^\vee),
\end{equation}
up to the identification $P^\vee\tensor_S S' \iso (P\tensor_S S')^\vee$. To treat the case of general $u$, first recall that the base change along the $u$-morphism
$$(S,I,R,σ,\dot{σ})→(S,I,R,σ,u\dot{σ})$$
is given by 
$$(P,Q,F,\dot{F})\mapsto (P,Q,uF,\dot{F}).$$

\begin{lem}\label{lemDual}
Let $α:\mcA=(S,I,R,σ,\dot{σ})→(S',I',R',σ',\dot{σ}')$ be a $u$-isomorphism and let $ε\in S^\times$ be a unit such that $σ(ε)ε^{-1} = u$. Let $\mcP = (P,Q,F,\dot{F})$ be an $\mcA$-window. Then multiplication by $ε$ defines an isomorphism
$$α_*(\mcP^\vee) \iso (α_* \mcP)^\vee.$$
\end{lem}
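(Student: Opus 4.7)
The plan is to reduce to the case of a ``pure $u$-twist'' on a single underlying frame. Concretely, introduce the intermediate frame $\tilde{\mcA} := (S', I', R', σ', u^{-1}\dot σ')$ and factor $α$ as the composition
\[
\mcA \xrightarrow{\;β\;} \tilde{\mcA} \xrightarrow{\;\mr{id}\;} \mcA',
\]
where $β$ has the same underlying ring map as $α$ and is strict (the defining equation $uα\circ\dot σ = \dot σ'\circ α$ is exactly what makes $β$ a strict morphism into $\tilde{\mcA}$), while the identity on underlying data is a $u$-morphism from $\tilde{\mcA}$ to $\mcA'$. Applying \eqref{eqDualStrict} to $β$ handles the strict half canonically, so I am reduced to the case $α = \mr{id}:(S,I,R,σ,\dot σ)\to(S,I,R,σ,u\dot σ)$, in which $α_*(P,Q,F,\dot F) = (P,Q,uF,\dot F)$.

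In this reduced situation the underlying $S$-modules of $α_*(\mcP^\vee)$ and $(α_*\mcP)^\vee$ are both equal to $P^\vee$, so the claim is an identity between two window structures on the same module, and it is natural to compare them through their linearizations. Pick a normal decomposition $P=L\oplus T$. The linearization of $α_*\mcP$ equals the linearization $\mathbf F$ of $\mcP$ precomposed with the $S$-linear endomorphism $M$ of $S\tensor_σ P$ which acts as the identity on $S\tensor_σ L$ and as multiplication by $u$ on $S\tensor_σ T$; dualizing yields the linearization of $(α_*\mcP)^\vee$ in the form $(\mathbf F^{-1})^\vee\circ(M^{-1})^\vee$, where $(M^{-1})^\vee$ is the identity on $S\tensor_σ L^\vee$ and multiplication by $u^{-1}$ on $S\tensor_σ T^\vee$. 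On the other hand, the dual window $\mcP^\vee$ uses the swapped normal decomposition $P^\vee = L^\vee\oplus T^\vee$ with $T^\vee$ playing the role of $L$ and $L^\vee$ that of $T$; applying the twist formula to it shows that the linearization of $α_*(\mcP^\vee)$ differs from $(\mathbf F^{-1})^\vee$ by scaling the $L^\vee$-component (the new ``$T$-part'') by $u$.

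To finish, it remains to verify that the $S$-linear automorphism $ϕ_ε:P^\vee\to P^\vee$, $f\mapsto εf$, intertwines these two linearizations, and similarly on the level of $\dot F^\vee$. The computation splits into the $L^\vee$-component and the $T^\vee$-component of $P^\vee$. Since $ϕ_ε$ crosses the tensor product over $σ$ as multiplication by $σ(ε)$, the compatibility on the $L^\vee$-component reduces to the identity $σ(ε) = uε$, while that on the $T^\vee$-component gives the identity $εu = σ(ε)$; both are precisely the hypothesis $σ(ε)ε^{-1}=u$. This proves that $ϕ_ε$ is an isomorphism of $\mcA'$-windows, and hence proves the lemma. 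The only real difficulty is keeping track of where the unit $u$ is inserted under each of the two operations ``dualize then base-change'' versus ``base-change then dualize''; once the linearizations are written out in a normal decomposition, the verification is a short calculation fully determined by the single relation $σ(ε)ε^{-1}=u$.
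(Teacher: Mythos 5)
Your proof is correct and follows essentially the same route as the paper: the paper also picks a normal decomposition, writes the linearizations of $α_*(\mcP^\vee)$ and $(α_*\mcP)^\vee$ as the common operator $α(\mathbf{F}^\vee)^{-1}$ twisted by diagonal matrices in $1$ and $u^{\pm1}$, and concludes via $σ(ε)ε^{-1}=u$. Your preliminary factorization of $α$ through the auxiliary frame $(S',I',R',σ',u^{-1}\dot σ')$ is only an organizational variant of what the paper does implicitly by recalling, just before the lemma, that base change along $(S,I,R,σ,\dot σ)\to(S,I,R,σ,u\dot σ)$ sends $(P,Q,F,\dot F)$ to $(P,Q,uF,\dot F)$.
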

\begin{proof}
Choose a normal decomposition $P = L\oplus T,\ Q = L\oplus IT$ and consider the linearization $\mathbf{F}$ as in \eqref{eqlin}. Then the window $α_*(\mcP^\vee)$ (resp.\! $(α_*\mcP)^\vee$) corresponds to the normal decomposition $P^\vee = L^\vee \oplus T^\vee,\ Q^\vee = IL^\vee \oplus T^\vee$ and the operator
$$\left( \begin{smallmatrix} 1 & \\ & u^{-1}\end{smallmatrix} \right) α(\mathbf{F}^\vee)^{-1}\ \ \left(\text{resp.\!}\ \left( \begin{smallmatrix} u & \\ & 1\end{smallmatrix}\right) α(\mathbf{F}^\vee)^{-1}\right).$$
It is clear that multiplication by $ε$ defines an isomorphism.
\end{proof}

\begin{defn}\label{defBiHom}
Let $\mcP_i = (P_i,Q_i,F_i,\dot F_i)$, for $i = 1,2,3$, be three $\mcA$-windows. We define
$$\mr{BiHom}(\mcP_1\times \mcP_2, \mcP_3)$$ to be the set of $S$-bilinear forms $(\ ,\ ):P_1\times P_2→P_3$ such that $(Q_1,Q_2)\subset Q_3$ and such that
$$(\dot F_1 q_1,\dot F_2 q_2) = \dot F_3(q_1,q_2),\ \ \ q_1\in Q_1, q_2\in Q_2.$$
\end{defn}

Note that $\mcA$ (or rather just the quadruple $(S,I,σ,\dot{σ})$) is an $\mcA$-window over itself.

\begin{lem}
Let $\mcP = (P,Q,F,\dot F)$ be an $\mcA$-window. Then the canonical pairing $\langle\ ,\ \rangle:P\times P^\vee→S$ defines an element in $\mr{BiHom}(\mcP\times \mcP^\vee, \mcA)$.
\end{lem}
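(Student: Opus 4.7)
The plan is to verify directly the two defining properties of an element of $\mr{BiHom}(\mcP\times \mcP^\vee,\mcA)$: that $\langle Q,Q^\vee\rangle \subset I$ and that $\langle \dot F q,\dot F^\vee q^\vee\rangle = \dot σ\langle q,q^\vee\rangle$ for all $q\in Q$, $q^\vee \in Q^\vee$. The first is immediate: it is literally the definition of $Q^\vee := \{ϕ\in P^\vee\mid ϕ(Q)\subset I\}$.

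For the second identity, I would fix a normal decomposition $P = L\oplus T$ with $Q = L\oplus IT$, and use the induced dual normal decomposition $P^\vee = L^\vee \oplus T^\vee$ with $Q^\vee = IL^\vee \oplus T^\vee$ (with the roles of the direct summands swapped). Since both $\dot F$ and $\dot F^\vee$, as well as the pairing, are $σ$-semilinear/additive in each factor, it suffices to check the identity on pairs coming from the four summands $L\times L^\vee$, $L\times T^\vee$, $IT\times L^\vee$, and $IT\times T^\vee$. On these summands, $\dot F$ and $\dot F^\vee$ are expressible through the linearizations $\mathbf F = (\dot F\oplus F)^\sharp$ and $\mathbf F^\vee$, the latter being by construction the $S$-linear inverse of the $S$-dual of $\mathbf F$ with respect to the canonical pairing $S\tensor_σ P\times S\tensor_σ P^\vee → S$, $(s\tensor p,t\tensor ϕ)\mapsto stσ(ϕ(p))$. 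Unwinding the definitions yields $\langle \mathbf F(1\tensor p),ψ\rangle = σ\langle p,\mathbf F^\vee(ψ)\rangle$ for $p\in P, ψ\in S\tensor_σ P^\vee$, and specializing this to elements in $L\oplus IT$ and $IL^\vee\oplus T^\vee$ produces each of the four required equalities.

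The only genuine subtlety is bookkeeping between $σ$ and $\dot σ$ when elements land in $I$: for $\ell\in L$ and $i\cdot t^\vee\in IL^\vee$, the pairing $\langle \ell, it^\vee\rangle$ lies in $I$, and one must convert $σ(it^\vee(\ell)) = π\dot σ(it^\vee(\ell))$ (or the analogous relation in the $\mcO$-frame) into a factor that absorbs correctly against $F = π\dot F$ on $T$, and similarly against $F^\vee = π \dot F^\vee$ on $L^\vee$. Thus on $L\times L^\vee$ and on $IT\times T^\vee$ the identity follows directly from the defining property of $\mathbf F^\vee$, while on $L\times IL^\vee$ and $IT\times L^\vee$ one picks up one factor of $π$ from each side which cancel against $F = π\dot F$ and $F^\vee = π\dot F^\vee$ respectively.

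I expect the main obstacle to be purely notational: keeping track of the four cases and the $π$-factors in the $\mcO$-frame relations between $σ, \dot σ, F, \dot F, F^\vee, \dot F^\vee$. Once the compatibility of $\mathbf F^\vee$ with the canonical pairing is phrased cleanly, the four cases become mechanical.
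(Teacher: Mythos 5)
Your plan is correct and is essentially the paper's own proof: pick a normal decomposition, use the linearization $\mathbf{F}$ and the fact that $\mcP^\vee$ is defined through $(\mathbf{F}^\vee)^{-1}$, and verify $\langle \dot F q,\dot F^\vee q^\vee\rangle=\dot{\sigma}\langle q,q^\vee\rangle$ summand by summand. The bookkeeping is in fact lighter than you anticipate: the cross terms on $L\times T^\vee$ and $IT\times IL^\vee$ vanish identically (since $L^\vee$ annihilates $T$ and $T^\vee$ annihilates $L$), and the two remaining cases close using only the $\sigma$-linearity $\dot{\sigma}(\xi s)=\sigma(s)\dot{\sigma}(\xi)$ of $\dot{\sigma}$, with no cancellation of $\pi$-factors against $F=\pi\dot F$ needed.
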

\begin{proof}
This can be checked after choosing a normal decomposition $P = L\oplus T$, $Q = L\oplus IT$. Let $\mathbf{F}$ be the linearization of $\dot F \oplus F$ as in \eqref{eqlin}. The relation $\langle Q, Q^\vee\rangle \subset I$ holds by definition of $Q^\vee$. Now for example if $q\in L$ and $ξq^\vee\in IL^\vee$, then
\begin{equation}
\begin{aligned}
\langle \dot F (q), \dot F^\vee(ξq^\vee)\rangle & = \langle \mathbf{F}(q) , \dot{σ}(ξ)(\mathbf{F}^{-1})^\vee(q^\vee)\rangle\\
& = \dot {σ}(ξ)σ(\langle q, q^\vee\rangle)\\
& = \dot {σ}(\langle q, ξq^\vee\rangle).
\end{aligned}
\end{equation}
The other cases for $q$ and $q^\vee$ are checked analogously.
\end{proof}

\begin{prop}\label{propDualForm}
Let $\mcP_i = (P_i,Q_i,F_i,\dot F_i)$, for $i = 1,2$, be two $\mcA$-windows. Then pullback of the canonical pairing defines an isomorphism
$$\Hom(\mcP_1,\mcP_2^\vee) \iso \mr{BiHom}(\mcP_1\times \mcP_2,\mcA).$$
This isomorphism is functorial in both $\mcP_1$ and $\mcP_2$ and compatible with base change along morphisms of $\mcO$-frames $\mcA→\mcA'$.
\end{prop}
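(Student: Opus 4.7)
The plan is to define mutually inverse assignments. For $f\in \Hom(\mcP_1,\mcP_2^\vee)$ set $b_f(x,y):=f(x)(y)$, and for $b\in \mr{BiHom}(\mcP_1\times\mcP_2,\mcA)$ set $f_b(x)(y):=b(x,y)$. These are tautologically mutually inverse bijections between $S$-linear maps $P_1\to P_2^\vee$ and $S$-bilinear forms $P_1\times P_2\to S$, so the entire content of the proposition is that they match the window-theoretic conditions on the two sides. Intrinsically, $b_f$ is the pullback of the canonical evaluation pairing $P_2^\vee\times P_2\to S$ along $f\times \id_{P_2}$, which is the sense of ``pullback of the canonical pairing'' meant in the statement.

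The forward direction $f\mapsto b_f$ follows directly from the preceding lemma: the evaluation pairing lies in $\mr{BiHom}(\mcP_2^\vee\times \mcP_2,\mcA)$ (the lemma gives this with the factors swapped, and both BiHom conditions are symmetric in the two arguments, so the ordering is immaterial), and composing with the morphism of windows $f\times\id$ preserves BiHom-ness.

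For the reverse direction, the inclusion $b(Q_1,Q_2)\subset I$ is by definition equivalent to $f_b(Q_1)\subset Q_2^\vee$. The identity $f_b\circ \dot F_1=\dot F_2^\vee\circ f_b$ on $Q_1$, regarded as an equality in $P_2^\vee$, will be checked by pairing against elements of the form $\dot F_2(q_2)$ with $q_2\in Q_2$: on such elements the left-hand side equals $b(\dot F_1 q_1,\dot F_2 q_2)$ while the right-hand side equals $\dot\sigma(b(q_1,q_2))$ by the defining relation $\langle \dot F_2 q, \dot F_2^\vee \varphi\rangle = \dot\sigma\langle q,\varphi\rangle$ coming from the preceding lemma, so equality is exactly the second BiHom condition. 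To justify that $\dot F_2(Q_2)$ suffices as a testing set, fix a normal decomposition $P_2=L_2\oplus T_2$: then $\dot F_2(L_2)\subset \dot F_2(Q_2)$, and from $\dot F_2(\xi t)=\dot\sigma(\xi)F_2(t)$ for $\xi\in I,\ t\in T_2$ together with $\dot\sigma(I)\cdot S=S$ one sees that the $S$-span of $\dot F_2(Q_2)$ also contains $F_2(T_2)$; the isomorphism $(\dot F_2\oplus F_2)^\sharp:S\tensor_\sigma P_2\to P_2$ then forces this $S$-span to be all of $P_2$. The compatibility of $f_b$ with $F_1,F_2$ follows formally from the $\dot F$-compatibility via $\dot F(\xi x)=\dot\sigma(\xi) F(x)$, so $f_b$ is indeed a morphism of windows.

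Functoriality in $\mcP_1,\mcP_2$ is immediate from the formula $b_f(x,y)=f(x)(y)$, and compatibility with a strict base change $\mcA\to\mcA'$ follows because the formation of $P^\vee,Q^\vee,\dot F^\vee$ commutes with strict base change, see \eqref{eqDualStrict}. The single delicate point is the spanning statement for $\dot F_2(Q_2)$, which is what allows the pointwise BiHom identity on pairs $(\dot F_1 q_1,\dot F_2 q_2)$ to be promoted to an identity of functionals in $P_2^\vee$; everything else is bookkeeping.
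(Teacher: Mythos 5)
Your proof is correct and follows essentially the same route as the paper: both reduce to the tautological bijection on underlying $S$-modules and then verify, by pairing $f(\dot F_1 q_1)$ against elements $\dot F_2 q_2$ and using that the $S$-span of $\dot F_2(Q_2)$ is all of $P_2$, that the second BiHom condition is equivalent to $f\circ \dot F_1 = \dot F_2^\vee\circ f$. The only differences are expository — you spell out the spanning property from a normal decomposition where the paper simply invokes that $\dot F_2$ is a $σ$-linear epimorphism, and you treat base change only for strict morphisms, which is consistent with the paper (for a general $u$-morphism the compatibility holds only up to the unit $ε$ of Lemma \ref{lemDual}, as the remark following the proposition indicates).
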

\begin{proof}
The canonical map is injective since it is induced from the analogous isomorphism on underlying $S$-modules,
$$\Hom_S(P_1,P_2^\vee) \overset{\iso}{→} \mr{BiHom}_S(P_1\times P_2,S).$$
To prove surjectivity, we consider a homomorphism $f:P_1→P^\vee_2$ such that the induced bilinear form $(\ ,\ ):P_1\times P_2→S$ lies in $\mr{BiHom}(\mcP_1\times \mcP_2,\mcA)$. We claim that $f$ is a homomorphism of $\mcA$-windows.

The relation $f(Q_1)\subset Q_2^\vee$ follows immediately from the relation $(Q_1,Q_2)\subset I$. We still have to show $f(\dot F_1 q_1) = \dot F_2^\vee(f(q_1))$ for all $q_1\in Q_1$. For this we compute for all $q_2\in Q_2$,
\begin{equation}
\begin{aligned}
\langle f(\dot F_1 q_1), \dot F_2 q_2\rangle & = (\dot F_1 q_1, \dot F_2 q_2) \\
  & = \dot{σ}(q_1,q_2)\\ 
  & = \dot{σ} \langle f(q_1), q_2\rangle\\
  & = \langle \dot F^\vee_2 f(q_1), \dot F_2 f(q_2)\rangle.
\end{aligned}
\end{equation}
Now $\dot F_2:Q_2→P_2$ is a $σ$-linear epimorphism and $\langle\ ,\ \rangle$ is $S$-bilinear. This implies
$$\langle f(\dot F_1 q_1), p_2\rangle = \langle \dot F_2^\vee (f(q_1)), p_2\rangle,\ \ p_2\in P_2,$$
which proves $\dot F_2^\vee f(q_1) = f( \dot F_1 q_1)$.
\end{proof}

In terms of the pairings, the isomorphism from Lemma \ref{lemDual} corresponds to scaling the form $α_*(\ ,\ )$ by $ε^{-1}$. 

\begin{defn}\label{defPol}
Let $\mcP=(P,Q,F,\dot F)$ be an $\mcA$-window. A \emph{principal polarization} is an isomorphism $λ:\mcP→\mcP^\vee$ such that $λ^\vee:\mcP = (\mcP^\vee)^\vee→\mcP^\vee$ equals $-λ$. Equivalently, a polarization is an alternating perfect pairing $λ(\ ,\ )\in \mr{BiHom}(\mcP\times \mcP,\mcA).$
\end{defn}

\subsection{Strict $\mcO$-modules and duality}
\label{subsect:strOKMod}
Let us fix a uniformizer $π\in \mcO$. We refer to \cite[Section 1.2]{FF} for the definition and properties of the relative Witt vectors. 

\begin{defn}\label{defWitt}
For any $\mcO$-algebra $R$, we define the \emph{Witt $\mcO$-frame}\footnote{There is no need to write the Verschiebung as a superscript in the appendix.}
$$\mcW_{\mcO}(R) = (W_{\mcO}(R), I_{\mcO}(R), {}^F,V)$$
over $R$ as follows. The ring $W_{\mcO}(R)$ is the ring of relative $\mcO$-Witt vectors of $R$ with respect to $π$. The ideal $I_{\mcO}(R)$ is the augmentation ideal
$$I_{\mcO}(R) := \ker(W_{\mcO}(R)→R)$$
and ${}^F$ (resp.\! $V$) denotes the Frobenius (resp.\! the Verschiebung with respect to $π$). Windows over $\mcW_{\mcO}(R)$ are also called \emph{$\mcO$-displays over $R$}, see \cite{ACZ}.
\end{defn}

\begin{defn}
Let $R$ be an $\mcO$-algebra. A \emph{strict $\mcO$-module} over $S = \Spec R$ is a pair $(X,ι)$ where $X/S$ is a $p$-divisible group and $ι:\mcO→\End(X)$ an action such that $\mcO$ acts on $\Lie(X)$ via the structure morphism $\mcO→\mcO_S$. A strict $\mcO$-module is called \emph{formal}, if the underlying $p$-divisible group is formal.
\end{defn}

Recall that, by Zink \cite{Zink} and \cite{Lauequiv} (in the absolute case $\mcO = \mbZ_p$) and the extension by Ahsendorf \cite{ACZ} (in the general case), there is an equivalence of categories
$$\{\text{strict formal $\mcO$-modules$/S$}\} \iso \{\text{nilpotent $\mcO$-displays$/S$}\}$$
whenever $π$ is nilpotent in $R$.

\begin{defn}\label{defDualModule}
Let $X = (X,ι)$ be a strict formal $\mcO$-module over $S$ with associated $\mcO$-display $\mcP$.\\
i) $X$ is called \emph{biformal} if the dual $\mcO$-display $\mcP^\vee$ is also nilpotent.
\footnote{Being biformal is equivalent to the slopes $0$ and $1$ not occurring in the slope filtration of (the relative $\mcO$-isocrystal of) $X$ at every geometric point of $S$.}\\
ii) The dual of a biformal strict $\mcO$-module $X$ is the strict $\mcO$-module associated to the dual of its $\mcO$-display $\mcP^\vee$.\\
iii) A \emph{polarization} (resp.\! \emph{principal polarization}) of the biformal strict $\mcO$-module $X$ is an isogeny (resp.\! an isomorphism) $λ:X→X^\vee$ such that $λ^\vee = -λ$.
\end{defn}

\begin{rmk}
The restriction to biformal strict $\mcO$-modules is necessary since we only work with $\mcO$-displays instead of Dieudonné $\mcO$-displays. See \cite[Section 4]{ACZ} for the definition of the dual group in the general case.
\end{rmk}

\begin{rmk}\label{rmkDualModule}
Note that the definition of the Verschiebung $V$ on $W_{\mcO}(R)$ and hence the definition of the dual $\mcO$-display (resp.\! the dual strict $\mcO$-module) depends on the choice of the uniformizer $π$.
\end{rmk}

Recall the following results from \cite[Section 3]{ACZ}. To any strict formal $\mcO$-module, there is associated a crystal $\mbD_X$ on the category of $\mcO$-pd-thickenings. We denote by $\mbD_X(S')$ its value at an $\mcO$-pd-thickening $S→S'$. As in the case of $p$-divisible groups, there is a Hodge filtration $\mcF\subset \mbD_X(S)$ and deformations of $X$ along $\mcO$-pd-thickenings are in bijection with liftings of the Hodge filtration.

Now assume that $X$ is biformal. It follows from the definitions that there is a perfect pairing
$$\mbD_X(S')\times \mbD_{X^\vee}(S')→\mcO_{S'}.$$
Furthermore, the Hodge filtration $\mcF\subset \mbD_X(S)$ is the orthogonal complement of the Hodge filtration $\mcF^\vee\subset \mbD_{X^\vee}(S)$ of the dual $\mcO$-module. In particular if $λ:X→X^\vee$ is a principal polarization, then the induced bilinear form on $\mbD_X$ is alternating and the Hodge filtration $\mcF\subset \mbD_X(S)$ is a Lagrangian subspace. Deformations of $(X,λ)$ along an $\mcO$-pd-thickening are then in bijection with liftings of the Hodge filtration as a Lagrangian subspace.

\section{The totally ramified case}\label{sectLT}
\subsection{Lubin-Tate frames}
Let $\mcO'/\mcO$ be a finite, integrally closed and totally ramified extension of degree $e$ and choose a uniformizer $π'\in \mcO'$. For any $\mcO'$-algebra $R$, we consider the ring $\mcO'\tensor_{\mcO}W_{\mcO}(R)$. We denote the $\mcO'$-linear extension of the Frobenius by $σ:=\mr{id}_{\mcO'}\tensor {}^F$. We also define
$$J_{\mcO'}(R):=\ker(\mcO'\tensor_{\mcO}W_{\mcO}(R)→R).$$
Our aim now is to define a $σ$-linear epimorphism $\dot{σ}:J_{\mcO'}(R)→\mcO'\tensor_{\mcO} W_{\mcO}(R)$ that makes
$$(\mcO'\tensor_{\mcO}W_{\mcO}(R),J_{\mcO}(R),R,σ,\dot{σ})$$
into an $\mcO'$-frame such that strict formal $\mcO'$-modules over $R$ are equivalent to windows over that frame. 

\begin{defn}
Let $R$ be a $π$-adic $\mcO'$-algebra. A \emph{Lubin-Tate $\mcO$-display over $R$ (for the extension $\mcO'$)} is an $\mcO$-display $(P,Q,F,\dot F)$ over $R$ equipped with a strict $\mcO'$-action such that $P$ is free of rank $1$ over $\mcO'\tensor_{\mcO} W_{\mcO}(R).$
\end{defn}

The strictness implies that $Q = J_{\mcO'}(R)P$. We will usually choose a generator of $P$ and hence consider $\mcO$-displays of the form
$$(\mcO'\tensor_{\mcO} W_{\mcO}(R), J_{\mcO'}(R), F, \dot F).$$
Here, $\mcO'$ acts naturally on $\mcO'\tensor_{\mcO} W_{\mcO}(R)$ and both $F$ and $\dot F$ are $σ$-linear.

\begin{rmk}\label{rmkLTx}
(1) The definition could be extended to $P$ being only locally free of rank $1$ over $\mcO'\tensor_{\mcO} W_{\mcO}(R)$. But we will not need this.

(2) Let $u\in \mcO'\tensor_{\mcO} W_{\mcO}(R)$ be a unit and let $(P,Q,F,\dot F)$ be a Lubin-Tate $\mcO$-display over $R$. Then also $(P,Q,uF,u\dot F)$ is a Lubin-Tate $\mcO$-display.

(3) Let $\dot F:J_{\mcO'}(R)→\mcO'\tensor_{\mcO} W_{\mcO}(R)$ be any $σ$-linear epimorphism. Then there is at most one way to define a $σ$-linear endomorphism $F$ of $\mcO'\tensor_{\mcO} W_{\mcO}(R)$ which satisfies the identity
\begin{equation}\label{condLT}
\dot F(ξx) = V^{-1}(ξ)F(x),\ \ ξ\in \mcO'\tensor_{\mcO} I_{\mcO}(R),\ x\in \mcO'\tensor_{\mcO} W_{\mcO}(R),
\end{equation}
where $V$ denotes the $\mcO'$-linear extension of the $π$-Verschiebung to $\mcO'\tensor_{\mcO} I_{\mcO}(R)$. It is given by 
$$F(x) = \dot F(V(1)x)$$
and it is now a condition that the so-defined $F$ satisfies the relation \eqref{condLT} for all $ξ$. It is enough to check this for $x = 1$ in which case the condition becomes
\begin{equation}\label{eqLT}
\dot F(ξ) = V^{-1}(ξ)F(1) = V^{-1}(ξ)\dot F(V(1)),\ \ ξ\in \mcO'\tensor_{\mcO} I_{\mcO}(R).
\end{equation}
\end{rmk}

\begin{prop}\label{propLTDisp}
Let $R$ be any $π$-adic $\mcO'$-algebra.

(1) For any Lubin-Tate $\mcO$-display
$$(\mcO'\tensor_{\mcO} W_{\mcO}(R),J_{\mcO'}(R), F, \dot{F}),$$
the element $κ:=\dot{F}(π'\tensor 1 - 1\tensor[π'])$ is a unit.

(2) For every unit $κ\in \mcO'\tensor_{\mcO} W_{\mcO}(R)$, there exists a unique Lubin-Tate $\mcO$-display
$$(\mcO'\tensor_{\mcO} W_{\mcO}(R),J_{\mcO'}(R), F, \dot{F})$$
such that $\dot{F}(π'\tensor 1 - 1\tensor[π']) = κ$.
\end{prop}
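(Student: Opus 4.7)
The plan is to exploit the Eisenstein relation for $π'$ over $\mcO$ in order to derive a key algebraic identity in $\mcO'\tensor_{\mcO} W_{\mcO}(R)$. Let $E(T) \in \mcO[T]$ be the Eisenstein polynomial of $π'$ over $\mcO$ and let $Q(X, Y) := (E(X) - E(Y))/(X - Y) \in \mcO[X, Y]$. Substituting $X = π'\tensor 1$, $Y = 1\tensor [π']$ and using $E(π'\tensor 1) = E(π')\tensor 1 = 0$ gives
$$η = -ξ_0 \cdot Q_0,\qquad η := E(1\tensor[π']),\quad Q_0 := Q(π'\tensor 1, 1\tensor [π']),$$
where $ξ_0 := π'\tensor 1 - 1\tensor [π']$. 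Since the image of $η$ in $R$ equals $E(π') = 0$, we have $η \in \mcO'\tensor I_{\mcO}(R)$.

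The next step is to show that $V^{-1}(η)$ is a unit in $\mcO'\tensor W_{\mcO}(R)$. A ghost-component computation gives $w_0(V^{-1}(η)) = w_1(η)/π = E((π')^q)/π$, which reduces modulo $π' R$ to $a_0/π \in \mcO^\times$. Since $R$ is $π$-adic and $π = (\text{unit})\cdot (π')^e$, the element $π'$ lies in the Jacobson radical of $R$, so $V^{-1}(η)\mod I_{\mcO}(R)$ is a unit in $R$. Combined with the standard fact that $\mcO'\tensor I_{\mcO}(R) \subseteq \mr{Jac}(\mcO'\tensor W_{\mcO}(R))$ (via the identity $V(x)^n = π^{n-1}V(x^n)$ and $π$-adicity of $R$), this implies $V^{-1}(η)$ itself is a unit.

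For part (1), I apply $\dot F$ to the identity $η = -ξ_0 Q_0$: $σ$-linearity yields $\dot F(η) = -σ(Q_0)κ$, while Remark \ref{rmkLTx}(3) combined with $η \in \mcO'\tensor I_{\mcO}(R)$ gives $\dot F(η) = V^{-1}(η)F(1)$, where $F(1) := \dot F(V(1))$. Hence
$$F(1) = -V^{-1}(η)^{-1}\,σ(Q_0)\cdot κ \in κ\cdot(\mcO'\tensor W_{\mcO}(R)).$$
Since $J_{\mcO'}(R) = ξ_0 \cdot (\mcO'\tensor W_{\mcO}(R)) + \mcO'\tensor I_{\mcO}(R)$ and the restriction of $\dot F$ to the second summand is $V^{-1}(-)\cdot F(1)$, the $\mcO'\tensor W_{\mcO}(R)$-submodule generated by the image of $\dot F$ equals $κ\cdot (\mcO'\tensor W_{\mcO}(R))$. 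The epimorphism hypothesis forces $κ$ to be a unit.

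For part (2), given a unit $κ$, the above computation prescribes $F(1) := -V^{-1}(η)^{-1}σ(Q_0)κ$; I extend to a $σ$-linear map $F(x) := σ(x)F(1)$ and define
$$\dot F(ξ_0 a + β) := σ(a)κ + V^{-1}(β)F(1)$$
for $a \in \mcO'\tensor W_{\mcO}(R)$, $β \in \mcO'\tensor I_{\mcO}(R)$. Uniqueness follows from the argument of part (1). The main technical point is well-definedness: the factorization $E(X) = (X-π')Q(X,π')$ in $R[X]$ shows that the annihilator of $\overline{ξ_0} \in \mcO'\tensor_{\mcO} R$ is the principal ideal generated by the image of $Q_0$, so every relation $ξ_0 a + β = 0$ in $J_{\mcO'}(R)$ lifts to $a = Q_0 c + β'$ with $β' \in \mcO'\tensor I_{\mcO}(R)$ and $β = ηc - ξ_0 β'$. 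The consistency condition then splits into the defining equation for $F(1)$ (handling the $c$ term) and the identity $σ(β') = πV^{-1}(β')$ for $β'\in\mcO'\tensor I_{\mcO}(R)$ (handling the $β'$ term), which is automatic from $F\circ V = π$. The hardest part will be this well-definedness verification together with the subsequent check that the resulting $(P, Q, F, \dot F)$ satisfies all the $\mcO$-display axioms.
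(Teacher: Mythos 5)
Your argument is correct, but it takes a genuinely different route from the paper on both substantive points. For part (1), the paper invokes Lemma \ref{lemUnit} to reduce to geometric points $R/\pi'\to k$, where $\mcO'\tensor_{\mcO}W_{\mcO}(k)\iso W_{\mcO'}(k)$ is a complete DVR and $\pi'\tensor 1-1\tensor[\pi']$ generates $J_{\mcO'}(k)$; you instead observe globally that $\dot F(J_{\mcO'}(R))$ spans exactly $\kappa\cdot(\mcO'\tensor W_{\mcO}(R))$ and let the epimorphism axiom force $\kappa$ to be a unit. Your uniqueness argument is essentially the paper's in different clothing: your $Q_0$ is an explicit lift of the paper's $\overline{\theta}$ (the informal fraction $(N\pi'\tensor 1-1\tensor N\pi')/(\pi'\tensor 1-1\tensor\pi')$ of Lemma \ref{lemMagic} is precisely $\pm\overline{Q}_0$), your $\eta=-\xi_0Q_0$ plays the role of the paper's $a=\theta(\pi'\tensor 1-1\tensor[\pi'])$, and your unit statement for $V^{-1}(\eta)$ is Lemma \ref{lemthetaunit}. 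The real divergence is existence in part (2): the paper imports it from Zink's Proposition 26 (stated for $\mcO=\mbZ_p$ and $\pi$-torsion-free $R$), specializes to $R=\mcO'$, and base-changes, whereas you construct $\dot F$ directly by the formula $\dot F(\xi_0a+\beta)=\sigma(a)\kappa+V^{-1}(\beta)F(1)$ and verify well-definedness via the exact computation of the annihilator of $\overline{\xi}_0$ in $\mcO'\tensor_{\mcO}R\iso R[X]/(E(X))$ (valid because $X-\pi'$ is monic, hence a non-zero-divisor in $R[X]$). This is self-contained and uniform over arbitrary $\pi$-adic $R$, at the price of the well-definedness check, which you correctly identify as the crux and which does go through: the $c$-term cancels by the defining equation for $F(1)$, and the $\beta'$-term cancels because $\sigma(\beta')=\pi V^{-1}(\beta')$ while $\sigma(\xi_0)F(1)=V^{-1}(\eta)^{-1}\sigma(\eta)\kappa=\pi\kappa$. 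Two small points to tighten when writing this up: the identity $w_0(V^{-1}(\eta))=w_1(\eta)/\pi$ only literally makes sense when $\pi$ is a non-zero-divisor in $R$, so compute $V^{-1}(E([\pi']))$ over $\mcO'$ and base-change (legitimate since $\eta$ is defined over $\mcO'$); and you should record explicitly that the $\dot F$ so defined is $\sigma$-linear over $\mcO'\tensor W_{\mcO}(R)$ and satisfies $\dot F(V(w)x)=wF(x)$, both of which follow at once from the formula.
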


Zink \cite[Proposition 26]{Zink} proves part (2) in the case $\mcO = \mbZ_p$ and $π$-torsion free $R$. The proof carries over to the case of general $\mcO$. Applying this result with $R = \mcO'$ and using base change, we get the existence of Lubin-Tate $\mcO$-displays for all $π$-adic $\mcO'$-algebras $R$. Applying (2) of Remark \ref{rmkLTx}, we get the existence for all units $κ$. So we are left with proving (1) and the uniqueness assertion from (2).

\emph{Proof of Proposition \ref{propLTDisp}, part (1).}
Let us show that $κ$ is a unit. For this recall the following lemma from \cite{Zink}. It follows from the fact that $W_{\mcO}(R)$ is $I_{\mcO}(R)$-adically complete.

\begin{lem}\label{lemUnit}
Let $R$ be a $π$-adic $\mcO'$-algebra. Then an element $u\in \mcO'\tensor_{\mcO}W_{\mcO}(R)$ is a unit if and only if its image in
$$(\mcO'/π')\tensor_{\mcO'} (R/π')$$
is.\qed
\end{lem}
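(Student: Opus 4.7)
\textbf{Proof plan for Lemma \ref{lemUnit}.} The plan is a two-stage reduction, exploiting completeness at each stage. I would set $A := \mcO'\tensor_{\mcO} W_{\mcO}(R)$ and $\tilde{I}:=\mcO'\tensor_{\mcO}I_{\mcO}(R)\subset A$. Since $\mcO'/\mcO$ is totally ramified of degree $e$, I would pick a uniformizer $π'\in\mcO'$ with Eisenstein minimal polynomial $P(T)\in\mcO[T]$, so that $\mcO'=\mcO[T]/P(T)$ as $\mcO$-algebra, and note that $A$ is free of rank $e$ over $W_{\mcO}(R)$.

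For the first reduction, since $W_{\mcO}(R)$ is $I_{\mcO}(R)$-adically separated and complete (the key input alluded to in the excerpt), the finite free $W_{\mcO}(R)$-module $A$ is $\tilde{I}$-adically separated and complete. In particular $1+\tilde{I}\subset A^{\times}$ by geometric series, and therefore $u\in A$ is a unit if and only if its image in $A/\tilde I=\mcO'\tensor_{\mcO}R=R[T]/P(T)$ is a unit.

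For the second reduction, I would use that $R$ is $π$-adic and that $π=(π')^e\cdot(\text{unit})$ in $\mcO'$, so $R$ is also $π'$-adic. The ring $R[T]/P(T)$ is a finite free $R$-module, hence $π'$-adically separated and complete, so a class is a unit iff its reduction modulo $π'$ is a unit. As $P$ is Eisenstein, $P(T)\equiv T^e$ modulo $π$, which gives
$$(R[T]/P(T))/π'\;=\;(R/π'R)[T]/T^e,$$
in which $T$ is nilpotent. Thus a class is a unit iff its image modulo $T$ is a unit in $R/π'R$, and the latter ring is exactly $(\mcO'/π')\tensor_{\mcO'}(R/π')$ as $\mcO'/π'$-algebras.

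No serious obstacle is expected; the only point requiring care is to verify that the composition $A\to(\mcO'/π')\tensor_{\mcO'}(R/π')$ appearing in the statement of the lemma is the same as the iterated reduction $A\to A/\tilde I\to(A/\tilde I)/π'\to R/π'R$ used above, which is immediate from functoriality of the quotients.
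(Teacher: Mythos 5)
Your proof is correct and is essentially the paper's argument: the paper simply asserts that the lemma "follows from the fact that $W_{\mcO}(R)$ is $I_{\mcO}(R)$-adically complete," and your two-stage reduction (first modulo $\mcO'\tensor I_{\mcO}(R)$ by completeness, then modulo $π'$ and the nilpotent $T$ using the Eisenstein shape of $P$) is the natural way to fill in that assertion. No gaps.
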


In particular, we can check that $κ$ is a unit at geometric points $R/π'→k$. But then
$$\mcO'\tensor_{\mcO}W_{\mcO}(k) \iso W_{\mcO'}(k)$$
is a complete DVR with uniformizer $π'$ and residue field $k$. The element $π'\tensor 1 - 1\tensor [π']$ maps to a generator of $J_{\mcO'}(k) = π'W_{\mcO'}(k)$. In particular, it is sent to a unit by (the base change to $k$) of $\dot F$. This finishes the proof of part (1).
\qed

\begin{lem}\label{lemMagic}
There exists an element $θ\in \mcO'\tensor_{\mcO}W_{\mcO}(\mcO')$ with the following two properties.
\begin{itemize}
\item[(i)] $θJ_{\mcO'}(\mcO')\subset \mcO'\tensor_{\mcO} I_{\mcO}(\mcO')$.
\item[(ii)] The image of $θ$ under $\mcO'\tensor_{\mcO} W_{\mcO}(\mcO')→\mcO'\tensor_{\mcO} W_{\mcO}(\mcO'/π') \iso \mcO'$ has valuation $e-1$. 
\end{itemize}
\end{lem}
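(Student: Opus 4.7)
The plan is to construct $\theta$ explicitly from the Eisenstein polynomial of $\pi'$. Since $\mcO'/\mcO$ is totally ramified of degree $e$, the minimal polynomial $P(T) \in \mcO[T]$ of $\pi'$ is Eisenstein of degree $e$. Dividing in $\mcO'[T]$ gives a monic factorization $P(T) = (T-\pi')Q(T)$ with $Q(T)\in \mcO'[T]$ of degree $e-1$. I would then set
\[
\theta := Q(1\tensor [\pi']) \in \mcO'\tensor_{\mcO} W_{\mcO}(\mcO').
\]

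For property (i), the key identity is obtained by evaluating the factorization at $T = 1\tensor [\pi']$:
\[
\theta\cdot (\pi'\tensor 1 - 1\tensor [\pi']) = -P(1\tensor [\pi']) = -\,1\tensor P([\pi']).
\]
Because the augmentation $W_{\mcO}(\mcO')\to \mcO'$ sends $[\pi']\mapsto \pi'$ and $P(\pi')=0$, we get $P([\pi'])\in I_{\mcO}(\mcO')$, so the above product lies in $\mcO'\tensor I_{\mcO}(\mcO')$. To propagate this to all of $J_{\mcO'}(\mcO')$, I would use the short exact sequence
\[
0 \to \mcO'\tensor I_{\mcO}(\mcO') \to J_{\mcO'}(\mcO') \to J \to 0,
\]
where $J = \ker(\mcO'\tensor_{\mcO}\mcO'\to \mcO')$. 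Since $\mcO'\tensor_{\mcO}\mcO' = \mcO'[T]/P(T)$ with $T = 1\tensor \pi'$, the ideal $J$ is principal, generated by $1\tensor \pi' - \pi'\tensor 1$. Lifting to Witt vectors, $J_{\mcO'}(\mcO')$ is generated as an $\mcO'\tensor W_{\mcO}(\mcO')$-module by $\mcO'\tensor I_{\mcO}(\mcO')$ together with $\pi'\tensor 1 - 1\tensor [\pi']$; since $\mcO'\tensor I_{\mcO}(\mcO')$ is an ideal, property (i) follows.

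For property (ii), the projection $\mcO'\tensor W_{\mcO}(\mcO')\to \mcO'\tensor_{\mcO} W_{\mcO}(\mcO'/\pi') \iso \mcO'$ annihilates $[\pi']$ (its image in the residue field is zero), so $\theta$ reduces to $Q(0) = -P(0)/\pi'$. Since $P$ is Eisenstein, $P(0)$ is a uniformizer of $\mcO$, whose $\mcO'$-valuation is $e$; hence $\theta$ reduces to an element of $\mcO'$-valuation exactly $e-1$.

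The argument is essentially mechanical once the candidate $\theta$ is in hand. The substantive step is the observation that $J_{\mcO'}(\mcO')$ is generated modulo $\mcO'\tensor I_{\mcO}(\mcO')$ by the single Teichmüller difference $\pi'\tensor 1 - 1\tensor [\pi']$, which in turn comes from the monogenicity $\mcO' = \mcO[\pi']$; without this principal-ideal reduction one would have to verify (i) element-by-element, whereas here it is supplied gratis by the factorization $P(T) = (T-\pi')Q(T)$.
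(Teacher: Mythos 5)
Your proof is correct and is essentially the paper's argument: the paper's $\ob{θ}$, defined via the divided difference of the Eisenstein equation of $π'$, is exactly the image of your $Q(1\tensor [π'])$ in $\mcO'\tensor_{\mcO}\mcO'$ (the paper then takes an arbitrary lift where you take the Teichmüller one), and both proofs reduce (i) to the single generator $π'\tensor 1 - 1\tensor[π']$ of $J_{\mcO'}(\mcO')$ modulo $\mcO'\tensor I_{\mcO}(\mcO')$ and verify (ii) by reducing mod $π'$ and computing the valuation of the constant term.
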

\begin{proof}
First note that for any $π$-adic $\mcO'$-algebra $R$, the ring $\mcO'\tensor_{\mcO}W_{\mcO}(R)$ has the $W_{\mcO}(R)$-basis
$$1\tensor 1, (π')^i\tensor 1 - 1\tensor[π']^i,\ \ i = 1,\ldots,e-1.$$
In particular,
$$J_{\mcO'}(R) = \mcO'\tensor_{\mcO}I_{\mcO}(R) + (π'\tensor 1 - 1\tensor[π'])\mcO'\tensor_{\mcO} W_{\mcO}(R).$$
Thus the first condition is equivalent to $θ(π'\tensor 1 - 1\tensor [π'])\in \mcO'\tensor_{\mcO} I_{\mcO}(\mcO').$ If $\overbar{θ}$ denotes the image of $θ$ in $\mcO'\tensor_{\mcO} \mcO'$, then this is equivalent to
$$\ob{θ}(π'\tensor 1 - 1\tensor π') = 0.$$

Informally, we define $\ob{θ}$ as the fraction
$$\frac{Nπ'\tensor 1 - 1 \tensor Nπ'}{π'\tensor 1 - 1\tensor π'}\in \mcO'\tensor_{\mcO}\mcO'$$
where $Nπ'$ denotes the norm of $π'$ with respect to the ring extension $\mcO'/\mcO$. This does not make sense as stated since the numerator vanishes and the denominator is a zero divisor. The precise definition is as follows. Let
$$(π')^e+a_{e-1}(π')^{e-1}+\ldots+a_1π'+(-1)^eNπ' = 0$$
be the Eisenstein equation of $π'$. Then we set
$$(-1)^e\overbar{θ}:=-\frac{(π')^e\tensor 1 - 1\tensor (π')^e}{π'\tensor 1 - 1\tensor π'}-\sum_{i = 1}^{e-1} a_i \frac{(π')^i\tensor 1 - 1 \tensor (π')^i}{π'\tensor 1 - 1\tensor π'}$$
where each summand is understood as a geometric series. Let $θ\in \mcO'\tensor_{\mcO}W_{\mcO}(\mcO')$ be any lift of $\overbar{θ}$. Then $θ$ satisfies (i) by construction and we are left with verifying (ii).

Consider the quotient
$$β:\mcO'\tensor_{\mcO} W_{\mcO}(\mcO')→\mcO'\tensor_{\mcO} W_{\mcO}(\mcO'/π')→\mcO'\tensor_{\mcO} (W_{\mcO}(\mcO'/π')/π).$$
Then $θ$ satisfies (ii) if and only if $β(θ)\neq 0$ and $π'β(θ) = 0$. Now note that $β(\mcO'\tensor_{\mcO} I_{\mcO}(\mcO')) = 0$ and hence $β$ factors through $\mcO'\tensor_{\mcO} \mcO'$. It is easy to see that the image of $\ob{θ}$ in $\mcO'\tensor_{\mcO} (W_{\mcO}(\mcO'/π')/π)$ satisfies these two properties.
\end{proof}

\begin{lem}\label{lemthetaunit}
Let $θ$ be an element as in Lemma \ref{lemMagic} and let $V$ denote the $\mcO'$-linear extension of the Verschiebung to $\mcO'\tensor_{\mcO} W_{\mcO}(R)$. Then
$$V^{-1}(θ(π'\tensor 1 - 1\tensor [π']))$$
is a unit in $\mcO'\tensor_{\mcO} W_{\mcO}(\mcO')$.
\end{lem}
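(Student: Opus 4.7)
The plan is to reduce the unit check to a valuation computation in $\mcO'$ itself, by base-changing the Witt ring along the residue map $\mcO' \twoheadrightarrow \mbF$ and exploiting the fact that $\mcO'/\mcO$ is totally ramified. First I would observe that $π' \tensor 1 - 1 \tensor [π'] \in J_{\mcO'}(\mcO')$ (its image in $\mcO'$ is $π' - π' = 0$), so property (i) of $θ$ from Lemma \ref{lemMagic} guarantees $θ(π'\tensor 1 - 1 \tensor [π']) \in \mcO' \tensor I_{\mcO}(\mcO')$, and $V^{-1}$ is indeed applicable. To test the unit property, I would invoke Lemma \ref{lemUnit} with $R = \mcO'$: it suffices to show that the image of $V^{-1}(θ(π'\tensor 1 - 1 \tensor [π']))$ in $(\mcO'/π') \tensor_{\mcO'} (\mcO'/π') = \mbF$ is nonzero.

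The crucial observation is that since $\mcO'/\mcO$ is totally ramified, the residue field $\mbF$ of $\mcO'$ equals the residue field of $\mcO$. Consequently $W_{\mcO}(\mbF) \iso \mcO$, the Frobenius on this ring is the identity, and the Verschiebung acts as multiplication by $π$. Base change along $\mcO' \twoheadrightarrow \mbF$ therefore yields a reduction
$$\mcO' \tensor_\mcO W_\mcO(\mcO') \twoheadrightarrow \mcO' \tensor_\mcO W_\mcO(\mbF) = \mcO',$$
under which $V$ becomes multiplication by $π$ and $V^{-1}$ becomes division by $π$ on $π\mcO'$. By the naturality of $V^{-1}$ in the Witt functor, I can compute the image of $V^{-1}(θ(π'\tensor 1 - 1 \tensor [π']))$ in $\mcO'$ by first reducing $θ(π'\tensor 1 - 1 \tensor [π'])$ to $\mcO'$ and then dividing by $π$.

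The reduction $[π'] \in W_\mcO(\mcO') \to W_\mcO(\mbF) = \mcO$ equals $[\overline{π'}] = [0] = 0$, so the image of $π' \tensor 1 - 1 \tensor [π']$ in $\mcO'$ is simply $π'$, of valuation $1$. Combined with property (ii) of $θ$, which states that $\overline{θ} \in \mcO'$ has valuation $e-1$, the image of $θ(π'\tensor 1 - 1 \tensor [π'])$ is $\overline{θ}π'$, an element of valuation $e = v(π)$ in $\mcO'$, hence equals $π \cdot u$ for a unit $u \in (\mcO')^\times$. Applying $V^{-1}$ (i.e.\ dividing by $π$) gives $u$, which further projects to a nonzero element of $\mbF$. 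By Lemma \ref{lemUnit}, this suffices to conclude that $V^{-1}(θ(π'\tensor 1 - 1 \tensor [π']))$ is a unit in $\mcO' \tensor W_\mcO(\mcO')$.

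No step in this argument is difficult in itself; the one point that warrants attention is the naturality of $V$ (and hence of $V^{-1}$ on $I_\mcO$) under the ring map $\mcO' \to \mbF$, which is immediate from the functoriality of the relative Witt vector construction. The real work has been front-loaded into the construction of $θ$ in Lemma \ref{lemMagic}, which is what makes the valuation bookkeeping come out correctly.
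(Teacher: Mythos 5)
Your argument is correct and is the same as the paper's: the paper also checks the unit property after base change to $\mcO'\tensor_{\mcO} W_{\mcO}(\mcO'/π')\iso \mcO'$, where $V$ becomes multiplication by $π$, and then invokes property (ii) of $θ$ together with Lemma \ref{lemUnit}. You have merely written out the valuation bookkeeping that the paper leaves implicit.
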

\begin{proof}
This can be checked in $\mcO'\tensor_{\mcO} W_{\mcO}(\mcO'/π')\iso \mcO'$. But here, the Verschiebung $V$ is multiplication by $π$. Using property (ii) from Lemma \ref{lemMagic}, we get the result.
\end{proof}

\begin{proof}[Proof of Proposition \ref{propLTDisp}, part (2).]
We now prove the uniqueness of a Lubin-Tate $\mcO$-display structure
$$(\mcO'\tensor_{\mcO} W_{\mcO}(R), J_{\mcO'}(R), F, \dot F)$$
with $\dot F(π'\tensor 1 - 1\tensor [π']) = κ$.

Note first that $κ$ determines $\dot F$ on $(π'\tensor 1 - 1\tensor [π'])\mcO'\tensor_{\mcO} W_{\mcO}(R)$ by $σ$-linearity. Since
$$J_{\mcO'}(R) = \mcO'\tensor_{\mcO} I_{\mcO}(R) + (π'\tensor 1 - 1\tensor [π'])\mcO'\tensor_{\mcO}W_{\mcO}(R),$$
we are left with showing that $κ$ determines $\dot F$ on $\mcO'\tensor_{\mcO} I_{\mcO}(R)$. By the relation \eqref{eqLT}, it is enough to show that $F(1)$ is determined by $κ$.

Let $θ$ be as in Lemma \ref{lemMagic} and set $a := θ(π'\tensor 1 - 1\tensor [π'])\in \mcO'\tensor_{\mcO} I_{\mcO}(R)$. Then
$$\dot F (a) = \dot F (θ(π'\tensor 1 - 1\tensor [π'])) = σ(θ) κ$$
but also
$$\dot F(a) = V^{-1}(a) F(1).$$
By Lemma \ref{lemthetaunit}, $V^{-1}(a)$ is a unit and hence $F(1)$ is determined by $κ$. 
\end{proof}

\begin{defn}\label{defLT}
Let $R$ be a $π$-adic $\mcO'$-algebra. A \emph{Lubin-Tate $\mcO'$-frame over $R$} is an $\mcO'$-frame of the form
$$(\mcO'\tensor_{\mcO} W_{\mcO}(R), J_{\mcO}(R), R, σ,\dot{σ})$$
where $\dot {σ}$ is a $σ$-linear epimorphism satisfying the relation analogous to \eqref{eqLT},
$$\dot{σ}(ξ) = V^{-1}(ξ) \dot{σ}(V(1)).$$
In other words, $\dot{σ}$ is coming from a Lubin-Tate $\mcO$-display. For a unit $κ\in \mcO'\tensor_{\mcO} W_{\mcO}(R)$, 
we denote by
$$\mcL_{\mcO'/\mcO,κ}(R)$$
the Lubin-Tate $\mcO'$-frame such that $\dot{σ}(π'\tensor 1 - 1\tensor [π']) = κ$. By Proposition \ref{propLTDisp}, such a $\dot{σ}$ exists and is unique.
\end{defn}

\begin{rmk}
By \cite[Lemma 2.2]{Lau}, there exists a unique element $s\in \mcO'\tensor_{\mcO} W_{\mcO}(R)$ such that $σ(ξ) = s\dot{σ}(ξ)$ for all $ξ\in J_{\mcO'}(R)$. For the $\mcO'$-frame $\mcL_{\mcO'/\mcO,κ}(R)$, this element is $s = κ^{-1} σ(π'\tensor 1 - 1 \tensor [π'])$.
\end{rmk}

\begin{ex}\label{exLT}
(1)
We consider the case $\mcO' = \mcO$. For any $π$-adic $\mcO$-algebra $R$, the Witt $\mcO$-frame $\mcW_{\mcO}(R)$ is an example of a Lubin-Tate $\mcO$-frame. It agrees with $\mcL_{\mcO/\mcO,ε}(R)$, where $ε\in W_{\mcO}(R)$ is the unit 
$$ε = V^{-1}(π-[π]).$$

(2) 
We return to the case of an arbitrary totally ramified extension $\mcO'/\mcO$. Let $θ$ be an element as in Lemma \ref{lemMagic}. We define $\dot{σ}:J_{\mcO'}(R)→\mcO'\tensor_{\mcO} W_{\mcO}(R)$ as
$$\dot{σ}(x) = V^{-1}(θx).$$
Then for $ξ\in \mcO'\tensor_{\mcO} I_{\mcO}(R)$,
$$\dot{σ}(ξ) = V^{-1}(θξ) = σ(θ)V^{-1}(ξ) = V^{-1}(ξ)\dot{σ}\left (V(1)\right )$$
because of the identity $θV(1) = V(σ(θ)).$ Thus $\dot{σ}$ defines the Lubin-Tate $\mcO'$-frame $\mcL_{\mcO'/\mcO,κ}(R)$ where $κ$ is the unit $V^{-1}(θ(π'\tensor 1 - 1\tensor [π']))$ from Lemma \ref{lemthetaunit}.
\end{ex}

We now consider a tower of extensions $\mcO''/\mcO'/\mcO$, all totally ramified. We fix uniformizers $π'',π'$ and $π$ in the respective rings. Recall from \cite{FF} that for any $\mcO'$-algebra $R$, there is a natural map of $\mcO$-algebras
$$α:W_{\mcO}(R)→W_{\mcO'}(R).$$
This map is Frobenius equivariant and satisfies $α\circ V_{π} = \frac{π}{π'} V_{π'} \circ α$ where $V_π$ and $V_{π'}$ denote the respective Verschiebung maps.

\begin{prop}\label{propLTFunct}
Let $R$ be a $π$-adic $\mcO''$-algebra and let $κ\in \mcO''\tensor_{\mcO}W_{\mcO}(R)$ be a unit. Then the natural map of $\mcO''$-algebras
$$α:\mcO''\tensor_{\mcO}W_{\mcO}(R)→\mcO''\tensor_{\mcO'}W_{\mcO'}(R)$$
induces a strict morphism of $\mcO''$-frames
$$\mcL_{\mcO''/\mcO,κ}(R)→\mcL_{\mcO''/\mcO',α(κ)}(R).$$
In other words, $α$ commutes with the $\dot{σ}$-operators.
\end{prop}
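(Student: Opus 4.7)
The plan is to verify the four axioms for a strict morphism of $\mcO''$-frames one by one: (a) $\alpha$ is a ring homomorphism inducing the identity on $R$; (b) it sends $J_{\mcO''}(R)$ into $J'_{\mcO''}(R)$ (the analogous ideal in the target); (c) it commutes with the Frobenius, $\alpha\circ\sigma = \sigma'\circ\alpha$; (d) it commutes with the $\dot{\sigma}$-operators on $J_{\mcO''}(R)$. Assertions (a)--(c) are formal consequences of the properties, recalled just before the statement, that the natural map $W_\mcO(R)\to W_{\mcO'}(R)$ is a Frobenius-equivariant $\mcO$-algebra map compatible with the augmentations to $R$; these properties pass to the $\mcO''$-linear extension $\alpha$. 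All the content is therefore in (d).

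For (d), since $\dot{\sigma}$ and $\dot{\sigma}'$ are additive and semilinear over their respective Frobenii, it suffices to check the identity on a set of generators of $J_{\mcO''}(R)$ as an ideal. From the proof of Lemma~\ref{lemMagic} one has
\[
J_{\mcO''}(R) = \mcO'' \otimes_\mcO I_\mcO(R) + (\pi'' \otimes 1 - 1 \otimes [\pi''])\,(\mcO'' \otimes_\mcO W_\mcO(R)),
\]
leaving two cases. On $\pi''\otimes 1 - 1\otimes[\pi'']$ (which $\alpha$ fixes, as it preserves both $\mcO''$ and Teichmüller lifts), both sides give $\alpha(\kappa)$ directly from the defining properties of the two Lubin-Tate frames. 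On an element $V_\pi(y) \in \mcO'' \otimes_\mcO I_\mcO(R)$ (with $y \in \mcO''\otimes_\mcO W_\mcO(R)$), the Lubin-Tate relation $\dot{\sigma}(V_\pi(y)) = y F(1)$ yields $\alpha(\dot{\sigma}(V_\pi(y))) = \alpha(y)\alpha(F(1))$. On the other hand, the relative Witt-vector compatibility $\alpha\circ V_\pi = (\pi/\pi')V_{\pi'}\circ\alpha$ (forced by Frobenius-equivariance together with ${}^F\circ V_\pi = \pi$ and ${}^{F'}\circ V_{\pi'} = \pi'$) combined with the identity $V_{\pi'}^{-1}((\pi/\pi')V_{\pi'}(z)) = (\pi/\pi')z$ (valid since ${}^{F'}$ fixes $\pi/\pi'\in\mcO'$) gives $\dot{\sigma}'(\alpha(V_\pi(y))) = (\pi/\pi')\alpha(y)F'(1)$. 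The desired identity thus reduces to the single scalar relation
\begin{equation*}
\alpha(F(1)) = (\pi/\pi')\,F'(1). \tag{$\star$}
\end{equation*}

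To prove $(\star)$ I will use the relation $\sigma(\xi) = s\,\dot{\sigma}(\xi)$ on $J_{\mcO''}(R)$, with $s = \kappa^{-1}\sigma(\pi''\otimes 1 - 1\otimes[\pi''])$ as noted in the remark after Definition~\ref{defLT}. Applied to $\xi = V_\pi(1)$ (so that $\dot{\sigma}(V_\pi(1)) = F(1)$), it gives the explicit formula
\[
F(1) = \kappa\,\sigma(V_\pi(1))\,\big/\,\sigma(\pi''\otimes 1 - 1\otimes[\pi'']).
\]
Applying $\alpha$ and exploiting (c), the identity $\alpha(V_\pi(1)) = (\pi/\pi')V_{\pi'}(1)$, and $\sigma'(\pi/\pi') = \pi/\pi'$ produces
\[
\alpha(F(1)) = (\pi/\pi')\,\alpha(\kappa)\,\sigma'(V_{\pi'}(1))\,\big/\,\sigma'(\pi''\otimes 1 - 1\otimes[\pi'']),
\]
which equals $(\pi/\pi')F'(1)$ by the analogous formula in the target Lubin-Tate frame $\mcL_{\mcO''/\mcO',\alpha(\kappa)}(R)$.

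The main obstacle is the bookkeeping of the scalar $\pi/\pi'$ arising from $\alpha V_\pi = (\pi/\pi')V_{\pi'}\alpha$: it enters both in the computation of $\dot{\sigma}'\circ\alpha$ on Verschiebung elements and (via $\alpha(V_\pi(1))$) in the expression for $\alpha(F(1))$. The identity $(\star)$ records precisely that these two occurrences match up, so once the Frobenius- and Verschiebung-compatibilities of $\alpha$ and the $s$-formula for Lubin-Tate frames have been assembled, the computation is essentially mechanical; strictness (absence of any twist $u \neq 1$) is then automatic.
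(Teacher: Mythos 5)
Your reduction of the problem to the single scalar identity $(\star)$, $\alpha(F(1)) = (\pi/\pi')\,F'(1)$, is correct, as are points (a)--(c) and the treatment of the generator $\pi''\otimes 1 - 1\otimes[\pi'']$. The gap is in your proof of $(\star)$. The element $s=\kappa^{-1}\sigma(\pi''\otimes 1-1\otimes[\pi''])$ with $\sigma=s\,\dot{\sigma}$ on $J_{\mcO''}(R)$ is \emph{not} a unit: the image of $\sigma(\pi''\otimes 1-1\otimes[\pi''])$ in $\mcO''\otimes_{\mcO}R$ is $\pi''\otimes 1-1\otimes(\pi'')^{q}$, which vanishes in $(\mcO''/\pi'')\otimes(R/\pi'')$, so Lemma \ref{lemUnit} rules out invertibility (for the Witt frame the corresponding $s$ is just $\pi$). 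Hence your ``explicit formula'' for $F(1)$ is a division by a non-unit and does not determine $F(1)$. What the relation actually yields is the implicit identity $\sigma(\pi''\otimes 1-1\otimes[\pi''])\cdot F(1)=\kappa\pi$, and likewise in the target frame; applying $\alpha$ and comparing gives only
$$\sigma'(\pi''\otimes 1-1\otimes[\pi''])\cdot\bigl(\alpha(F(1))-(\pi/\pi')F'(1)\bigr)=0.$$
To cancel, you would need $\sigma'(\pi''\otimes 1-1\otimes[\pi''])$ to be a non-zero-divisor in $\mcO''\otimes_{\mcO'}W_{\mcO'}(R)$, which fails (or is at least unproven) for a general $\pi$-adic $R$: once $R$ is non-reduced, $W_{\mcO'}(R)$ has plenty of zero divisors.

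The correct way to pin down $F(1)$ in terms of $\kappa$ is the one used in the uniqueness part of Proposition \ref{propLTDisp}(2): pair $\pi''\otimes 1-1\otimes[\pi'']$ with the element $\theta$ of Lemma \ref{lemMagic}, for which $V_\pi^{-1}(\theta(\pi''\otimes 1-1\otimes[\pi'']))$ \emph{is} a unit by Lemma \ref{lemthetaunit}, giving $F(1)=V_\pi^{-1}(\theta(\pi''\otimes 1-1\otimes[\pi'']))^{-1}\sigma(\theta)\kappa$. Running your comparison with this formula then forces you to relate $\alpha(\theta)$ to the corresponding element $\theta'$ for $\mcO''/\mcO'$ (their normalizations differ, since condition (ii) of Lemma \ref{lemMagic} refers to different ramification indices), which is precisely the bookkeeping the paper sidesteps by quoting \cite[Proposition 2.23]{ACZ}: the Ahsendorf base change of the Lubin-Tate $\mcO$-display underlying $\mcL_{\mcO''/\mcO,\kappa}(R)$ is a Lubin-Tate $\mcO'$-display satisfying $\alpha\circ\dot{\sigma}=\dot{\sigma}'\circ\alpha$ by construction, and the uniqueness in Proposition \ref{propLTDisp}(2) identifies the resulting frame with $\mcL_{\mcO''/\mcO',\alpha(\kappa)}(R)$ because $\dot{\sigma}'(\pi''\otimes 1-1\otimes[\pi''])=\alpha(\kappa)$. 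Either repair $(\star)$ along these lines or cite the ACZ result directly.
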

\begin{proof}
Let us consider the Lubin-Tate $\mcO$-display $(\mcO''\tensor_{\mcO} W_{\mcO}(R), J_{\mcO}(R), F, \dot{σ})$ underlying the $\mcO''$-frame $\mcL_{\mcO''/\mcO,κ}(R)$. By \cite[Proposition 2.23]{ACZ}, there exists a Lubin-Tate $\mcO'$-display $(\mcO''\tensor_{\mcO'} W_{\mcO'}(R),J_{\mcO'}(R),F',\dot{σ}')$ over $R$ such that $α\circ \dot{σ} = \dot{σ}'\circ α$. The corresponding Lubin-Tate $\mcO''$-frame then equals $\mcL_{\mcO''/\mcO',α(κ)}(R)$ which proves the proposition.
\end{proof}

\subsection{Windows over Lubin-Tate frames}
\begin{prop}\label{propLTWindows}
Let $\mcO'/\mcO$ be a totally ramified extension of rings of integers in $p$-adic local fields. Let $R$ be a $π$-adic $\mcO'$-algebra and $κ\in \mcO'\tensor_{\mcO}W_{\mcO}(R)$ a unit.
Then there is an equivalence of categories
$$\{\text{strict formal }\mcO'\text{-modules over }R\}\iso \{\text{nilpotent}\ \mcL_{\mcO'/\mcO,κ}(R)\text{-windows}\}.$$
This equivalence is compatible with base change in $R$ and with base change along the morphisms of $\mcO'$-frames
$$\mcL_{\mcO'/\mcO,κ}(R)→\mcL_{\mcO'/\tilde{\mcO},\tilde{κ}}(R)$$
for intermediate extensions $\mcO\subset \tilde{\mcO}\subset \mcO'$.
\end{prop}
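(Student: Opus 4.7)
The plan is to combine two equivalences. First, by Ahsendorf's theorem (the equivalence \eqref{eqEquiv2} discussed above), nilpotent strict formal $\mcO'$-modules over $R$ correspond to nilpotent $\mcO$-displays over $R$ equipped with a strict $\mcO'$-action. Second, I will show that such $\mcO$-displays are naturally equivalent to $\mcL_{\mcO'/\mcO,\kappa}(R)$-windows, the composition of the two equivalences yielding the proposition.

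For the forward functor, start with an $\mcL_{\mcO'/\mcO,\kappa}(R)$-window $(P,Q,F,\dot F)$ and view $P$ as a $W_\mcO(R)$-module by restriction along the inclusion $W_\mcO(R)\hookrightarrow \mcO'\tensor_\mcO W_\mcO(R)$. The resulting quadruple $(P,Q,F,\dot F)$ is an $\mcO$-display: one must check that $\dot F(\xi y)=V_0^{-1}(\xi)F(y)$ for $\xi\in I_\mcO(R)\subset \mcO'\tensor I_\mcO(R)\subset J_{\mcO'}(R)$, which follows from the relation $\dot \sigma(\xi)=V^{-1}(\xi)\dot\sigma(V(1))$ of Definition \ref{defLT} after verifying that $\dot\sigma(V(1))=1$, which in turn is built into the normalization of the Lubin-Tate frame and can be matched to Proposition \ref{propLTDisp}(2). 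The strict $\mcO'$-action comes from the $\mcO'\tensor W_\mcO(R)$-module structure, and strictness is precisely the inclusion $J_{\mcO'}(R)P\subset Q$ which holds for any window over the Lubin-Tate frame. Nilpotency is preserved since it is phrased purely via the operator $V$ of the $\mcO$-display.

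For the converse functor, start with an $\mcO$-display $(P,Q,F,\dot F)$ with strict $\mcO'$-action. The $\mcO'$-action upgrades $P,Q$ to $\mcO'\tensor W_\mcO(R)$-modules, and strictness gives $J_{\mcO'}(R)P\subset Q$. To produce an $\mcL_{\mcO'/\mcO,\kappa}(R)$-window structure, it suffices to check the identity $\dot F(\xi y)=\dot\sigma(\xi)F(y)$ for all $\xi\in J_{\mcO'}(R)$, $y\in P$. Decomposing $J_{\mcO'}(R)=\mcO'\tensor I_\mcO(R)+\eta(\mcO'\tensor W_\mcO(R))$ with $\eta=\pi'\tensor 1-1\tensor[\pi']$, the contribution from $\mcO'\tensor I_\mcO(R)$ follows by $\mcO'$-linearity from the $\mcO$-display axiom. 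The remaining content is the single identity $\dot F(\eta y)=\kappa F(y)$ for all $y\in P$; the map $y\mapsto \dot F(\eta y)$ is $\sigma$-linear over $\mcO'\tensor W_\mcO(R)$ by the $\mcO'$-linearity of $\dot F$ combined with its Frobenius-semilinearity over $W_\mcO(R)$, and the proportionality to $F$ by the unit $\kappa$ is forced by the rank-one uniqueness in Proposition \ref{propLTDisp} together with the fact that the entire structure is locally pulled back from the Lubin-Tate $\mcO$-display associated to $\kappa$ by the universal property of Lubin-Tate displays.

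Finally, both compatibilities are automatic: base change in $R$ commutes with $W_\mcO(-)$ and with Ahsendorf's construction; base change along $\mcL_{\mcO'/\mcO,\kappa}(R)\to \mcL_{\mcO'/\tilde\mcO,\tilde\kappa}(R)$ corresponds under our functor to the passage from $\mcO$-displays with strict $\mcO'$-action to $\tilde\mcO$-displays with strict $\mcO'$-action, which is Proposition \ref{propLTFunct} paired with the base change in Ahsendorf's theorem. The main technical obstacle is the verification of $\dot F(\eta y)=\kappa F(y)$ in the backward direction: this rigidly couples the frame parameter $\kappa$ to the strict $\mcO'$-action and requires either a Lubin-Tate-compatible normal decomposition or a direct reduction to the rank-one case of Proposition \ref{propLTDisp}.
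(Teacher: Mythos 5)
Your overall strategy---reduce via the equivalence \eqref{eqEquiv2} to comparing nilpotent $\mcO$-displays with strict $\mcO'$-action against $\mcL_{\mcO'/\mcO,κ}(R)$-windows---is the same as the paper's, but your forward functor contains a genuine error. You keep the quadruple $(P,Q,F,\dot F)$ unchanged and justify the $\mcO$-display axiom by the claim $\dot{σ}(V(1))=1$. This is false whenever $e=[\mcO':\mcO]>1$: by Remark \ref{rmkLTx}(3), $\dot{σ}(V(1))$ is the value $F(1)$ of the rank-one Lubin-Tate $\mcO$-display attached to $κ$, and for the choice $κ=V^{-1}(θ\,η)$ of Example \ref{exLT}(2), where $η:=π'\tensor 1-1\tensor[π']$, one has $\dot{σ}(ξ)=V^{-1}(θξ)=σ(θ)V^{-1}(ξ)$ for $ξ\in\mcO'\tensor I_{\mcO}(R)$, hence $\dot{σ}(V(1))=σ(θ)$, whose reduction has valuation $e-1$ by Lemma \ref{lemMagic}(ii) and is therefore not even a unit. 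The entire content of the proposition is that the window Frobenius and the display Frobenius differ by a twist by $θ$: if $(P,Q,F,\dot F)$ is the $\mcO$-display of the strict formal $\mcO'$-module, the window Frobenius must be $F'(x)=κ^{-1}\dot F(ηx)$, equivalently $F(x)=F'(θx)$. Reusing $F$ unchanged makes the axiom $\dot F(ξy)=V^{-1}(ξ)F(y)$ fail by exactly the factor $σ(θ)$.

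The same factor is the unresolved gap in your backward direction. You correctly isolate $\dot F(ηy)=κF'(y)$ as the definition of $F'$, but then assert that the contribution of $ξ\in\mcO'\tensor I_{\mcO}(R)$ ``follows by $\mcO'$-linearity from the $\mcO$-display axiom.'' It does not follow formally: the display axiom gives $\dot F(ξy)=V^{-1}(ξ)F(y)$, while the window axiom demands $\dot F(ξy)=\dot{σ}(ξ)F'(y)=σ(θ)V^{-1}(ξ)F'(y)$, and reconciling the two is precisely the computation the paper carries out: reduce to the single unit $κ=V^{-1}(θη)$ (any other $κ$ rescales both sides by a unit), note that $θη\in\mcO'\tensor I_{\mcO}(R)$ by Lemma \ref{lemMagic}(i), and compute $σ(θ)\dot F(ηx)=\dot F(θηx)=V^{-1}(θη)F(x)=κF(x)$, so that $σ(θ)F'(x)=F(x)$ as required. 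Your appeal to ``rank-one uniqueness'' in Proposition \ref{propLTDisp} and to the structure being ``locally pulled back from the Lubin-Tate $\mcO$-display'' cannot replace this: a general window is not pulled back from the rank-one object, and the uniqueness statement there concerns the frame, not the window. In short, you have correctly located the technical obstacle but neither direction of your construction actually overcomes it, and the forward direction as written produces an object that is not an $\mcO$-display.
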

\begin{proof}
Let $X/R$ be a formal $\mcO$-module equipped with a strict $\mcO'$-action $ι:\mcO'→\End(X)$. Let $\mcP := (P,Q,F,\dot F)$ be its $\mcO$-display. Then $P$ is naturally an $\mcO'\tensor_{\mcO} W_{\mcO}(R)$-module and $J_{\mcO'}(R)P\subset Q$. Furthermore, the map $\dot F$ is a $σ$-linear epimorphism $Q→P$. Then there is at most one way to define a $σ$-linear operator $F':P→P$
which makes $(P,Q,F',\dot F)$ into an $\mcL_{\mcO'/\mcO,κ}(R)$-window, namely
$$F'(x) := κ^{-1}\dot F((π'\tensor 1 - 1\tensor [π']) x).$$
We need to verify that this $F'$ satisfies
\begin{equation}\label{ABC}
\dot{F}(ξx) = \dot{σ}(ξ) F'(x),\ \ ξ\in J_{\mcO'}(R),\ x\in P.
\end{equation}
It is enough to verify this for one single $κ$ since all other choices multiply both sides of the equation by a unit. So we choose
$$κ = V^{-1}(θ(π'\tensor 1 - 1\tensor[π']))$$
where $θ$ is an element as in Lemma \ref{lemMagic}. In other words, we work with the Lubin-Tate $\mcO'$-frame from Example \ref{exLT} (2).

Both sides in equation \eqref{ABC} are $σ$-linear, so it is enough to verify the relation for $ξ = (π'\tensor 1 - 1\tensor[π'])$ or $ξ\in I_{\mcO}(R)$. (These elements generate $J_{\mcO'}(R)$ as ideal.) The case $ξ = (π'\tensor 1 - 1\tensor [π'])$ is the definition of $F'$. In the case $ξ \in I_{\mcO}(R)$, we compute
$$\begin{aligned}
\dot{σ}(ξ)F'(x) & = V^{-1}(θξ)F'(x)\\
& = σ(θ)V^{-1}(ξ)F'(x)\\
& = σ(θ)V^{-1}(ξ)κ^{-1} \dot{F} ((π'\tensor 1 - 1\tensor [π'])x)\\
& = V^{-1}(ξ)κ^{-1} \dot{F} (θ(π'\tensor 1 - 1\tensor [π'])x)\\
& = V^{-1}(ξ) F(x) = \dot{F}(ξx).
\end{aligned}$$
In the last step, we used that $\mcP$ is an $\mcO$-display.

Thus we get a functor
$$\{\text{strict formal }\mcO'\text{-modules over }R\}→ \{\text{nilpotent }\mcL_{\mcO'/\mcO,κ}(R)\text{-windows}\}$$
which commutes with base change in $R$ and in the Lubin-Tate $\mcO'$-frame.

To prove that this functor is an equivalence, we construct its inverse. Again it suffices to do this in the special case of $κ = V^{-1}(θ(π'\tensor 1 - 1\tensor [π']))$. Given any $\mcL_{\mcO'/\mcO,κ}(R)$-window $(P,Q,F',\dot{F})$, we define a $σ$-linear operator $F:P→P$ by the formula
$$F(x) = F'(θx)$$
We only need to check that this defines an $\mcO$-display, i.e.\! that $\dot{F}(ξx) = V^{-1}(ξ)F(x)$ for all $ξ\in I_{\mcO}(R)$. But
$$\dot{F}(ξx) = V^{-1}(θξ)F'(x) = σ(θ)V^{-1}(ξ)F'(x) = V^{-1}(ξ)F(x).$$
The compatibility with base change along the morphisms of frames $α:\mcL_{\mcO'/\mcO,κ}(R)→\mcL_{\mcO'/\tilde{\mcO},\tilde {κ}}(R)$ is clear. Namely let $α_*\mcP = (P',Q',F',\dot F')$ be the base change of $\mcP$ and let $\mcP'' = (P'',Q'',F'',\dot F')$ be the $\mcL_{\mcO'/\tilde \mcO, \tilde {κ}}(R)$-window constructed from the $\tilde {\mcO}$-display of $(X,ι)$. Then
$$P' = (\mcO'\tensor_{\tilde{\mcO}} W_{\tilde{\mcO}}(R)) \tensor P = P''$$
by \cite[Definition 2.24]{ACZ} which relates the $\mcO$-display of $X$ and its $\tilde{\mcO}$-display. Furthermore, the submodules $Q'$ and $Q''$ agree under this identification. Now both $\dot F'$ and $\dot F''$ are determined by the condition that they agree with $\dot F$ on the image of $Q$. Since $F'$ and $F''$ are determined by $\dot F'$ and $\dot F''$, the windows $\mcP'$ and $\mcP''$ agree.
\end{proof}

\begin{cor}
The morphisms of $\mcO''$-frames from Proposition \ref{propLTFunct}
$$\mcL_{\mcO''/\mcO,κ}(R)→\mcL_{\mcO''/\mcO',κ'}(R)$$
are all crystalline, i.e.\! they induce equivalences on their categories of windows.
\end{cor}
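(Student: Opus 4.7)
The plan is a short diagram chase reducing the corollary to the compatibility statement already contained in Proposition \ref{propLTWindows}. First I would apply that proposition twice. Applied as written, with the roles played by $\mcO'=\mcO''$ and $\tilde \mcO = \mcO$, it gives an equivalence
$$\{\text{strict formal } \mcO''\text{-modules over } R\} \iso \{\text{nilpotent } \mcL_{\mcO''/\mcO,κ}(R)\text{-windows}\}.$$
Applied with $\tilde \mcO = \mcO'$ instead (and a unit $κ'$ obtained from $κ$ via the natural map $\mcO''\tensor_\mcO W_\mcO(R) \to \mcO''\tensor_{\mcO'} W_{\mcO'}(R)$ of Proposition \ref{propLTFunct}), it gives
$$\{\text{strict formal } \mcO''\text{-modules over } R\} \iso \{\text{nilpotent } \mcL_{\mcO''/\mcO',κ'}(R)\text{-windows}\}.$$

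Second, I would invoke the final clause of Proposition \ref{propLTWindows}, which states that these two equivalences are compatible with base change along the strict morphism of $\mcO''$-frames $\mcL_{\mcO''/\mcO,κ}(R) \to \mcL_{\mcO''/\mcO',κ'}(R)$ of Proposition \ref{propLTFunct}. In other words, the triangle with vertices the two window categories and the category of strict formal $\mcO''$-modules over $R$ commutes (up to natural isomorphism), with horizontal arrows being the equivalences above and the remaining arrow being base change along the frame morphism. Since both diagonal arrows are equivalences, two-out-of-three forces base change to be an equivalence as well, which is the assertion of the corollary for nilpotent windows.

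The only potentially non-routine point is that the corollary literally says ``equivalences on their categories of windows'', whereas Proposition \ref{propLTWindows} is stated for nilpotent windows only; so a brief remark is needed explaining that the nilpotence condition is preserved and reflected by the base change morphism. This is immediate from the construction: the $σ$-linear operators $F',\dot F'$ on the two windows are intertwined by the natural map $α$, and nilpotence of $F \mod I$ is a condition on the induced operator on $P/J_{\mcO''}(R)P$, which is identified under $α\tensor \mr{id}$. With this remark the corollary follows formally, and no new calculation beyond what Proposition \ref{propLTWindows} already supplies is required.
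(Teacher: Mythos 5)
Your argument is exactly the paper's: the corollary is deduced by combining the two equivalences of Proposition \ref{propLTWindows} (for the frames over $\mcO$ and over $\mcO'$) with the stated compatibility of that proposition with base change along the frame morphism, and concluding by two-out-of-three. Your added remark that nilpotence is preserved and reflected under base change is a harmless clarification of a point the paper leaves implicit.
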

\begin{proof}
This is just a reformulation of the fact that the equivalence in the previous proposition commutes with the base change along such morphisms of $\mcO''$-frames.
\end{proof}

\section{The unramified case}

For completeness, we also include the case of an unramified extension $\mcO'/\mcO$. Let $f$ be the degree of the extension. Again we fix a uniformizer $π\in \mcO$. For a $π$-adic $\mcO'$-algebra $R$, there exists a unique morphism
$$\mcO'→W_{\mcO}(R)$$
that lifts the given morphism $\mcO'→R$. In particular, there is a direct product decomposition
$$\mcO'\tensor_{\mcO}W_{\mcO}(R)\iso \prod_{\mbZ/f} W_{\mcO}(R).$$

\begin{defn}
For a $π$-adic $\mcO'$-algebra $R$, we define the $\mcO'$-frame
$$\mcA_{\mcO'/\mcO}(R):=(W_{\mcO}(R),I_{\mcO}(R),{}^{F^f},{}^{F^{f-1}}V^{-1}).$$
Windows over $\mcA_{\mcO'/\mcO}(R)$ are also called $f$-$\mcO$-displays, see \cite{ACZ}.
\end{defn}

In his thesis \cite{Ahs}, Ahsendorf constructs a functor
$$γ:\{\text{strict formal }\mcO'\text{-modules over }R\}→\{\text{nilpotent $f$-$\mcO$-displays over $R$}\}.$$
Furthermore, the natural morphism
$$W_{\mcO}(R)→W_{\mcO'}(R)$$
induces a strict morphism of $\mcO'$-frames
$$\mcA_{\mcO'/\mcO}(R)→\mcW_{\mcO'}(R)$$
and thus gives rise to a functor
$$δ:\{\text{$f$-$\mcO$-displays over $R$}\}→\{\mcO'\text{-displays over }R\}$$
that is compatible with duality by Lemma \ref{lemDual}.
Ahsendorf proves that the composition of these functors is an equivalence of categories. We slightly strengthen this result as follows.

\begin{prop}\label{propfOwindow}
Let $R$ be a noetherian $π$-adic $\mcO'$-algebra. Then the above functors $γ$ and $δ$ are both equivalences of categories (when restricted to the full subcategories of nilpotent windows).
\end{prop}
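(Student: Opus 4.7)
By Ahsendorf's theorem the composition $\delta \circ \gamma$ restricts to an equivalence of categories between strict formal $\mcO'$-modules over $R$ and nilpotent $\mcO'$-displays over $R$. I would then invoke the two-out-of-three property for equivalences: given two composable functors $F, G$ with $G \circ F$ an equivalence, $F$ is an equivalence if and only if $G$ is. Thus the proof reduces to showing that one of $\gamma$ or $\delta$ is individually an equivalence on the indicated subcategories of nilpotent windows.

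The natural approach is to show that $\gamma$ is an equivalence by writing down an explicit quasi-inverse, in close analogy with the first half of the proof of Proposition \ref{propUram}. Given a nilpotent $\mcA_{\mcO'/\mcO}(R)$-window $\mcP' = (P', Q', F', \dot F')$, choose a normal decomposition $P' = L' \oplus T'$, $Q' = L' \oplus I_{\mcO}(R) T'$, and construct an $\mcO$-display $\mcP = (P, Q, F, \dot F)$ whose underlying module
\[
P \;=\; \bigoplus_{i=0}^{f-1} P_i
\]
carries a $\mbZ/f$-grading encoding a strict $\mcO'$-action via the isomorphism $\mcO' \tensor_{\mcO} W_{\mcO}(R) \iso \prod_{i\in \mbZ/f} W_{\mcO}(R)$. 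The degree-$0$ summand $P_0$ recovers $P'$ and the other summands $P_i$ are Frobenius twists of $P'$, with $Q_i = P_i$ for $i \neq 0$ as required by strictness. The operators $F, \dot F$ are defined as degree-$1$ cyclic shifts of the summands; at the single non-trivial step, from degree $f-1$ to degree $0$, the shift is given by the linearizations of $F'$ and $\dot F'$ transported through the chosen normal decomposition. By construction the restrictions of $F^f$ and $F^{f-1}\dot F$ to the degree-$0$ summand coincide with $F'$ and $\dot F'$, so that $\gamma(\mcP) \iso \mcP'$ canonically.

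The main technical step is the verification that this construction really produces an $\mcO$-display: one must check the defining identity $\dot F(\xi x) = V^{-1}(\xi) F(x)$ for $\xi \in I_{\mcO}(R)$, and show that the result is independent of the choice of normal decomposition so that the assignment $\mcP' \mapsto \mcP$ becomes functorial up to canonical isomorphism. This is the same kind of bookkeeping as in the unramified case of Proposition \ref{propUram} and boils down to the fact that the Frobenius ${}^{F^f}$ on $W_{\mcO}(R)$ fixes the image of $\mcO'$, so that a compatible grading is preserved under iteration. Nilpotence of the constructed $\mcO$-display is immediate since the degree-$0$ component of $F^f$ is exactly $F'$, which is nilpotent by hypothesis. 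Once $\gamma$ is shown to be an equivalence, the corresponding conclusion for $\delta$ follows formally from the two-out-of-three observation, completing the proof.
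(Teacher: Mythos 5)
Your proposal matches the paper's argument: the paper likewise cites Ahsendorf for $\delta\circ\gamma$ being an equivalence, observes that it then suffices to show one of the two functors is an equivalence, and does so by writing down exactly this quasi-inverse of $\gamma$ — reconstructing the $\mbZ/f$-graded $\mcO$-display from $P_0 = P'$, Frobenius twists in the other degrees, $Q_i = P_i$ for $i\neq 0$, and the cyclic-shift operator $\Phi$ with the linearization $\phi$ placed in the step from degree $f-1$ to degree $0$ via a normal decomposition. The approach and the key construction are essentially identical.
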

\begin{proof}
By Ahsendorf, the composition $δ\circ γ$ is an equivalence of categories, at least when restricted to the full subcategories of nilpotent windows. It is hence enough to prove that either of these functors is an equivalence. It would even be enough to just prove the faithfulness of $δ$. But for later use, we construct a quasi-inverse for $γ$. For this, we first recall the construction of this functor.

Let $\mcP=(P,Q,F,\dot{F})$ be the $\mcO$-display of a strict formal $\mcO'$-module over $R$. Then the natural map $\mcO'→W_{\mcO}(R)$ induces a $\mbZ/f$-grading
$$P = \bigoplus_{i\in \mbZ/f} P_i$$
such that both $F$ and $\dot{F}$ are homogeneous of degree $1$.
The strictness implies that $Q = Q_0\oplus P_1\oplus \ldots\oplus P_{f-1}.$ In particular, the restriction $\dot{F}_i := \dot{F}\vert_{Q_i}$ is an ${}^F$-linear isomorphism $P_i→P_{i+1}$ for $i = 1,\ldots, f-1$. The $f$-$\mcO$-display is now given by
$$(P_0,Q_0,\dot{F}^{f-1}\circ F\vert_{P_0}, \dot{F}^{f}\vert_{Q_0}).$$

Let us phrase this construction in terms of an $\mcO'$-stable normal decomposition $P = L\oplus T$. The $\mcO'$-stability is equivalent to the fact that both $L$ and $T$ are compatible with the grading, hence have the form
$$\begin{aligned}
L &= L_0\oplus P_1\oplus\ldots \oplus P_{f-1},\\
T &= T_0\oplus 0\oplus \ldots \oplus 0.\end{aligned}$$
Let $\Phi:= \dot{F}\vert_L \oplus F\vert_T = \oplus_{i\in \mbZ/f}\Phi_i$ be the ${}^F$-linear automorphism of $P$ associated to the normal decomposition. We use $\Phi_i$ to identify $P_{i+1}$ with $P_i^{(F)}$. Hence the display $\mcP$ together with its $\mcO'$-action can be describes as follows. The modules are of the form
$$\begin{aligned}
P &= P_0\oplus P_0^{(F)}\oplus P_0^{(F^2)}\oplus \ldots \oplus P_0^{(F^{f-1})},\\
Q &= Q_0\oplus P_0^{(F)}\oplus P_0^{(F^2)}\oplus \ldots \oplus P_0^{(F^{f-1})}
\end{aligned}$$
and the display structure is given by the normal decomposition
$$\begin{aligned}
L &= L_0\oplus P_0^{(F)}\oplus P_0^{(F^2)}\oplus \ldots \oplus P_0^{(F^{f-1})},\\
T &= T_0\oplus 0\oplus \ldots \oplus 0\end{aligned}$$
and the ${}^F$-linear operator
$$\Phi = \left(\begin{matrix} & & & & ϕ\\
1 & & & & \\
& 1 & & & \\
& & \ddots & & \\
& & & 1 & \\
\end{matrix}\right)$$
where $ϕ= \Phi_{f-1}.$

It is now clear how to invert this construction. Given an $\mcO'$-display $\mcP' = (P',Q',F',\dot F')$, we set $P_0 := P'$ and $Q_0:= Q'$. Then we define $P$ and $Q$ by the above formulas. If $(P' = L'\oplus T', ϕ)$ is a normal decomposition of $\mcP'$, then we set $L_0 := L'$, $T_0 := T'$ and define a normal decomposition and the operator $\Phi$ by the above formulas.
\end{proof}

\end{document}